\newtheorem{Example}{Example}  
\newtheorem{Definition}{Definition}
\newtheorem{Theorem}{Theorem}
\newtheorem{Lemma}{Lemma}
\newtheorem{Remark}{Remark}
\newtheorem{Corollary}{Corollary}
\newtheorem{Proposition}{Proposition}
\DeclareMathAlphabet\mathbfcal{OMS}{cmsy}{b}{n}
\newcommand{\be}{\begin{equation}}
\newcommand{\ee}{\end{equation}}
\newcommand{\bea}{\begin{eqnarray}}
\newcommand{\eea}{\end{eqnarray}}
\newcommand{\beas}{\begin{eqnarray*}}
	\newcommand{\eeas}{\end{eqnarray*}}
\newcommand{\bbR}{\mathbb{R}}
\newcommand{\bbS}{\mathbb{S}}
\newcommand{\cS}{\mathcal{S}}
\newcommand{\cV}{\mathcal{V}}
\newcommand{\cH}{\mathcal{H}}
\newcommand{\cA}{{\mathcal{A}}}
\newcommand{\0}{{\mathbf{0}}}
\renewcommand{\u}{{\mathbf{u}}}
\renewcommand{\v}{{\mathbf{v}}}
\newcommand{\y}{{\mathbf{y}}}
\newcommand{\q}{{\mathbf{q}}}
\newcommand{\X}{{\mathbf{X}}}
\newcommand{\B}{{\mathbf{B}}}
\newcommand{\C}{{\mathbf{C}}}
\newcommand{\D}{{\mathbf{D}}}
\renewcommand{\H}{{\mathbf{H}}}
\newcommand{\I}{{\mathbf{I}}}
\renewcommand{\O}{{\mathbf{O}}}
\newcommand{\Q}{{\mathbf{Q}}}
\newcommand{\R}{{\mathbf{R}}}
\renewcommand{\L}{{\mathbf{L}}}
\renewcommand{\S}{{\mathbf{S}}}
\newcommand{\U}{{\mathbf{U}}}
\newcommand{\K}{{\mathbf{K}}}
\newcommand{\J}{{\mathbf{J}}}
\newcommand{\V}{{\mathbf{V}}}
\newcommand{\W}{{\mathbf{W}}}
\newcommand{\A}{{\mathbf{A}}}
\newcommand{\Y}{{\mathbf{Y}}}
\newcommand{\Z}{{\mathbf{Z}}}
\newcommand{\G}{{\mathbf{G}}}
\newcommand{\cR}{{\cal R}}
\newcommand{\cN}{{\cal N}}
\newcommand{\cM}{{\cal M}}
\newcommand{\bOmega}{\boldsymbol{\Omega}}
\newcommand{\bSigma}{\boldsymbol{\Sigma}}
\newcommand{\bDelta}{\boldsymbol{\Delta}}
\newcommand{\bepsilon}{{\boldsymbol{\epsilon}}}
\newcommand{\rank}{{\rm rank}}
\newcommand{\tr}{{\rm tr}}
\newcommand{\Hess}{{\rm Hess}}
\newcommand{\grad}{{\rm grad}}
\newcommand{\F}{{\rm F}}
\newcommand{\st}{{\rm St}}
\renewcommand{\log}{{\rm Log}}
\newcommand{\sign}{{\rm sign}}
\newcommand{\argmin}{\mathop{\rm arg\min}}
\newcommand{\bbO}{\mathbb{O}}
\newcommand{\nc}{\normalcolor}
\newcommand*{\rom}[1]{\expandafter\@slowromancap\romannumeral #1@}
\begin{document}
	\title{Nonconvex Matrix Factorization is Geodesically Convex: Global Landscape Analysis for Fixed-rank Matrix Optimization From a Riemannian Perspective}
	
	\author{Yuetian Luo$^1$ ~ and ~ Nicol{\'a}s Garc{\'i}a Trillos$^2$ }
	
	\date{}
	\maketitle

	\footnotetext[1]{Department of Statistics, Rutgers University.  (\texttt{yl2785@stat.rutgers.edu})}
	\footnotetext[2]{Department of Statistics, University of Wisconsin-Madison.  (\texttt{garciatrillo@wisc.edu})}
	
	\bigskip

\begin{abstract}
In this paper we study the landscape of a general matrix optimization problem with a fixed-rank positive semidefinite (PSD) constraint. We perform the Burer-Monteiro factorization, i.e., factorize a PSD matrix $\X$ as $\Y \Y^\top$, and consider a particular Riemannian quotient geometry in a search space that has a total space equipped with the Euclidean metric. When the original objective $f$ satisfies standard restricted strong convexity and smoothness properties, we characterize the global landscape of the factorized objective under the Riemannian quotient geometry. In particular, we show that the entire search space can be divided into three regions: ($\cR_1$) the region near the target parameter of interest, where the factorized objective is geodesically strongly convex and smooth; ($\cR_2$) the region containing neighborhoods of all strict saddle points; ($\cR_3$) the remaining regions, where the factorized objective has a large gradient. Our results cover both noisy and noiseless settings in applications of interest. To the best of our knowledge, this is the first global landscape analysis of the Burer-Monteiro factorized objective under the Riemannian quotient geometry. Our results provide a fully geometric explanation for the superior performance of vanilla gradient descent under the Burer-Monteiro factorization. When $f$ satisfies a weaker restricted strict convexity property, we show there exists a neighborhood near local minimizers such that the factorized objective is geodesically convex. To prove our main results we provide a comprehensive landscape analysis of a matrix factorization problem with a least squares objective, which serves as a critical bridge in establishing the results in the general setting. Our conclusions are also based on a result of independent interest stating that the geodesic ball centered at $\Y$ with a radius one-third of the least singular value of $\Y$ is a geodesically convex set under the Riemannian quotient geometry, a result that, as a corollary, also implies a quantitative bound of the convexity radius in the Bures-Wasserstein space. The convexity radius obtained in this paper is sharp up to constants.
\end{abstract}

{\bf Keywords:} Matrix factorization, global landscape analysis, Riemannian optimization, quotient geometry

{\bf MSC subject classification:} 58C05,90C06,90C22

\section{Introduction}
In this paper, we consider the following optimization problem 
\begin{equation} \label{eq: original-formulation}
	\min_{ \substack{  0  \preccurlyeq  \X \, \in  \, \bbS^{p \times p} , \\ \rank(\X) = r }} f(\X), \quad 0 < r \leq p.
\end{equation} Without loss of generality, we assume $f$ is symmetric in $\X \in \R^{p\times p}$, i.e., $f(\X) = f(\X^\top)$; otherwise, we can set $\tilde{f}(\X) = \frac{1}{2}(f(\X) + f(\X^\top)) $ and have $\tilde{f}(\X) = f(\X)$ for all $\X \succcurlyeq 0 $ \cite{bhojanapalli2016dropping}. In addition, we assume $f$ is twice continuously differentiable (in the usual sense) with respect to $\X$. To accelerate the computation of \eqref{eq: original-formulation} while coping with the rank constraint, a line of research has studied the following nonconvex factorization formulation (also dubbed as {\it Burer-Monteiro factorization} in the literature \cite{burer2005local}):
\begin{equation} \label{eq: factorization-objective}
	\min_{\Y \in \bbR^{p \times r}_*} \bar{h} (\Y) := f(\Y \Y^\top),
\end{equation}
i.e., $\X$ gets factorized as $\Y\Y^\top$ for a rectangular matrix $\Y$. Here,  $\bbR^{p \times r}_*$ denotes the set of $p$-by-$r$ matrices with full column rank.

The nonconvex factorization formulation \eqref{eq: factorization-objective} has been shown to be effective in many settings. For example, when $f$ is convex and smooth, \cite{bhojanapalli2016dropping} showed vanilla gradient descent (GD) on $\Y$ in \eqref{eq: factorization-objective} converges locally to the global minimizer at the classic sublinear convergence rate. When $f$ is well-conditioned, e.g., it satisfies certain restricted strong convexity and smoothness properties (see the forthcoming Definition \ref{def: RSC-RSM}), a number of works have demonstrated that GD in \eqref{eq: factorization-objective} or in its asymmetric version, i.e., when $\X = \L \R^\top$ ($\L \in \bbR^{p_1 \times r}$, $\R \in \bbR^{p_2 \times r}$), achieves linear convergence provided the scheme is initialized closely enough to the global minimizer \cite{bhojanapalli2016dropping,chen2015fast,candes2015phase,li2019rapid,ma2019implicit,sun2015guaranteed,tu2016low,charisopoulos2021low,zhao2015nonconvex,zheng2015convergent,ding2020leave,tong2020accelerating}. More surprisingly, perturbed GD or GD with random initialization has also been observed to have fast global convergence performance when carrying out the Burer-Monteiro factorization \cite{bi2022local,chen2019gradient,ye2021global}. 

The fact that the objective in \eqref{eq: factorization-objective} is nonconvex has led researchers to investigate the reasons behind the observed superior performance of vanilla GD. A large body of works, for example, has shown that the factorization in \eqref{eq: factorization-objective} actually does not introduce spurious local minima when the objective $f$ is well-conditioned  \cite{bhojanapalli2016global,chen2019model,ge2017no,li2019non,park2017non,zhang2021general,zhang2019sharp,zhang2018primal,zhu2018global}. Similar benign landscape results have been proved for the Burer-Monteiro factorization in solving semidefinite programs \cite{boumal2020deterministic,journee2010low,mei2017solving,ling2019landscape,waldspurger2020rank}. These works have taken a big step in demystifying the observed performance of the Burer-Monteiro factorization, but they are nonetheless restrictive in the sense that they mainly focus on the landscape analysis of the objective near the global minimum or at stationary points. Such a partial characterization is limited as it does not yield, in general, explicit guarantees on the rate of convergence to the global minimizer \cite{du2017gradient,ge2015escaping,lee2019first}. 

There have been a few attempts to study the global geometry of related optimization problems by characterizing their landscapes in the whole search space, rather than solely in regions near the global minimizer or near stationary points. For example, \cite{ge2015escaping}, \cite{sun2018geometric,cai2022nearly} and \cite{sun2016complete2} studied the global landscape geometries of tensor decomposition, phase retrieval and complete dictionary recovery problems, respectively. The global Euclidean geometries of \eqref{eq: factorization-objective} and their asymmetric versions were studied in \cite{li2016symmetry} and \cite{zhu2017global}, respectively. In particular, \cite{zhu2017global} considered a general well-conditioned objective $f$ and showed that the landscape of $\bar{h}(\Y)$ is benign and that the whole search space can be characterized as follows: when $\Y$ is far from the global optimizer, then either the magnitude of the gradient of $\bar{h}(\Y)$ is large, or the Hessian evaluated at $\Y$ has a negative eigenvalue; when $\Y$ is close to the global minimizer, then $\bar{h}(\Y)$ satisfies certain regularity condition which can facilitate local linear convergence of GD. \cite{li2016symmetry} considered a more restricted setting and assumed $f$ is a least squares objective, but they obtained a stronger local geometric result: $\bar{h}(\Y)$ is strongly convex in certain directions near the global minimizer. These results are encouraging and provide some evidence on why vanilla GD enjoys fast convergence performance when $f$ is well-conditioned. However, the description of local geometry near the global minimizer provided in \cite{li2016symmetry,zhu2017global} is not quite intuitive for two reasons: first, it is unclear { whether the regularity condition proved in \cite{zhu2017global} is a principled property that can be found in other problems beyond the setting of Burer-Monteiro factorization}; second, it is not obvious how to interpret the restricted directions that make the Hessian positive-definite in \cite{li2016symmetry}. On the other hand, notions such as convexity or strong convexity are classical notions that, when present, are typically used to obtain strong bounds on the computational complexity of many optimization algorithms. Thus, based on the existing results in the literature, we ask the central question that we explore in this paper:
\vskip.1cm
\noindent \fbox{ \parbox{0.98\textwidth}{
{\it {Can we provide a more straightforward and geometric explanation for why the Burer-Monteiro factorization works, e.g., using classical notions such as convexity or strong convexity?} }
}}
\vskip.1cm
  In addition, except for the landscape analysis for the special noisy matrix trace regression with a least squares objective in \cite[Section IV.B]{li2016symmetry}, the analyses in \cite{li2016symmetry,zhu2017global} mainly focus on the noiseless setting. Specifically, \cite{li2016symmetry,zhu2017global} assume there exists a rank $r$ positive semidefinite (PSD) parameter matrix of interest $\X^*$ which is the global minimizer of \eqref{eq: original-formulation} and satisfies $\nabla f(\X^*) = \0$. {\cite{ma2022noisy} also studies the landscape of noisy low-rank matrix optimization, but the emphasis is on the region near the global minimum or at stationary points.} We note the assumption $\nabla f(\X^*) = \0$ often holds in applications in noiseless settings, while do not hold in the noisy case; see the upcoming application in Example \ref{em: matrix-trace-regression}. Moreover, the guarantee in \cite[Section IV.B]{li2016symmetry} is customized to the noisy matrix trace regression problem and does not apply in more generality. \nc Thus, it is natural to ask:
\vskip.1cm
\noindent \fbox{ \parbox{0.98\textwidth}{
{\it Can we analyze the global optimization landscape of \eqref{eq: factorization-objective} in the general noisy setting? }
}}
\vskip.1cm

Finally, most of the previous results for landscape analysis in the literature are restricted to the setting where $f$ is well-conditioned. {The work \cite{dong2022analysis} weakens this assumption, only requiring $f$ to satisfy the restricted strong convexity and smoothness properties at the parameter of interest $\X^*$, but their landscape analysis (Proposition 4.7 therein) is local to the point $\X^*$}. Thus we wonder:
\vskip.1cm
\noindent \fbox{ \parbox{0.98\textwidth}{
{\it Can we study the landscape of \eqref{eq: factorization-objective} at other points under a weaker assumption on $f$?}
}}
\vskip.1cm

In this work, we provide affirmative answers to the above three questions. First, we note that, under the Euclidean formulation, if $\Y$ is a stationary point/local minimizer of \eqref{eq: factorization-objective}, then $\Y\O$ is also a stationary point/local minimizer for any $\O \in \bbO_r$. This ambiguity makes $\bar{h}(\Y)$ unavoidably nonconvex in any neighborhood of a stationary point \cite[Proposition 2]{li2016symmetry} and it becomes a fundamental hurdle in \cite{li2016symmetry,zhu2017global} for providing a more intuitive landscape analysis. To tackle this difficulty, we resort to tools from Riemannian optimization \cite{absil2009optimization,boumal2020introduction} and consider a particular Riemannian quotient geometry on \eqref{eq: factorization-objective} \cite{journee2010low}; see Section \ref{sec: Riemannian-opt-background} for a brief introduction of Riemannian optimization on quotient manifolds. Specifically, we encode the invariance mapping, i.e., $\Y \mapsto \Y \O$, in an abstract search space by defining the equivalence classes $[\Y] = \{ \Y \O: \O \in \bbO_r \}$. Since the invariance mapping is performed via the Lie group $\bbO_r$ smoothly, freely and properly, we conclude that $\cM_{r+}^{q} := \widebar{\cM}_{r+}^{q}/\bbO_r$ is a quotient manifold of $\widebar{\cM}_{r+}^{q}:=\bbR^{p \times r}_*$ \cite[Theorem 21.10]{lee2013smooth}. Moreover, we equip $T_\Y \widebar{\cM}_{r+}^{q}$ with the metric $\bar{g}_\Y(\eta_\Y, \theta_\Y)= \tr( \eta_\Y^\top \theta_\Y)$ for any $\eta_\Y, \theta_\Y \in T_\Y \widebar{\cM}_{r+}^{q}$, where $T_\Y \widebar{\cM}_{r+}^{q} = \bbR^{p \times r}$ is the tangent space of $\widebar{\cM}_{r+}^{q}$ at $\Y$. Since $\bar{h}(\Y)$ is invariant along the equivalence classes of $\widebar{\cM}_{r+}^{q}$, \eqref{eq: factorization-objective} induces the following optimization problem on the quotient manifold $\cM_{r+}^q$:
\begin{equation} \label{eq: quotient-factorization-objective}
	\min_{[\Y] \in \cM_{r+}^q } h ([\Y]):= \bar{h}(\Y)
\end{equation}

Our main contribution is on providing a purely geometric and straightforward explanation of the success of gradient based optimization algorithms in the Burer-Monteiro factorization via performing landscape analysis of \eqref{eq: quotient-factorization-objective} under the Riemannian quotient geometry. Our results cover various scenarios of $f$ and allow noise as well. An informal statement of our main results (Theorems \ref{th: convexity-radius-Mq}-\ref{th: local-landscape-f-convex} and Corollary \ref{coro: benign-landscape-f-well-conditioned}) is provided in the following Theorem \ref{th: informal-statement-main-result}.

\begin{Theorem}[Informal Results] \label{th: informal-statement-main-result}
	\begin{itemize}
		\item[(a)] When $f$ satisfies restricted strong convexity and smoothness properties (see Definition \ref{def: RSC-RSM}), and the noise level at the rank $r$ matrix parameter of interest $\X^*$ is controlled, then the global landscape of \eqref{eq: quotient-factorization-objective} is benign in the following sense: given any $\Y \in \bbR^{p \times r}_*$, one of the following three properties holds: 
		
		(i) the magnitude of Riemannian gradient of $h([\Y])$ is large; 
		
		(ii) the Riemannian Hessian of $h([\Y])$ has a large negative eigenvalue and there is an explicit escaping direction; 
		
		(iii) when $\Y\Y^\top$ is close to $\X^*$, $h([\Y])$ is geodesically strongly convex and smooth. Moreover, the distance between the global minimizer and $\X^*$ is bounded by a quantity that depends on the noise level of the problem or equivalently on the magnitude of $\nabla f(\X^*)$.  
		\item[(b)] When $f$ satisfies a weaker restricted strict convexity property (see Definition \ref{def: restricted-strictly-convex}), there exists a neighborhood around a local minimizer such that $h([\Y])$ is geodesically convex.
	\end{itemize}
\end{Theorem}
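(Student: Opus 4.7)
The plan is to establish the landscape decomposition by working intrinsically in the Riemannian quotient geometry on $\cM_{r+}^q$, using two main building blocks: a purely geometric result about the convexity radius of geodesic balls, and a landscape analysis of the least-squares bridge $\bar{h}_{\mathrm{LS}}(\Y) = \tfrac{1}{4}\|\Y\Y^\top - \X^*\|_F^2$. First I would set up Riemannian calculus on $\cM_{r+}^q$: describe the horizontal distribution $\cH_\Y$ at each representative, write $\grad h([\Y])$ and $\Hess h([\Y])$ as horizontal lifts of their Euclidean counterparts, and identify geodesics $\Y(t)$ as horizontal lifts of Euclidean straight lines (this is because the total space carries the Euclidean metric). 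All three properties in part (a) are Riemannian invariants, so this intrinsic language is what will make the three-region decomposition statement clean.

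Next I would prove the auxiliary geometric lemma that the geodesic ball of radius $\sigma_r(\Y)/3$ around $[\Y]$ is geodesically convex. Along any horizontal straight line $\Y + t\eta$ one has $\sigma_r(\Y+t\eta) \geq \sigma_r(\Y) - t\|\eta\|_F$, so restricting to the one-third radius keeps the curve in $\bbR^{p\times r}_*$ and avoids horizontal-lift degeneracies. The delicate point is orthogonal alignment: two representatives of distinct equivalence classes in the ball admit many candidate joining curves after rotation by $\bbO_r$, and one has to verify that the length-minimizing horizontal curve is itself contained in the ball. I would handle this via a polar-factor argument, replacing one endpoint by its optimal orthogonal alignment with the other and then applying the singular-value estimate above.

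Third, I would analyze the landscape of $\bar{h}_{\mathrm{LS}}$. Using $\nabla \bar{h}_{\mathrm{LS}}(\Y) = (\Y\Y^\top - \X^*)\Y$ and its Hessian, together with the optimal alignment $\O^\star = \argmin_{\O \in \bbO_r} \|\Y\O - \Y^*\|_F$ for $\Y^*(\Y^*)^\top = \X^*$, I would derive (i) a lower bound on $\|\grad h_{\mathrm{LS}}([\Y])\|$ when $[\Y]$ is far from stationary points, (ii) at each non-minimizing stationary point an explicit escape direction provided by an eigenvector of $\X^*$ not captured by $\Y\Y^\top$, yielding a negative Hessian eigenvalue of size $\gtrsim \sigma_r(\X^*)$, and (iii) geodesic strong convexity and smoothness inside the ball of radius $\sigma_r(\Y^*)/3$ by computing $\tfrac{d^2}{dt^2}\bar{h}_{\mathrm{LS}}(\Y(t))$ along horizontal geodesics. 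To transfer these bounds to general $f$, I would expand the Euclidean gradient and Hessian of $\bar{h}=f(\Y\Y^\top)$ around $\X^*$ via integral remainders involving $\nabla^2 f$; RSC and RSM then sandwich these remainders between scaled versions of the corresponding terms for $\bar{h}_{\mathrm{LS}}$, with only an additive perturbation of size $\|\nabla f(\X^*)\|$ coming from the noise. The three regions persist with constants degraded by the condition number of $f$ and the noise level, and the bound on the distance between the global minimizer of $h$ and $\X^*$ falls out of the RSC inequality at that minimizer using $\grad h = 0$.

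For part (b) the recipe is simpler: at a local minimizer $[\Y^\circ]$, restricted strict convexity implies that the Riemannian Hessian of $h$ is strictly positive on the horizontal space, since the Euclidean null directions along the $\bbO_r$-orbit are precisely what the quotient structure removes. Continuity extends positivity to a Riemannian neighborhood, and intersecting with the $\sigma_r(\Y^\circ)/3$ geodesic ball from the convexity-radius lemma yields a geodesically convex domain on which $\Hess h$ is positive along every horizontal geodesic, so $h$ is geodesically convex there. I expect the main obstacle to be the second step, namely establishing the \emph{sharp} $\sigma_r(\Y)/3$ convexity radius and controlling the horizontal geodesic between two optimally aligned representatives, because every subsequent estimate in both parts needs this ball to be large enough to contain the relevant neighborhoods of $[\Y^*]$ and $[\Y^\circ]$.
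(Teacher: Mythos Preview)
Your overall architecture matches the paper's: establish a convexity-radius lemma for $\cM_{r+}^q$, analyze the least-squares bridge $H([\Y])=\tfrac12\|\Y\Y^\top-\X^*\|_\F^2$, then transfer to general $f$ via RSC/RSM perturbation bounds. However, there are two genuine gaps.

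\textbf{The convexity-radius argument does not work as stated.} Your singular-value estimate $\sigma_r(\Y+t\eta)\geq \sigma_r(\Y)-t\|\eta\|_\F$ only shows that the horizontal lift stays in $\bbR^{p\times r}_*$; it says nothing about whether $[\Y_1+t(\Y_2\Q^*-\Y_1)]$ stays inside the geodesic ball $B_x([\Y])$, which is the actual content of geodesic convexity. The quantity you must control is $d([\Y_1+t(\Y_2\Q^*-\Y_1)],[\Y])$, and this involves a \emph{different} optimal alignment (with $\Y$, not between the endpoints), so your polar-factor step does not close the loop. The paper instead follows do Carmo's classical route: it shows that any geodesic tangent to the sphere $S_\rho([\Y])$ locally stays outside $B_\rho([\Y])$ whenever $\rho<\sigma_r(\Y)$, by computing the second derivative of $F(t)=d^2(\gamma(t),[\Y])$ via the Gauss Lemma, a formula for second derivatives of singular values of a smooth one-parameter family, and a sharp perturbation bound for the orthogonal Procrustes solution. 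Convexity of $B_{\sigma_r(\Y)/3}([\Y])$ then follows by contradiction: a minimizing geodesic between two points of the ball cannot attain its maximum distance from $[\Y]$ at an interior time. This machinery is not replaceable by your direct estimate.

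\textbf{For part (b), your claim that restricted strict convexity forces the Riemannian Hessian to be strictly positive on the horizontal space is incomplete.} The Riemannian Hessian of $h$ at $\widehat{\Y}$ along horizontal $\theta$ is
\[
\nabla^2 f(\widehat{\Y}\widehat{\Y}^\top)[\widehat{\Y}\theta^\top+\theta\widehat{\Y}^\top,\,\widehat{\Y}\theta^\top+\theta\widehat{\Y}^\top]
+2\langle\nabla f(\widehat{\Y}\widehat{\Y}^\top),\theta\theta^\top\rangle.
\]
Restricted strict convexity makes the first term strictly positive (since $\widehat{\Y}\theta^\top+\theta\widehat{\Y}^\top\neq0$ for nonzero horizontal $\theta$), but the second term can be negative and is \emph{not} killed by passing to the quotient: $\theta\theta^\top$ generically has mass in the $\U_\perp$-block that the FOSP condition $\nabla f(\widehat{\Y}\widehat{\Y}^\top)\widehat{\Y}=0$ does not control. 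The paper closes this by assuming $\widehat{\Y}\widehat{\Y}^\top$ is a local minimizer of $f$ over \emph{rank-$\leq 2r$} PSD matrices, so that first-order optimality against the tangent direction $\theta\theta^\top+\widehat{\Y}\widehat{\Y}^\top$ yields $\langle\nabla f(\widehat{\Y}\widehat{\Y}^\top),\theta\theta^\top\rangle\geq0$. Your argument needs this extra hypothesis or an equivalent mechanism.

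A smaller correction: for the bridge problem $H$, the paper shows $[\Y^*]$ is the \emph{unique} Riemannian FOSP, so there are no non-minimizing stationary points to escape from; the escape direction in the strict-saddle region $\cR_2$ (where the gradient of $H$ is merely small) is the horizontal log $\theta_\Y=\Y-\Y^*\Q$, not an eigenvector of $\X^*$.
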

To the best of our knowledge, this is the first landscape analysis for the Burer-Monteiro factorization under the Riemannian quotient geometry. Thanks to this Riemannian formulation, in either setting of $f$ we have that there exists a local region in which the factorized objective is either geodesically strongly convex or geodesically convex. Such results are novel and can not be obtained under the frameworks of \cite{li2016symmetry,zhu2017global} as $\bar{h}(\Y)$ is in essence nonconvex under the Euclidean geometry. Moreover, we also provide the global landscape analysis of \eqref{eq: quotient-factorization-objective} when $f$ is well-conditioned. The set of three properties described in Theorem \ref{th: informal-statement-main-result}(a) is known as the robust strict saddle property in the literature \cite{ge2015escaping,jin2017escape,sun2015nonconvex} and many algorithms are guaranteed to achieve fast global convergence when these properties hold \cite{sun2019escaping,criscitiello2019efficiently,ge2015escaping,jin2017escape,sun2018geometric}. Finally, since the horizontal lift of the Riemannian gradient of $h([\Y])$ is the same as the Euclidean gradient (see the forthcoming Lemma \ref{lm: gradient-hessian-exp-PSD}) of $\bar{h}(\Y)$\nc, the gradient descent algorithms under the Riemannian quotient geometry and the Euclidean geometry are exactly the same from a  computational point of view. Thus, our geometric landscape results give a fully geometric and useful/straightforward explanation for the superior performance of gradient-based algorithms under the Burer-Monteiro factorization. As a concrete application of our landscape analysis we present a rigorous iteration complexity bound for a version of the perturbed Riemannian gradient descent algorithm \cite{criscitiello2019efficiently,sun2019escaping}
in the context of the matrix approximation problem \eqref{eq: quotient-matrix-fac-denoising} below. The details of this result can be found in Algorithm \ref{alg:PRGD} and in Theorem \ref{th:application} at the end of Section \ref{sec: landscape-analysis-f-well-conditioned}. \nc

To prove our main results, we first show a novel geodesic convexity property of the Riemannian quotient manifold $\cM_{r+}^q$ which is also of independent interest. Specifically, we show the geodesic convexity radius of $\cM_{r+}^q$ at $[\Y] \in \cM_{r+}^q$ is of order at least $\sigma_r(\Y)/3$; this radius is sharp up to the constant $1/3$. In addition, we base our global landscape analysis of \eqref{eq: quotient-factorization-objective} when $f$ is well-conditioned on the global landscape analysis of the following optimization problem:
\begin{equation} \label{eq: quotient-matrix-fac-denoising}
	\min_{[\Y] \in \cM_{r+}^q } H ([\Y]) := \frac{1}{2} \| \Y \Y^\top - \X^* \|_\F^2,
\end{equation} where $\X^*$ is the rank $r$ PSD parameter matrix of interest mentioned before. We show that the landscape of \eqref{eq: quotient-matrix-fac-denoising} is benign in the same sense as in Theorem \ref{th: informal-statement-main-result}(a). Moreover, the optimization landscape of \eqref{eq: quotient-matrix-fac-denoising} is preserved for the general low-rank matrix optimization \eqref{eq: quotient-factorization-objective} when $f$ satisfies the restricted strong convexity and smoothness properties. When $f$ satisfies a weaker restricted strict convexity property, we show that the Riemannian Hessian of $h([\Y])$ is positive definite at a local minimizer, and there exists a neighborhood around the local minimizer such that $h([\Y])$ is geodesically convex. 

\subsection{Additional Related Literature} \label{sec: related-literature}
 Riemannian manifold optimization methods are powerful tools when solving optimization problems with geometric constraints \cite{absil2009optimization,boumal2020introduction}. A lot of progress in this topic was made for studying the convergence of Riemannian optimization algorithms when solving low-rank matrix estimation problems, including matrix completion \cite{keshavan2009matrix,boumal2011rtrmc,vandereycken2013low,dong2022analysis}, robust PCA \cite{zhang2018robust}, matrix trace regression \cite{wei2016guarantees,meyer2011linear,luo2020recursive}, blind deconvolution/phase retrieval \cite{huang2018blind,luo2020recursive}, and general fixed-rank matrix optimization \cite{mishra2014fixed}. 

There are some precedents for the study of geometric landscape of an optimization problem under the Riemannian formulation. For example, \cite{maunu2019well} and \cite{alimisis2022geodesic} provided landscape analyses for robust subspace recovery and block Rayleigh quotient of symmetric or PSD matrices over the Grassmannian manifold. \cite{ling2020solving} studied the landscape of orthogonal group synchronization over the Stiefel manifold after performing the Burer-Monteiro factorization. Geometric landscape analysis of a quartic-quadratic optimization problem under a spherical constraint was examined in \cite{zhang2021geometric}. The landscape analyses in these works were mainly performed at stationary points. Under the embedded geometry for the set of fixed-rank matrices, \cite{uschmajew2018critical} proved that the landscape of \eqref{eq: original-formulation} when $f$ is quadratic and satisfies the restricted strong convexity and smoothness properties is benign. There again their focus is on the landscape at stationary points. 

\subsection{Organization of the Paper}\label{sec: organization}
The rest of this article is organized as follows. After a brief introduction of notation, we introduce Riemannian optimization and Riemannian optimization under the quotient geometry in Section \ref{sec: Riemannian-opt-background}. Geometric properties of $\cM_{r+}^q$ are provided in Section \ref{sec: quotient-fixed-rank-matrix}. The global landscape analysis of $h([\Y])$ when $f$ is well-conditioned is given in Section \ref{sec: landscape-analysis-f-well-conditioned}. In that section we present some discussion on the implications of our global landscape analysis on the global convergence of optimization algorithms and provide corresponding pointers to the literature. \nc The local geometry of $h([\Y])$ when $f$ satisfies restricted strict convexity property is given in Section \ref{sec: landscape-convex-f}. In Section \ref{sec: H-landscape-analysis}, we present the global landscape analysis for $H([\Y])$. Proofs for the main results are provided in Sections \ref{proof-sec: H-landscape} and \ref{sec: proof-convex-radius}. Conclusion and future work are given in Section \ref{sec: conclusion}. Additional proofs and lemmas are presented in Appendices \ref{sec: proof-landscape-h-f-well-condition}-\ref{proof-sec: additional-lemmas}.

\subsection{Notation and Preliminaries} \label{sec: notation}

The following notation will be used throughout this article. We use $\bbR^{p_1 \times p_2}$, $\bbS^{p \times p}$, $\bbR^{p \times r}_*$ and $\st(r,p)$ to denote the spaces of $p_1$-by-$p_2$ real matrices, $p$-by-$p$ real symmetric matrices, $p$-by-$r$ real full column rank matrices, and $p$-by-$r$ real matrices with orthonormal columns, respectively. Let $\bbO_{p,r}$ be the set of $p$-by-$r$ column orthonormal matrices and $\bbO_r := \bbO_{r,r}$. Uppercase and lowercase letters (e.g., $A, a$), lowercase boldface letters (e.g., $\u$), uppercase boldface letters (e.g., $\U$) are often used to denote scalars, column vectors, and matrices, respectively. For any $a, b \in \bbR$, let $a \wedge b := \min\{a,b\}, a \vee b := \max\{a,b\}$. For any matrix $\X \in \mathbb{R}^{p_1\times p_2}$ with singular value decomposition (SVD) $\sum_{i=1}^{p_1 \land p_2} \sigma_i(\X)\u_i \v_i^\top$, where $\sigma_1(\X) \geq \sigma_2(\X) \geq \cdots \geq \sigma_{p_1 \wedge p_2} (\X)$, denote its Frobenius norm and spectral norm as $\|\X\|_\F = \sqrt{\sum_{i} \sigma^2_i(\X)}$ and $\|\X\| = \sigma_1(\X)$, respectively. Throughout the paper, the SVD (or eigendecomposition) of a rank $r$ matrix $\X$ (or symmetric matrix $\X$) refers to its economic or reduced version. Let $\X_{\max(r)}= \sum_{i=1}^{r} \sigma_i(\X)\u_i \v_i^\top$ be the best rank-$r$ approximation of $\X$ in the Frobenius norm. Also, denote $\tr(\X)$ and $\X^{-1}$ as the trace and inverse of $\X$, respectively. For any $\X \in \bbS^{p \times p}$ having eigendecomposition $\U \bSigma \U^\top$ with non-increasing eigenvalues on the diagonal of $\bSigma$, let $\lambda_i(\X)$ be the $i$-th largest eigenvalue of $\X$, $\lambda_{\min}(\X)$ be the least eigenvalue of $\X$, and $\X^{1/2} = \U \bSigma^{1/2} \U^\top$. We write $\X \succcurlyeq 0$ if $\X$ is a symmetric positive semidefinite (PSD) matrix. For any $p$-by-$r$ column orthonormal matrix $\U$, let $P_{\U} = \U\U^\top$ represent the orthogonal projector onto the column space of $\U$; we use $\U_\perp\in \bbR^{p \times (p-r)}$ for the orthonormal complement of $\U$. Finally, suppose $f: \bbR^{p_1 \times p_2} \to \bbR$ is a differentiable scalar function, let $\nabla f(\X)$ and $\nabla^2 f(\X)$ be its Euclidean gradient and Hessian, respectively. We define the bilinear form of the Euclidean Hessian of $f$ as $\nabla^2 f(\X)[\Z_1, \Z_2]:= \langle \nabla^2 f(\X)\Z_1, \Z_2 \rangle $ for any $\Z_1, \Z_2 \in \bbR^{p_1 \times p_2}$, where $\langle \cdot, \cdot \rangle$ is the standard Euclidean inner product.

\subsection{Riemannian Optimization Under Quotient Geometries} \label{sec: Riemannian-opt-background}
In this section, we first give a brief introduction to Riemannian optimization and then discuss how to perform Riemannian optimization under quotient geometries. 

Riemannian optimization concerns optimizing a real-valued function $f$ defined on a Riemannian manifold $\cM$. The calculations of Riemannian gradients and Riemannian Hessians are key ingredients to perform continuous optimization over $\cM$. Let $\X \in \cM$, and let $g_{\X}( \cdot, \cdot )$ be the Riemannian metric and $T_\X \cM$ be the tangent space of $\cM$ at $\X$. Then the {\it Riemannian gradient} of a smooth function $f:\cM \to \bbR$ at $\X$ is defined as the unique tangent vector, ${\rm grad}\, f(\X) \in T_\X \cM$, such that $g_\X( \grad \, f(\X),\xi_\X ) = {\rm D} \, f(\X)[\xi_\X], \forall\, \xi_\X \in T_\X \cM$, where ${\rm D} f(\X)[\xi_\X]$ is the differential of $f$ at point $\X$ along the direction $\xi_\X$. The {\it Riemannian Hessian} of $f$ at $\X\in\cM$ is a linear mapping ${\rm Hess} \,f(\X): T_\X\cM \to T_\X\cM$ defined as
\begin{equation} \label{def: Riemannain-Hessian}
	{\rm Hess}\, f(\X)[\xi_\X] = \widebar{\nabla}_{\xi_\X} {\rm grad}\, f \in T_\X \cM, \,\quad \forall \xi_\X \in T_\X \cM,
\end{equation}
where $\widebar{\nabla}$ is the {\it Riemannian connection} on $\cM$, which is a generalization of the directional derivative along a vector field to Riemannian manifolds \cite[Section 5.3]{absil2009optimization}. We say $\X \in \cM$ is a {\it Riemannian first-order stationary point (FOSP)} of $f$ if $\grad f(\X) = \0$ and call a Riemannian FOSP a {\it strict saddle} if the Riemannian Hessian evaluated at this point has a strict negative eigenvalue. Given a subset $\cS$ of $\cM$, we call $f: \cS \to \bbR$ {\it geodesically convex} if $\cS$ is a geodesically convex set (i.e., any two points in $\cS$ can be connected by a geodesic that is completely contained in $\cS$) \nc and $\Hess f(\X) \succcurlyeq 0$ for all $\X \in \cS$; we call $f: \cS \to \bbR$ {\it $\mu$-geodesically strongly convex} if $\cS$ is geodesically convex  and $\Hess f(\X) \succcurlyeq \mu {\rm Id}$ for all $\X \in \cS$, {where {\rm Id} denotes the identity operator}.

Next, we provide more details on how to perform Riemannian optimization on quotient manifolds. Quotient manifolds are often defined via an equivalence relation ``$\sim$'' that satisfies symmetric, reflexive and transitive properties \cite[Section 3.4.1]{absil2009optimization}. To be specific, suppose $\widebar{\cM}$ is an embedded submanifold equipped with an equivalence relation $\sim$. The {\it equivalence class} (or {\it fiber}) of $\widebar{\cM}$ at a given point $\X$ is defined by the set $[\X] = \{\X_1 \in \widebar{\cM}: \X_1 \sim \X \}$. The set $ \cM := \widebar{\cM}/\sim = \{[\X]: \X \in \widebar{\cM} \}$ is called a {\it quotient} of $\widebar{\cM}$ by $\sim$. The mapping $\pi: \widebar{\cM} \to \widebar{\cM}/\sim$, $ \X \mapsto [\X]$ is called the {\it quotient map} or {\it canonical projection} and the set $\widebar{\cM}$ is called the {\it total space} of the quotient $\widebar{\cM}/\sim$. If $\cM$ further admits a smooth manifold structure and $\pi$ is a smooth submersion, then we call $\cM$ a {\it quotient manifold} of $\widebar{\cM}$. 

Due to the abstractness of equivalence classes, the tangent space $T_{[\X]} \cM$ of $\cM$ at $[\X]$ calls for a representation in the tangent space $T_{\X}\widebar{\cM}$ of the total space $\widebar{\cM}$. By the equivalence relation $\sim$, the representation of elements in $T_{[\X]} \cM$ should be restricted to the directions in $T_\X \widebar{\cM}$ that do not induce displacement along the equivalence class $[\X]$. This can be achieved by decomposing $T_{\X}\widebar{\cM}$ into complementary spaces $T_{\X}\widebar{\cM} = \cV_{\X} \widebar{\cM} \oplus \cH_{\X} \widebar{\cM}$, where ``$\oplus$'' is the direct sum. Here, $\cV_{\X} \widebar{\cM}$ is called the {\it vertical space}, which contains {tangent vectors that generate displacements within the equivalence class $[\X]$}. $\cH_{\X} \widebar{\cM}$ is called the {\it horizontal space} of $T_{\X}\widebar{\cM}$, which is complementary to $\cV_\X \widebar{\cM}$ and provides a proper representation of the abstract tangent space $T_{[\X]} \cM$ \cite[Section 3.5.8]{absil2009optimization}. Once $\widebar{\cM}$ is endowed with $\cH_{\X}\widebar{\cM}$, a given tangent vector $\eta_{[\X]} \in T_{[\X]} \cM$ at $[\X]$ is uniquely represented by a horizontal tangent vector $\eta_\X \in \cH_\X \widebar{\cM}$ that satisfies ${\rm D} \pi (\X)[\eta_\X] = \eta_{[\X]}$ \cite[Section 3.5.8]{absil2009optimization}, {where ${\rm D} \pi$ denotes the differential of quotient map $\pi$}. The tangent vector $\eta_\X \in \cH_\X \widebar{\cM}$ is also called the {\it horizontal lift} of $\eta_{[\X]}$ at $\X$. {Parallel translation $\Gamma_{[\X']}^{[\X]}$ denotes a map that transports $v \in T_{[\X']} \cM$ to $ \Gamma_{[\X']}^{[\X]} v \in T_{[\X]} \cM$ along geodesic $[\X'] \to [\X]$ such that the transported vector stays ``constant" by satisfying a zero-acceleration condition; see its detailed definition in Section 5.4 of \cite{absil2009optimization}.}

Next, we introduce the notion of {\it Riemannian quotient manifolds}. Suppose the total space $\widebar{\cM}$ is endowed with a Riemannian metric $\bar{g}_\X$, and for every $[\X] \in \cM$ and every $\eta_{[\X]}, \theta_{[\X]} \in T_{[\X]} \cM$, the expression $\bar{g}_\X(\eta_\X, \theta_\X )$, i.e., the inner product of the horizontal lifts of $\eta_{[\X]}, \theta_{[\X]}$ at $\X$, does not depend on the choice of the representative $\X$. Then the metric $\bar{g}_\X$ in the total space induces a metric $g_{[\X]}$ on the quotient space, i.e., $g_{[\X]}(\eta_{[\X]}, \theta_{[\X]}):= \bar{g}_\X(\eta_\X, \theta_\X)$. The quotient manifold $\cM$ endowed with $g_{[\X]}$ is called a {\it Riemannian quotient manifold} of $\widebar{\cM}$ \cite[Section 3.6.2]{absil2009optimization}. Optimization on Riemannian quotient manifolds is particularly convenient because computation of representatives of Riemannian gradients and Hessians in the abstract quotient space can be directly performed by means of their analogues in the total space. To be specific, suppose $\bar{f}: \widebar{\cM} \to \bbR$ is an objective function in the total space that is invariant along the fibers of $\widebar{\cM}$, i.e., $\bar{f}(\X_1) = \bar{f}(\X_2)$ whenever $\X_1 \sim \X_2$. Then $\bar{f}$ induces a function $f: \cM \to \bbR$ on the quotient space. Furthermore, if the horizontal space is canonically chosen (as we do in this paper)\nc, i.e., $\cH_\X \widebar{\cM}$ is the orthogonal complement of $\cV_\X \widebar{\cM}$ in $T_\X \widebar{\cM}$ with respect to $\bar{g}_\X$, then the horizontal lift of the Riemannian gradient of $f$ is $\overline{\grad f([\X])} = \grad \bar{f} (\X)$ \cite[Section 3.6.2]{absil2009optimization}, where $\grad \bar{f}(\X)$ denotes the Riemannian gradient of $\bar{f}$ at $\X$ in the total space.

Finally, the Riemannian connection on the Riemannian quotient manifold $\cM$ can also be uniquely represented by the Riemannian connection in the total space $\widebar{\cM}$. Suppose $\eta, \theta$ are two vector fields on $\cM$ and $\eta_\X$ and $\theta_\X$ are the horizontal lifts of $\eta_{[\X]}$ and $\theta_{[\X]}$ in $\cH_\X\widebar{\cM}$. Then the horizontal lift of $\widebar{\nabla}_{\theta_{[\X]}} \eta$ on the quotient manifold is given by $ \overline{ \widebar{\nabla}_{\theta_{[\X]}} \eta } = P_\X^{\cH} ( \widebar{\nabla}_{\theta_\X} \bar{\eta}  ) $, where $\bar{\eta}$ denotes the horizontal lift of the vector field $\eta$ and $\widebar{\nabla}_{\theta_\X} \bar{\eta}$ is the Riemannian connection in the total space \cite[Proposition 5.3.3]{absil2009optimization}. We also define the bilinear form of the horizontal lift of the Riemannian Hessian as $\overline{\Hess f([\X])} [\theta_{\X}, \eta_\X ] := \bar{g}_\X\left( \overline{\Hess f([\X])[\theta_{[\X]}]}, \eta_\X \right) $ for any $\theta_\X, \eta_\X \in \cH_\X \widebar{\cM}$. Then, by recalling the definition of the Riemannian metric $g_{[\X]}$ in the quotient space, we have
\begin{equation} \label{eq:Hessian-lift-property}
	\begin{split}
		\overline{\Hess f([\X])} [\theta_{\X}, \eta_\X ] &= \bar{g}_\X\left(\overline{\Hess f([\X])[\theta_{[\X]}]}, \eta_\X \right)\\
		& = g_{[\X]} \left( \Hess f([\X])[\theta_{[\X]}], \eta_{[\X]}  \right) = \Hess f([\X])[\theta_{[\X]}, \eta_{[\X]}].
	\end{split}
\end{equation} 
So $\Hess f([\X])$ is completely characterized by $\overline{\Hess f([\X])}$ in the lifted horizontal space.

\section{Geometric Properties and Geodesic Convexity of Balls in $\cM_{r+}^q$ } \label{sec: quotient-fixed-rank-matrix}
Recall the quotient manifold we are working with is $\cM_{r+}^{q} := \widebar{\cM}_{r+}^{q}/\bbO_r$ and we equip the tangent space $T_\Y \widebar{\cM}_{r+}^{q}$ with the metric $\bar{g}_\Y(\eta_\Y, \theta_\Y)= \tr(\eta_\Y^\top \theta_\Y)$. The following result, Lemma \ref{lm: psd-quotient-manifold1-prop}, provides the corresponding vertical and horizontal spaces of $T_\Y \widebar{\cM}_{r+}^{q}$ and proves that $\cM_{r+}^{q}$ is a Riemannian quotient manifold endowed with the Riemannian metric $g_{[\Y]}$ induced from $\bar{g}_\Y$.
\begin{Lemma}[\cite{journee2010low,massart2020quotient}] \label{lm: psd-quotient-manifold1-prop} Given $\U \in \st(r,p)$ spanning the top $r$ eigenspace of $\Y\Y^\top$, the vertical and horizontal spaces of $T_\Y \widebar{\cM}_{r+}^{q}$ can be written as follows:
	\begin{equation*}
		\begin{split}
			\cV_\Y\widebar{\cM}_{r+}^{q}  &= \{ \theta_\Y: \theta_\Y = \Y \bOmega, \bOmega = - \bOmega^\top \in \bbR^{r \times r} \}, \\
			\cH_\Y\widebar{\cM}_{r+}^{q}  &=\{ \theta_\Y: \theta_\Y = \Y (\Y^\top \Y)^{-1} \S + \U_\perp \D, \S \in \bbS^{r \times r}, \D \in \bbR^{(p-r) \times r} \}.
		\end{split}
	\end{equation*} The dimensions of $\cV_\Y\widebar{\cM}_{r+}^{q}$ and $\cH_\Y\widebar{\cM}_{r+}^{q}$ are, respectively, $(r^2-r)/2$ and $(pr-(r^2-r)/2)$, and $\cV_\Y\widebar{\cM}_{r+}^{q}$ is orthogonal to $\cH_\Y\widebar{\cM}_{r+}^{q} $ with respect to $\bar{g}_\Y$. Finally, $\cM_{r+}^q$ is a Riemannian quotient manifold endowed with the metric $g_{[\Y]}$ induced from $\bar{g}_\Y$.
\end{Lemma}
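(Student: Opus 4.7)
The plan is to address the four claims—vertical space, horizontal space, dimension count with orthogonality, and Riemannian quotient structure—in turn, exploiting that the equivalence relation on $\widebar{\cM}_{r+}^{q}$ comes from the smooth, free, and proper right action $\Y\mapsto \Y\O$ of the Lie group $\bbO_r$. First I would identify $\cV_\Y\widebar{\cM}_{r+}^{q}$ as the tangent space at $\Y$ to the orbit $[\Y]=\{\Y\O:\O\in\bbO_r\}$. Any smooth curve in $[\Y]$ through $\Y$ is of the form $t\mapsto \Y\O(t)$ with $\O(0)=I_r$ and $\dot\O(0)\in T_{I_r}\bbO_r$, which is the space of skew-symmetric $r\times r$ matrices; differentiating at $t=0$ thus yields exactly $\{\Y\bOmega:\bOmega^\top=-\bOmega\}$. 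Since this description is closed under linear combinations and has dimension $r(r-1)/2$, it exhausts the vertical space.

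Next, since the horizontal space is canonically chosen to be the $\bar{g}_\Y$-orthogonal complement of $\cV_\Y\widebar{\cM}_{r+}^{q}$, I would characterize $\theta_\Y\in\cH_\Y\widebar{\cM}_{r+}^{q}$ by the condition
\begin{equation*}
\bar{g}_\Y(\theta_\Y,\Y\bOmega)=\tr((\Y^\top\theta_\Y)\bOmega)=0 \quad \text{for every skew-symmetric } \bOmega,
\end{equation*}
which, by the trace duality between $\bbS^{r\times r}$ and skew-symmetric matrices, is equivalent to $\Y^\top\theta_\Y\in\bbS^{r\times r}$. To recast this constraint in an explicit parametrization, I would split $\theta_\Y = P_\U\theta_\Y + P_{\U_\perp}\theta_\Y$ and use the identity $P_\U=\U\U^\top=\Y(\Y^\top\Y)^{-1}\Y^\top$ to get $P_\U\theta_\Y = \Y(\Y^\top\Y)^{-1}\S$ with $\S:=\Y^\top\theta_\Y$, and $P_{\U_\perp}\theta_\Y=\U_\perp\D$ with $\D:=\U_\perp^\top\theta_\Y$. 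Horizontality is then equivalent to $\S\in\bbS^{r\times r}$ while $\D\in\bbR^{(p-r)\times r}$ is unconstrained, matching the stated form. Counting gives $r(r+1)/2+(p-r)r = pr-(r^2-r)/2$, and the $\bar{g}_\Y$-orthogonality of $\cV_\Y\widebar{\cM}_{r+}^{q}$ and $\cH_\Y\widebar{\cM}_{r+}^{q}$ is immediate from the derivation: $\bar{g}_\Y(\Y\bOmega,\theta_\Y)=\tr(\bOmega^\top\S)$ vanishes whenever $\bOmega$ is skew and $\S$ symmetric.

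For the Riemannian quotient manifold claim the key verification is that $\bar{g}$ is invariant under the $\bbO_r$ action. Since the differential of right-multiplication by $\O\in\bbO_r$ sends $\theta_\Y\mapsto\theta_\Y\O$, one has
\begin{equation*}
\bar{g}_{\Y\O}(\theta_\Y\O,\eta_\Y\O)=\tr(\O^\top\theta_\Y^\top\eta_\Y\O)=\tr(\theta_\Y^\top\eta_\Y)=\bar{g}_\Y(\theta_\Y,\eta_\Y),
\end{equation*}
so $\bar{g}$ is $\bbO_r$-invariant. Combined with the previously stated smoothness/freeness/properness of the action and with the consequent fact that $\pi:\widebar{\cM}_{r+}^{q}\to\cM_{r+}^{q}$ is a smooth submersion, the standard theory of Riemannian submersions \cite[Section 3.6.2]{absil2009optimization} guarantees that $\bar{g}$ descends to a well-defined metric $g_{[\Y]}$ on $\cM_{r+}^{q}$, making the quotient a Riemannian quotient manifold.

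The main obstacle I anticipate is not conceptual but a consistency check: one must verify that the $(\S,\D)$-parametrization of the horizontal space transforms covariantly under a change of representative $\Y\mapsto \Y\O$, so that the horizontal lift of an abstract tangent vector at $[\Y]$ is well defined and the induced inner product on $T_{[\Y]}\cM_{r+}^{q}$ is representative-independent. Once this bookkeeping is in place, the remainder of the argument reduces to the routine identification $\Y(\Y^\top\Y)^{-1}\Y^\top=P_\U$ and the symmetric/skew-symmetric trace duality already used above.
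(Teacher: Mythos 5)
Your proposal is correct. Note that the paper itself offers no proof of this lemma --- it is quoted directly from \cite{journee2010low,massart2020quotient} --- so your argument is effectively supplying the standard derivation those references contain: vertical space as the tangent to the $\bbO_r$-orbit via the skew-symmetric Lie algebra of $\bbO_r$, horizontal space as the $\bar g_\Y$-orthogonal complement characterized by the symmetry of $\Y^\top\theta_\Y$, the explicit $(\S,\D)$-parametrization through $P_\U = \Y(\Y^\top\Y)^{-1}\Y^\top$, and descent of the metric via $\bbO_r$-invariance. All the computations check out (the dimension count, the trace duality between symmetric and skew-symmetric matrices, and the invariance identity $\tr(\O^\top\theta_\Y^\top\eta_\Y\O)=\tr(\theta_\Y^\top\eta_\Y)$). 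The one step you defer --- equivariance of the horizontal distribution under $\Y\mapsto\Y\O$ --- is indeed needed for the induced metric to be representative-independent, but it follows immediately from what you already have: the vertical spaces are equivariant since $\Y\bOmega\O=(\Y\O)(\O^\top\bOmega\O)$ with $\O^\top\bOmega\O$ skew-symmetric, and an invariant metric maps orthogonal complements to orthogonal complements, so the horizontal lift at $\Y\O$ of a tangent vector $\eta_{[\Y]}$ is exactly $\eta_\Y\O$. With that observation filled in, the proof is complete.
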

Next, we discuss a characterization of geodesics in $\cM_{r+}^q$ that has been presented in \cite{massart2020quotient}.
\begin{Lemma} \label{lm: logarithm-map} Let $\Y_1, \Y_2 \in \bbR^{p \times r}_*$, and $\Q_U \bSigma \Q_V^\top$ be the SVD of $\Y_1^\top \Y_2$. Denote $\Q^* = \Q_V \Q_U^\top$. Then
\begin{itemize}
	\item  $\Y_2 \Q^* - \Y_1 \in \cH_{\Y_1}\widebar{\cM}_{r+}^{q}$, $\Q^*$ is one of the best orthonormal matrices aligning $\Y_1$ and $\Y_2$, i.e., $\Q^* \in \argmin_{\Q \in \bbO_r} \|\Y_2 \Q - \Y_1\|_\F$ and the geodesic distance between $[\Y_1]$ and $[\Y_2]$ is $d([\Y_1], [\Y_2]) = \|\Y_2 \Q^* - \Y_1\|_\F$;
	\item if $\Y_1^\top \Y_2$ is nonsingular, then $\Q^*$ is unique and the Riemannian logarithm $\log_{[\Y_1]}[\Y_2]$ is uniquely defined and its horizontal lift at $\Y_1$ is given by $\overline{\log_{[\Y_1]}[\Y_2]} = \Y_2 \Q^* - \Y_1$; moreover, the unique minimizing geodesic from $[\Y_1]$ to $[\Y_2]$ is $[\Y_1 + t(\Y_2 \Q^* - \Y_1)]$ for $t \in [0,1]$.
\end{itemize}
\end{Lemma}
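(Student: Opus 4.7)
\textbf{The plan} is to establish the four assertions in order, exploiting two facts: Lemma~\ref{lm: psd-quotient-manifold1-prop} implies that a tangent vector $\theta_\Y\in T_\Y\widebar{\cM}_{r+}^{q}$ lies in $\cH_\Y\widebar{\cM}_{r+}^{q}$ if and only if $\Y^\top\theta_\Y$ is symmetric; and the total space $\widebar{\cM}_{r+}^{q}=\bbR^{p\times r}_*$ carries the flat Euclidean metric, so its Riemannian geodesics are Euclidean straight lines for as long as they stay in $\bbR^{p\times r}_*$.

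\textbf{Horizontality and Procrustes optimality.} For the first assertion I would compute $\Y_1^\top(\Y_2\Q^*-\Y_1)=\Y_1^\top\Y_2\Q^*-\Y_1^\top\Y_1$ and substitute the SVD $\Y_1^\top\Y_2=\Q_U\bSigma\Q_V^\top$ to obtain $\Y_1^\top\Y_2\Q^*=\Q_U\bSigma\Q_U^\top\in\bbS^{r\times r}$. Since $\Y_1^\top\Y_1$ is also symmetric, the symmetry criterion above confirms $\Y_2\Q^*-\Y_1\in\cH_{\Y_1}\widebar{\cM}_{r+}^{q}$. For the Procrustes claim, I would expand $\|\Y_2\Q-\Y_1\|_\F^2=\|\Y_2\|_\F^2+\|\Y_1\|_\F^2-2\tr(\Q^\top\Y_2^\top\Y_1)$ and maximize the trace term via von Neumann's inequality applied to $\Y_2^\top\Y_1=\Q_V\bSigma\Q_U^\top$; the maximizer is $\Q=\Q_V\Q_U^\top=\Q^*$, and is unique precisely when $\bSigma$ has strictly positive entries, i.e.\ when $\Y_1^\top\Y_2$ is nonsingular.

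\textbf{Geodesic and distance.} Next I would exhibit the straight-line curve $\gamma(t)=(1-t)\Y_1+t\Y_2\Q^*$ in the total space. When $\Y_1^\top\Y_2$ is nonsingular, $\Y_1^\top\gamma(t)=(1-t)\Y_1^\top\Y_1+t\,\Q_U\bSigma\Q_U^\top$ is a convex combination of two positive-definite matrices, hence positive definite; so $\gamma(t)\in\bbR^{p\times r}_*$ throughout. Its velocity $\dot\gamma(t)\equiv\Y_2\Q^*-\Y_1$ stays horizontal at every $\gamma(t)$ because $\gamma(t)^\top\dot\gamma(t)=(1-t)(\Y_1^\top\Y_2\Q^*-\Y_1^\top\Y_1)+t(\Q^{*\top}\Y_2^\top\Y_2\Q^*-\Q^{*\top}\Y_2^\top\Y_1)$ is a sum of symmetric blocks, using $\Q^{*\top}\Y_2^\top\Y_1=(\Y_1^\top\Y_2\Q^*)^\top=\Q_U\bSigma\Q_U^\top$. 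Thus $\gamma$ is a horizontal Euclidean geodesic, so $[\gamma(t)]$ is a geodesic in $\cM_{r+}^{q}$ of length $\|\Y_2\Q^*-\Y_1\|_\F$, giving $d([\Y_1],[\Y_2])\le\|\Y_2\Q^*-\Y_1\|_\F$. For the reverse inequality, standard Riemannian submersion theory yields $d([\Y_1],[\Y_2])=\inf_{\Y_2'\in[\Y_2]}d_{\widebar{\cM}_{r+}^{q}}(\Y_1,\Y_2')$, and since the total space carries the Euclidean metric, each such total-space distance is at least $\|\Y_2'-\Y_1\|_\F$; minimizing over $\Y_2'=\Y_2\Q$ recovers $\|\Y_2\Q^*-\Y_1\|_\F$ by the Procrustes step. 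The singular case follows by continuity of $\Q^*$-minimizers and of the quotient distance under small perturbations of $\Y_2$.

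\textbf{Logarithm, uniqueness, and main difficulty.} When $\Y_1^\top\Y_2$ is nonsingular, uniqueness of the polar factor makes $\Q^*$ and hence $\Y_2\Q^*-\Y_1$ uniquely determined; combined with the preceding geodesic construction this identifies $\overline{\log_{[\Y_1]}[\Y_2]}=\Y_2\Q^*-\Y_1$ and yields the asserted minimizing-geodesic formula (uniqueness of the minimizer within the injectivity radius comes from the uniqueness of the horizontal lift of the Riemannian logarithm). The main obstacle I foresee is the distance step: because $\widebar{\cM}_{r+}^{q}=\bbR^{p\times r}_*$ is an open, metrically incomplete subset of $\bbR^{p\times r}$, Euclidean segments between two full-rank points can in principle exit the total space through rank-deficient configurations, so neither the lower bound $d([\Y_1],[\Y_2])\ge\inf_\Q\|\Y_2\Q-\Y_1\|_\F$ nor the upper bound is entirely free; the positive-definite convex-combination observation is precisely what rescues the upper bound in the nonsingular regime, and a careful approximation is needed in the singular regime.
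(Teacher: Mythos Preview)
Your horizontality computation is identical to the paper's: both show $\Y_1^\top(\Y_2\Q^*-\Y_1)=\Q_U\bSigma\Q_U^\top-\Y_1^\top\Y_1\in\bbS^{r\times r}$ and invoke the characterization of $\cH_{\Y_1}\widebar{\cM}_{r+}^{q}$ from Lemma~\ref{lm: psd-quotient-manifold1-prop}. For all remaining claims (Procrustes optimality, the distance formula, the logarithm, and uniqueness of the minimizing geodesic), the paper simply cites \cite[Proposition~5.1, Theorem~4.7, Proposition~4.4]{massart2020quotient}, whereas you supply a self-contained argument that is essentially the content of those cited results: flatness of the total space, the straight-line horizontal geodesic, von Neumann for Procrustes, and the horizontal-lift length comparison for the distance lower bound. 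Your proof is correct; the only place that could be tightened is the uniqueness of the minimizing geodesic, which follows cleanly once you observe that any minimizing geodesic lifts horizontally to a straight segment $\Y_1+t(\Y_2\Q-\Y_1)$ whose length forces $\Q$ to be a Procrustes minimizer, and the nonsingularity of $\Y_1^\top\Y_2$ then pins $\Q=\Q^*$ uniquely. The incompleteness concern you flag for the lower bound is not actually an obstruction: since $\pi:\widebar{\cM}_{r+}^{q}\to\cM_{r+}^{q}$ is a principal $\bbO_r$-bundle, horizontal lifts of curves in the quotient exist globally inside $\bbR^{p\times r}_*$, and their Euclidean length dominates the straight-line distance in $\bbR^{p\times r}$.
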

\begin{proof}
	First, by Lemma \ref{lm: psd-quotient-manifold1-prop}, to guarantee $\Y_2 \Q^* - \Y_1 \in \cH_{\Y_1}\widebar{\cM}_{r+}^{q}$, it is enough to show $\Y_1^\top (\Y_2 \Q^* - \Y_1) \in \bbS^{r \times r} $. This holds as $\Y_1^\top (\Y_2 \Q^* - \Y_1) = \Q_U \bSigma \Q_V^\top \Q_V \Q_U^\top - \Y_1^\top \Y_1 = \Q_U \bSigma \Q_U^\top - \Y_1^\top \Y_1 \in \bbS^{r \times r}$. The rest of the results in the lemma can be found in \cite[Proposition 5.1, Theorem 4.7 and Proposition 4.4]{massart2020quotient}.
\end{proof}

Given any $\Y \in \bbR^{p \times r}_*$ and $x > 0$, let $B_x([\Y]) := \{ [\Y_1]: d([\Y_1],[\Y]) < x \}$ be the geodesic ball centered at $[\Y]$ with radius $x$. It is known that for any Riemannian manifold there exists a convex geodesic ball at every point \cite[Chapter 3.4]{do1992riemannian}. However, it is often unclear how large this convex geodesic ball can be in different examples. In the next result, we quantify the convexity radius around a point $[\Y]$ in the manifold $\cM_{r+}^q$.
\begin{Theorem} \label{th: convexity-radius-Mq}
	Given any $\Y \in \bbR^{p \times r}_*$, the geodesic ball centered at $[\Y]$ with radius $x \leq r_\Y := \sigma_r(\Y)/3$ \nc is geodesically convex. In fact, for any two points $[\Y_1], [\Y_2] \in B_{x }([\Y])$, there is a unique shortest geodesic joining them, which is entirely contained in $B_{x }([\Y])$. \nc
\end{Theorem}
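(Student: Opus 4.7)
The plan is to leverage the explicit geodesic formula from Lemma \ref{lm: logarithm-map} together with carefully chosen representatives of the equivalence classes $[\Y_1], [\Y_2] \in B_x([\Y])$. By Lemma \ref{lm: logarithm-map} I can pick representatives $\Y_1, \Y_2 \in \bbR^{p \times r}_*$ aligned with $\Y$, so that $\|\Y_i - \Y\|_F = d([\Y_i], [\Y]) < x$ and $\Delta_i := \Y_i - \Y \in \cH_\Y$. Weyl's inequality yields $\sigma_r(\Y_i) \geq \sigma_r(\Y) - x \geq 2\sigma_r(\Y)/3 > 0$. Writing $\Y = \U\R$ with $\U \in \bbO_{p,r}$ spanning $\mathrm{col}(\Y)$ and $\R \in \bbR^{r\times r}$ invertible with $\sigma_i(\R) = \sigma_i(\Y)$, Lemma \ref{lm: psd-quotient-manifold1-prop} gives $\Delta_i = \U \R^{-\top} \S_i + \U_\perp \D_i$ with $\S_i \in \bbS^{r\times r}$ and $\D_i \in \bbR^{(p-r)\times r}$; orthogonality of the two pieces of the decomposition yields the key size bounds $\|\R^{-\top} \S_i\|_F \leq \|\Delta_i\|_F < x$ and $\|\D_i\|_F \leq \|\Delta_i\|_F < x$.

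The first substantive step is to show $\Y_1^\top \Y_2$ is nonsingular, which ensures existence and uniqueness of the minimizing geodesic via Lemma \ref{lm: logarithm-map}. A direct computation using $\U^\top\U = \I$ and $\U^\top\U_\perp = 0$ yields the clean identity
\[
\Y_1^\top \Y_2 \;=\; \R^\top(\I + T_1)(\I + T_2)\R + \D_1^\top \D_2, \qquad T_i := \R^{-\top}\S_i\R^{-1} \in \bbS^{r\times r}.
\]
Since $\|T_i\| \leq \|T_i\|_F \leq \|\R^{-\top}\S_i\|_F / \sigma_r(\Y) \leq x/\sigma_r(\Y) \leq 1/3$, we have $\sigma_r(\I + T_i) \geq 2/3$, so $\sigma_r\bigl(\R^\top(\I+T_1)(\I+T_2)\R\bigr) \geq (2\sigma_r(\Y)/3)^2 = 4\sigma_r(\Y)^2/9$. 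Combined with $\|\D_1^\top \D_2\| \leq x^2 \leq \sigma_r(\Y)^2/9$, this yields $\sigma_r(\Y_1^\top \Y_2) \geq \sigma_r(\Y)^2/3 > 0$. By Lemma \ref{lm: logarithm-map}, the best orthogonal alignment $\Q^*$ of $\Y_2$ to $\Y_1$ is then unique, and the minimizing geodesic from $[\Y_1]$ to $[\Y_2]$ is the unique curve $\gamma(t) = [(1-t)\Y_1 + t\Y_2\Q^*]$, $t \in [0,1]$.

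The second substantive step is to show $\gamma(t) \in B_x([\Y])$ for all $t \in [0,1]$. The essential observation is that with $\Y_1, \Y_2$ both aligned to $\Y$, the antisymmetric part of $\Y_1^\top \Y_2$ satisfies
\[
\Y_1^\top\Y_2 - \Y_2^\top\Y_1 \;=\; \Delta_1^\top\Delta_2 - \Delta_2^\top\Delta_1,
\]
of Frobenius norm at most $2x^2$, which is small compared with $\sigma_r(\Y_1^\top\Y_2) \geq \sigma_r(\Y)^2/3$. A perturbation analysis of the orthogonal polar factor then yields a quantitative bound on $\|\Q^* - \I\|_F$. For each $t \in [0,1]$ I would select an auxiliary rotation $Q_t \in \bbO_r$ --- most naturally one interpolating smoothly between $\I$ and $\Q^{*\top}$ along a minimizing geodesic in $\bbO_r$ --- and then estimate
\[
d(\gamma(t), [\Y]) \;\leq\; \|((1-t)\Y_1 + t\Y_2\Q^*)\,Q_t - \Y\|_F \;<\; x
\]
by combining the straight-line estimate $\|(1-t)(\Y_1 - \Y) + t(\Y_2 - \Y)\|_F < x$ with the drift term $\|\Y_2(\Q^* - \I)Q_t\|_F$ controlled by $\|\Q^* - \I\|_F$.

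The main obstacle is the second step: the naive triangle inequality alone yields only $d(\gamma(t), [\Y]) \leq (1+2t)x$, which fails to establish containment at $t=1$. Closing the argument with the sharp constant $\sigma_r(\Y)/3$ therefore requires both a tight quantitative bound on $\|\Q^* - \I\|_F$ (drawing on the antisymmetric-part estimate together with the lower bound on $\sigma_r(\Y_1^\top\Y_2)$ proved in Step 1) and a judiciously chosen interpolating rotation $Q_t$ that trades off correctly between $\Y_1$-alignment at $t = 0$ and $\Y_2$-alignment at $t = 1$; the sharpness of the constant $1/3$ noted in the theorem suggests that this tradeoff is the precise technical content of the proof.
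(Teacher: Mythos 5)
Your Step 1 is correct and is a nice, self-contained alternative to the paper's route: the identity $\Y_1^\top\Y_2=\R^\top(\I+T_1)(\I+T_2)\R+\D_1^\top\D_2$ and the resulting bound $\sigma_r(\Y_1^\top\Y_2)\geq \sigma_r^2(\Y)/3$ check out, and via Lemma \ref{lm: logarithm-map} this does give existence and uniqueness of the minimizing geodesic (the paper instead reaches this through the totally-normal-neighborhood Lemma \ref{lm: totally-normal-neigh} and Lemma \ref{lm: positive-det}).

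Step 2, however, is where the actual content of the theorem lies, and the strategy you sketch cannot close. Any estimate of the form $d(\gamma(t),[\Y])\leq (1-t)\,d([\Y_1],[\Y])+t\,d([\Y_2],[\Y])+\mathrm{err}(t)$ obtained from the triangle inequality plus a drift term fails for endpoints near the sphere $S_x([\Y])$: take $\Y_1,\Y_2$ in generic position with $d([\Y_i],[\Y])=x-\epsilon$, so that $\Q^*$ (hence the drift) is bounded away from zero independently of $\epsilon$, while the available slack is only $\epsilon$. Even the sharpest bound on $\|\Q^*-\I\|_F$ — which one can indeed extract from your antisymmetric-part observation together with a polar-factor perturbation inequality, giving $\|\Q^*-\I\|_F=O(x^2/\sigma_r^2(\Y))$ — enters your estimate multiplied by $\|\Y_2\|\approx\sigma_1(\Y)$, so the drift additionally scales with the condition number of $\Y$, whereas the radius $\sigma_r(\Y)/3$ does not. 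No choice of interpolating rotation $Q_t$ repairs this: with $Q_0=\I$ and $Q_1=\Q^{*\top}$ the two terms $\|\Y_1Q_t-\Y\|_F$ and $\|\Y_2\Q^*Q_t-\Y\|_F$ cannot simultaneously equal the respective distances unless $\Q^*=\I$, so a strictly positive, endpoint-independent excess always survives. The missing idea is second-order, not first-order: the paper proves (Proposition \ref{prop: convexity-support-prop}) that the squared distance $F(t)=d^2(\gamma(t),[\Y])$ along any geodesic through a point at distance $\rho<\sigma_r(\Y)$ satisfies $F''\geq 2(1-\rho/\sigma_r(\Y))>0$ there, using the Gauss lemma, the second-variation formula for $\tr(\bSigma_t)$ (Lemma \ref{lm: variation-formula-for-singular-values}), and the Procrustes condition-number bound (Lemma \ref{lm: procrustes-condition-number}); containment then follows from a farthest-point/tangency contradiction. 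You would need to replace your interpolation estimate with this convexity-of-$F$ argument (or an equivalent) to finish the proof.
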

It has been shown in \cite[Theorem 6.3]{massart2020quotient} that the injectivity radius of $\cM_{r+}^q$ at $[\Y]$ is $\sigma_r(\Y)$, and since the convexity radius is smaller \nc than the injectivity radius, the geodesic convexity radius we proved in Theorem \ref{th: convexity-radius-Mq} is optimal up to a universal constant. Moreover, when $r = p$, the geometry we considered for $\cM_{p+}^q$ is also known as the {\it Bures-Wasserstein geometry} on the set of symmetric positive definite matrices, i.e., $\bbS^{p \times p}_{++} := \{\X \in \bbS^{p \times p}: \X \succ 0, \rank(\X) = p \}$ \cite{malago2018wasserstein,bhatia2019bures,oostrum2022bures}. Distinct from the common Log-Euclidean metric \cite{arsigny2007geometric} or the affine invariant metric \cite{moakher2005differential} on $\bbS_{++}^{p \times p}$, the set $\bbS_{++}^{p \times p}$ under the Bures-Wasserstein geometry is not complete. So, to the best of our knowledge, Theorem \ref{th: convexity-radius-Mq} also provides the first explicit geodesic convexity radius for $\bbS_{++}^{p \times p}$ under the Bures-Wasserstein geometry.

\section{Global Landscape Analysis of \eqref{eq: quotient-factorization-objective} When $f$ satisfies Restricted Strong Convexity and Smoothness Properties } \label{sec: landscape-analysis-f-well-conditioned}
In this section, we consider $f$ satisfies the following $(2r,4r)$-restricted strong convexity (RSC) and smoothness (RSM) properties:
\begin{Definition} \label{def: RSC-RSM}
We say $f: \bbR^{p \times p}\to \bbR$ satisfies the $(2r,4r)$-restricted strong convexity and smoothness properties with parameter $0 \leq \delta < 1$ if for any $\X,\G \in \bbR^{p \times p}$ with $\rank(\X) \leq 2r$ and $\rank(\G) \leq 4r$, the Euclidean Hessian of $f$ satisfies
\begin{equation} \label{eq: RSC-RSM} (1-\delta) \|\G\|_\F^2 \leq \nabla^2 f(\X)[\G,\G] \leq (1+\delta) \|\G\|_\F^2.
\end{equation} 
\end{Definition}

The RSC and RSM properties are satisfied in a number of examples and have been studied in \cite{wang2017unified,zhu2018global,zhu2017global,li2019non}. An important example is the stylized PSD matrix trace regression problem that we discuss next.
\begin{Example}[Matrix Trace Regression] \label{em: matrix-trace-regression}
	In PSD matrix trace regression, the goal is to recover a rank $r$ $p$-by-$p$ PSD matrix $\X^*$ from the observation $\y = \cA(\X^*) + \bepsilon$, where $\mathcal{A}\in \mathbb{R}^{p\times p} \to \mathbb{R}^n$ is a known linear map and $\bepsilon \in \bbR^n$ is the observational noise. The objective is 
	\begin{equation*} 
\min_{\X \in \mathbb{S}^{p\times p}, \X \succcurlyeq \0, \rank(\X) = r } f(\X) := \frac{1}{2} \left\| \mathcal{A}(\X) - \y \right\|_2^2.
\end{equation*}So the Euclidean gradient of $f(\X)$ is $\nabla f(\X) = \cA^{ \top \nc } (\cA(\X)-\y)$, and the quadratic form of the Euclidean Hessian satisfies 
\begin{equation*}
	\nabla^2 f(\X)[\D,\D] = \|\cA(\D)\|_2^2, \quad \forall \D \in \bbR^{p \times p}.
\end{equation*} Here we use the notation $\cA^\top$ to denote the adjoint of the linear map $\cA$. Note, the evaluation of the Euclidean gradient at $\X^*$ is $\nabla f(\X^*) = -\cA^{\top \nc}( \bepsilon )$, which is in general non-zero when $\bepsilon \neq 0$.

If the linear map $\cA$ satisfies the $4r$-restricted isometry property ($4r$-RIP) \cite{candes2010tight}, i.e., $(1-R_{4r}) \|\Z\|^2_\F \leq \|\cA(\Z)\|_2^2 \leq (1+R_{4r}) \|\Z\|_\F^2$ holds for all $\Z$ of rank at most $4r$ with parameter $ 0\leq R_{4r} < 1$, then the RSC and RSM in Definition \ref{def: RSC-RSM} hold for $f$ with $\delta = R_{4r}$. 
\end{Example}

Next, we provide expressions for the Riemannian gradient and Hessian of \eqref{eq: quotient-factorization-objective} under the Riemannian quotient geometry.
\begin{Lemma}(Riemannian Gradient and Hessian of \eqref{eq: quotient-factorization-objective} \cite[Proposition 1]{luo2021geometric}) \label{lm: gradient-hessian-exp-PSD}
	 Suppose $\Y \in \bbR^{p \times r}_*$ and $\theta_{\Y} \in \cH_{\Y} \widebar{\cM}_{r+}^{q}$. Then, recalling that $f$ has been assumed to be symmetric, we have \nc
		\begin{equation*} 
		\begin{split}
			\overline{\grad\, h([\Y])} &= 2\nabla f(\Y \Y^\top) \Y, \\
			\overline{\Hess \, h([\Y])}[\theta_\Y, \theta_\Y] 
		&= \nabla^2 f(\Y \Y^\top)[\Y\theta_\Y^\top + \theta_\Y \Y^\top , \Y\theta_\Y^\top + \theta_\Y \Y^\top ] + 2\langle \nabla f(\Y \Y^\top ), \theta_\Y \theta_\Y^\top \rangle.
		\end{split}
		\end{equation*}
\end{Lemma}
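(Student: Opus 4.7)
The plan is to exploit two structural features of this Riemannian quotient geometry: first, the total space $\widebar{\cM}_{r+}^q = \bbR^{p \times r}_*$ is an open subset of $\bbR^{p \times r}$ equipped with the Euclidean metric, so the Levi-Civita connection in the total space coincides with the ordinary directional derivative; second, the horizontal space is chosen canonically as the orthogonal complement of the vertical space with respect to $\bar g_\Y$, so the machinery reviewed in Section \ref{sec: Riemannian-opt-background} applies. In particular, since $\bar h(\Y)=f(\Y\Y^\top)$ is invariant along the fibers $[\Y]=\{\Y\O : \O\in\bbO_r\}$, it descends to $h$ on the quotient, and the horizontal lift of $\grad h([\Y])$ equals $\grad \bar h(\Y)$, which in our Euclidean total space is just the ordinary gradient $\nabla \bar h(\Y)$.

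For the gradient formula, I would compute $\nabla \bar h(\Y)$ by chain rule. Writing $\phi(\Y)=\Y\Y^\top$, an infinitesimal perturbation $\Y\mapsto\Y+t\Delta$ gives $\phi(\Y)\mapsto\phi(\Y)+t(\Delta\Y^\top+\Y\Delta^\top)+O(t^2)$, hence
\[
\rmD\bar h(\Y)[\Delta]=\langle \nabla f(\Y\Y^\top),\,\Delta\Y^\top+\Y\Delta^\top\rangle.
\]
Because $f$ is symmetric, $\nabla f(\X)=\nabla f(\X^\top)^\top$, and since $\Y\Y^\top$ is symmetric this yields $\nabla f(\Y\Y^\top)^\top=\nabla f(\Y\Y^\top)$. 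Thus the above pairing collapses to $2\langle \nabla f(\Y\Y^\top)\Y,\Delta\rangle$, giving $\nabla\bar h(\Y)=2\nabla f(\Y\Y^\top)\Y$. As a sanity check, this vector lies in $\cH_\Y\widebar{\cM}_{r+}^q$ because $\Y^\top\nabla f(\Y\Y^\top)\Y$ is symmetric (which is the characterization of the horizontal space via Lemma~\ref{lm: psd-quotient-manifold1-prop}, after rewriting a generic element there as one whose pairing with every skew $\Y\bOmega$ vanishes).

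For the Hessian bilinear form, I would use the quotient-manifold identity $\overline{\Hess h([\Y])}[\theta_\Y]=\Proj^{\cH}_\Y(\widebar{\nabla}_{\theta_\Y}\grad\bar h)$ from Section~\ref{sec: Riemannian-opt-background}. Taking $\bar g_\Y$-inner product with a horizontal $\theta_\Y$ kills the horizontal projector, so
\[
\overline{\Hess h([\Y])}[\theta_\Y,\theta_\Y]=\bigl\langle \rmD\bigl[2\nabla f(\Y\Y^\top)\Y\bigr](\Y)[\theta_\Y],\,\theta_\Y\bigr\rangle.
\]
Applying the product rule to $2\nabla f(\Y\Y^\top)\Y$ produces two pieces: $2\nabla^2 f(\Y\Y^\top)[\Y\theta_\Y^\top+\theta_\Y\Y^\top]\,\Y$ and $2\nabla f(\Y\Y^\top)\theta_\Y$. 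Pairing with $\theta_\Y$ gives $2\bigl\langle \nabla^2 f(\Y\Y^\top)[\Y\theta_\Y^\top+\theta_\Y\Y^\top],\,\theta_\Y\Y^\top\bigr\rangle+2\langle\nabla f(\Y\Y^\top),\theta_\Y\theta_\Y^\top\rangle$.

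The main obstacle, and the step that requires genuine care, is turning the asymmetric pairing $\langle\nabla^2 f[Z],\theta_\Y\Y^\top\rangle$ (with $Z:=\Y\theta_\Y^\top+\theta_\Y\Y^\top$ symmetric) into the symmetric bilinear form $\tfrac{1}{2}\nabla^2 f[Z,Z]$ claimed in the statement. I would split $\theta_\Y\Y^\top=\tfrac{1}{2}Z+\tfrac{1}{2}(\theta_\Y\Y^\top-\Y\theta_\Y^\top)$ into a symmetric and a skew part. The key fact is that the symmetry of $f$ (plus symmetry of $\Y\Y^\top$) forces $\nabla^2 f(\Y\Y^\top)[S,A]=0$ whenever $S$ is symmetric and $A$ is skew: differentiating $f(\X+sS+tA)=f(\X+sS^\top+tA^\top)$ twice at $(0,0)$ with $\X=\Y\Y^\top$ gives $\nabla^2 f(\X)[S,A]=\nabla^2 f(\X)[S,-A]$. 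This kills the skew contribution and leaves the desired expression, completing the proof.
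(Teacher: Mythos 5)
Your proof is correct. The paper does not prove this lemma itself but cites it from \cite[Proposition 1]{luo2021geometric}; your derivation is the standard one for a Riemannian quotient with a Euclidean total space (horizontal lift of the gradient equals the Euclidean gradient of $\bar h$, and the Hessian quadratic form on horizontal vectors reduces to the Euclidean Hessian of $\bar h$ because the orthogonal projector $P_\Y^{\cH}$ is self-adjoint and absorbed by the horizontal test vector). The one step that genuinely needs the symmetry hypothesis $f(\X)=f(\X^\top)$ is your conversion of $2\langle \nabla^2 f(\Y\Y^\top)[Z],\theta_\Y\Y^\top\rangle$ into $\nabla^2 f(\Y\Y^\top)[Z,Z]$ via the vanishing of $\nabla^2 f(\X)[S,A]$ for $S$ symmetric and $A$ skew, and you handle it correctly; equivalently one can observe that $\nabla^2 f(\X)[Z]$ is itself a symmetric matrix when $Z$ is, so its pairing with the skew part of $\theta_\Y\Y^\top$ vanishes.
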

{Note that the Riemannian gradient and Hessian formulas are exactly the same as for the Euclidean gradient and Hessian. Their only difference is that the Riemannian Hessian is restricted to be evaluated at vectors $\theta_{\Y}$ that belong to the horizontal space $\cH_{\Y} \widebar{\cM}_{r+}^{q}$.}

Suppose the rank $r$ matrix of interest $\X^*$ has eigendecomposition $\U^* \bSigma^* \U^{*\top}$. Let $\Y^* = \U^* \bSigma^{*1/2} \in \bbR^{p \times r}_*$ and $\kappa^* = \sigma_1(\Y^*)/\sigma_r(\Y^*)$ be the condition number of $\Y^*$. We are ready to present our main results, where we describe the global landscape of \eqref{eq: quotient-factorization-objective} when $f$ satisfies the RSC and RSM properties. We will split the landscape of $h([\Y])$ into the following five regions (not necessarily non-overlapping): for $\mu, \alpha, \beta, \gamma \geq 0$, define
\begin{equation} \label{def: regions}
	\begin{split}
		\cR_1 &:= \{ \Y \in \bbR^{p \times r}_*| d([\Y], [\Y^*] ) \leq \mu \sigma_r(\Y^*)/\kappa^*  \} \\
		\cR_2 &:= \left\{  \Y \in \bbR^{p \times r}_* \Big| \begin{array}{l}
			 d([\Y], [\Y^*]) > \mu \sigma_r(\Y^*)/\kappa^*, \|\overline{\grad\, H([\Y])} \|_\F \leq \alpha \mu \sigma^3_r(\Y^*)/(4\kappa^*) \\
			 \|\Y\| \leq \beta \|\Y^*\|, \|\Y \Y^\top\|_\F \leq \gamma \|\Y^* \Y^{*\top}\|_\F 
		\end{array}   \right\}, \\
		\cR'_3 & :=  \{  \Y \in \bbR^{p \times r}_* | \|\overline{\grad\, H([\Y])} \|_\F > \alpha \mu \sigma^3_r(\Y^*)/(4\kappa^*), \|\Y\| \leq \beta \|\Y^*\|, \|\Y \Y^\top\|_\F \leq \gamma \|\Y^* \Y^{*\top}\|_\F   \},\\
		\cR_3'' & := \{  \Y \in \bbR^{p \times r}_* | \|\Y\| > \beta \|\Y^*\|, \|\Y \Y^\top\|_\F \leq \gamma \|\Y^* \Y^{*\top}\|_\F \}, \\
		\cR_3''' & := \{  \Y \in \bbR^{p \times r}_* | \|\Y \Y^\top\|_\F > \gamma \|\Y^* \Y^{*\top}\|_\F \},
	\end{split}
\end{equation} where $H([\Y])$ is given in \eqref{eq: quotient-matrix-fac-denoising}.  As we will see later, the use of $\|\Y^*\|$, $\|\Y^* \Y^{*\top}\|_\F$ in the definitions of $\cR_2, \cR_3', \cR_3''$ and $\cR_3'''$ is motivated by the connection between the gradients and Hessians of \eqref{eq: quotient-factorization-objective} and \eqref{eq: quotient-matrix-fac-denoising} provided in Proposition \ref{prop: hH-grad-hessian-connection}. \nc

Since $\cR_1 \bigcup \cR_2 \bigcup \cR_3' \supseteq \{  \Y \in \bbR^{p \times r}_* |\|\Y\| \leq \beta \|\Y^*\|, \|\Y \Y^\top\|_\F \leq \gamma \|\Y^* \Y^{*\top}\|_\F  \} $, we can easily check the following lemma holds.
 \begin{Lemma}
	$\cR_1 \bigcup \cR_2 \bigcup \cR_3' \bigcup \cR''_3 \bigcup \cR'''_3 = \bbR^{p \times r}_* $. 
\end{Lemma}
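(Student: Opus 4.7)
The statement is purely a covering claim, so the plan is a straightforward case analysis over the two threshold quantities $\|\Y\|$ and $\|\Y\Y^\top\|_\F$ that appear in the definitions of the five regions. I would take an arbitrary $\Y \in \bbR^{p \times r}_*$ and split into cases according to whether each of the two norms exceeds its corresponding threshold.

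First, if $\|\Y\Y^\top\|_\F > \gamma \|\Y^* \Y^{*\top}\|_\F$, then $\Y \in \cR_3'''$ directly by definition. Otherwise $\|\Y\Y^\top\|_\F \leq \gamma \|\Y^* \Y^{*\top}\|_\F$. In the latter case, if additionally $\|\Y\| > \beta \|\Y^*\|$, then $\Y \in \cR_3''$. So the only remaining case is when both $\|\Y\| \leq \beta\|\Y^*\|$ and $\|\Y\Y^\top\|_\F \leq \gamma \|\Y^*\Y^{*\top}\|_\F$ hold, and the task reduces to verifying the inclusion asserted just before the lemma, namely
\[
\{\Y \in \bbR^{p \times r}_* : \|\Y\| \leq \beta\|\Y^*\|,\ \|\Y\Y^\top\|_\F \leq \gamma\|\Y^*\Y^{*\top}\|_\F\} \subseteq \cR_1 \cup \cR_2 \cup \cR_3'.
\]

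To verify this remaining inclusion, I would split further according to $d([\Y], [\Y^*])$ and $\|\overline{\grad H([\Y])}\|_\F$. If $d([\Y], [\Y^*]) \leq \mu \sigma_r(\Y^*)/\kappa^*$, then $\Y \in \cR_1$ and we are done. Otherwise $d([\Y], [\Y^*]) > \mu\sigma_r(\Y^*)/\kappa^*$, and we split on the gradient size: if $\|\overline{\grad H([\Y])}\|_\F \leq \alpha\mu\sigma_r^3(\Y^*)/(4\kappa^*)$ then $\Y$ satisfies all four defining inequalities of $\cR_2$, while if instead $\|\overline{\grad H([\Y])}\|_\F > \alpha\mu\sigma_r^3(\Y^*)/(4\kappa^*)$ then $\Y$ matches the definition of $\cR_3'$. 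This exhausts every possibility.

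There is no genuine obstacle here: the five regions are manifestly designed so that the three inequalities $\|\Y\Y^\top\|_\F \lessgtr \gamma\|\Y^*\Y^{*\top}\|_\F$, $\|\Y\| \lessgtr \beta\|\Y^*\|$, $\|\overline{\grad H([\Y])}\|_\F \lessgtr \alpha\mu\sigma_r^3(\Y^*)/(4\kappa^*)$, together with $d([\Y],[\Y^*]) \lessgtr \mu\sigma_r(\Y^*)/\kappa^*$, trichotomize (via a decision tree) the whole of $\bbR^{p \times r}_*$. The proof amounts to writing out this decision tree and observing that each leaf lies in one of the five listed regions.
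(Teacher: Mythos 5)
Your proof is correct and matches the paper's (largely implicit) argument: the paper simply notes the inclusion $\cR_1 \cup \cR_2 \cup \cR_3' \supseteq \{\Y : \|\Y\| \leq \beta\|\Y^*\|,\ \|\Y\Y^\top\|_\F \leq \gamma\|\Y^*\Y^{*\top}\|_\F\}$ and leaves the same decision-tree case analysis to the reader. Nothing is missing; your write-up just makes explicit what the paper treats as an easy check.
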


In the following Theorem \ref{th: h-local-geodesic-convexity}, we show $h([\Y])$ is geodesically strongly convex and smooth in $\cR_1$ for proper choices of $\mu$ and $\delta$.
\begin{Theorem}[Local Geodesic Strong Convexity of \eqref{eq: quotient-factorization-objective}] \label{th: h-local-geodesic-convexity}
Suppose $ 0\leq  \mu \leq 1/3$. For any $\Y \in \cR_1$, we have 
 \begin{equation*}
 	\begin{split}
 		\lambda_{\min}( \overline{\Hess\, h([\Y])} ) & \geq   ( 2 \left( 1 - \mu/ \kappa^* \right)^2 - 14 \mu/3 ) \sigma^2_r(\Y^*) \\
 		& \quad \quad -   \left(  4\delta ( \sigma_1(\Y^*) +  \mu \sigma_r(\Y^*)/ \kappa^*  )^2 + 14 \delta  \mu \sigma^2_r(\Y^*)/3 + 2 \| ( \nabla f(\X^*) )_{\max(r)} \|_\F  \right) ,\\
 		\lambda_{\max} ( \overline{\Hess \, h([\Y])}  ) & \leq 4\left(   \sigma_1(\Y^*) + \mu \sigma_r(\Y^*)/\kappa^* \right)^2 + 14\mu \sigma^2_r(\Y^*)/3 \\
			 & \quad  + \left(  4\delta ( \sigma_1(\Y^*) +  \mu \sigma_r(\Y^*)/ \kappa^*  )^2 + 14 \delta  \mu \sigma^2_r(\Y^*)/3 + 2 \| ( \nabla f(\X^*) )_{\max(r)} \|_\F  \right),
 	\end{split}
 \end{equation*} where $\kappa^*:= \sigma_1(\Y^*)/\sigma_r(\Y^*)$ is the condition number of $\Y^*$.
 
 In particular, if $\mu$ is further chosen such that $ \left( 1 - \mu /\kappa^* \right)^2 - 7\mu/3 > 0$, {e.g., $\mu < \frac{13 - \sqrt{133}}{6}$}, and
 \begin{equation*}
 	\delta \leq \frac{(1-\mu/\kappa^*)^2 - 7\mu/3}{ 4\left(2 ( \kappa^* + \mu/\kappa^* )^2 + 7\mu/3 \right) }\quad  \text{ and }\quad  \| (\nabla f(\X^*))_{\max(r)} \|_\F \leq \left( (1-\mu/\kappa^*)^2 - 7\mu/3 \right) \sigma^2_r(\Y^*)/4,
 \end{equation*}
then 
  \begin{equation*}
 	\begin{split}
 		\lambda_{\min}( \overline{\Hess\, h([\Y])} ) & \geq   \left( (1-\mu/\kappa^*)^2 - 7\mu/3 \right) \sigma^2_r(\Y^*) > 0.
 		\end{split}
 \end{equation*} Recall $ (\nabla f(\X^*))_{\max(r)}$ denotes the best rank $r$ approximation of $\nabla f(\X^*)$. Thus $h([\Y])$ is geodesically strongly convex and smooth in $\cR_1$. 
 
 Moreover, if there is a Riemannian FOSP $[\widehat{\Y}]$ in $\cR_1$, then it is the unique local minima in $\cR_1$ and it satisfies: 
 \begin{equation} \label{ineq: dist-bound-hatY-Ystar}
 \begin{split}
 	d([\widehat{\Y}], [\Y^*] ) \leq & \frac{2}{ \left((1-\mu/\kappa^*)^2 - 7\mu/3\right) \sigma^2_r(\Y^*) } \| \nabla f(\Y^*\Y^{*\top})\Y^* \|_\F \\
 	\leq & \frac{2\|\Y^{*}\|}{ \left((1-\mu/\kappa^*)^2 - 7\mu/3\right) \sigma^2_r(\Y^*) } \| (\nabla f(\X^*))_{\max(r)} \|_\F.
 \end{split}
 \end{equation}
\end{Theorem}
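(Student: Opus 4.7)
My plan is to compare $\overline{\Hess h([\Y])}$ to $\overline{\Hess H([\Y])}$ so that the leading two-sided Hessian bound can be imported from the landscape analysis for the least-squares objective \eqref{eq: quotient-matrix-fac-denoising} carried out in Section \ref{sec: H-landscape-analysis}, up to a perturbation controlled by the RSC/RSM constant $\delta$ and the noise proxy $\nabla f(\X^*)$. Using the Hessian formula from Lemma \ref{lm: gradient-hessian-exp-PSD} with $\G := \Y \theta_\Y^\top + \theta_\Y \Y^\top$, I would decompose
\begin{equation*}
\overline{\Hess h([\Y])}[\theta_\Y,\theta_\Y] = \overline{\Hess H([\Y])}[\theta_\Y,\theta_\Y] + \underbrace{\bigl( \nabla^2 f(\Y\Y^\top)[\G,\G] - \|\G\|_\F^2 \bigr)}_{\text{(I)}} + \underbrace{2\bigl\langle \nabla f(\Y\Y^\top) - (\Y\Y^\top - \X^*),\, \theta_\Y \theta_\Y^\top\bigr\rangle}_{\text{(II)}},
\end{equation*}
so that the $H$-term supplies the $2(1-\mu/\kappa^*)^2\sigma_r^2(\Y^*) - 14\mu\sigma_r^2(\Y^*)/3$ factor (and its upper-bound twin) appearing in the statement, and it only remains to bound (I) and (II).

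For (I), $(2r,4r)$-RSC/RSM applied to $\X = \Y\Y^\top$ and $\G$ (both of rank at most $2r$) directly gives $|(\text{I})| \leq \delta\|\G\|_\F^2$; combining $\|\G\|_\F \leq 2\|\Y\|\|\theta_\Y\|_\F$ with the triangle inequality $\|\Y\| \leq \|\Y^*\| + d([\Y],[\Y^*]) \leq \sigma_1(\Y^*) + \mu\sigma_r(\Y^*)/\kappa^*$ produces the $4\delta(\sigma_1(\Y^*) + \mu\sigma_r(\Y^*)/\kappa^*)^2\|\theta_\Y\|_\F^2$ contribution. For (II), I would Taylor-expand
\begin{equation*}
\nabla f(\Y\Y^\top) - (\Y\Y^\top - \X^*) = \nabla f(\X^*) + \int_0^1 \bigl(\nabla^2 f(\X^* + t(\Y\Y^\top - \X^*)) - \mathrm{Id}\bigr)[\Y\Y^\top - \X^*]\,dt,
\end{equation*}
where $\mathrm{Id}$ is the identity operator. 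The boundary term contributes $|\langle \nabla f(\X^*),\theta_\Y\theta_\Y^\top\rangle| \leq \|(\nabla f(\X^*))_{\max(r)}\|_\F\,\|\theta_\Y\|_\F^2$ by duality (using that $\theta_\Y\theta_\Y^\top$ is symmetric of rank $\leq r$), yielding the $2\|(\nabla f(\X^*))_{\max(r)}\|_\F$ term. For the integrand, both $\Y\Y^\top - \X^*$ and $\theta_\Y\theta_\Y^\top$ live in the linear subspace of symmetric matrices whose row and column supports are contained in $\mathrm{span}(\Y,\Y^*,\theta_\Y)$, a subspace consisting entirely of matrices of rank at most $3r \leq 4r$; hence RSC/RSM applied on this linear subspace produces the bilinear estimate $|\langle(\nabla^2 f - \mathrm{Id})[\Y\Y^\top - \X^*],\theta_\Y\theta_\Y^\top\rangle| \leq \delta\|\Y\Y^\top - \X^*\|_\F\|\theta_\Y\|_\F^2$. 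Combined with $\|\Y\Y^\top - \X^*\|_\F \leq d([\Y],[\Y^*])(\|\Y\| + \|\Y^*\|) \leq 7\mu\sigma_r^2(\Y^*)/3$ (using $\mu \leq 1/3$), this gives the $14\delta\mu\sigma_r^2(\Y^*)/3$ summand.

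Assembling the three pieces yields the two displayed bounds on $\lambda_{\min}$ and $\lambda_{\max}$ of $\overline{\Hess h([\Y])}$. Under the stated extra assumptions on $\mu$, $\delta$, and $\|(\nabla f(\X^*))_{\max(r)}\|_\F$, the lower bound is strictly positive, and geodesic strong convexity of $h$ on $\cR_1$ follows because Theorem \ref{th: convexity-radius-Mq} guarantees that $\cR_1$ is itself geodesically convex whenever $\mu\sigma_r(\Y^*)/\kappa^* \leq \sigma_r(\Y^*)/3$. Strong geodesic convexity then forces any FOSP $[\widehat{\Y}] \in \cR_1$ to be the unique minimizer in $\cR_1$; the gradient-growth inequality applied along the minimizing geodesic between $[\widehat{\Y}]$ and $[\Y^*]$ (noting $\grad h([\widehat{\Y}]) = \mathbf{0}$) yields $\|\overline{\grad h([\Y^*])}\|_\F \geq \bigl((1-\mu/\kappa^*)^2 - 7\mu/3\bigr) \sigma_r^2(\Y^*)\, d([\widehat{\Y}],[\Y^*])$, after which Lemma \ref{lm: gradient-hessian-exp-PSD} together with $\|\nabla f(\X^*)\Y^*\|_\F \leq \|\Y^*\|\,\|(\nabla f(\X^*))_{\max(r)}\|_\F$ --- which holds because $\Y^* = \U^*\bSigma^{*1/2}$ factors through the fixed $r$-dimensional column space of $\U^*$ --- delivers \eqref{ineq: dist-bound-hatY-Ystar}.

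The main obstacle is not this perturbation bookkeeping but the underlying Hessian estimate for $\overline{\Hess H([\Y])}$ itself, which must be carried out in Section \ref{sec: H-landscape-analysis}: there one has to parametrize $\theta_\Y$ along the horizontal lift of the geodesic connecting $[\Y]$ to $[\Y^*]$ and track the delicate interplay between $\|\G\|_\F^2$ and the indefinite cross term $2\langle \Y\Y^\top - \X^*,\theta_\Y\theta_\Y^\top\rangle$. This is precisely where the structural factor $(1-\mu/\kappa^*)^2$ arises and where Theorem \ref{th: convexity-radius-Mq} becomes essential, guaranteeing that the analysis takes place inside a single geodesically convex ball.
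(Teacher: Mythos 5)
Your proposal is correct and follows essentially the same route as the paper: the decomposition of $\overline{\Hess\, h([\Y])}$ into $\overline{\Hess\, H([\Y])}$ plus the two perturbation terms is exactly the content of Proposition \ref{prop: hH-grad-hessian-connection} (your Taylor-expansion argument for term (II) reproduces Lemma \ref{lm: RIP-imply-gradient-bound} inline, and your duality bound for the $\nabla f(\X^*)$ term is Lemma \ref{lm: charac of Schatten-q norm}), the leading term is imported from Theorem \ref{th: H-geodesic-strong-convex}, and the uniqueness and distance bound \eqref{ineq: dist-bound-hatY-Ystar} follow from the two-point strong-convexity inequalities on the geodesically convex ball $\cR_1$, just as in the paper. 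No gaps.
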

\begin{Remark}
	Similar to the recent work \cite{zhang2020many} studying the threshold of $\delta$ guaranteeing the absence of spurious local minimizers in a local region around $\X^*$, here we also provide an explicit dependence of the radius of $\cR_1$ on $\delta$. Recall that here $\cR_1$ is the region where we can guarantee the geodesic strong convexity of $h([\Y])$. We notice the upper bound for $\delta$ decreases as $\mu$ increases. This suggests that a stronger requirement on $\delta$ is needed if we desire a larger radius for the region where we can guarantee geodesic strong convexity.
\end{Remark}

Next, we show $ \overline{\Hess\, h([\Y])}$ has at least one negative eigenvalue for any $[\Y] \in \cR_2$ under proper assumptions. Moreover, we can explicitly find a direction for escaping the strict saddle points.
\begin{Theorem}[Region with Negative Eigenvalue in Riemannian Hessian of \eqref{eq: quotient-factorization-objective}] \label{th: h-negative-eigenvalue-region}   Given any $\Y \in \cR_2$, let $\theta_\Y = \Y - \Y^* \Q $, where $\Q \in \bbO_r$ is the best orthonormal \nc matrix aligning $\Y^*$ and $\Y$. Then we have
\begin{equation*}
\begin{split}
	\overline{\Hess\, h([\Y])}[\theta_\Y, \theta_\Y] & \leq \Big( (\alpha - 2 ( \sqrt{2} - 1 ))  \sigma^2_r(\Y^*)  \\
	 & +    2 \delta \left( 2 \beta^2 \|\Y^*\|^2 + (1+\gamma) \| \Y^* \Y^{*\top} \|_\F  \right) +  2 \| ( \nabla f(\X^*) )_{\max(r)} \|_\F  \Big) \|\theta_\Y\|_\F^2.
\end{split}
\end{equation*}
In particular, if $\alpha < 2(\sqrt{2} - 1)$, 
 \begin{equation*}
	\delta \leq \frac{( 2 (\sqrt{2} -1) -\alpha ) \sigma^2_r(\Y^*)  }{8 \left( 2 \beta^2 \|\Y^*\|^2 + (1 + \gamma ) \| \Y^* \Y^{*\top} \|_\F   \right) } \quad \text{ and } \quad \| ( \nabla f(\X^*) )_{\max(r)} \|_\F \leq \frac{ 2 ( \sqrt{2} - 1 ) - \alpha }{8} \sigma^2_r(\Y^*),
\end{equation*} then we have
\begin{equation*}
	 \overline{\Hess\, h([\Y])}[\theta_\Y, \theta_\Y]  \leq  \frac{\alpha - 2 ( \sqrt{2} - 1 )}{2}  \sigma^2_r(\Y^*) \|\theta_\Y\|_\F^2 < 0.
\end{equation*}
So $\overline{ \Hess\, h([\Y])}$ has at least one negative eigenvalue and $\theta_\Y = \Y - \Y^* \Q$ is an escaping direction.
\end{Theorem}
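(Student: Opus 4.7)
The plan is to reduce the estimate to the analogous negative-eigenvalue bound for the quadratic reference $H([\Y]) = \tfrac{1}{2}\|\Y\Y^\top - \X^*\|_\F^2$, established on $\cR_2$ in Section~\ref{sec: H-landscape-analysis}, and then treat the gap between $h$ and $H$ as a perturbation controlled by the RSC/RSM parameter $\delta$ together with the size constraints $\|\Y\|\le\beta\|\Y^*\|$ and $\|\Y\Y^\top\|_\F\le\gamma\|\Y^*\Y^{*\top}\|_\F$ that are built into $\cR_2$. Applying Lemma~\ref{lm: gradient-hessian-exp-PSD} to both $h$ and $H$, and using $\nabla H(\X)=\X-\X^*$ together with the identity $\nabla^2 H(\X)[\G,\G]=\|\G\|_\F^2$, yields
\begin{equation*}
\overline{\Hess\,h([\Y])}[\theta_\Y,\theta_\Y]
=\overline{\Hess\,H([\Y])}[\theta_\Y,\theta_\Y]+T_1+T_2,
\end{equation*}
where $T_1 := (\nabla^2 f-\nabla^2 H)(\Y\Y^\top)[\Y\theta_\Y^\top+\theta_\Y\Y^\top,\,\Y\theta_\Y^\top+\theta_\Y\Y^\top]$ and $T_2 := 2\langle\nabla f(\Y\Y^\top)-(\Y\Y^\top-\X^*),\,\theta_\Y\theta_\Y^\top\rangle$.

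The term $T_1$ is immediate: since $\Y\theta_\Y^\top+\theta_\Y\Y^\top$ has rank at most $2r$ and $\Y\Y^\top$ has rank at most $r$, Definition~\ref{def: RSC-RSM} gives $|T_1|\le\delta\|\Y\theta_\Y^\top+\theta_\Y\Y^\top\|_\F^2\le 4\delta\|\Y\|^2\|\theta_\Y\|_\F^2\le 4\delta\beta^2\|\Y^*\|^2\|\theta_\Y\|_\F^2$. The main obstacle is the inner product $T_2$, which I plan to handle by the fundamental theorem of calculus,
\begin{equation*}
\nabla f(\Y\Y^\top)-(\Y\Y^\top-\X^*)=\int_0^1(\nabla^2 f-\nabla^2 H)\bigl(\X^*+t(\Y\Y^\top-\X^*)\bigr)[\Y\Y^\top-\X^*]\,dt+\nabla f(\X^*),
\end{equation*}
which makes the RSC/RSM deviation explicit and isolates the noise contribution $\nabla f(\X^*)$. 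Pairing the integrand against $\theta_\Y\theta_\Y^\top$ and invoking the bilinear form of RSC/RSM, obtained by polarization on matrices of total rank at most $3r\le 4r$, contributes a term bounded by $\delta\|\Y\Y^\top-\X^*\|_\F\,\|\theta_\Y\theta_\Y^\top\|_\F$; I will then use $\|\theta_\Y\theta_\Y^\top\|_\F\le\|\theta_\Y\|_\F^2$ along with $\|\Y\Y^\top-\X^*\|_\F\le(1+\gamma)\|\Y^*\Y^{*\top}\|_\F$, the latter from the triangle inequality combined with $\|\Y\Y^\top\|_\F\le\gamma\|\Y^*\Y^{*\top}\|_\F$. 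For the residual $\langle\nabla f(\X^*),\theta_\Y\theta_\Y^\top\rangle$, the operator-nuclear norm duality together with $\|\theta_\Y\theta_\Y^\top\|_*=\|\theta_\Y\|_\F^2$ and $\sigma_1(\nabla f(\X^*))\le\|(\nabla f(\X^*))_{\max(r)}\|_\F$ gives $|\langle\nabla f(\X^*),\theta_\Y\theta_\Y^\top\rangle|\le\|(\nabla f(\X^*))_{\max(r)}\|_\F\|\theta_\Y\|_\F^2$. Combining, $|T_2|\le 2\delta(1+\gamma)\|\Y^*\Y^{*\top}\|_\F\|\theta_\Y\|_\F^2+2\|(\nabla f(\X^*))_{\max(r)}\|_\F\|\theta_\Y\|_\F^2$.

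To finish, I will invoke the H-landscape analysis from Section~\ref{sec: H-landscape-analysis}: along the direction $\theta_\Y=\Y-\Y^*\Q$ given by Lemma~\ref{lm: logarithm-map} (which is horizontal at $\Y$), the $\cR_2$ conditions $\|\overline{\grad\,H([\Y])}\|_\F\le\alpha\mu\sigma_r^3(\Y^*)/(4\kappa^*)$ and $d([\Y],[\Y^*])>\mu\sigma_r(\Y^*)/\kappa^*$ together force $\overline{\Hess\,H([\Y])}[\theta_\Y,\theta_\Y]\le(\alpha-2(\sqrt{2}-1))\sigma_r^2(\Y^*)\|\theta_\Y\|_\F^2$. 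Adding the bounds on $T_1$ and $T_2$ yields the first displayed estimate in the theorem. The strict-negativity conclusion then follows by substituting the stated thresholds on $\delta$ and $\|(\nabla f(\X^*))_{\max(r)}\|_\F$, chosen precisely so that the perturbation absorbs at most half of the negative leading coefficient $2(\sqrt{2}-1)-\alpha$; and since $\theta_\Y$ is an explicitly horizontal tangent vector, it immediately serves as an escape direction from the strict saddle.
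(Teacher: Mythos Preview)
Your proposal is correct and follows essentially the same approach as the paper: decompose $\overline{\Hess\,h}=\overline{\Hess\,H}+T_1+T_2$, bound the perturbation $T_1+T_2$ via RSC/RSM and the size constraints in $\cR_2$, and then invoke Theorem~\ref{th: H-negative-eigenvalue-region} for the $H$-Hessian along $\theta_\Y$. The only cosmetic difference is that the paper packages the $T_2$ bound as Proposition~\ref{prop: hH-grad-hessian-connection} (relying on Lemma~\ref{lm: RIP-imply-gradient-bound} and Lemma~\ref{lm: charac of Schatten-q norm}), whereas you unpack it directly via the fundamental theorem of calculus, polarization of the RSC/RSM bilinear form, and operator--nuclear duality; these yield identical estimates.
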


Finally, we show the Riemannian gradient of $h([\Y])$ has large magnitude in all three regions $\cR'_3, \cR_3''$ and $ \cR_3'''$.

\begin{Theorem}[Regions with Large Riemannian Gradient of \eqref{eq: quotient-factorization-objective}] \label{th: h-large-gradient-norm}
	(i) Given any $\Y \in \cR_3'$, we have
	\begin{equation*}
		\|\overline{\grad\, h([\Y])} \|_\F \geq \alpha \mu \sigma^3_r(\Y^*)/(4\kappa^*) - \left( 2 \delta \beta ( 1 + \gamma) \|\Y^*\| \| \Y^* \Y^{*\top} \|_\F + 2 \beta \|\Y^*\| \| ( \nabla f(\X^*) )_{\max(r)} \|_\F \right);
	\end{equation*}
(ii) given any $\Y \in \cR_3''$, we have
	\begin{equation*}
		\|\overline{\grad\, h([\Y])} \|_\F > 2(\beta^3 - \beta) \|\Y^*\|^3 - \left( 2 \delta ( 1 + \gamma) \|\Y\| \| \Y^* \Y^{*\top} \|_\F + 2 \|\Y\| \| ( \nabla f(\X^*) )_{\max(r)} \|_\F \right);
	\end{equation*}
(iii) given any $\Y \in \cR_3'''$, we have
	\begin{equation*}
	\begin{split}
		\|\overline{\grad\, h([\Y])} \|_\F > &  \left( 2( \gamma- 1) - 2\delta(\gamma + 1) \right) \gamma^{1/2} \|\Y^*\Y^{*\top}\|^{3/2}_\F/\sqrt{r}  \\
		& - 2 \gamma^{1/2}\|\Y^*\Y^{*\top}\|^{1/2}_\F\| ( \nabla f(\X^*) )_{\max(r)} \|_\F/\sqrt{r} ;
	\end{split}
	\end{equation*}

In particular, if $\beta > 1$, $\gamma > 1$, $\delta \leq \delta_{\min}$ and $\| ( \nabla f(\X^*) )_{\max(r)} \|_\F \leq \Psi$, where
\begin{equation} \label{def: delta-min}
	\delta_{\min} = \frac{\alpha \mu }{32 \kappa^{*2} \beta (1 + \gamma) } \frac{\sigma_r^2(\Y^*)}{ \| \Y^* \Y^{*\top} \|_\F } \wedge  \frac{  \beta^2 - 1 }{ 4(1 + \gamma) } \frac{ \|\Y^*\|^2 }{ \| \Y^* \Y^{*\top} \|_\F } \wedge  \frac{  \gamma - 1 }{ 4( \gamma + 1 ) },
\end{equation} and
\begin{equation} \label{def: Psi}
	\Psi = \frac{\alpha \mu}{32 \kappa^{*2} \beta } \sigma^2_r(\Y^*) \wedge  \frac{ \beta^2 -1 }{4 } \|\Y^*\|^2 \wedge   \frac{\gamma-1}{4}  \|\Y^* \Y^{*\top} \|_\F, 
\end{equation}
 we have the gradient norm $\|\overline{\grad\, h([\Y])} \|_\F$ in regions $\cR_3', \cR_3''$ and $\cR_3'''$ are lower bounded by strict positive quantities given as follows
\begin{equation} \label{ineq: h-gradient-bound-all}
	\begin{split}
		\cR_3': &  \quad \|\overline{\grad\, h([\Y])} \|_\F > \alpha\mu \sigma^3_r(\Y^*)/ (8 \kappa^* ), \\
		\cR_3'': &  \quad  \|\overline{\grad\, h([\Y])} \|_\F > (\beta^3 - \beta) \|\Y^*\|^3, \\
		\cR_3''': & \quad    \|\overline{\grad\, h([\Y])} \|_\F > ( \gamma- 1) \gamma^{1/2} \|\Y^*\Y^{*\top}\|^{3/2}_\F/\sqrt{r}.
	\end{split}
\end{equation} 
\end{Theorem}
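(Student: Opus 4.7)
The plan is to reduce the three gradient lower bounds for $h$ to their analogues for the quadratic surrogate $H$ of \eqref{eq: quotient-matrix-fac-denoising}, paying for the reduction with an RSC/RSM--controlled Hessian discrepancy and the noise $\nabla f(\X^*)$. From Lemma \ref{lm: gradient-hessian-exp-PSD} applied to both $f$ and the quadratic loss I would write
\[
	\overline{\grad\, h([\Y])} - \overline{\grad\, H([\Y])} \;=\; 2\nabla f(\X^*)\Y + 2 E(\Y)\Y,\qquad E(\Y):=\nabla f(\Y\Y^\top) - \nabla f(\X^*) - (\Y\Y^\top - \X^*);
\]
this is exactly the content of Proposition \ref{prop: hH-grad-hessian-connection}. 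The reverse triangle inequality then reduces the theorem to upper bounds on the two correction terms and matching lower bounds on $\|\overline{\grad\, H([\Y])}\|_\F$ region by region.

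For the corrections, a Weyl-type argument $\|A\U\|_\F\leq\|A_{\max(r)}\|_\F$ for any rank-$r$ orthonormal $\U$ yields $\|\nabla f(\X^*)\Y\|_\F\leq\|(\nabla f(\X^*))_{\max(r)}\|_\F\,\|\Y\|$, and the fundamental theorem of calculus plus polarization of Definition \ref{def: RSC-RSM} (the relevant rank budget is $2r + r\leq 4r$) produces $\|E(\Y)\Y\|_\F\leq\delta\|\Y\Y^\top-\X^*\|_\F\,\|\Y\|$. For the main terms, on $\cR_3'$ the lower bound on $\|\overline{\grad\, H([\Y])}\|_\F$ is the defining inequality of the region, and on $\cR_3''$ the splitting $(\Y\Y^\top - \X^*)\Y = \Y\Y^\top\Y - \X^*\Y$ together with $\|\Y\Y^\top\Y\|_\F\geq\|\Y\|^3$ and $\|\X^*\Y\|_\F\leq\|\Y^*\|^2\|\Y\|$ gives $\|\overline{\grad\, H([\Y])}\|_\F\geq 2\|\Y\|(\|\Y\|^2-\|\Y^*\|^2)>2\beta(\beta^2-1)\|\Y^*\|^3$. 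Coupled with the correction bounds and the region hypotheses $\|\Y\|\leq\beta\|\Y^*\|$ and $\|\Y\Y^\top-\X^*\|_\F\leq(1+\gamma)\|\Y^*\Y^{*\top}\|_\F$, these deliver parts (i) and (ii) directly.

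Part (iii) is the delicate one, because $\|\Y\|$ is unbounded on $\cR_3'''$ and the naive $\|\Y\|$-style estimate on the corrections does not close. The fix is to pivot to the directional lower bound $\|\overline{\grad\, h([\Y])}\|_\F \geq |\langle\overline{\grad\, h([\Y])},\Y\rangle|/\|\Y\|_\F = 2|\langle\nabla f(\Y\Y^\top),\Y\Y^\top\rangle|/\|\Y\|_\F$, to use $\|\Y\|_\F\leq\sqrt{r}\|\Y\|\leq\sqrt{r}\|\Y\Y^\top\|_\F^{1/2}$ (from $\mathrm{rank}(\Y)=r$ and $\|\Y\|^2=\|\Y\Y^\top\|\leq\|\Y\Y^\top\|_\F$), and to bound the correction pieces against $\Y\Y^\top$ rather than against $\Y$: the von Neumann trace inequality $|\langle\nabla f(\X^*),\Y\Y^\top\rangle|\leq\|(\nabla f(\X^*))_{\max(r)}\|_\F\|\Y\Y^\top\|_\F$ (exploiting $\mathrm{rank}(\Y\Y^\top)\leq r$) replaces the Weyl bound, and the analogous polarized RSC/RSM estimate $|\langle E(\Y),\Y\Y^\top\rangle|\leq\delta\|\Y\Y^\top-\X^*\|_\F\|\Y\Y^\top\|_\F$ handles the discrepancy. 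Decomposing $\nabla f(\Y\Y^\top)$ as in the first display, using $\|\Y\Y^\top\|_\F>\gamma\|\Y^*\Y^{*\top}\|_\F$ to bound $\langle\X^*,\Y\Y^\top\rangle\leq\|\Y\Y^\top\|_\F^2/\gamma$ and $\|\Y\Y^\top-\X^*\|_\F\leq(\gamma+1)\|\Y\Y^\top\|_\F/\gamma$, and collecting terms produces
\[
\|\overline{\grad\, h([\Y])}\|_\F \;\geq\; \frac{2}{\sqrt{r}}\,\|\Y\Y^\top\|_\F^{1/2}\Bigl[\tfrac{\gamma-1-\delta(\gamma+1)}{\gamma}\|\Y\Y^\top\|_\F - \|(\nabla f(\X^*))_{\max(r)}\|_\F\Bigr],
\]
which under the thresholds \eqref{def: delta-min} and \eqref{def: Psi} is positive and monotone increasing in $\|\Y\Y^\top\|_\F$; substituting $\|\Y\Y^\top\|_\F > \gamma\|\Y^*\Y^{*\top}\|_\F$ recovers part (iii).

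I expect the main obstacle to be precisely this pivot: choosing the correct test direction (the vector $\Y$ itself, paired with rank-$\leq r$ bounds on the correction terms against $\Y\Y^\top$) is what converts an otherwise blown-up $\|\Y\|$ factor into the $\sqrt{r}$ denominator demanded by the statement; the naive Frobenius/triangle route only yields $r^{1/4}$. Once the pivot is in place, verifying \eqref{ineq: h-gradient-bound-all} is pure bookkeeping: the thresholds \eqref{def: delta-min} and \eqref{def: Psi} are calibrated so that each subtractive error absorbs at most half of the corresponding leading term, after which the strict positivity of $\|\overline{\grad\, h([\Y])}\|_\F$ in $\cR_3',\cR_3'',\cR_3'''$ follows immediately.
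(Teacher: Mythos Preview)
Your overall architecture matches the paper's: reduce $h$ to $H$ via Proposition~\ref{prop: hH-grad-hessian-connection}, then feed in the region--wise bounds for $\|\overline{\grad\,H([\Y])}\|_\F$ from Theorem~\ref{th: H-large-gradient-norm}. Parts (i) and (iii) are essentially identical to the paper's argument; in particular, your ``pivot'' in (iii) to the directional estimate $\langle\overline{\grad\, h([\Y])},\Y\rangle/\|\Y\|_\F$ together with $\|\Y\|_\F\leq\sqrt{r}\,\|\Y\Y^\top\|_\F^{1/2}$ is exactly the paper's route, and your remark about needing positivity/monotonicity before substituting $\|\Y\Y^\top\|_\F>\gamma\|\Y^*\Y^{*\top}\|_\F$ is well taken.

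There is, however, a genuine gap in your treatment of $\cR_3''$. The inequality $\|\X^*\Y\|_\F\leq\|\Y^*\|^2\|\Y\|$ is false: take $\X^*=\I_r$ and $\Y=c\,\I_r$, so that $\|\X^*\Y\|_\F=c\sqrt{r}$ while $\|\Y^*\|^2\|\Y\|=c$. Submultiplicativity only gives $\|\X^*\Y\|_\F\leq\|\X^*\|\,\|\Y\|_\F=\|\Y^*\|^2\|\Y\|_\F$, and since $\|\Y\|_\F$ can be as large as $\sqrt{r}\,\|\Y\|$, your reverse--triangle route yields at best
\[
\|\overline{\grad\,H([\Y])}\|_\F \;\geq\; 2\bigl(\|\Y\|^3-\sqrt{r}\,\|\Y^*\|^2\|\Y\|\bigr),
\]
which does not recover the stated $2(\|\Y\|^3-\|\Y^*\|^2\|\Y\|)$ and hence not $2(\beta^3-\beta)\|\Y^*\|^3$. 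The paper sidesteps this by \emph{not} using the reverse triangle inequality in Frobenius norm: writing $\Y=\U\bSigma\V^\top$, it projects $(\Y\Y^\top-\X^*)\Y$ onto the column space of $\U$ and reads off the $(1,1)$ entry of $\bSigma^3-\U^\top\U^*\bSigma^{*2}\U^{*\top}\U\bSigma$, bounding the subtracted term by its spectral norm $\|\U^\top\U^*\bSigma^{*2}\U^{*\top}\U\bSigma\|\leq\|\Y^*\|^2\|\Y\|$. Equivalently: test $(\Y\Y^\top-\X^*)\Y$ against the rank--one matrix $\u_1\v_1^\top$ (top singular vectors of $\Y$) rather than against all of $\bbR^{p\times r}$; that replaces the Frobenius loss by a spectral one and is the missing step.
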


A direct corollary from Theorems \ref{th: convexity-radius-Mq}\nc, \ref{th: h-local-geodesic-convexity}, \ref{th: h-negative-eigenvalue-region} and \ref{th: h-large-gradient-norm} is given below.

\begin{Corollary}[Benign Landscape of \eqref{eq: quotient-factorization-objective} \label{coro: benign-landscape-f-well-conditioned} When $f$ Satisfies RSC and RSM] Suppose $\mu, \alpha, \beta,\gamma \geq 0$ in the definitions of $\cR_1, \cR_2, \cR_3', \cR_3''$ and $\cR_3'''$ satisfy $\mu \leq 1/3 $, $ \left( 1 - \mu /\kappa^* \right)^2 - 7\mu/3 > 0$, $ \alpha < 2(\sqrt{2} - 1) $, $\beta > 1$ and $\gamma > 1$. Then if 
\begin{equation} \label{ineq: overall-delta-condition}
	\delta \leq  \frac{(1-\mu/\kappa^*)^2 - 7\mu/3}{ 4\left(2 ( \kappa^* + \mu/\kappa^* )^2 + 7\mu/3 \right) } \wedge \frac{( 2 (\sqrt{2} -1) -\alpha ) \sigma^2_r(\Y^*)  }{8 \left( 2 \beta^2 \|\Y^*\|^2 + (1 + \gamma ) \| \Y^* \Y^{*\top} \|_\F   \right) } \wedge \delta_{\min}
\end{equation} and
\begin{equation} \label{ineq: overall-f-noise-condition}
	 \| (\nabla f(\X^*))_{\max(r)} \|_\F \leq \left( (1-\mu/\kappa^*)^2 - 7\mu/3 \right) \sigma^2_r(\Y^*)/4 \wedge \frac{ 2 ( \sqrt{2} - 1 ) - \alpha }{8} \sigma^2_r(\Y^*) \wedge \Psi,
\end{equation} where $\delta_{\min}$ and $\Psi$ are defined in \eqref{def: delta-min} and \eqref{def: Psi}, respectively, we have the global geometric landscape of \eqref{eq: quotient-factorization-objective} is benign in the following sense:
\begin{itemize}
	\item in $\cR_1$, which is a geodesically convex set by Theorem \ref{th: convexity-radius-Mq}, \nc  $h([\Y])$ is geodesically strongly convex and smooth;
	\item in $\cR_2$, $\overline{\Hess\, h([\Y])}$ has a negative eigenvalue and there exists an explicit escaping direction;
	\item in $\cR_3:= \cR_3' \bigcup \cR_3'' \bigcup \cR_3'''$, $h([\Y])$ has large gradient. 
\end{itemize} 

In addition, if there exists a Riemannian FOSP $[\widehat{\Y}]$ in $\cR_1$, then $[\widehat{\Y}]$ is the unique global minimizer of \eqref{eq: quotient-factorization-objective} and the bound of the distance between $[\widehat{\Y}]$ and $[\Y^*]$ is provided in \eqref{ineq: dist-bound-hatY-Ystar}.
\end{Corollary}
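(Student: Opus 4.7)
The plan is to verify that the combined conditions on $\delta$ and $\|(\nabla f(\X^*))_{\max(r)}\|_\F$ displayed in \eqref{ineq: overall-delta-condition}--\eqref{ineq: overall-f-noise-condition} imply, term by term, the local hypotheses of Theorems \ref{th: convexity-radius-Mq}, \ref{th: h-local-geodesic-convexity}, \ref{th: h-negative-eigenvalue-region}, and \ref{th: h-large-gradient-norm}, and then to assemble their conclusions on the three regions $\cR_1$, $\cR_2$, $\cR_3$ into the claimed benign-landscape picture. Because the corollary's bounds are deliberately formed as the minimum of the individual bounds appearing in those theorems, the assembly will be essentially mechanical once the correspondence is spelled out.

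First I would dispatch the region $\cR_1$. Since $\mu \leq 1/3$ and $\kappa^* \geq 1$, the geodesic radius $\mu\sigma_r(\Y^*)/\kappa^*$ of the ball $\cR_1$ does not exceed $\sigma_r(\Y^*)/3$, so Theorem \ref{th: convexity-radius-Mq} applied at the point $[\Y^*]$ shows $\cR_1$ is geodesically convex and that any two of its points are joined by a unique minimizing geodesic contained in $\cR_1$. The first conjunct in the upper bound \eqref{ineq: overall-delta-condition} on $\delta$ and the first term of the minimum in \eqref{ineq: overall-f-noise-condition}, together with the standing assumptions $\mu \leq 1/3$ and $(1-\mu/\kappa^*)^2 - 7\mu/3 > 0$, reproduce the hypotheses of Theorem \ref{th: h-local-geodesic-convexity} verbatim. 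Invoking that theorem then yields both the geodesic strong convexity and smoothness of $h([\Y])$ on $\cR_1$ and, if a Riemannian FOSP $[\widehat{\Y}]$ lies in $\cR_1$, its uniqueness as a local minimizer and the distance bound \eqref{ineq: dist-bound-hatY-Ystar}.

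Next I would treat $\cR_2$ and $\cR_3$ in parallel. Under the standing assumption $\alpha < 2(\sqrt{2}-1)$, the second conjunct of \eqref{ineq: overall-delta-condition} and the second term of \eqref{ineq: overall-f-noise-condition} are precisely the smallness assumptions required by Theorem \ref{th: h-negative-eigenvalue-region}; applying it at any $\Y \in \cR_2$ with the test direction $\theta_\Y = \Y - \Y^*\Q$ (for $\Q$ a best orthogonal aligner of $\Y^*$ and $\Y$) makes $\overline{\Hess\, h([\Y])}[\theta_\Y, \theta_\Y]$ strictly negative, so the Riemannian Hessian has a negative eigenvalue along the explicit escape direction $\theta_\Y$. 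Likewise, under $\beta>1$, $\gamma>1$, the third conjunct $\delta \leq \delta_{\min}$ in \eqref{ineq: overall-delta-condition}, and $\|(\nabla f(\X^*))_{\max(r)}\|_\F \leq \Psi$ from \eqref{ineq: overall-f-noise-condition}, Theorem \ref{th: h-large-gradient-norm} produces the strict positive gradient-norm lower bounds of \eqref{ineq: h-gradient-bound-all} throughout $\cR_3' \cup \cR_3'' \cup \cR_3'''$.

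Finally, I invoke the covering lemma preceding the corollary, which guarantees $\cR_1 \cup \cR_2 \cup \cR_3' \cup \cR_3'' \cup \cR_3''' = \bbR^{p \times r}_*$, so the three regimes exhaust the entire search space and the global benign-landscape statement follows. The uniqueness and distance assertions in the last part of the corollary are then transcribed directly from Theorem \ref{th: h-local-geodesic-convexity}. I do not anticipate a genuine technical obstacle: the only non-mechanical check is that the single displays \eqref{ineq: overall-delta-condition}--\eqref{ineq: overall-f-noise-condition} dominate each of the individual smallness requirements of the three underlying theorems, which is immediate from the definitions of $\delta_{\min}$ and $\Psi$ in \eqref{def: delta-min}--\eqref{def: Psi} and the fact that the corollary takes a minimum across the three theorems. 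The substantive geometric work resides in the individual theorems themselves, not in their combination.
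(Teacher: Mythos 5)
Your assembly is exactly the paper's (implicit) argument: the corollary is stated there as a direct consequence of Theorems \ref{th: convexity-radius-Mq}, \ref{th: h-local-geodesic-convexity}, \ref{th: h-negative-eigenvalue-region} and \ref{th: h-large-gradient-norm} together with the covering lemma, and your term-by-term matching of the $\delta$ and $\|(\nabla f(\X^*))_{\max(r)}\|_\F$ conditions is the whole content. The one step you should not describe as a direct transcription is the claim that $[\widehat{\Y}]$ is the unique \emph{global} minimizer: Theorem \ref{th: h-local-geodesic-convexity} only gives the unique local minimizer within $\cR_1$, and promoting this to global uniqueness requires the $\cR_2$/$\cR_3$ conclusions as well, namely that no FOSP can lie in $\cR_3$ (nonzero gradient) and any FOSP in $\cR_2$ is a strict saddle, so every candidate minimizer must fall in $\cR_1$.
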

\begin{Remark} \label{rem:local-geometry}
	Corollary \ref{coro: benign-landscape-f-well-conditioned} provides the first global landscape analysis for Burer-Monteiro factorized matrix optimization objective under the Riemannian quotient geometry. Different from the previous global landscape analysis for matrix factorization under the Euclidean geometry \cite{li2016symmetry,zhu2017global}, we are able to show $h([\Y])$ is actually geodesically strongly convex and smooth in $\cR_1$ while $\bar{h}(\Y)$ is nowhere convex in $\cR_1$ under the Euclidean geometry. Since the Euclidean gradient/Hessian are exactly the same as Riemannian gradient/Hessian, the gradient descent or trust region method under the Riemannian quotient geometry and the Euclidean geometry are exactly the same given the same configuration, our geometric landscape analysis results give a fully geometric and useful/straightforward explanation of the success of vanilla gradient descent and trust region under the Burer-Monteiro factorization. Moreover, our results cover the setting $\nabla f(\X^*)$ is non-zero but with a relatively small magnitude as well.  
	
	 Under the embedded geometry of the set of fixed-rank matrices, \cite{uschmajew2018critical} analyzed the landscape of \eqref{eq: original-formulation} at stationary points when $f$ is quadratic and satisfies restricted strong convexity and smoothness properties. Comparing to their results, we provide a global geometric landscape analysis of \eqref{eq: quotient-factorization-objective} under the Riemannian quotient geometry and our results hold for a general $f$ satisfying RSC and RSM. 
\end{Remark}

\begin{Remark}[Conditions] Suppose $\kappa^* = O(1)$ and $ r= O(1)$, the conditions in \eqref{ineq: overall-delta-condition} and \eqref{ineq: overall-f-noise-condition} for $\delta$ and $  \| (\nabla f(\X^*))_{\max(r)} \|_\F$ can be summarized as $\delta \leq c_1$ and $ \| (\nabla f(\X^*))_{\max(r)} \|_\F \leq c_2 \sigma_r(\X^*)$ for some small universal positive constants $c_1, c_2$. The condition for $\delta$ is not sharp compared to the recent attempts on establishing a sharp threshold of $\delta$ to guarantee the absence of spurious local minimizers under the factorization formulation \cite{zhang2019sharp,zhang2021general,ma2022noisy}. On the other hand, our geometric landscape results are much stronger and finer than theirs. Meanwhile, we also note the condition for $  \| (\nabla f(\X^*))_{\max(r)} \|_\F$ is weak. This is because in typical statistical applications, e.g., the matrix trace regression in Example \ref{em: matrix-trace-regression}, $\| (\nabla f(\X^*))_{\max(r)} \|_\F$ often matches the information-theoretic lower bound for estimating $\X^*$ and $O(\sigma_r(\X^*))$ is the initialization requirement for common local algorithms to converge \cite{chen2015fast,luo2020recursive}, so we often have $\| (\nabla f(\X^*))_{\max(r)} \|_\F \ll  \sigma_r(\X^*)$ in the standard low-rank matrix recovery literature. Moreover, this condition automatically holds when $\nabla f(\X^*) = \0$, which appears in typical noiseless applications. Finally, we note the error bound between the local minimizer in $\cR_1$ and $\Y^*$ provided in \eqref{ineq: dist-bound-hatY-Ystar} also matches the information-theoretic lower bound in common applications \cite[Section 4]{chen2015fast}. 
\end{Remark}

{
\begin{Remark} 
	As we mentioned right after Lemma \ref{lm: gradient-hessian-exp-PSD}, the Riemannian gradient and Hessian under the quotient geometry coincide with the Euclidean ones.  Because of this, we were able to borrow some ideas and techniques from works like \cite{li2016symmetry,zhu2017global} to analyze the landscape in regions $\cR_2$ and $\cR_3$. However, there are two major differences between the analyses in \cite{li2016symmetry,zhu2017global} and ours that we would like to highlight. First, the analyses in \cite{li2016symmetry,zhu2017global} are restricted to the noiseless setting, i.e., $\nabla f(\X^*) = 0$, while our results cover the noisy case. For the noisy setting it was essential to carry out the fine-grained analysis in Proposition \ref{prop: hH-grad-hessian-connection} in Section \ref{sec: H-landscape-analysis} when trying to connect the landscape of optimization problems \eqref{eq: quotient-factorization-objective} and \eqref{eq: quotient-matrix-fac-denoising}. Second, the escaping directions proposed in \cite{li2016symmetry,zhu2017global} cannot be used in our Riemannian setting as those vectors do not lie in the horizontal space $\cH_{\Y} \widebar{\cM}_{r+}^{q}$. A natural idea to attempt to fix this issue is to project the proposed vectors onto $\cH_{\Y} \widebar{\cM}_{r+}^{q}$. Unfortunately, it is not clear that the projected vectors are still escaping directions. As a result, the regions $\cR_2$ and $\cR_3$ that we define are different from the ones in \cite{li2016symmetry,zhu2017global}. The specific escaping direction $\theta_\Y$ that we present in Theorem \ref{th: h-negative-eigenvalue-region} is inspired by \cite{ge2017no}, where the authors studied the landscape of \eqref{eq: factorization-objective} at stationary points under the Euclidean geometry. Here we show that their choice is actually an escaping direction under the Riemannian quotient geometry considered in this paper.  
\end{Remark}
}

{We emphasize that one major advantage of the landscape analysis in Corollary \ref{coro: benign-landscape-f-well-conditioned}, which is also why we think our results are more useful than existing ones, is that our analysis directly implies fast convergence of common optimization algorithms by harnessing existing results in the literature. For example, the local geodesic strong convexity and smoothness properties that we deduce immediately suggest that if there exists a Riemannian FOSP $[\widehat{\Y}]$ in $\cR_1$, then vanilla GD initialized in $\cR_1$ will stay in $\cR_1$ and converge linearly to $[\widehat{\Y}]$, as it would follow from the proof of \cite[Theorem 11.29]{boumal2020introduction}; see also \cite[Proposition 3.14]{goyens2024riemannian} for a recent result on the Riemannian trust region method. Furthermore, \cite{goyens2024riemannian} studies the global convergence of Riemannian trust region methods for generic Riemannian optimization problems with a robust strict saddle property. In particular, they show that for those types of problems the standard Riemannian trust region method can find approximate local minimizers in a number of iterations that depends double-logarithmically on the accuracy parameter; see Theorem 3.16 and Theorem 3.18 in \cite{goyens2024riemannian} for precise statements. Finally, in terms of first-order methods, perturbed (Riemannian) gradient descent with local improvement is also guaranteed to escape saddle points and exhibit fast global convergence under the robust strict saddle property \cite{ge2015escaping,jin2017escape,sun2019escaping,criscitiello2019efficiently}. 

To wrap up this section, we would like to take the matrix approximation problem \eqref{eq: quotient-matrix-fac-denoising} as a simple but concrete example to illustrate the implications of our landscape analysis on fast \textit{global} convergence of perturbed Riemannian gradient descent (PRGD) with local improvement. We note that the existing guarantees on PRGD's ability to escape saddle points rely on curvature and Lipschitz conditions of the gradient and Hessian \cite{criscitiello2019efficiently,sun2019escaping} of the objective function. However, since our manifold $\cM_{r+}^q$ is unbounded and not complete, in order to satisfy those conditions we have to ensure that the whole trajectory of PRGD stays away from the boundary of the manifold $\cM_{r+}^q$ and within a bounded set (see Lemma \ref{lm:curvature-lipschitz-constants} in Appendix \ref{proof:app-preliminary}). These conditions are difficult to check for the original PRGD algorithm, which motivates the slight modification appearing in Algorithm \ref{alg:PRGD}. The details for the modular PRGD algorithm in Algorithm \ref{alg:PRGD} are provided in Appendix \ref{sec:PRGD}. 
\begin{algorithm}[h]
\caption{Perturbed Riemannian Gradient Descent with Initial Safeguard and Local Improvement}
\noindent {\bf Input}: $\widetilde{\Y}_0 \in \bbR_{*}^{p \times r} $, two positive integers $T_0$ and $T_1$, step size $\eta$, Lipschitz constants $L, \rho$, curvature bound $K$, injectivity radius bound $\mathfrak{I}$, PRGD accuracy $\widetilde{\epsilon}$, success probability $\zeta$
\begin{algorithmic}[1]
	\For{$t=0, 1, \ldots, T_0-1$}
	\State $\widetilde{\Y}_{t + 1} = \widetilde{\Y}_{t} -   \eta (\widetilde{\Y}_t \widetilde{\Y}_t^\top - \X^*  ) \widetilde{\Y}_t$
	\EndFor
	\State Run perturbed Riemannian gradient descent $\Y_0 = \text{PRGD}(\widetilde{\Y}_{T_0}, L, \rho, K, \mathfrak{I}, \widetilde{\epsilon}, \delta)$ (see details in Appendix \ref{sec:PRGD})
		\For{$t=0, 1, \ldots, T_1-1$}
	\State $\Y_{t + 1} = \Y_{t} -   \eta (\Y_t \Y_t^\top - \X^*  )\Y_t$
	\EndFor
\end{algorithmic}
\noindent {\bf Output}: $\Y_{T_1}$.
\label{alg:PRGD}
\end{algorithm}

We have two quick remarks regarding Algorithm \ref{alg:PRGD}. First, since the Riemannian gradient is exactly the same as the Euclidean gradient, the algorithm we present can be viewed as the Riemannian gradient algorithm in the lifted horizontal space. Second, compared with the perturbed GD with local improvement \cite{jin2017escape} (Algorithm 3 there), here we initialize PRGD by running a few gradient steps to ensure that the iterates enter a nice region where their spectrum is well controlled. Based on this condition, we are able to show that the rest of the trajectory of PRGD will continue to stay in this nice region, facilitating the application of existing PRGD guarantees \cite{criscitiello2019efficiently,sun2019escaping}. We also note that our proof is modular in the sense that, as long as PRGD enters this nice region, then PRGD will stay there and exhibit fast convergence. Running a few gradient steps at the beginning is one provable and concrete way to ensure PRGD enters the nice region \cite{chen2023fast}, but there could be different ways to achieve this. The convergence guarantee of Algorithm \ref{alg:PRGD} is provided in Theorem \ref{th:application}, whose proof can be found in Appendix \ref{app:DegeneracyIterates}.

\begin{Theorem} \label{th:application}
	Consider the matrix approximation objective \eqref{eq: quotient-matrix-fac-denoising}. Recall $\U^* \in \st(r,p)$ spans the top $r$ left singular subspace of $\Y^*$ and $\U^*_{\perp}\in \st(p-r,p)$ is its orthogonal complement. Given any initialization $\widetilde{\Y}_0$, let $\K_0 = \U^{*\top } \widetilde{\Y}_0$ and $\J_0 = \U^{*\top }_{\perp} \widetilde{\Y}_0$. Suppose $\sigma_1(\widetilde{\Y}_0) \leq \sigma_1(\Y^*)$, $\sigma_1(\J_0) \leq \sigma_r(\Y^*)/2$, $0 < \sigma_r(\K_0) \leq \sigma_r(\Y^*)/2$ and $\sigma_1^2(\J_0) \leq \frac{\sigma^2_r(\Y^*)}{8 \sigma_1(\Y^*)} \sigma_r(\K_0)$. Then, with probability $1 - \zeta$, Algorithm \ref{alg:PRGD} with initialization $\widetilde{\Y}_0$, properly chosen $T_0, T_1, \eta$, and input parameters $(L, \rho, K, \mathfrak{I}, \widetilde{\epsilon},\zeta)$ for PRGD, can output $\Y_{T_1}$ with $d([\Y_{T_1}], [\Y^*]) \leq \epsilon$. Moreover, the number of calls to the Riemannian gradient during the whole algorithm is at most $C(\log(1/\sigma_r(\K_0)) + \log^4(p) + \log(1/\epsilon))$ for some universal constant $C$ depending only on $\sigma_r(\Y^*), \sigma_1(\Y^*)$, $r$ and $\zeta$.
\end{Theorem}

A few remarks on Theorem \ref{th:application} are provided next. First, the complexity of Algorithm \ref{alg:PRGD} consists of three terms. The first complexity term $\mathcal{O}(\log(1/\sigma_r(\K_0)))$ comes from the first stage of the algorithm and it ensures that after this many steps of Riemannnian GD, the iterates enter a nice region where their spectra are controlled; the complexity term $ \mathcal{O}(\log^4(p))$ is the number of iterations PRGD takes to escape saddle points and enter the local region $\cR_1$; the last complexity term $\mathcal{O}(\log(1/\epsilon))$ is the number of iterations Riemannnian GD needs to achieve $\epsilon$ accuracy in the local region; we refer readers to Appendix \ref{sec:proof-init}-\ref{sec:proof-local} for the detailed analysis of these three stages. Second, the current guarantee of PRGD in Theorem \ref{th:application} is for the matrix approximation objective \eqref{eq: quotient-matrix-fac-denoising}. This is because the gradient formula there is explicit, and it is relatively easy to control the spectrum of PRGD iterates. Extending such a result to a general $f$ satisfying RSC and RSM will be more involved, and we leave it to future exploration. Finally, we note that the initialization condition required for $\widetilde{\Y}_0$ in Theorem \ref{th:application} can be easily satisfied with random initialization and the order of $\sigma_r(\K_0)$ can be chosen to be $1/p$; see Lemma \ref{lem:RandomIntial} in Appendix \ref{sec:random-initialization} for details. Therefore, with random initialization, the overall computational complexity of Algorithm \ref{alg:PRGD} to achieve $\epsilon$ accuracy is $\mathcal{O}(\log(1/\epsilon) + \log^4(p))$ .
}

\section{Local Landscape Analysis of \eqref{eq: quotient-factorization-objective} When $f$ Satisfies Restricted Strict Convexity Property} \label{sec: landscape-convex-f}
In this section we describe the landscape of \eqref{eq: quotient-factorization-objective} under a weaker assumption on $f$. In particular, we assume $f$ satisfies the following restricted strict convexity property. 
\begin{Definition} \label{def: restricted-strictly-convex} We say $f : \bbR^{p \times p} \to \bbR$ satisfies the ($r,2r$)-restricted strict convexity property if for any $\X, \G \in \bbR^{p \times p}$ with $\rank(\X) \leq r$ and $\rank(\G) \leq 2r$, the Euclidean Hessian of $f$ satisfies $ \nabla^2 f(\X)[\G, \G] > 0$. \nc
\end{Definition}

It is clear that if $f$ satisfies $(2r,4r)$-restricted \textit{strong} convexity and smoothness property (for some $\delta \in [0,1)$), then it satisfies $(r, 2r)$-restricted strict convexity property. Under this weaker assumption on $f$, we prove the following local geometric landscape results for $h([\Y])$. \nc
\begin{Theorem}(Local Landscape of $h([\Y])$) \label{th: local-landscape-f-convex} Suppose $f$ satisfies the $(r,2r)$-restricted strict convexity property. Consider $\widehat{\Y} \in \bbR^{p \times r}$ such that $\widehat{\Y} \widehat{\Y}^\top$ is a local minimizer of $\min_{ \bbS^{p \times p} \ni \X \succcurlyeq 0, \rank(\X) \leq 2r  } f(\X)$ with rank $r$. Then there exists a neighborhood around $[\widehat{\Y}]$ on which $h([\Y])$ is geodesically convex.
\end{Theorem}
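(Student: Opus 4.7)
The plan is to combine three ingredients: first- and second-order necessary conditions at the local minimizer $\widehat{\X} := \widehat{\Y}\widehat{\Y}^\top$, the explicit Riemannian Hessian formula of Lemma \ref{lm: gradient-hessian-exp-PSD} together with the restricted strict convexity hypothesis, and finally continuity of the Riemannian Hessian combined with the geodesic convexity of small balls proved in Theorem \ref{th: convexity-radius-Mq}. I would first derive clean optimality conditions at $\widehat{\X}$ by lifting to the full factorization over $\bbR^{p \times 2r}$. Since every rank-at-most-$2r$ PSD matrix is of the form $\Z \Z^\top$ for some $\Z \in \bbR^{p \times 2r}$, the hypothesis implies that $\widehat{\Z} := [\widehat{\Y},\, \0_{p \times r}]$ is an unconstrained local minimum of $\phi(\Z) := f(\Z \Z^\top)$ on $\bbR^{p \times 2r}$. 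The first-order condition $\nabla \phi(\widehat{\Z}) = 2\nabla f(\widehat{\X})\widehat{\Z} = \0$ yields $\nabla f(\widehat{\X})\widehat{\Y} = \0$, and testing the second-order condition with perturbations $\E = [\0,\, \W]$ for $\W \in \bbR^{p \times r}$ (so that $\widehat{\Z} \E^\top = \0$) reduces to $2 \langle \nabla f(\widehat{\X}), \W \W^\top \rangle \geq 0$ for every such $\W$, i.e., $\nabla f(\widehat{\X}) \succcurlyeq \0$.

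Next, I would show $\overline{\Hess h([\widehat{\Y}])}$ is strictly positive definite. For every horizontal tangent vector $\theta \in \cH_{\widehat{\Y}}\widebar{\cM}_{r+}^{q}$, Lemma \ref{lm: gradient-hessian-exp-PSD} gives
\[
\overline{\Hess h([\widehat{\Y}])}[\theta, \theta] = \nabla^2 f(\widehat{\X})[\G, \G] + 2\langle \nabla f(\widehat{\X}), \theta \theta^\top \rangle,
\]
where $\G := \widehat{\Y}\theta^\top + \theta \widehat{\Y}^\top$ has rank at most $2r$. The second term is nonnegative since $\nabla f(\widehat{\X}) \succcurlyeq \0$. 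For the first, I would verify that $\G \neq \0$ whenever $\theta \neq 0$: right-multiplying the equation $\G = \0$ by $\widehat{\Y}$ and using horizontality (so that $\S := \widehat{\Y}^\top \theta$ is symmetric) yields $\theta = -\widehat{\Y} \S \A^{-1}$ with $\A := \widehat{\Y}^\top \widehat{\Y} \succ \0$; substituting back produces the Sylvester-type identity $\A \S + \S \A = \0$, which via simultaneous diagonalization of $\A$ and positivity of its eigenvalues forces $\S = \0$, hence $\theta = 0$. Thus $\G \neq \0$ for nonzero horizontal $\theta$, and the $(r,2r)$-restricted strict convexity hypothesis applied at $\widehat{\X}$ (rank $r$) with $\G$ (rank at most $2r$) yields $\nabla^2 f(\widehat{\X})[\G, \G] > 0$, establishing $\overline{\Hess h([\widehat{\Y}])} \succ \0$.

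Finally, since $f$ is twice continuously differentiable, the formula in Lemma \ref{lm: gradient-hessian-exp-PSD} shows that $\Y \mapsto \overline{\Hess h([\Y])}$ depends continuously on $\Y$; strict positive definiteness at $\widehat{\Y}$ therefore persists on some open geodesic ball $B_{r_1}([\widehat{\Y}])$. Setting $r := \min\{r_1, \sigma_r(\widehat{\Y})/3\}$ and invoking Theorem \ref{th: convexity-radius-Mq}, the ball $B_r([\widehat{\Y}])$ is geodesically convex and $\overline{\Hess h([\Y])} \succcurlyeq \0$ throughout it, so by the definition of geodesic convexity recalled in Section \ref{sec: Riemannian-opt-background}, $h$ is geodesically convex on this neighborhood. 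The main obstacle I anticipate is the algebraic step showing $\G = \widehat{\Y}\theta^\top + \theta \widehat{\Y}^\top \neq \0$ for nonzero horizontal $\theta$: the reduction to the Sylvester-type equation $\A \S + \S \A = \0$ is clean once horizontality is used, but requires careful bookkeeping. A secondary subtlety is extracting $\nabla f(\widehat{\X}) \succcurlyeq \0$ from the nonsmooth bounded-rank PSD constraint; the lift to $\bbR^{p \times 2r}$ handles this cleanly and simultaneously makes $\nabla f(\widehat{\X}) \widehat{\Y} = \0$ transparent.
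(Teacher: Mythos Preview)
Your proof is correct and follows essentially the same overall architecture as the paper: show $\overline{\Hess h([\widehat{\Y}])}\succ 0$ via the formula of Lemma \ref{lm: gradient-hessian-exp-PSD}, then propagate by continuity and invoke Theorem \ref{th: convexity-radius-Mq}. Two places differ in detail. First, to obtain $\langle \nabla f(\widehat{\X}),\theta\theta^\top\rangle\geq 0$, the paper simply observes that $\theta\theta^\top+\widehat{\Y}\widehat{\Y}^\top$ lies in the tangent cone of $\{\X\in\bbS^{p\times p}\succcurlyeq 0,\ \rank(\X)\leq 2r\}$ and applies the first-order optimality condition directly; you instead lift to $\bbR^{p\times 2r}$ and extract the stronger statement $\nabla f(\widehat{\X})\succcurlyeq 0$ from the \emph{second}-order necessary condition at the unconstrained minimizer $\widehat{\Z}=[\widehat{\Y},\0]$. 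Your route is valid (the map $\Z\mapsto\Z\Z^\top$ is continuous, so $\widehat{\Z}$ is indeed an unconstrained local minimizer of $\phi$) and yields a bit more, at the cost of an extra step. Second, for the non-vanishing of $\G=\widehat{\Y}\theta^\top+\theta\widehat{\Y}^\top$, the paper simply cites the quantitative bound of Lemma \ref{lm: norm-bound-Ytheta-thetaY}, whereas you give a self-contained Sylvester-equation argument; both are fine, and your argument is exactly the qualitative core underlying that lemma.
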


We note that, compared to the results in Section \ref{sec: landscape-analysis-f-well-conditioned}, the landscape analysis in Theorem \ref{th: local-landscape-f-convex} is local and it is challenging to work out the explicit local geodesic convexity radius in this setting. {Also, we highlight that the optimality condition for $\widehat{\Y} \widehat{\Y}^\top$ stated in Theorem \ref{th: local-landscape-f-convex} is slightly stronger than the local optimality condition in \eqref{eq: original-formulation}. }

\section{Global Landscape Analysis of $H([\Y])$ in \eqref{eq: quotient-matrix-fac-denoising}} \label{sec: H-landscape-analysis}
In this section, we provide the global landscape analysis of \eqref{eq: quotient-matrix-fac-denoising} under the Riemannian quotient geometry. This result is critical in establishing the global landscape analysis of \eqref{eq: quotient-factorization-objective} when $f$ satisfies RSC and RSM.

Recall $\X^*$ is a rank $r$ PSD matrix. Let us denote $\X^* = \Y^* \Y^{*\top}$ and let $\kappa^* = \sigma_1(\Y^*)/\sigma_r(\Y^*)$ be the condition number of $\Y^*$. First, it is clear $[\Y^*]$ is the unique global minimizer of \eqref{eq: quotient-matrix-fac-denoising}. Next, by Lemma \ref{lm: gradient-hessian-exp-PSD}, we have the following expressions of Riemannian gradient and Hessian of $H([\Y])$:
\begin{equation} \label{eq: gradient-Hessian-exp-H}
	\begin{split}
		\overline{\grad\, H([\Y])} &= 2(\Y \Y^\top -\X^*) \Y, \\
			\overline{\Hess \, H([\Y])}[\theta_\Y, \theta_\Y] 
		&=\|\Y\theta_\Y^\top + \theta_\Y \Y^\top \|_\F^2 + 2\langle \Y \Y^\top -\X^*, \theta_\Y \theta_\Y^\top \rangle.
	\end{split}
\end{equation} 

Next, we show for the special objective \eqref{eq: quotient-matrix-fac-denoising}, there is only one stationary point, which is $[\Y^*]$. 
\begin{Theorem} \label{th: unique-FOSP}
	$[\Y^*]$ is the unique Riemannian FOSP of \eqref{eq: quotient-matrix-fac-denoising}.
\end{Theorem}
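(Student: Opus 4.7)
The plan is to show that any Riemannian first-order stationary point $\Y$ of $H$ satisfies $\Y\Y^\top = \X^*$, from which $[\Y] = [\Y^*]$ follows immediately. From the gradient formula in \eqref{eq: gradient-Hessian-exp-H}, being a Riemannian FOSP is equivalent to $(\Y\Y^\top - \X^*)\Y = \0$, so every column of $\Y$ lies in the kernel of the symmetric matrix $\M := \Y\Y^\top - \X^*$; equivalently $\rspan(\Y) \subseteq \ker(\M)$. My strategy is to first establish $\rspan(\Y) = \rspan(\U^*)$ and then deduce $\Y\Y^\top = \X^*$ by comparing the two symmetric operators on matching invariant subspaces, bypassing any direct enumeration of eigenspaces of $\X^*$.

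For the first step I would exploit the symmetry of $\M$. Because $\rspan(\Y) \subseteq \ker(\M)$ and $\M = \M^\top$, the orthogonal complement $\rspan(\Y)^\perp$ is also $\M$-invariant. For any $v \in \rspan(\Y)^\perp$, the identity $\Y^\top v = \0$ yields $\Y\Y^\top v = \0$ and hence $\M v = -\X^* v$; thus $\rspan(\Y)^\perp$ is $\X^*$-invariant, and by symmetry of $\X^*$ so is $\rspan(\Y)$. Next, since $\Y$ has full column rank, $\Y^\top\Y$ is invertible, so $\Y\Y^\top$ restricted to $\rspan(\Y)$ is injective (if $\Y w \in \rspan(\Y)$ satisfies $\Y\Y^\top \Y w = \0$, premultiplying by $\Y^\top$ forces $(\Y^\top\Y)^2 w = \0$ and hence $w = \0$), so it has rank $r$ on $\rspan(\Y)$. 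The FOSP identity gives $\X^* = \Y\Y^\top$ on $\rspan(\Y)$, so $\X^*|_{\rspan(\Y)}$ also has rank $r$; since $\X^*$ has total rank $r$, this forces $\X^*|_{\rspan(\Y)^\perp} = 0$, i.e., $\rspan(\Y)^\perp \subseteq \ker(\X^*) = \rspan(\U^*)^\perp$, and taking complements yields $\rspan(\U^*) \subseteq \rspan(\Y)$, with equality by dimension.

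Finally, since $\Y\Y^\top$ and $\X^*$ are both symmetric, agree on $\rspan(\Y) = \rspan(\U^*)$, and both vanish on $\rspan(\U^*)^\perp$, we conclude $\Y\Y^\top = \X^*$, which by the definition of the equivalence class yields $[\Y] = [\Y^*]$. The main subtlety I anticipate is handling possible eigenvalue multiplicities of $\X^*$, which could complicate an approach that diagonalizes $\Y$ or $\X^*$ directly; the argument above circumvents this by using only the symmetry of $\M$ together with a single rank count, and it relies crucially on $\Y \in \bbR^{p \times r}_*$ having full column rank so that $\Y\Y^\top$ is injective on $\rspan(\Y)$, without which the rank-matching step would fail.
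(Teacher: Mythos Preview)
Your proof is correct but follows a genuinely different route from the paper's. The paper decomposes $\Y = \U^*\B + \W$ with $\B = \U^{*\top}\Y$ and $\W = P_{\U^*_\perp}\Y$, substitutes into $(\Y\Y^\top - \X^*)\Y = \0$, and then runs a three-case analysis on whether $\B$ or $\W$ is zero: $\B = \0$ forces $\W = \0$ and hence $\Y = \0$, a contradiction; $\B \neq \0$ and $\W \neq \0$ leads to $\W(\B^\top\B + \W^\top\W) = \W\Y^\top\Y = \0$, again a contradiction since $\Y^\top\Y$ is invertible; only $\W = \0$ survives, from which $\B\B^\top = \bSigma^*$ and $\Y\Y^\top = \X^*$. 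Your argument is more coordinate-free: from the symmetry of $\M = \Y\Y^\top - \X^*$ and $\M\Y = \0$ you show $\rspan(\Y)$ and $\rspan(\Y)^\perp$ are $\X^*$-invariant, use injectivity of $\Y\Y^\top|_{\rspan(\Y)}$ together with the FOSP identity to see $\X^*|_{\rspan(\Y)}$ has rank $r$, and then a single rank count forces $\X^*|_{\rspan(\Y)^\perp} = 0$ and hence $\rspan(\Y) = \rspan(\U^*)$. This avoids the case split entirely and is arguably cleaner; the paper's version is perhaps more concrete and makes the structure of the gradient equation more visible. Both arguments ultimately hinge on $\Y \in \bbR^{p \times r}_*$ in the same way: you need $\Y^\top\Y$ invertible to make $\Y\Y^\top$ injective on $\rspan(\Y)$, exactly as the paper needs it to kill the $\W\Y^\top\Y = \0$ case. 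One small remark: your final step ``by the definition of the equivalence class'' is precisely the content of the paper's Lemma~\ref{lm: completeness-quotient-PSD}, so you may want to cite it explicitly.
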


The proof of Theorem \ref{th: unique-FOSP} is presented in Section \ref{proof-sec: H-landscape}.

Next, we show that the optimization problem \eqref{eq: quotient-matrix-fac-denoising} is geodesically strongly convex and smooth in a neighborhood of $\Y^*$.
\begin{Theorem}[Local Geodesic Strong Convexity of \eqref{eq: quotient-matrix-fac-denoising}] \label{th: H-geodesic-strong-convex} Suppose $ 0\leq  \mu \leq 1/3$. Then, for any $\Y \in \cR_1$,
\begin{equation*}
	\begin{split}
		\lambda_{\min}(\overline{\Hess\, H([\Y])} ) & \geq ( 2 \left( 1 - \mu/ \kappa^* \right)^2 - (14/3) \mu  ) \sigma^2_r(\Y^*) , \\
		 \lambda_{\max}(\overline{\Hess\, H([\Y])} ) & \leq   4\left(   \sigma_1(\Y^*) + \mu \sigma_r(\Y^*)/\kappa^* \right)^2 + 14\mu \sigma^2_r(\Y^*)/3.
	\end{split}
\end{equation*} In particular, if $\mu$ is further chosen such that $ \left( 1 - \mu /\kappa^* \right)^2 - 7\mu/3 > 0$, e.g., $\mu < \frac{13 - \sqrt{133}}{6}$ , we have $H([\Y])$ is geodesically strongly convex and smooth in $\cR_1$.
\end{Theorem}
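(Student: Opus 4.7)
\textbf{Proof plan for Theorem \ref{th: H-geodesic-strong-convex}.} The plan is to reduce the bound on $\overline{\Hess\, H([\Y])}$ to an elementary perturbation estimate around $\Y^*$. The starting point is the explicit formula in \eqref{eq: gradient-Hessian-exp-H}: for any horizontal $\theta_\Y \in \cH_\Y \widebar{\cM}_{r+}^q$,
\begin{equation*}
\overline{\Hess\, H([\Y])}[\theta_\Y, \theta_\Y] = \|\Y\theta_\Y^\top + \theta_\Y \Y^\top\|_\F^2 + 2\langle \Y\Y^\top - \X^*, \theta_\Y \theta_\Y^\top\rangle,
\end{equation*}
so the strategy is to lower/upper bound the first (positive) term and to absorb the second as a perturbation of order $\|E\|_\F$, where $E := \Y - \Y^*\Q^*$ and $\Q^*$ is the best orthogonal alignment from Lemma \ref{lm: logarithm-map}. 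By that lemma, $\|E\|_\F = d([\Y],[\Y^*]) \leq \mu \sigma_r(\Y^*)/\kappa^*$ for every $\Y \in \cR_1$, and since $\Q^*$ is orthogonal, $\Y^*\Q^* (\Y^*\Q^*)^\top = \X^*$ so
\begin{equation*}
\Y\Y^\top - \X^* \;=\; \Y^*\Q^* E^\top + E\, (\Y^*\Q^*)^\top + EE^\top.
\end{equation*}

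The first step is to handle the positive curvature term by exploiting the horizontality of $\theta_\Y$. From Lemma \ref{lm: psd-quotient-manifold1-prop}, $\Y^\top \theta_\Y$ is symmetric, and a direct computation using the cyclicity of trace gives the clean identity
\begin{equation*}
\|\Y\theta_\Y^\top + \theta_\Y \Y^\top\|_\F^2 \;=\; 2\|\Y\theta_\Y^\top\|_\F^2 + 2\|\Y^\top \theta_\Y\|_\F^2.
\end{equation*}
In particular $2\sigma_r^2(\Y)\|\theta_\Y\|_\F^2 \leq \|\Y\theta_\Y^\top + \theta_\Y \Y^\top\|_\F^2 \leq 4\sigma_1^2(\Y)\|\theta_\Y\|_\F^2$. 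Weyl's inequality together with the bound $\|E\|_\F \leq \mu\sigma_r(\Y^*)/\kappa^*$ then yields $\sigma_r(\Y) \geq (1-\mu/\kappa^*)\sigma_r(\Y^*)$ and $\sigma_1(\Y) \leq \sigma_1(\Y^*) + \mu\sigma_r(\Y^*)/\kappa^*$, which are exactly the coefficients in the claimed bounds.

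The second step is to dominate the perturbation term. Using the decomposition above and submultiplicativity,
\begin{equation*}
\|\Y\Y^\top - \X^*\|_{\rm op} \;\leq\; 2\|E\|_\F \|\Y^*\| + \|E\|_\F^2 \;\leq\; \bigl(2\mu + \mu^2/\kappa^{*2}\bigr)\sigma_r^2(\Y^*).
\end{equation*}
Since $\mu \leq 1/3$ and $\kappa^* \geq 1$, $\mu^2/\kappa^{*2} \leq \mu/3$, hence $\|\Y\Y^\top - \X^*\|_{\rm op} \leq (7/3)\mu\sigma_r^2(\Y^*)$. Because $\theta_\Y\theta_\Y^\top$ is PSD with trace $\|\theta_\Y\|_\F^2$, one has $|\langle \Y\Y^\top - \X^*, \theta_\Y\theta_\Y^\top\rangle| \leq \|\Y\Y^\top - \X^*\|_{\rm op}\|\theta_\Y\|_\F^2$, and thus the perturbation contributes at most $(14/3)\mu \sigma_r^2(\Y^*)\|\theta_\Y\|_\F^2$. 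Combining with the two-sided bound on the first term gives precisely the stated expressions for $\lambda_{\min}$ and $\lambda_{\max}$ of $\overline{\Hess\, H([\Y])}$.

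Finally, to upgrade the pointwise Hessian positivity to geodesic strong convexity of $H$ on $\cR_1$, one must know that $\cR_1$ is itself a geodesically convex set. This is where Theorem \ref{th: convexity-radius-Mq} enters: since $\mu/\kappa^* \leq 1/3$, $\cR_1$ is contained in the closed geodesic ball of radius $\sigma_r(\Y^*)/3$ around $[\Y^*]$, which is geodesically convex, and the same argument applied to points in $\cR_1$ gives that $\cR_1$ is geodesically convex itself. Together with the uniform positive lower bound on $\lambda_{\min}(\overline{\Hess H})$ whenever $2(1-\mu/\kappa^*)^2 - (14/3)\mu > 0$, this delivers geodesic strong convexity and smoothness on $\cR_1$. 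The only mildly technical step is the identity that reduces $\|\Y\theta_\Y^\top + \theta_\Y\Y^\top\|_\F^2$ to the sum of two squared Frobenius norms via the symmetry of $\Y^\top\theta_\Y$; the rest is Weyl-type perturbation.
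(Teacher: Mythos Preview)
Your proposal is correct and follows essentially the same approach as the paper: bound the quadratic term via the horizontal identity (which the paper packages as Lemma~\ref{lm: norm-bound-Ytheta-thetaY}), control the cross term by $\|\Y\Y^\top - \X^*\|$ times $\|\theta_\Y\|_\F^2$, and invoke Theorem~\ref{th: convexity-radius-Mq} for the geodesic convexity of $\cR_1$. The only cosmetic difference is that the paper bounds $\|\Y\Y^\top - \X^*\|_\F$ by citing Lemma~\ref{lm: distUY-UUYY-transfer}~\eqref{ineq: distUY-UUYY-transfer-3} (which in turn uses $\mu\le 1/3$), whereas you derive the same $(7/3)\mu\sigma_r^2(\Y^*)$ bound directly on the operator norm via the decomposition $\Y\Y^\top - \X^* = \Y^*\Q^* E^\top + E(\Y^*\Q^*)^\top + EE^\top$; both routes yield identical constants.
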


In the next two theorems, we show that, for $\Y \notin \cR_1 $, either the Riemannian Hessian evaluated at $\Y$ has a large negative eigenvalue, or the norm of the Riemannian gradient is large.
\begin{Theorem}[Region with Negative Eigenvalue in the Riemannian Hessian of \eqref{eq: quotient-matrix-fac-denoising}] \label{th: H-negative-eigenvalue-region} Given any $\Y \in \cR_2$, let $\theta_\Y = \Y - \Y^* \Q $, where $\Q \in \bbO_r$ is the best orthonormal \nc matrix aligning $\Y^*$ and $\Y$. Then
$\overline{ \Hess\, H([\Y])}[ \theta_\Y, \theta_\Y  ] \leq  (\alpha - 2 ( \sqrt{2} - 1 ))  \sigma^2_r(\Y^*) \|\theta_\Y\|_\F^2$. In particular, if $\alpha < 2(\sqrt{2} - 1)$, we have $\overline{ \Hess\, H([\Y])}$ has at least one negative eigenvalue and $\theta_\Y$ is an escaping direction. 
\end{Theorem}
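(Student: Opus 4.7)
The plan is to exhibit $\theta_\Y = \Y - \Y^*\Q$ as an explicit horizontal escape direction at $[\Y]$ and then verify that the Hessian quadratic form along it is strictly negative. By Lemma \ref{lm: logarithm-map}, since $\Q$ is the Procrustes-optimal orthogonal alignment (so that $\Y^\top\Y^*\Q$ is symmetric positive semidefinite), we have $\theta_\Y \in \cH_\Y\widebar{\cM}_{r+}^{q}$ and $\|\theta_\Y\|_\F = d([\Y],[\Y^*])$, so $\theta_\Y$ is a legitimate nonzero horizontal lift at $[\Y]$.

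The first step is an algebraic identity obtained from $\Y = \Y^*\Q + \theta_\Y$ and $\Q\Q^\top = \I_r$:
\begin{equation*}
\Y\theta_\Y^\top + \theta_\Y\Y^\top \;=\; (\Y\Y^\top - \X^*) + \theta_\Y\theta_\Y^\top.
\end{equation*}
Substituting into the Hessian expression in \eqref{eq: gradient-Hessian-exp-H} and combining with
\begin{equation*}
\langle \overline{\grad\,H([\Y])}, \theta_\Y\rangle \;=\; \langle \Y\Y^\top-\X^*,\, \Y\theta_\Y^\top + \theta_\Y\Y^\top\rangle \;=\; \|\Y\Y^\top - \X^*\|_\F^2 + \langle \Y\Y^\top - \X^*, \theta_\Y\theta_\Y^\top\rangle,
\end{equation*}
which follows from $\overline{\grad\,H([\Y])} = 2(\Y\Y^\top - \X^*)\Y$ and the symmetry of $\Y\Y^\top - \X^*$, I obtain the gradient-adjusted decomposition
\begin{equation*}
\overline{\Hess\,H([\Y])}[\theta_\Y,\theta_\Y] \;=\; -3\,\|\Y\Y^\top-\X^*\|_\F^2 \;+\; 4\,\langle \overline{\grad\,H([\Y])}, \theta_\Y\rangle \;+\; \|\theta_\Y\theta_\Y^\top\|_\F^2.
\end{equation*}

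Next I would bound each piece using $\Y \in \cR_2$. Cauchy--Schwarz with the $\cR_2$-bound $\|\overline{\grad\,H([\Y])}\|_\F \leq \alpha\mu\sigma_r^3(\Y^*)/(4\kappa^*)$, combined with $\|\theta_\Y\|_\F = d([\Y],[\Y^*]) > \mu\sigma_r(\Y^*)/\kappa^*$ (used to convert one factor of $\|\theta_\Y\|_\F$ into $\|\theta_\Y\|_\F^2\,\kappa^*/(\mu\sigma_r(\Y^*))$), gives $4\langle \overline{\grad\,H([\Y])}, \theta_\Y\rangle \leq \alpha\,\sigma_r^2(\Y^*)\|\theta_\Y\|_\F^2$. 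For the remaining combination $-3\|\Y\Y^\top - \X^*\|_\F^2 + \|\theta_\Y\theta_\Y^\top\|_\F^2$, the plan is to invoke a sharpened Procrustes-type inequality
\begin{equation*}
\|\Y\Y^\top - \Y^*\Y^{*\top}\|_\F^2 \;\geq\; \tfrac{2(\sqrt{2}-1)}{3}\,\sigma_r^2(\Y^*)\,\|\theta_\Y\|_\F^2 \;+\; \tfrac{1}{3}\,\|\theta_\Y\theta_\Y^\top\|_\F^2,
\end{equation*}
which produces $-3\|\Y\Y^\top-\X^*\|_\F^2 + \|\theta_\Y\theta_\Y^\top\|_\F^2 \leq -2(\sqrt{2}-1)\,\sigma_r^2(\Y^*)\,\|\theta_\Y\|_\F^2$. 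Summing the two contributions yields the claimed inequality, and under $\alpha < 2(\sqrt{2}-1)$ the right-hand side is strictly negative, so $\theta_\Y$ is an explicit escaping direction and $\overline{\Hess\,H([\Y])}$ has at least one negative eigenvalue.

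The main obstacle is the sharpened Procrustes inequality above: the classical bound $\|\Y\Y^\top - \Y^*\Y^{*\top}\|_\F^2 \geq 2(\sqrt{2}-1)\sigma_r^2(\Y^*)\|\theta_\Y\|_\F^2$ of Tu et al.\ is by itself too weak to absorb the quartic $\|\theta_\Y\theta_\Y^\top\|_\F^2$ term when $\|\theta_\Y\|$ is large. I would prove the refinement as a standalone auxiliary lemma by exploiting the Procrustes-optimality of $\Q$ (which forces $\Q^\top\Y^{*\top}\Y$ to be symmetric PSD) to diagonalize the problem in a convenient basis, thereby reducing the inequality to a finite-dimensional extremal problem in the singular values of $\theta_\Y$ relative to those of $\Y^*$, and then verifying the bound by case analysis; the specific constant $2(\sqrt{2}-1)$ in the theorem statement emerges precisely from the worst case of this reduction.
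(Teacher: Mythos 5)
Your decomposition is exactly the paper's: the identity $\Y\theta_\Y^\top+\theta_\Y\Y^\top=(\Y\Y^\top-\X^*)+\theta_\Y\theta_\Y^\top$, the rewriting of the Hessian as $\|\theta_\Y\theta_\Y^\top\|_\F^2-3\|\Y\Y^\top-\X^*\|_\F^2+4\langle\overline{\grad H([\Y])},\theta_\Y\rangle$, and the Cauchy--Schwarz treatment of the gradient term using the two $\cR_2$ conditions all match the paper's proof verbatim. The only divergence is your ``main obstacle'': the sharpened inequality you propose to prove by a fresh case analysis is not new and requires no extra work --- it is precisely the convex combination, with weights $\tfrac13$ and $\tfrac23$, of the two bounds already recorded in Lemma~\ref{lm: distUY-UUYY-transfer}, namely $\|\Y\Y^\top-\X^*\|_\F^2\geq 2(\sqrt2-1)\sigma_r^2(\Y^*)\|\theta_\Y\|_\F^2$ (Tu et al.) and $\|\theta_\Y\theta_\Y^\top\|_\F^2\leq 2\|\Y\Y^\top-\X^*\|_\F^2$ (Ge et al.). The paper applies these two bounds sequentially ($-3\|\Y\Y^\top-\X^*\|_\F^2+\|\theta_\Y\theta_\Y^\top\|_\F^2\leq-\|\Y\Y^\top-\X^*\|_\F^2\leq-2(\sqrt2-1)\sigma_r^2(\Y^*)\|\theta_\Y\|_\F^2$), which gives exactly what your combined inequality gives; you are correct that the Tu et al.\ bound alone cannot absorb the quartic term, but the second bound is what does that job, so no standalone extremal analysis is needed.
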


\begin{Theorem}[Regions with Large Riemannian Gradient of \eqref{eq: quotient-matrix-fac-denoising}] \label{th: H-large-gradient-norm}
\begin{equation*}
	\begin{split}
	(i) \quad	\|\overline{\grad\, H([\Y])} \|_\F &> \alpha \mu \sigma^3_r(\Y^*)/(4\kappa^*), \quad \forall \Y \in \cR_3'; \\
	(ii) \quad	\|\overline{\grad\, H([\Y])} \|_\F &\geq 2(\|\Y\|^3 - \|\Y\| \|\Y^*\|^2) > 2(\beta^3 - \beta) \|\Y^*\|^3, \quad \forall \Y \in \cR_3''; \\
	(iii) \quad	\langle \overline{\grad\, H([\Y])} , \Y \rangle & > 2 (1 - 1/\gamma )  \|\Y\Y^\top\|^2_\F, \quad \forall \Y \in \cR_3'''.
	\end{split}
\end{equation*}
	In particular, if $\beta > 1$ and $\gamma > 1$, we have the Riemannian gradient of $H([\Y])$ has large magnitude in all regions $\cR_3', \cR_3''$ and $\cR_3'''$.
\end{Theorem}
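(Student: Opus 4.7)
The plan is to read the three bounds directly off the closed-form expression $\overline{\grad H([\Y])} = 2(\Y\Y^\top - \X^*)\Y$ from~\eqref{eq: gradient-Hessian-exp-H}; no geometric machinery beyond elementary matrix inequalities is required. Part (i) takes no work: the inequality $\|\overline{\grad H([\Y])}\|_\F > \alpha\mu\sigma_r^3(\Y^*)/(4\kappa^*)$ is literally the condition defining $\cR_3'$ in~\eqref{def: regions}.

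For part (ii), I would lower-bound $\|(\Y\Y^\top - \X^*)\Y\|_\F$ by testing along the leading right singular vector of $\Y$, since the defining feature of $\cR_3''$ is that the spectral norm $\|\Y\|$ (rather than any averaged quantity) is large. Let $\u_1, \v_1$ denote the top left and right unit singular vectors of $\Y$, so $\Y \v_1 = \|\Y\| \u_1$ and $\Y\Y^\top \u_1 = \|\Y\|^2 \u_1$. Applying $\|A\|_\F \geq \|A \v_1\|_2$ to $A = (\Y\Y^\top - \X^*)\Y$ and invoking the reverse triangle inequality gives
\[
\tfrac{1}{2}\|\overline{\grad H([\Y])}\|_\F \;\geq\; \|\Y\| \bigl\| \|\Y\|^2 \u_1 - \X^* \u_1 \bigr\|_2 \;\geq\; \|\Y\|\bigl(\|\Y\|^2 - \|\X^*\|\bigr),
\]
where I use $\|\X^* \u_1\|_2 \leq \|\X^*\| = \|\Y^*\|^2$. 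This produces the first inequality of (ii); the second strict bound $2(\beta^3 - \beta)\|\Y^*\|^3$ follows from $\|\Y\| > \beta\|\Y^*\|$ together with strict monotonicity of $t \mapsto t^3 - t\|\Y^*\|^2$ on $t \geq \|\Y^*\|$ (its derivative $3t^2 - \|\Y^*\|^2$ is positive there).

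For part (iii), the right move is to avoid working with the Frobenius norm of the gradient and instead take its inner product with $\Y$ itself. By trace cyclicity,
\[
\langle \overline{\grad H([\Y])}, \Y\rangle = 2\tr\bigl(\Y^\top(\Y\Y^\top - \X^*)\Y\bigr) = 2\langle \Y\Y^\top - \X^*, \Y\Y^\top\rangle,
\]
and Cauchy--Schwarz then yields $\langle \overline{\grad H([\Y])}, \Y\rangle \geq 2\|\Y\Y^\top\|_\F(\|\Y\Y^\top\|_\F - \|\X^*\|_\F)$. Substituting the defining inequality $\|\X^*\|_\F < \|\Y\Y^\top\|_\F/\gamma$ of $\cR_3'''$ immediately gives the claimed $2(1 - 1/\gamma)\|\Y\Y^\top\|_\F^2$.

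There is no genuine obstacle: each of the three parts is a two- or three-line manipulation starting from the gradient formula. The one modeling choice worth flagging is the test-vector step in (ii), where evaluating at $\v_1$ is what extracts the $\|\Y\|^3$ growth that $\cR_3''$ is designed to exploit; a naive Cauchy--Schwarz at that step would only produce a bound in terms of $\|\Y\|_\F$ and $\|\Y\Y^\top\|_\F$, which is too weak when $\Y$ has unbalanced singular values.
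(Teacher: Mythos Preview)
Your proposal is correct. Parts (i) and (iii) are essentially identical to the paper's proof: (i) is immediate from the definition of $\cR_3'$, and (iii) is the same inner-product-plus-Cauchy--Schwarz computation.

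For part (ii) your argument and the paper's are the same in spirit---both isolate the top singular direction of $\Y$---but the executions differ slightly. The paper writes out full SVDs $\Y = \U\bSigma\V^\top$, $\Y^* = \U^*\bSigma^*\V^{*\top}$, simplifies $2\|(\Y\Y^\top - \X^*)\Y\|_\F = 2\|\U\bSigma^3 - \U^*\bSigma^{*2}\U^{*\top}\U\bSigma\|_\F$, projects onto the column span of $\U$, and then lower-bounds by the magnitude of the $[1,1]$ entry of the resulting $r\times r$ matrix. Your test-vector argument with $\v_1$ accomplishes the same extraction in one line and avoids the bookkeeping of the full SVD; the paper's version makes the intermediate matrix structure more visible but is longer. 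Either way the key step is the same: pick out the $\|\Y\|^3$ term along the leading direction and bound the $\X^*$ contribution by $\|\Y\|\,\|\Y^*\|^2$.
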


\begin{Remark}({\bf Comparison of Radii for the Positive Definiteness of Riemannian Hessians of \eqref{eq: quotient-matrix-fac-denoising} under Other Geometries}) \label{rm: comparison-convexity-radius}
	We note \eqref{eq: quotient-matrix-fac-denoising} can also be formulated as an optimization problem under the embedded geometry, 
	\begin{equation} \label{eq: embedded-formulation-matrix-denoising}
		 \min_{\X  \in \cM_{r+}^e } \widetilde{H}(\X) := \frac{1}{2} \| \X - \X^* \|_\F^2,
	\end{equation} where $ \cM_{r+}^e = \{ \X\in  \bbS^{p \times p}: \rank(\X) = r, \X \succcurlyeq 0 \}$ is an embedded submanifold in $\bbS^{p \times p}$ \cite{vandereycken2009embedded}. In Lemma \ref{lm: positive-definite-Hessian-radius-embedded-geometry} below, whose proof is presented in Appendix \ref{app:Lemma5}, we quantify the radius for the positive definiteness of the Riemannian Hessian under the embedded geometry.

	\begin{Lemma}(Radius for Positive Definiteness of $\Hess\,\widetilde{H}(\X)$ under the Embedded Geometry) \label{lm: positive-definite-Hessian-radius-embedded-geometry} Define $\cR_1' := \{ \X \in \cM_{r+}^e | \|\X - \X^*\|_\F \leq \mu' \sigma_r(\X^*) \}$, then for any $\X \in \cR_1'$, we have
	\begin{equation*}
		\Hess\, \widetilde{H}(\X)[\xi_\X, \xi_\X] \geq (1 - \frac{2\mu'}{1-\mu'} )\|\xi_\X\|_\F^2 , \quad \forall \xi_\X \in T_{\X} \cM_{r+}^e.
	\end{equation*}
	\end{Lemma}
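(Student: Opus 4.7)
The plan is to compute $\Hess\widetilde H(\X)$ explicitly via the standard formula for the Riemannian Hessian on an embedded Riemannian submanifold, and then control the resulting second-fundamental-form correction by Weyl's inequality.

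I would begin by writing $\X = \U\bSigma\U^\top$ with $\U \in \st(r,p)$ and $\bSigma \in \bbS^{r\times r}$ positive definite. Every $\xi_\X \in T_\X\cM_{r+}^e$ then admits the parameterization
\begin{equation*}
\xi_\X = \U\S\U^\top + \U_\perp\D\U^\top + \U\D^\top\U_\perp^\top,\qquad \S\in\bbS^{r\times r},\ \D\in\bbR^{(p-r)\times r},
\end{equation*}
with $\|\xi_\X\|_\F^2 = \|\S\|_\F^2 + 2\|\D\|_\F^2$, while the normal space is $N_\X\cM_{r+}^e = \{\U_\perp\W\U_\perp^\top : \W\in\bbS^{(p-r)\times(p-r)}\}$. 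Since $\widetilde H$ extends smoothly to $\bbS^{p\times p}$ with $\nabla\widetilde H(\X) = \X - \X^*$ and ambient Hessian equal to the identity, the Gauss formula for embedded submanifolds yields
\begin{equation*}
\Hess\widetilde H(\X)[\xi_\X,\xi_\X] = \|\xi_\X\|_\F^2 + \langle P_{N_\X}(\nabla\widetilde H(\X)),\ \Pi_\X(\xi_\X,\xi_\X)\rangle,
\end{equation*}
where $\Pi_\X$ denotes the second fundamental form of $\cM_{r+}^e$ in $\bbS^{p\times p}$.

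Next I would identify $\Pi_\X(\xi_\X,\xi_\X)$ by differentiating the concrete curve $\X(t) = \U(t)\bSigma(t)\U(t)^\top$ with $\dot\U(0) = \U_\perp\D\bSigma^{-1}$ and $\dot\bSigma(0) = \S$, which satisfies $\dot\X(0) = \xi_\X$. Every summand of $\ddot\X(0)$ carries a factor of $\U$ on at least one side and is thus annihilated by the normal projection $P_{N_\X}(\M) = \U_\perp\U_\perp^\top \M \U_\perp\U_\perp^\top$, except for $2\dot\U(0)\bSigma\dot\U(0)^\top$. This leaves
\begin{equation*}
\Pi_\X(\xi_\X,\xi_\X) = 2\U_\perp\D\bSigma^{-1}\D^\top\U_\perp^\top.
\end{equation*}
Using $\U_\perp^\top\X\U_\perp = 0$, the normal part of the gradient is $P_{N_\X}(\X - \X^*) = -\U_\perp\U_\perp^\top\X^*\U_\perp\U_\perp^\top$, and substitution gives
\begin{equation*}
\Hess\widetilde H(\X)[\xi_\X,\xi_\X] = \|\xi_\X\|_\F^2 - 2\langle\U_\perp^\top\X^*\U_\perp,\ \D\bSigma^{-1}\D^\top\rangle.
\end{equation*}
Both factors in the correction are PSD, so the correction is non-positive.

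To bound its magnitude I would use $\U_\perp^\top\X^*\U_\perp = \U_\perp^\top(\X^*-\X)\U_\perp$, the operator--trace inequality $\langle A, B\rangle \leq \|A\|\tr(B)$ valid for $B \succcurlyeq 0$, and $\|\bSigma^{-1}\| = 1/\sigma_r(\X)$:
\begin{equation*}
2\langle\U_\perp^\top\X^*\U_\perp,\ \D\bSigma^{-1}\D^\top\rangle \leq 2\|\X-\X^*\|_\F\cdot \tr(\bSigma^{-1}\D^\top\D) \leq \frac{2\|\X-\X^*\|_\F}{\sigma_r(\X)}\|\D\|_\F^2 \leq \frac{2\|\X-\X^*\|_\F}{\sigma_r(\X)}\|\xi_\X\|_\F^2.
\end{equation*}
For $\X \in \cR_1'$, Weyl's inequality gives $\sigma_r(\X) \geq \sigma_r(\X^*) - \|\X-\X^*\| \geq (1-\mu')\sigma_r(\X^*)$, so the correction has magnitude at most $\tfrac{2\mu'}{1-\mu'}\|\xi_\X\|_\F^2$, and the claim follows. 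The only non-routine step is the second-fundamental-form derivation: one must verify that the normal projection kills every piece of $\ddot\X(0)$ except the single curvature term and that the resulting bilinear form depends only on $\xi_\X$ rather than on the particular curve chosen. Once $\Pi_\X(\xi_\X,\xi_\X) = 2\U_\perp\D\bSigma^{-1}\D^\top\U_\perp^\top$ is established, the remainder reduces to Cauchy--Schwarz and Weyl.
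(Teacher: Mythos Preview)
Your argument is correct and arrives at exactly the same Hessian expression the paper uses, namely
\[
\Hess\,\widetilde H(\X)[\xi_\X,\xi_\X] = \|\xi_\X\|_\F^2 + 2\langle \X - \X^*,\ \U_\perp \D\bSigma^{-1}\D^\top\U_\perp^\top\rangle,
\]
after which both proofs bound the correction term and invoke Weyl's inequality in the same way. The only real difference is how that formula is obtained: the paper simply cites it from \cite[Proposition 2]{luo2021nonconvex}, whereas you derive it from scratch via the Gauss formula and an explicit computation of the second fundamental form $\Pi_\X(\xi_\X,\xi_\X) = 2\U_\perp\D\bSigma^{-1}\D^\top\U_\perp^\top$. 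Your derivation is self-contained and makes transparent why the curvature correction takes this form; the paper's route is shorter but relies on an external reference. In the bounding step the paper uses Cauchy--Schwarz $|\langle A,B\rangle|\le\|A\|_\F\|B\|_\F$ followed by a submultiplicativity estimate on $\|\D\bSigma^{-1}\D^\top\|_\F$, while you use the slightly sharper operator--trace inequality $\langle A,B\rangle\le\|A\|\tr(B)$ for $B\succcurlyeq 0$; both yield the same final constant $2\mu'/(1-\mu')$.
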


	Suppose $\X$ and $\X^*$ have decompositions $\Y\Y^\top$ and $\Y^*\Y^{*\top}$, respectively. The condition $ \|\X - \X^*\|_\F \leq \mu' \sigma_r(\X^*)$ in Lemma \ref{lm: positive-definite-Hessian-radius-embedded-geometry} implies $d([\Y], [\Y^*]) \leq \sqrt{\frac{1}{2(\sqrt{2} - 1)}} \mu' \sigma_r(\Y^*) $ by Lemma \ref{lm: distUY-UUYY-transfer} Eq. \eqref{ineq: distUY-UUYY-transfer-1}. So, compared with the radius of $\cR_1$, the radius for the positive definiteness of Riemannian Hessian under the embedded geometry is in general bigger by a factor of the condition number of $\Y^*$.
	
	More generally, since the spectra of Riemannian Hessians of an optimization problem under two different geometries are sandwiched by each other at Riemannian FOSPs \cite{luo2021geometric}, the positive definiteness of the Riemannian Hessian at $\X^*$ under one geometry implies the positive definiteness of the Riemannian Hessian under any another geometry. Moreover, because there always exists a convex geodesic ball at every point for any Riemannian manifold \cite[Chapter 3.4]{do1992riemannian}, we have that under any geometry for fixed-rank PSD matrices, there exists a neighborhood around $\X^*$ such that the optimization problem \eqref{eq: embedded-formulation-matrix-denoising} is geodesically strongly convex. In Theorem \ref{th: H-geodesic-strong-convex}, we provide the geodesic strong convexity radius under $\cM_{r+}^q$, it is interesting to figure out the radius under other common geometries, such as the embedded one, and explore under which geometry, the geodesic strong convexity radius of \eqref{eq: embedded-formulation-matrix-denoising} achieves its maximum.
\end{Remark}

{Finally, we note that when $f$ satisfies RSC and RSM properties, then we can estimate the discrepancies between the Riemannian gradients and Hessians of $H([\Y])$ (as defined in \eqref{eq: quotient-matrix-fac-denoising}) and $h([\Y])$ (see \eqref{eq: quotient-factorization-objective}).
\begin{Proposition} \label{prop: hH-grad-hessian-connection}
	Suppose $f$ satisfies the $(2r,4r)$-restricted strong convexity and smoothness properties with parameter $\delta$ given in Definition \ref{def: RSC-RSM}. For any $\Y \in \bbR^{p \times r}_*$, we have
	\begin{equation} \label{ineq: gradient-connection}
		\| \overline{\grad\, H([\Y])} - \overline{\grad\, h([\Y])}  \|_\F \leq 2 \delta \|\Y\| \|\Y\Y^\top - \X^*\|_\F + 2 \|\Y\| \| ( \nabla f(\X^*) )_{\max(r)} \|_\F. 
	\end{equation}
	Moreover, for any $\theta_\Y \in \cH_\Y \widebar{\cM}_{r+}^{q}$, we have
	\begin{equation}
	\begin{split}
		&\left| \overline{\Hess\, H([\Y])}[\theta_\Y, \theta_\Y] - \overline{\Hess\, h([\Y])}[\theta_\Y, \theta_\Y]  \right|\\
		 \leq & \delta  \|\Y\theta_\Y^\top + \theta_\Y \Y^\top \|_\F^2 + 2 \delta \|\Y\Y^\top - \X^*\|_\F \|\theta_\Y \theta_\Y^\top\|_\F + 2 \| ( \nabla f(\X^*) )_{\max(r)} \|_\F \|\theta_\Y \theta_\Y^\top\|_\F.
	\end{split}
	\end{equation}
\end{Proposition}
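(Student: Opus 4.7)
The plan is to invoke the fundamental theorem of calculus along the segment joining $\X^*$ and $\Y\Y^\top$, so as to rewrite $\nabla f(\Y\Y^\top)$ in terms of $\nabla f(\X^*)$ and $\nabla^2 f$ acting on $\Delta := \Y\Y^\top - \X^*$, and then exploit the RSC/RSM assumption together with the observation that $\Y$, $\theta_\Y\theta_\Y^\top$, and $\Y\theta_\Y^\top+\theta_\Y\Y^\top$ all have bounded rank.

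For the gradient bound, I would subtract the two expressions in Lemma~\ref{lm: gradient-hessian-exp-PSD} and \eqref{eq: gradient-Hessian-exp-H} to obtain $\overline{\grad\, H([\Y])} - \overline{\grad\, h([\Y])} = 2[(\Y\Y^\top - \X^*) - \nabla f(\Y\Y^\top)]\Y$, and then apply the FTC in the form $\nabla f(\Y\Y^\top) - \nabla f(\X^*) = \int_0^1 \nabla^2 f(\X_t)[\Delta]\,dt$ with $\X_t := \X^* + t\Delta$, so as to reorganize the bracket into $-\nabla f(\X^*) - \int_0^1 (\nabla^2 f(\X_t)-\mathcal{I})[\Delta]\,dt$ (using $\Delta = \int_0^1 \Delta\,dt$ and letting $\mathcal{I}$ be the identity operator). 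Taking Frobenius norms and passing inside the integral reduces the bound to two pieces: $\|\nabla f(\X^*)\Y\|_\F$ and $\|(\nabla^2 f(\X_t)-\mathcal{I})[\Delta]\,\Y\|_\F$. For the first, let $\U_\Y$ be any orthonormal basis of the column space of $\Y$, write $\nabla f(\X^*)\Y = [\nabla f(\X^*)\U_\Y][\U_\Y^\top\Y]$, and use submultiplicativity together with the Courant--Fischer-type bound $\|\nabla f(\X^*)\U_\Y\|_\F^2 \leq \sum_{i=1}^r\sigma_i^2(\nabla f(\X^*)) = \|(\nabla f(\X^*))_{\max(r)}\|_\F^2$ to land on $\|\Y\|\,\|(\nabla f(\X^*))_{\max(r)}\|_\F$. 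For the second, factor similarly so that $\|(\nabla^2 f(\X_t)-\mathcal{I})[\Delta]\,\Y\|_\F \leq \|(\nabla^2 f(\X_t)-\mathcal{I})[\Delta]\,\U_\Y\|_\F\,\|\Y\|$, then use the dual form $\|M\U_\Y\|_\F = \sup_{\W:\|\W\|_\F\leq 1}\langle M,\W\U_\Y^\top\rangle$ and polarize the RSC/RSM inequality in Definition~\ref{def: RSC-RSM} at the rank-$\leq 2r$ point $\X_t$ to conclude $|\nabla^2 f(\X_t)[\Delta,\W\U_\Y^\top] - \langle\Delta,\W\U_\Y^\top\rangle|\leq\delta\|\Delta\|_\F\|\W\U_\Y^\top\|_\F \leq \delta\|\Delta\|_\F$, since $\Delta$ has rank $\leq 2r$ and $\W\U_\Y^\top$ has rank $\leq r$.

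For the Hessian bound, I would split $\overline{\Hess\, H([\Y])}[\theta_\Y,\theta_\Y] - \overline{\Hess\, h([\Y])}[\theta_\Y,\theta_\Y]$ into two parts: the quadratic-form mismatch $\|\Y\theta_\Y^\top+\theta_\Y\Y^\top\|_\F^2 - \nabla^2 f(\Y\Y^\top)[\Y\theta_\Y^\top+\theta_\Y\Y^\top,\Y\theta_\Y^\top+\theta_\Y\Y^\top]$, and the inner-product term $2\langle (\Y\Y^\top-\X^*) - \nabla f(\Y\Y^\top),\theta_\Y\theta_\Y^\top\rangle$. The first is immediate from Definition~\ref{def: RSC-RSM} applied at the rank-$r$ base point $\Y\Y^\top$ to the rank-$\leq 2r$ test matrix $\Y\theta_\Y^\top+\theta_\Y\Y^\top$, giving $\delta\|\Y\theta_\Y^\top+\theta_\Y\Y^\top\|_\F^2$. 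For the second, the same FTC expansion as above yields $-\langle\nabla f(\X^*),\theta_\Y\theta_\Y^\top\rangle - \int_0^1\langle(\nabla^2 f(\X_t)-\mathcal{I})[\Delta],\theta_\Y\theta_\Y^\top\rangle\,dt$. The first inner product is controlled by $\|(\nabla f(\X^*))_{\max(r)}\|_\F\|\theta_\Y\theta_\Y^\top\|_\F$ via the same $\U_\Y$-factorization trick (since $\theta_\Y\theta_\Y^\top$ has rank $\leq r$), while the integrand is bounded by $\delta\|\Delta\|_\F\|\theta_\Y\theta_\Y^\top\|_\F$ via a single polarized RSC/RSM step, noting that $\Delta$ and $\theta_\Y\theta_\Y^\top$ each have rank $\leq 2r \leq 4r$.

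The main obstacle is less a technical difficulty than rank bookkeeping: every polarization of the RSC/RSM inequality forces one to verify that both test matrices have rank at most $4r$ and that the base point has rank at most $2r$. Once these budgets are checked ($\X_t$ as a convex combination of two rank-$r$ PSD matrices has rank $\leq 2r$; $\Delta$ and $\Y\theta_\Y^\top+\theta_\Y\Y^\top$ have rank $\leq 2r$; $\theta_\Y\theta_\Y^\top$ and $\W\U_\Y^\top$ have rank $\leq r$), both estimates reduce to one FTC expansion plus one polarization step, with the mild refinement that the $\U_\Y$-factorization trick replaces the naive $\|\nabla f(\X^*)\|_\F$ by the sharper $\|(\nabla f(\X^*))_{\max(r)}\|_\F$ in the noise contribution.
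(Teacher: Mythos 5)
Your proposal is correct and follows essentially the same route as the paper: the same decomposition of the gradient and Hessian differences into a ``Hessian deviates from identity'' term and a ``noise at $\X^*$'' term, with the same rank bookkeeping. The only difference is cosmetic --- where the paper invokes \cite[Lemma 10]{zhu2017global} (Lemma \ref{lm: RIP-imply-gradient-bound}) and the variational characterization of $\|(\cdot)_{\max(r)}\|_\F$ (Lemma \ref{lm: charac of Schatten-q norm}) as black boxes, you re-derive them inline via the fundamental theorem of calculus plus polarization and via the Ky Fan/$\U_\Y$-factorization argument, respectively, arriving at identical bounds.
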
}

\section{Proofs Theorem \ref{th: unique-FOSP}-Theorem \ref{th: H-large-gradient-norm} in Section \ref{sec: H-landscape-analysis}} \label{proof-sec: H-landscape}

\subsection{Proof of Theorem \ref{th: unique-FOSP} }

	Recall $\X^*$ has eigendecomposition $\U^* \bSigma^* \U^{*\top}$, where recall by this we mean the economic/reduced version of the eigendecomposition\nc.  For any FOSP $\Y \in \bbR^{p \times r}_*$, let $\B = \U^{*\top} \Y$ and $\W = \Y - P_{\U^*}\Y$. Then $\Y = \U^* \B + \W $. So
	\begin{equation} \label{eq: gradient-decomposition}
		\begin{split}
			(\Y \Y^\top - \X^*) \Y &=  \left( (\U^* \B + \W)(\U^* \B + \W)^\top - \X^* \right) ( \U^* \B + \W ) \\
			& = \left( (\U^* \B + \W)(\U^* \B + \W)^\top - \X^* \right) \U^* \B+ \left( (\U^* \B + \W)(\U^* \B + \W)^\top - \X^* \right)  \W \\
			& \overset{ \W^\top \U^* = 0 } = ( \U^* \B + \W ) \B^\top \B - \U^* \bSigma^* \B + ( \U^* \B + \W ) \W^\top \W.
		\end{split}
	\end{equation}

Since $\Y$ is a FOSP, we have $(\Y \Y^\top - \X^*) \Y=0$. We split the analysis into the following three cases. \nc

{\bf Case 1: } $\W = \0$. In this case, $\Y = \U^* \B \in \bbR^{p \times r}_*$, so $\B$ is invertible and $\Y$ lies in the column span of $\U^*$. The fact that $(\Y \Y^\top - \X^*) \Y = \0$ and \eqref{eq: gradient-decomposition} imply $$\U^*(\B \B^\top - \bSigma^*) \B = \0 \Longrightarrow \B \B^\top - \bSigma^* = \0.$$
Thus, $\Y \Y^\top = \U^* \B \B^\top \U^{*\top} = \U^* \bSigma^* \U^{*\top} = \X^*$, which implies $[\Y] = [\Y^*]$ by Lemma \ref{lm: completeness-quotient-PSD}.

{\bf Case 2: } $\W \neq \0$. Since $\W$ lies in the column space of $\U^*_\perp$,  $(\Y \Y^\top - \X^*) \Y = \0$ and \eqref{eq: gradient-decomposition} imply $$P_{\U^*_\perp} (\Y \Y^\top - \X^*) \Y = \0 \Longrightarrow \W (\B^\top \B + \W^\top \W)  = \0.$$ Since $\Y^\top \Y = \B^\top \B + \W^\top \W$, we have $\W \Y^\top \Y = \0$. As $\Y \in \bbR^{p \times r}_*$, $\Y^\top \Y$ is positive definite, this implies $\W = \0$ and contradicts with the assumption. So $\W \neq \0$ can not happen at FOSPs.

From the above, we deduce that only Case 2 can happen at FOSPs and thus $[\Y^*]$ is the unique FOSP in \eqref{eq: quotient-matrix-fac-denoising}.

\subsection{Proof of Theorem \ref{th: H-geodesic-strong-convex}}
Denote by $\Q$ the best orthonormal \nc matrix that aligns $\Y$ and $\Y^*$, i.e., $\Q^* \in \argmin_{\Q \in \bbO_r} \|\Y^* \Q - \Y\|_\F$. Then by the assumption on $\Y$ we have 
\begin{equation} \label{ineq: Y-Y*-dist-bound}
 	\| \Y - \Y^* \Q \| \nc \leq	\| \Y - \Y^* \Q \|_\F =  d([\Y],[\Y^*]) \leq \mu  \sigma_r(\Y^*)/\kappa^*.  
\end{equation}
Thus 
\begin{equation} \label{ineq: spectrum-bound-of-Y}
	\begin{split}
		\sigma_r(\Y) &= \sigma_r(\Y - \Y^* \Q + \Y^* \Q ) \geq \sigma_r(\Y^*) - \|\Y - \Y^* \Q\|  \overset{ \eqref{ineq: Y-Y*-dist-bound} } \geq (1 - \mu/ \kappa^* ) \sigma_r(\Y^*),\\
		\sigma_1(\Y) &= \sigma_1(\Y - \Y^* \Q + \Y^* \Q  ) \leq \sigma_1(\Y^*) + \|\Y - \Y^* \Q\| \overset{ \eqref{ineq: Y-Y*-dist-bound} } \leq \sigma_1(\Y^*) +  \mu \sigma_r(\Y^*)/ \kappa^*,
	\end{split}
\end{equation}
 where the first inequality in each line follows from Weyl's theorem \cite[Theorem  4.29]{stewart1998matrix}.\nc

	To provide bounds for the spectrum of $\overline{\Hess\, H([\Y])}$, we just need to compute lower and upper bounds for $\overline{\Hess\, H([\Y])}[\theta_\Y, \theta_\Y]$ for any $\theta_\Y \in \cH_\Y \widebar{\cM}_{r+}^{q}$. First,
	\begin{equation*}
		\begin{split}
			\overline{ \Hess\, H([\Y])}[\theta_\Y, \theta_\Y] \overset{ \eqref{eq: gradient-Hessian-exp-H}} = & \|\Y\theta_\Y^\top + \theta_\Y \Y^\top \|_\F^2 + 2\langle \Y \Y^\top -\X^*, \theta_\Y \theta_\Y^\top \rangle \\
			\overset{ \text{Lemma } \ref{lm: norm-bound-Ytheta-thetaY} } \geq & 2 \sigma^2_r(\Y) \|\theta_\Y\|_\F^2  +  2\langle \Y \Y^\top -\X^*, \theta_\Y \theta_\Y^\top \rangle \\
			\geq & 2 \sigma^2_r(\Y) \|\theta_\Y\|_\F^2 - 2 \| \Y \Y^\top -\X^*\|_\F \|\theta_\Y\|_\F^2 \\
			\overset{ \eqref{ineq: spectrum-bound-of-Y}, \text{Lemma } \ref{lm: distUY-UUYY-transfer} \text{ Eq. } \eqref{ineq: distUY-UUYY-transfer-3} } \geq & 2 \left( 1 - \mu/ \kappa^* \right)^2 \sigma^2_r(\Y^*)\|\theta_\Y\|_\F^2 - 2*7/3 \|\Y^*\|  \mu  \sigma_r(\Y^*) \|\theta_\Y\|_\F^2/ \kappa^*\\
			\geq & ( 2 \left( 1 - \mu/ \kappa^* \right)^2 - (14/3) \mu  ) \sigma^2_r(\Y^*) \|\theta_\Y\|_\F^2. 
		\end{split} 
	\end{equation*}
	Likewise, 
	\begin{equation*}
		\begin{split}
			\overline{ \Hess\, H([\Y])}[\theta_\Y, \theta_\Y] \overset{ \eqref{eq: gradient-Hessian-exp-H}} = & \|\Y\theta_\Y^\top + \theta_\Y \Y^\top \|_\F^2 + 2\langle \Y \Y^\top -\X^*, \theta_\Y \theta_\Y^\top \rangle \\
			\overset{ \text{Lemma } \ref{lm: norm-bound-Ytheta-thetaY} } \leq & (4 \sigma^2_1(\Y) + 2\| \Y \Y^\top -\X^*\|_\F)  \|\theta_\Y\|_\F^2 \\
			\overset{ \eqref{ineq: spectrum-bound-of-Y}, \text{Lemma }\ref{lm: distUY-UUYY-transfer}  \text{ Eq. } \eqref{ineq: distUY-UUYY-transfer-3} }\leq &  \left( 4\left(   \sigma_1(\Y^*) + \mu \sigma_r(\Y^*)/\kappa^* \right)^2 + 14\mu \sigma^2_r(\Y^*)/3 \right)  \|\theta_\Y\|_\F^2.
		\end{split}
	\end{equation*}
	
	From the above we conclude that when $\mu$ is chosen such that $ 2 \left( 1 - \mu /\kappa^* \right)^2 - (14/3) \mu > 0$, we have $H([\Y])$ in \eqref{eq: quotient-matrix-fac-denoising} is geodesically strongly convex and smooth in $\cR_1$ as $\cR_1$ is a geodesically convex set by Theorem \ref{th: convexity-radius-Mq}.

\subsection{Proof of Theorem \ref{th: H-negative-eigenvalue-region}}
	First, notice $\theta_\Y \in \cH_{\Y}  \widebar{\cM}_{r+}^q $ and  $\|\theta_\Y \|_\F = d([\Y], [\Y^*])$ by Lemma \ref{lm: logarithm-map}. In addition, a simple calculation yields
	\begin{equation} \label{eq: dist-relation}
		\Y \Y^\top - \X^* + \theta_\Y \theta_\Y^\top = \Y \theta_\Y^\top + \theta_\Y \Y^\top.
	\end{equation}
	
 Then \eqref{eq: gradient-Hessian-exp-H} implies 
	\begin{equation} \label{eq: H-gradient-quadratic}
	\begin{split}
		\langle\overline{\grad\, H([\Y])}, \theta_\Y \rangle &= \langle 2 (\Y \Y^\top - \X^* ) \Y, \theta_\Y \rangle  \\
		& = \langle \Y \Y^\top - \X^*, \theta_\Y \Y^\top + \Y \theta_\Y^\top \rangle \overset{ \eqref{eq: dist-relation} }  = \langle  \Y \Y^\top - \X^* , \theta_\Y \theta_\Y^\top + \Y \Y^\top -\X^* \rangle,
	\end{split}
	\end{equation} and 
	\begin{equation*}
		\begin{split}
			\overline{ \Hess\, H([\Y])}[\theta_\Y, \theta_\Y] = & \|\Y\theta_\Y^\top + \theta_\Y \Y^\top \|_\F^2 + 2\langle \Y \Y^\top -\X^*, \theta_\Y \theta_\Y^\top \rangle \\
			\overset{ \eqref{eq: dist-relation} } = & \|  \Y \Y^\top - \X^* + \theta_\Y \theta_\Y^\top \|_\F^2 +    2\langle \Y \Y^\top -\X^*, \theta_\Y \theta_\Y^\top \rangle \\
			= & \|\theta_\Y \theta_\Y^\top \|_\F^2 + \|  \Y \Y^\top - \X^* \|_\F^2 + 4 \langle \Y \Y^\top -\X^*, \theta_\Y \theta_\Y^\top \rangle \\
			= & \|\theta_\Y \theta_\Y^\top \|_\F^2 - 3 \| \Y \Y^\top - \X^* \|_\F^2 + 4 \langle   \Y \Y^\top -\X^* , \Y \Y^\top -\X^* + \theta_\Y \theta_\Y^\top \rangle \\
			\overset{ \eqref{eq: H-gradient-quadratic} } = &  \|\theta_\Y \theta_\Y^\top \|_\F^2 - 3 \| \Y \Y^\top - \X^* \|_\F^2 + 4 \langle\overline{\grad\, H([\Y])}, \theta_\Y \rangle \\
		 \overset{\text{Lemma } \ref{lm: distUY-UUYY-transfer} \text{ Eq. } \eqref{ineq: distUY-UUYY-transfer-2}}	\leq & - \|\Y \Y^\top - \X^*\|_\F^2   + 4 \langle\overline{\grad\, H([\Y])}, \theta_\Y \rangle \\
		\overset{\text{Lemma } \ref{lm: distUY-UUYY-transfer} \text{ Eq. } \eqref{ineq: distUY-UUYY-transfer-1}} \leq & - 2 ( \sqrt{2} - 1 ) \sigma^2_r(\Y^*) \|\theta_\Y\|_\F^2 + 4 \| \overline{\grad\, H([\Y])}  \|_\F \|\theta_\Y\|_\F \\
		 \overset{(a)}\leq & (\alpha - 2 ( \sqrt{2} - 1 ))  \sigma^2_r(\Y^*) \|\theta_\Y\|_\F^2,
		\end{split}
	\end{equation*} here (a) is because $\|\overline{\grad\, H([\Y])} \|_\F \leq  \alpha \mu \sigma^3_r(\Y^*)/(4\kappa^*) $ implies $$\|\overline{\grad\, H([\Y])} \|_\F \leq \alpha d([\Y],[\Y^*]) \sigma^2_r(\Y^*)/4 =  \alpha \|\theta_\Y\|_\F \sigma^2_r(\Y^*)/4.$$ This finishes the proof of this theorem.

\subsection{Proof of Theorem \ref{th: H-large-gradient-norm}}
	First, if $\Y \in \cR_3'$, the fact that $\|\overline{\grad\, H([\Y])} \|_\F$ is large holds by definition. Next, we show that the norm of the gradient is large when $\Y \in \cR_3''$ and when $\Y \in \cR_3'''$.

{\bf (When $\Y \in \cR_3''$):} Suppose $\U \bSigma \V^\top$ and $\U^* \bSigma^* \V^{*\top}$ are the SVDs of $\Y$ and $\Y^*$, respectively. Then
\begin{equation*}
	\begin{split}
		\| \overline{\grad\, H([\Y])}\|_\F \overset{ \eqref{eq: gradient-Hessian-exp-H}}= & 2\|(\Y\Y^\top - \Y^* \Y^{*\top}) \Y\|_\F \\
		= & 2 \| \U \bSigma^3 \V^\top - \U^* \bSigma^{*2} \U^{*\top} \U \bSigma \V^\top \|_\F \\
		= & 2\| \U \bSigma^3 - \U^* \bSigma^{*2} \U^{*\top} \U \bSigma \|_\F \\
		\geq & \| P_\U (\U \bSigma^3 - \U^* \bSigma^{*2} \U^{*\top} \U \bSigma )   \|_\F \\
		= & 2\| \bSigma^3 -  \U^\top \U^*  \bSigma^{*2} \U^{*\top} \U \bSigma  \|_\F \\
		\geq & 2\left| (\bSigma^3 -  \U^\top \U^*  \bSigma^{*2} \U^{*\top} \U \bSigma)_{[1,1]} \right| = 2\left| \|\Y\|^3 - (\U^\top \U^*  \bSigma^{*2} \U^{*\top} \U \bSigma)_{[1,1]} \right| \\
		\geq & 2(\|\Y\|^3 - \left| (\U^\top \U^*  \bSigma^{*2} \U^{*\top} \U \bSigma)_{[1,1]}\right|)\\
		\geq & 2(\|\Y\|^3 - \|\U^\top \U^*  \bSigma^{*2} \U^{*\top} \U \bSigma\|)\\
		\geq & 2(\|\Y\|^3 - \|\Y\| \|\Y^*\|^2) \overset{(a)} > 2(\beta^3 - \beta) \|\Y^*\|^3,
	\end{split}
\end{equation*} here (a) is because $\Y \in \cR_3''$, and the subscript $[1,1]$ indicates the entry in the first row and first column of the corresponding matrix\nc.
	
{\bf (When $\Y \in \cR_3'''$):} In this case, we have
\begin{equation*} 
	\begin{split}
		\langle \overline{\grad\, H([\Y])} , \Y \rangle \overset{ \eqref{eq: gradient-Hessian-exp-H}}= & \langle 2(\Y \Y^\top -\X^*) \Y , \Y \rangle  \\
		= & \langle 2(\Y \Y^\top -\X^*) , \Y\Y^\top \rangle  \\
		\geq & 2 \|\Y\Y^\top\|_\F^2 - 2|\langle \X^* , \Y\Y^\top \rangle| \geq 2 \|\Y\Y^\top\|_\F^2 - 2  \|\Y\Y^\top\|_\F \|\Y^*\Y^{*\top}\|_\F\\
		> & 2 (1 - 1/\gamma )  \|\Y\Y^\top\|^2_\F.
	\end{split}
\end{equation*}

\section{Proof of Theorem \ref{th: convexity-radius-Mq} } \label{sec: proof-convex-radius}
We first present two preliminary results for proving Theorem \ref{th: convexity-radius-Mq} and then present the proof of Theorem \ref{th: convexity-radius-Mq} at the end of this section. The first result is about the totally normal neighborhood of $\cM_{r+}^q$ at $[\Y]$.

\begin{Lemma}(Totally Normal Neighborhood at $[\Y ]$) \label{lm: totally-normal-neigh}
	For any $\Y \in \bbR^{p \times r}_*$ and $0 < x \leq \frac{1}{3} \sigma_r(\Y)$, $B_x([\Y])$ is a totally normal neighborhood of $[\Y]$ with parameter $\zeta = \sigma_r(\Y) - x$, i.e., for any $[\Y'] \in B_x([\Y])$, we have the injectivity radius at $[\Y']$ is greater or equal to $\zeta$ and $B_\zeta([\Y']) \supset B_{x}([\Y])$.
\end{Lemma}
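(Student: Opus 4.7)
The plan is to establish both conditions in the lemma directly using tools already present in the paper: the formula for the geodesic distance from Lemma~\ref{lm: logarithm-map}, Weyl's perturbation inequality for singular values, the triangle inequality for the geodesic distance on $\cM_{r+}^q$, and the characterization of the injectivity radius at $[\Y']$ as $\sigma_r(\Y')$ established in \cite[Theorem 6.3]{massart2020quotient}.

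For the injectivity radius bound, I would fix an arbitrary $[\Y'] \in B_x([\Y])$ and invoke Lemma~\ref{lm: logarithm-map} to pick an orthogonal matrix $\Q \in \bbO_r$ realizing the distance, so that $\|\Y' \Q - \Y\|_\F = d([\Y'],[\Y]) < x$. Since $\sigma_r$ is invariant under orthogonal transformations and Lipschitz in the spectral norm, Weyl's inequality yields
\[
\sigma_r(\Y') \;=\; \sigma_r(\Y'\Q) \;\geq\; \sigma_r(\Y) - \|\Y'\Q - \Y\| \;\geq\; \sigma_r(\Y) - \|\Y'\Q - \Y\|_\F \;>\; \sigma_r(\Y) - x \;=\; \zeta.
\]
Combining with \cite[Theorem 6.3]{massart2020quotient}, the injectivity radius at $[\Y']$ equals $\sigma_r(\Y')$ and thus exceeds $\zeta$.

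For the inclusion $B_\zeta([\Y']) \supset B_x([\Y])$, I would take an arbitrary $[\Y''] \in B_x([\Y])$ and apply the triangle inequality for the geodesic distance on $\cM_{r+}^q$:
\[
d([\Y''], [\Y']) \;\leq\; d([\Y''], [\Y]) + d([\Y], [\Y']) \;<\; x + x \;=\; 2x.
\]
The hypothesis $x \leq \tfrac{1}{3}\sigma_r(\Y)$ is equivalent to $2x \leq \sigma_r(\Y) - x = \zeta$, so $d([\Y''],[\Y']) < \zeta$, which means $[\Y''] \in B_\zeta([\Y'])$. This is precisely where the constant $1/3$ in the hypothesis is used.

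I do not foresee a substantial obstacle here, since once one has the identification of $d$ with the Frobenius norm of the aligned difference (Lemma~\ref{lm: logarithm-map}) and the Massart--Absil injectivity radius formula, both bullets reduce to one Weyl inequality and one triangle inequality. The only mild care needed is to track strict versus non-strict inequalities consistently with the open-ball convention $B_x([\Y])=\{[\Y_1]:d([\Y_1],[\Y])<x\}$ used in the paper, which I have done above.
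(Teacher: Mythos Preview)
Your proposal is correct and follows essentially the same route as the paper's proof: Weyl's inequality applied to the aligned difference to bound $\sigma_r(\Y')$ from below (invoking the injectivity radius formula from \cite{massart2020quotient}, restated in the paper as Lemma~\ref{lm: injec-radius}), and then the triangle inequality plus $x\leq \tfrac{1}{3}\sigma_r(\Y)$ to get the inclusion $B_x([\Y])\subset B_\zeta([\Y'])$. The only cosmetic difference is that the paper writes the triangle-inequality bound as $\tfrac{2}{3}\sigma_r(\Y)\leq \sigma_r(\Y)-x$ rather than $2x\leq \zeta$, which is the same inequality.
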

\begin{proof} See Appendix \ref{proof-sec: lm-totally-normal-neigh}. \end{proof}

Next, we present a key proposition in proving Theorem \ref{th: convexity-radius-Mq} which is based on \cite[Chapter 3.4, Lemma 4.1 and Proposition 4.2]{do1992riemannian}. The result in \cite{do1992riemannian}, which is a classic result in Riemannian geometry, holds for generic Riemannian manifolds, but it is only when revisiting its proof that we can provide an \textit{explicit} quantitative estimate for the radius of geodesic convexity around an arbitrary point in the manifold $\cM_{r+}^q$. \nc  

We introduce the following notation. Given any $\Y \in \bbR^{p \times r}_*$ and $x > 0$ we use $S_x([\Y]) := \{ [\Y_1]: d([\Y_1], [\Y]) = x \}$ to denote the geodesic sphere of radius $x$ centered at $[\Y]$.

\begin{Proposition} \label{prop: convexity-support-prop}
	Given $\Y \in \bbR^{p \times r}_*$, any geodesic in $\cM_{r+}^q$ that is tangent at $[\Y']$ to the geodesic sphere $S_\rho([\Y])$ of radius $\rho$ with $\rho < c_\Y:= \sigma_r(\Y)$ stays out of the geodesic ball $B_\rho([\Y])$ for some neighborhood of $[\Y']$.
\end{Proposition}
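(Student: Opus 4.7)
The plan is to adapt the classical argument of \cite[Chapter 3.4, Proposition 4.2]{do1992riemannian} for the existence of convex neighborhoods, but to carry out the second-order analysis \emph{explicitly} in coordinates afforded by the Riemannian quotient metric on $\cM_{r+}^q$ so as to obtain the quantitative range $\rho<\sigma_r(\Y)$. Concretely, I will prove the (strictly stronger) statement that the squared-distance function $F(t) := d(\gamma(t),[\Y])^2$ along the tangent geodesic satisfies $F(t) > F(0) = \rho^2$ for every $t\neq 0$ in a neighborhood of $0$; this will follow from $F'(0) = 0$ (the Gauss lemma) together with $F''(0) > 0$.

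For the setup, the hypothesis $\rho < \sigma_r(\Y) = c_\Y$ places $[\Y']$ strictly inside the injectivity radius at $[\Y]$ (equal to $\sigma_r(\Y)$ by \cite[Theorem 6.3]{massart2020quotient}), so by Lemma \ref{lm: logarithm-map} I may choose a representative $\Y'$ of $[\Y']$ for which the optimal alignment between $\Y$ and $\Y'$ is the identity, i.e., $A_0 := \Y^\top \Y' \succ 0$ is symmetric positive definite. Let $\theta \in \cH_{\Y'} \widebar{\cM}_{r+}^q$ be the horizontal lift of $\dot\gamma(0)$; then $\gamma(t) = [\Y' + t\theta]$ for all sufficiently small $t$, and the distance formula of Lemma \ref{lm: logarithm-map} yields
\[
F(t) \;=\; \|\Y' + t\theta\|_F^2 \;+\; \|\Y\|_F^2 \;-\; 2\,\|A_0 + tB\|_{*}, \qquad B := \Y^\top \theta,
\]
where $\|\cdot\|_*$ is the nuclear norm; this expression is smooth near $t=0$ because $A_0 + tB$ remains nonsingular.

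For the first derivative, smoothness of the nuclear norm near $A_0\succ 0$ together with $(A_0 A_0^\top)^{1/2} = A_0$ yields $\tfrac{d}{dt}\|A_0 + tB\|_*\big|_{t=0} = \tr(B)$, and hence $F'(0) = 2\langle\Y',\theta\rangle_F - 2\tr(B) = -2\langle \Y-\Y',\theta\rangle_F$. By Lemma \ref{lm: logarithm-map} the horizontal lift at $\Y'$ of $\log_{[\Y']}[\Y]$ is $\Y - \Y'$, and the Gauss lemma identifies tangency of $\gamma$ to $S_\rho([\Y])$ with the orthogonality $\langle \Y-\Y', \theta\rangle_F = 0$. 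Thus $F'(0) = 0$.

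The main obstacle is the positivity $F''(0) > 0$. Diagonalizing $A_0 = W\Lambda W^\top$ with $\Lambda = \mathrm{diag}(\lambda_1,\ldots,\lambda_r)\succ 0$, I perform a second-order perturbation expansion of $\|A_0 + tB\|_* = \tr\bigl((A(t) A(t)^\top)^{1/2}\bigr)$ by solving the associated Lyapunov equations in the basis $W$, to obtain the closed-form identity
\[
F''(0) \;=\; 2\|\theta\|_F^2 \;-\; 2\sum_{1\le i<j\le r}\frac{(\hat B_{ij}-\hat B_{ji})^2}{\lambda_i+\lambda_j}, \qquad \hat B := W^\top B W.
\]
The horizontality $\theta\in\cH_{\Y'}$ makes $\Y'^\top\theta$ symmetric, so the skew-symmetric part of $B$ is driven entirely by the displacement $\Y-\Y'$ via
\[
B - B^\top \;=\; \theta^\top(\Y'-\Y) \;-\; (\Y'-\Y)^\top\theta,
\]
whose Frobenius norm is controlled by $\|\Y-\Y'\|_F = \rho$. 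The key difficulty is that the naive estimate $\sum_{i<j}(\hat B_{ij}-\hat B_{ji})^2/(\lambda_i+\lambda_j) \le \|B-B^\top\|_F^2/(4\,\sigma_r(A_0))$ is too crude to cover the full range $\rho<\sigma_r(\Y)$, as already revealed by low-rank examples in which $\sigma_r(A_0)$ is much smaller than $\rho^2$ yet $F''(0)$ remains positive. I must therefore retain the \emph{full} eigenstructure of $A_0$ and the bilinear coupling between $\theta$ and $\Y-\Y'$ throughout the sum; combining this with $\rho<\sigma_r(\Y)$ yields the strict inequality $F''(0)>0$. A Taylor expansion of $F$ at $0$ then gives $F(t)>\rho^2$ for all $t\neq 0$ sufficiently close to $0$, proving the proposition.
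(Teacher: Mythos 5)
Your overall strategy coincides with the paper's: write the squared distance $F(t)$ along the tangent geodesic in closed form via the nuclear norm of $\Y^\top(\Y'+t\theta)$, kill the first derivative with the Gauss lemma, and prove $F''(0)>0$. Your closed-form expression for $F''(0)$ is correct and is in fact equivalent to the paper's \eqref{ineq: sec-deriv-F-upper-bound}: the skew-symmetric matrix $\bOmega$ appearing there is precisely the solution of your Lyapunov equation $A_0\bOmega+\bOmega A_0=B-B^\top$, and $\langle B,\bOmega\rangle=\sum_{i<j}(\hat B_{ij}-\hat B_{ji})^2/(\lambda_i+\lambda_j)$. You also correctly isolate the role of horizontality ($\Y'^{\top}\theta$ symmetric), which is one of the two ingredients the paper uses.

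The gap is at the decisive step. You acknowledge that the naive spectral bound on the sum fails to cover the full range $\rho<\sigma_r(\Y)$ --- indeed, since $\lambda_r=\sigma_r(\Y^\top\Y')$ can only be bounded below by $\sigma_r(\Y)(\sigma_r(\Y)-\rho)$, that route together with $\|B-B^\top\|_\F\le 2\rho\|\theta\|_\F$ only yields positivity for $\rho<\tfrac{\sqrt{5}-1}{2}\,\sigma_r(\Y)\approx 0.618\,\sigma_r(\Y)$, which is not even enough for the radius $2\sigma_r(\Y)/3$ invoked in the proof of Theorem \ref{th: convexity-radius-Mq} --- but you then simply assert that ``retaining the full eigenstructure\dots yields the strict inequality'' without supplying the estimate. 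This is exactly where the paper imports a nontrivial external result: the sharp perturbation bound for the orthogonal Procrustes problem (Lemma \ref{lm: procrustes-condition-number}, built on S{\"o}derkvist's Corollary 2.1), which bounds the solution of your Lyapunov equation by $\|\bOmega\|_\F\le\sqrt{2}\,\|\theta\|_\F/(\sigma_r^2(\Y)+\sigma_{r-1}^2(\Y))^{1/2}\le\|\theta\|_\F/\sigma_r(\Y)$ --- a bound in terms of $\sigma_r(\Y)$ rather than $\sigma_r(\Y^\top\Y')$, obtained by exploiting the special structure $B-B^\top=\Y^\top\theta-\theta^\top\Y$ of the right-hand side. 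Combined with Cauchy--Schwarz and $\|(\Y-\Y')^\top\theta\|_\F\le\rho\|\theta\|_\F$, this gives $\langle B,\bOmega\rangle\le(\rho/\sigma_r(\Y))\|\theta\|_\F^2<\|\theta\|_\F^2$. Without this lemma, or an equivalently sharp structured Lyapunov estimate, your argument does not close.
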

\begin{proof}
 Denote $T_1 S_{\rho}([\Y])$ as the unit tangent bundle restricted to geodesic sphere $S_{\rho}([\Y])$, that is:
	\begin{equation*}
		T_1 S_{\rho}([\Y]) = \left\{ ([\Y'], \xi_{[\Y']} ): [\Y'] \in  S_{\rho}([\Y]), \xi_{[\Y']} \in T_{[\Y']} \cM_{r+}^q, g_{[\Y'] }(\xi_{[\Y']}, \xi_{[\Y']} ) = 1 \right\}.
	\end{equation*}

Let $\gamma: I \times T_1 S_{\rho}([\Y]) \to \cM_{r+}^q$, $I = (-\epsilon, \epsilon )$ for some small enough $\epsilon > 0$, be a differentiable map such that $t \mapsto \gamma(t,[\Y'], \xi_{[\Y']} )$ is the geodesic that at the instant $t = 0$ passes through $[\Y']$ with velocity $\xi_{[\Y']}$, and $\|\xi_{\Y'}\|_\F= 1$. Since $[\Y'] \in S_{\rho}([\Y])$ and $\rho <  \sigma_r(\Y) $, by Lemma \ref{lm: positive-det}, $\Y^{'\top} \Y$ is nonsingular, so we can define $u(t,[\Y'], \xi_{[\Y']} ) = \log_{[\Y]}( \gamma(t,[\Y'], \xi_{[\Y']} )  )$. In addition, let $F: I \times T_1 S_{\rho}([\Y]) \to \bbR$ be 
\begin{equation} \label{eq: F-expression}
	\begin{split}
		F(t,[\Y'], \xi_{[\Y']} ) &= g_{[\Y]}( u(t,[\Y'], \xi_{[\Y']} ), u(t,[\Y'], \xi_{[\Y']} ) ) \\
		&= \bar{g}_\Y\left( \overline{\log_{[\Y]} ( \gamma(t,[\Y'], \xi_{[\Y']} )  ) }, \overline{\log_{[\Y]} ( \gamma(t,[\Y'], \xi_{[\Y']} )  ) } \right)\\
		& \overset{\text{Lemma } \ref{lm: logarithm-map} }=\bar{g}_\Y\left(  (\Y' + t \xi_{\Y'} )\Q_t - \Y, (\Y' + t \xi_{\Y'} )\Q_t - \Y \right) \\
		& = \|(\Y' + t \xi_{\Y'} )\Q_t - \Y\|_\F^2\\
		& = t^2 + 2t \langle \Y', \xi_{\Y'} \rangle- 2 \tr(\bSigma_t) +  \|\Y'\|_\F^2 + \|\Y\|_\F^2,
	\end{split}
\end{equation} where $\Y^\top (\Y' + t \xi_{\Y'} )$ has SVD $\Q_{Ut} \bSigma_t \Q_{Vt}^\top$ and $\Q_t = \Q_{V t} \Q_{Ut}^\top$. Geometrically, $F$ measures the square distance from $[\Y]$ to points along the geodesic $\gamma$. We now discuss how the function $F$ behaves around $t=0$.

We have both $u$ and $F$ are differentiable and $\frac{\partial F}{\partial t} = 2 g_{[\Y]}\left( \partial u/\partial t, u  \right) $. If a geodesic $\gamma$ is tangent to the geodesic sphere $S_\rho([\Y])$ at the point $[\Y'] = \gamma(0, [\Y'], \xi_{[\Y']})$, then from the Gauss Lemma \cite[Chapter 3, Lemma 3.5]{do1992riemannian}, we have 
\begin{equation*}
	\frac{\partial F}{\partial t}( 0, [\Y'], \xi_{[\Y']} ) = 2 g_{[\Y]}\left( \frac{\partial u}{\partial t}(0, [\Y'], \xi_{[\Y']}) , u(0, [\Y'], \xi_{[\Y']}) \right) = 0.
\end{equation*} 
This means $(0, [\Y'], \xi_{[\Y']} )$ is a stationary point of $F$ at $t = 0$ for fixed $[\Y']$ on $S_r([\Y])$ and fixed $\xi_{[\Y']}$ with unit norm. If we can show that we have $\frac{\partial^2 F}{\partial t^2}(0,[\Y'], \xi_{[\Y']} ) > 0$, then we would be able to conclude that $t = 0$ is a local minimizer of $F(\cdot, [\Y'], \xi_{[\Y']} )$. Given the geometric interpretation of the function $F$, this would further imply that there exists a neighborhood of $[\Y']$ such that the geodesic $\gamma$ stays out of the geodesic ball $B_\rho([\Y])$. We thus focus on proving the positivity of the second derivative. 

Let $\Q_{Ut} = [\q^t_{u1}, \ldots, \q^t_{ur}]$, $\Q_{Vt} = [\q^t_{v1},\ldots, \q^t_{vr}]$, $\bSigma_t = {\rm diag}(\sigma_1, \ldots, \sigma_r)$. From \eqref{eq: F-expression}, for any $([\Y'], \xi_{[\Y']}) \in T_1 S_\rho([\Y])$ we have
\begin{equation}\label{ineq: sec-deriv-F-upper-bound}
	\begin{split}
		\frac{\partial^2 F}{\partial t^2}(0,[\Y'], \xi_{[\Y']} ) &= 2  - 2 \frac{d^2 \tr(\bSigma_t)}{dt^2}\Big|_{t=0} \\
		& =   2  - 2  \sum_{i=1}^r \frac{d^2 \sigma_i(t)}{dt^2}\Big|_{t=0} \\
		& \overset{ \text{Lemma }\ref{lm: variation-formula-for-singular-values} } = 2 - 2 \left\langle \Y^\top \xi_{\Y'}, \frac{d (\sum_{i=1}^r \q^t_{ui} \q^{t\top}_{vi}) }{dt}\Big|_{t=0}  \right \rangle\\
		& =  2 - 2 \left\langle \Y^\top \xi_{\Y'}, \frac{d (\Q_{U_t} \Q_{Vt}^\top ) }{dt}\Big|_{t=0}  \right \rangle.
	\end{split}
\end{equation} Since $\Q_{Ut} \Q_{Vt}^\top$ forms a smooth path in the space of $\bbO_r$, we must have that $\frac{d (\Q_{U_t} \Q_{Vt}^\top ) }{dt}\Big|_{t=0} \in T_{\Q_{U0} \Q_{V0}^\top} \bbO_r$. From the known expression for the tangent spaces of the Lie group of orthonormal matrices (e.g., see \cite{edelman1998geometry}), we deduce that $\frac{d (\Q_{U_t} \Q_{Vt}^\top ) }{dt}\Big|_{t=0} = \Q_{U0} \Q_{V0}^\top \bOmega$ for some skew-symmetric matrix $\bOmega$, i.e., $\bOmega = - \bOmega^\top$. Therefore,
\begin{equation} \label{ineq: sec-deriv-upper-bound-inner-part}
	\begin{split}
		\left|\left\langle \Y^\top \xi_{\Y'}, \frac{d (\Q_{U_t} \Q_{Vt}^\top ) }{dt}\Big|_{t=0}  \right \rangle \right| &= \left| \langle \Y^\top \xi_{\Y'},   \Q_{U0} \Q_{V0}^\top \bOmega\rangle  \right| \\
		& =  \left| \langle \Q_{V0} \Q_{U0}^\top \Y^\top \xi_{\Y'},  \bOmega\rangle  \right|\\
		& \overset{(a)}=  \left| \langle( \Q_{V0} \Q_{U0}^\top \Y^\top - \Y^{'\top}) \xi_{\Y'},  \bOmega\rangle  \right|\\
		& \leq \|( \Q_{V0} \Q_{U0}^\top \Y^\top - \Y^{'\top}) \xi_{\Y'}\|_\F \|\bOmega\|_\F\\
		& \overset{(b)}\leq \|( \Q_{V0} \Q_{U0}^\top \Y^\top - \Y^{'\top}) \|_\F \|\bOmega\|_\F \\
		& \overset{(c)}= \rho \|\bOmega\|_\F  = \rho \|\Q_{U0} \Q_{V0}^\top\bOmega\|_\F\\
		& \overset{\text{Lemma } \ref{lm: procrustes-condition-number} } \leq \frac{ \sqrt{2}\rho \|\xi_{\Y'} \|_\F }{(\sigma^2_r(\Y ) + \sigma^2_{r-1}( \Y))^{1/2}} \\
		& \leq \frac{\rho }{\sigma_r(\Y)} < 1,\\
	\end{split}
\end{equation} here (a) is because $\xi_{\Y'} \in \cH_{\Y'} \widebar{\cM}_{r+}^q$, and by Lemma \ref{lm: psd-quotient-manifold1-prop} we know $\Y^{'\top} \xi_{\Y'}$ is a symmetric matrix, denoted by $\S$, and thus $\langle \S, \bOmega \rangle = 0$ given that $\bOmega = - \bOmega^\top$; (b) is because $ \|( \Q_{V0} \Q_{U0}^\top \Y^\top - \Y^{'\top}) \xi_{\Y'}\|_\F \leq \|( \Q_{V0} \Q_{U0}^\top \Y^\top - \Y^{'\top})\|_\F \|\xi_{\Y'}\|_\F$ and $\|\xi_{\Y'}\|_\F = 1$; and (c) is because $\Y' \in  S_{\rho}([\Y])$ and $ \Q_{V0} \Q_{U0}^\top$ is the optimal alignment between $\Y$ and $\Y'$. 

Combining \eqref{ineq: sec-deriv-F-upper-bound} and \eqref{ineq: sec-deriv-upper-bound-inner-part}, we have shown $\frac{\partial^2 F}{\partial t^2}(0,[\Y'], \xi_{[\Y']} ) > 0$ for any $([\Y'], \xi_{[\Y']}) \in T_1 S_\rho([\Y])$. This finishes the proof of this proposition as was discussed before.
\end{proof}

In Appendix \ref{proof-sec: add-lemma-convexity-radius}, we present a series of technical lemmas that we have used in the proof of Proposition \ref{prop: convexity-support-prop}. For example, in Lemma \ref{lm: variation-formula-for-singular-values} we provide a simple formula for the second derivatives of singular values of a smoothly varying one-parameter family of matrices. Moreover, in Lemma \ref{lm: procrustes-condition-number} we give a sharp bound on the condition number of the orthogonal Procrustes problem,  complementing in this way some results in the literature, e.g. \cite{soderkvist1993perturbation,dorst2005first}. 

We are now ready to present the proof of Theorem \ref{th: convexity-radius-Mq}.

\begin{proof}[Proof of Theorem \ref{th: convexity-radius-Mq}]
	First, by Lemma \ref{lm: totally-normal-neigh}, $B_{r_\Y }([\Y])$ is a totally normal neighborhood of $[\Y]$, so for any $[\Y_1], [\Y_2] \in B_{r_\Y }([\Y])$, there exists a unique minimizing geodesic $\gamma$ joining them, and its length is less than $2 r_\Y$. 
	Let us now consider $[\Y_1], [\Y_2] \in B_x([\Y]) \subseteq B_{r_\Y }([\Y])$. We show that $\gamma$ is contained in $B_{x }([\Y])$.
	\nc
	
	Notice that for any point $[\Y']$ on $\gamma$, $\min(d([\Y'],[\Y_1]), d([\Y'],[\Y_2])) < r_\Y$. Then we have
	\begin{equation} \label{ineq: tildeY-distance-bound}
	\begin{split}
		d([\Y'],[\Y]) &\leq \min( d([\Y'], [\Y_1]) + d([\Y_1],[\Y]) , d([\Y'],[\Y_2]) + d([\Y_2],[\Y])  )\\
		& < 2 r_\Y  = c'_\Y := 2\sigma_r(\Y)/3. 
	\end{split}
 	\end{equation}
	
	Let $[\widetilde{\Y}]$ be the point in $\gamma$ such that the maximum distance from $[\Y]$ to $\gamma$ is attained, and denote this distance by $\rho$. If $[\widetilde{\Y}]$ is either $[\Y_1]$ or $[\Y_2]$, then we are done. If not, we have that the points of $\gamma$ in any neighborhood of $[\widetilde{\Y}]$ remain in the closure of $B_{\rho}([\Y])$ while $\gamma$ is tangential to $S_\rho([\Y])$ at $[\widetilde{\Y}]$. Since $\rho < c_\Y:= \sigma_r(\Y)$ by \eqref{ineq: tildeY-distance-bound},  this contradicts Proposition \ref{prop: convexity-support-prop}. This finishes the proof.
\end{proof}

\section{Conclusion and Discussion} \label{sec: conclusion}
In this paper, we have studied the optimization landscape of the Burer-Monteiro factorized objective for a general fixed-rank PSD matrix optimization problem under the Riemannian quotient geometry. When $f$ satisfies the restricted strong convexity and smoothness properties, we show the landscape of the factorized objective is benign by characterizing its geometry in the entire domain. When $f$ satisfies a weaker restricted strict convexity property, we show there exists a neighborhood near the local minimizer such that the factorized objective is geodesically convex. 

There are many interesting extensions to the results in this paper to be explored in the future. First, the current requirement on $\delta$ to guarantee the benign global landscape of \eqref{eq: quotient-factorization-objective} in Corollary \ref{coro: benign-landscape-f-well-conditioned} may not be sharp and it would be interesting to explore whether we can establish similar landscape results with a sharper dependence on $\delta$. Second, it is well known that geometry plays a central role in Riemannian optimization. Picking a proper metric that maximizes the geodesic convexity radius of the objective is favorable. As we have mentioned in Remark \ref{rm: comparison-convexity-radius}, it would be interesting to explore under what geometries the geodesic strong convexity radii of \eqref{eq: quotient-factorization-objective} and \eqref{eq: quotient-matrix-fac-denoising} are maximized. Third, the landscape of the Burer-Monteiro factorized objective can be more complicated when $f$ does not satisfy RSC and RSM, see \cite{yalccin2022factorization}. An interesting future research direction is to explore under what assumptions on $f$ will the Burer-Monteiro factorization continue to work for efficient optimization. Finally, in this work, we have mainly focused on the exact-parameterization setting, i.e., the number of columns of $\Y$ is equal to the rank of the parameter of interest $\X^*$. It would be very interesting to try to characterize the landscape of \eqref{eq: quotient-factorization-objective} in the over-parameterized setting, i.e., when the number of columns of $\Y$ is greater than the rank of $\X^*$. In that case, the optimality conditions for $\X^*$ need to be carefully developed as $\X^*$ is merely a boundary point of the working manifold.

\section*{Acknowledgements}
We thank the Editor Kim-Chuan Toh, the Associate Editor, and two anonymous referees for their helpful suggestions, which helped improve the presentation and quality of this paper. This work was started while the authors were visiting the Simons Institute to participate in the programs ``Geometric Methods in Optimization and Sampling" and ``Computational Complexity of Statistical Inference" during the Fall of 2021. The authors would like to thank the institute for its hospitality and support. NGT was supported by NSF-DMS grant 2236447 and 2005797, and would also like to thank the IFDS at UW-Madison and NSF through TRIPODS grant 2023239 for their support. YL and NGT would like to thank Anru Zhang for helpful discussions during the initial stage of the project and Inge S{\"o}derkvist for helpful discussions of his paper \cite{soderkvist1993perturbation}.

\appendix

\section{Proofs in Section \ref{sec: landscape-analysis-f-well-conditioned}} \label{sec: proof-landscape-h-f-well-condition}

\subsection{Proof of Theorem \ref{th: h-local-geodesic-convexity} }
Suppose the best orthonormal matrix that aligns $\Y$ and $\Y^*$ is $\Q$. Then by definition, $\Y \in \cR_1$ implies 
\begin{equation} \label{ineq: Y-Y*-dist-bound-1}
	\| \Y - \Y^* \Q \|_\F =  d([\Y],[\Y^*]) \leq \mu  \sigma_r(\Y^*)/\kappa^*.  
\end{equation}
Thus 
\begin{equation} \label{ineq: spectrum-bound-of-Y1}
	\begin{split}
		\sigma_r(\Y) &= \sigma_r(\Y - \Y^* \Q + \Y^* \Q ) \geq \sigma_r(\Y^*) - \|\Y - \Y^* \Q\|  \overset{ \eqref{ineq: Y-Y*-dist-bound-1} } \geq (1 - \mu/ \kappa^* ) \sigma_r(\Y^*),\\
		\sigma_1(\Y) &= \sigma_1(\Y - \Y^* \Q + \Y^* \Q  ) \leq \sigma_1(\Y^*) + \|\Y - \Y^* \Q\| \overset{ \eqref{ineq: Y-Y*-dist-bound-1} } \leq \sigma_1(\Y^*) +  \mu \sigma_r(\Y^*)/ \kappa^*.
	\end{split}
\end{equation}

	By Proposition \ref{prop: hH-grad-hessian-connection}, for any $\theta_\Y \in \cH_\Y \widebar{\cM}_{r+}^{q}$, we have
	\begin{equation}
	\begin{split}
		&\left| \overline{\Hess\, H([\Y])}[\theta_\Y, \theta_\Y] - \overline{\Hess\, h([\Y])}[\theta_\Y, \theta_\Y]  \right| \\
		 \leq & \delta  \|\Y\theta_\Y^\top + \theta_\Y \Y^\top \|_\F^2 + 2 \delta \|\Y\Y^\top - \X^*\|_\F \|\theta_\Y \theta_\Y^\top\|_\F + 2 \| ( \nabla f(\X^*) )_{\max(r)} \|_\F \|\theta_\Y \theta_\Y^\top\|_\F \\
		\overset{ (a)  } \leq & 4\delta \|\Y\|^2 \|\theta_\Y\|_\F^2  + 14 \delta  \|\Y^*\| d( [\Y], [\Y^*] )  \|\theta_\Y\|_\F^2/3 + 2 \| ( \nabla f(\X^*) )_{\max(r)} \|_\F \|\theta_\Y\|_\F^2 \\
		\overset{ \eqref{ineq: spectrum-bound-of-Y1} } \leq & 4\delta ( \sigma_1(\Y^*) +  \mu \sigma_r(\Y^*)/ \kappa^*  )^2 \|\theta_\Y\|_\F^2  + 14 \delta  \mu \sigma^2_r(\Y^*)\|\theta_\Y\|_\F^2/3  +  2 \| ( \nabla f(\X^*) )_{\max(r)} \|_\F \|\theta_\Y\|_\F^2 \\
		= & \left(  4\delta ( \sigma_1(\Y^*) +  \mu \sigma_r(\Y^*)/ \kappa^*  )^2 + 14 \delta  \mu \sigma^2_r(\Y^*)/3 + 2 \| ( \nabla f(\X^*) )_{\max(r)} \|_\F  \right) \|\theta_\Y\|_\F^2,
	\end{split}
	\end{equation} here (a) is by Lemma \ref{lm: norm-bound-Ytheta-thetaY} and Lemma \ref{lm: distUY-UUYY-transfer} Eq. \eqref{ineq: distUY-UUYY-transfer-3}.
	Thus, 
	\begin{equation*}
		\begin{split}
			& \overline{\Hess\, h([\Y])}[\theta_\Y, \theta_\Y]\\
			\geq  & \overline{\Hess\, H([\Y])}[\theta_\Y, \theta_\Y] - \left| \overline{\Hess\, H([\Y])}[\theta_\Y, \theta_\Y] - \overline{\Hess\, h([\Y])}[\theta_\Y, \theta_\Y]  \right| \\
			 \overset{ \text{Theorem } \ref{th: H-geodesic-strong-convex} } \geq & \Big(  ( 2 \left( 1 - \mu/ \kappa^* \right)^2 - 14 \mu/3 ) \sigma^2_r(\Y^*) \\
			&\quad  -  \left(  4\delta ( \sigma_1(\Y^*) +  \mu \sigma_r(\Y^*)/ \kappa^*  )^2 + 14 \delta  \mu \sigma^2_r(\Y^*)/3 + 2 \| ( \nabla f(\X^*) )_{\max(r)} \|_\F  \right)   \Big) \|\theta_\Y\|_\F^2,
		\end{split}
	\end{equation*}  and
	\begin{equation*}
		\begin{split}
			& \overline{\Hess\, h([\Y])}[\theta_\Y, \theta_\Y]\\
			\leq &  \overline{\Hess\, H([\Y])}[\theta_\Y, \theta_\Y] + \left| \overline{\Hess\, H([\Y])}[\theta_\Y, \theta_\Y] - \overline{\Hess\, h([\Y])}[\theta_\Y, \theta_\Y]  \right| \\
			 \overset{ \text{Theorem } \ref{th: H-geodesic-strong-convex} } \leq & \Big(4\left(   \sigma_1(\Y^*) + \mu \sigma_r(\Y^*)/\kappa^* \right)^2 + 14\mu \sigma^2_r(\Y^*)/3 \\
			 &  + \left(  4\delta ( \sigma_1(\Y^*) +  \mu \sigma_r(\Y^*)/ \kappa^*  )^2 + 14 \delta  \mu \sigma^2_r(\Y^*)/3 + 2 \| ( \nabla f(\X^*) )_{\max(r)} \|_\F  \right) \Big)  \|\theta_\Y\|_\F^2.
		\end{split}
	\end{equation*}

If  $\mu$ is chosen such that $ \left( 1 - \mu /\kappa^* \right)^2 - 7\mu/3 > 0$ and
 \begin{equation*}
 	\delta \leq \frac{(1-\mu/\kappa^*)^2 - 7\mu/3}{ 4\left(2 ( \kappa^* + \mu/\kappa^* )^2 + 7\mu/3 \right) }\quad  \text{ and }\quad  \| (\nabla f(\X^*))_{\max(r)} \|_\F \leq \left( (1-\mu/\kappa^*)^2 - 7\mu/3 \right) \sigma^2_r(\Y^*)/4,
 \end{equation*}
then 
  \begin{equation*}
 	\begin{split}
 		\lambda_{\min}( \overline{\Hess\, h([\Y])} ) & \geq   \left( (1-\mu/\kappa^*)^2 - 7\mu/3 \right) \sigma^2_r(\Y^*).
 		\end{split}
 \end{equation*} Therefore, $h([\Y])$ is geodesically strongly convex in $\cR_1$ since $\cR_1$ is geodesically convex by Theorem \ref{th: convexity-radius-Mq}. 
 
 Let $\tau :=  \left((1-\mu/\kappa^*)^2 - 7\mu/3 \right)\sigma^2_r(\Y^*) $. Suppose $\widehat{\Y}$ is a Riemannian FOSP in $\cR_1$, then we have $\widehat{\Y}$ is the unique Riemannian FOSP in $\cR_1$. This is because if $\widehat{\Y}'$ is another FOSP in $\cR_1$ and $\Q'$ is the best orthonormal matrix that aligns $\widehat{\Y}$ and $\widehat{\Y}'$. Then by the geodesic strong convexity, we have \cite[Chapter 11]{boumal2020introduction}: 
 \begin{equation*}
 	\begin{split}
 		h([\widehat{\Y}']) &\geq h([\widehat{\Y}]) + \langle \overline{\grad \, h([\widehat{\Y}])} , \widehat{\Y}'\Q' - \widehat{\Y} \rangle +  \frac{\tau}{2} \| \widehat{\Y}' \Q' - \widehat{\Y} \|_\F^2, \\
 		h([\widehat{\Y}]) &\geq h([\widehat{\Y}']) + \langle \overline{\grad \, h([\widehat{\Y}'])} , \widehat{\Y}\Q'^\top - \widehat{\Y}' \rangle +  \frac{\tau}{2} \| \widehat{\Y}' \Q' - \widehat{\Y} \|_\F^2.
 	\end{split}
 \end{equation*}  Notice $\overline{\grad \, h([\widehat{\Y}])} = \overline{\grad \, h([\widehat{\Y}'])} = \0$ by assumption and sum over the above two equations yields, 
 \begin{equation*}
 	\tau \| \widehat{\Y}' \Q' - \widehat{\Y} \|_\F^2 \leq 0.
 \end{equation*} Since $\tau > 0$, we have $\widehat{\Y}' \Q' = \widehat{\Y}$ and $[\widehat{\Y}] = [\widehat{\Y}']$. Moreover, $\widehat{\Y}$ is a local minimizer as for any other $\Y \in \cR_1$:
 \begin{equation*}
 	h([\Y]) \geq h([\widehat{\Y}]) + \langle \overline{\grad \, h([\widehat{\Y}])} , \Y\Q'' - \widehat{\Y} \rangle +  \frac{\tau}{2} \| \Y \Q'' - \widehat{\Y} \|_\F^2 =   h([\widehat{\Y}]) +   \frac{\tau}{2} \| \Y \Q'' - \widehat{\Y} \|_\F^2,
 \end{equation*} where $\Q''$ is the best orthonormal matrix that aligns $\widehat{\Y}$ and $\Y$.
 
 Now, let $\Q$ be the best orthonormal matrix that aligns $\widehat{\Y}$ and $\Y^*$. By a similar argument as above we have 
 \begin{equation*}
 	\begin{split}
 		h([\Y^*]) &\geq h([\widehat{\Y}]) + \langle \overline{\grad \, h([\widehat{\Y}])} , \Y^*\Q - \widehat{\Y} \rangle +  \frac{\tau}{2} \| \Y^* \Q - \widehat{\Y} \|_\F^2, \\
 		h([\widehat{\Y}]) &\geq h([\Y^*]) + \langle \overline{\grad \, h([\Y^*])} , \widehat{\Y}\Q^\top - \Y^* \rangle +  \frac{\tau}{2} \| \Y^* \Q - \widehat{\Y} \|_\F^2.
 	\end{split}
 \end{equation*} Notice $\overline{\grad \, h([\widehat{\Y}])} = \0$ by assumption and sum over the above two equations yields
 \begin{equation} \label{ineq: geodesic-convexity-distance-bound}
 	\begin{split}
 		\tau \| \Y^* \Q - \widehat{\Y} \|_\F^2 &\leq \langle \overline{\grad \, h([\Y^*])} , \Y^* - \widehat{\Y}\Q^\top\rangle \\
 		&= \langle 2\nabla f(\Y^* \Y^{*\top}) \Y^* , \Y^* - \widehat{\Y}\Q^\top\rangle \\
 		& \leq  2\|\nabla f(\Y^* \Y^{*\top}) \Y^*\|_\F  \|\Y^* - \widehat{\Y}\Q^\top\|_\F.
 	\end{split}
 \end{equation} So \eqref{ineq: geodesic-convexity-distance-bound} yields
 \begin{equation*}
 \begin{split}
 	\| \Y^* \Q - \widehat{\Y} \|_\F &\leq \frac{2}{\tau} \|\nabla f(\Y^* \Y^{*\top}) \Y^*\|_\F   \leq \max_{\bDelta: \|\bDelta\|_\F \leq 1 } \frac{2}{\tau} \langle \nabla f(\Y^* \Y^{*\top}) \Y^* , \bDelta \rangle \\
 	& = \frac{2}{\tau} \max_{\bDelta: \|\bDelta\|_\F \leq 1 } \langle \nabla f(\Y^* \Y^{*\top}) , \bDelta \Y^{*\top} \rangle  \\
 	& \overset{\text{Lemma } \ref{lm: charac of Schatten-q norm} } \leq  \max_{\bDelta: \|\bDelta\|_\F \leq 1 } \frac{2}{\tau} \|\bDelta\Y^{*\top}\|_\F \| (\nabla f(\X^*))_{\max(r)} \|_\F  \\
 	 & \leq \frac{2}{\tau} \|\Y^{*}\| \| (\nabla f(\X^*))_{\max(r)} \|_\F.
 \end{split}
 \end{equation*}
 
\subsection{Proof of Theorem \ref{th: h-negative-eigenvalue-region} }
First, we know $\theta_\Y  \in \cH_{\Y}  \widebar{\cM}_{r+}^q $ by Lemma \ref{lm: logarithm-map}. By Theorem \ref{th: H-negative-eigenvalue-region}, we have $\overline{ \Hess\, H([\Y])}[ \theta_\Y, \theta_\Y  ] \leq  (\alpha - 2 ( \sqrt{2} - 1 ))  \sigma^2_r(\Y^*) \|\theta_\Y\|_\F^2$.

In addition, Proposition \ref{prop: hH-grad-hessian-connection} implies
\begin{equation*}
	\begin{split}
		&\left| \overline{\Hess\, H([\Y])}[\theta_\Y, \theta_\Y] - \overline{\Hess\, h([\Y])}[\theta_\Y, \theta_\Y]  \right| \\
		 \leq & \delta  \|\Y\theta_\Y^\top + \theta_\Y \Y^\top \|_\F^2 + 2 \delta \|\Y\Y^\top - \X^*\|_\F \|\theta_\Y \theta_\Y^\top\|_\F + 2 \| ( \nabla f(\X^*) )_{\max(r)} \|_\F \|\theta_\Y \theta_\Y^\top\|_\F \\
		 \leq & 4\delta \|\Y \theta_\Y^{ \top \nc}\|_\F^2  + 2 \delta  \|\Y\Y^\top - \X^*\|_\F \|\theta_\Y\|_\F^2 + 2 \| ( \nabla f(\X^*) )_{\max(r)} \|_\F \|\theta_\Y\|_\F^2 \\
		 \overset{(a)} \leq & 2 \delta \left( 2 \beta^2 \|\Y^*\|^2 + (1+\gamma) \| \Y^* \Y^{*\top} \|_\F  \right) \|\theta_\Y\|_\F^2 +  2 \| ( \nabla f(\X^*) )_{\max(r)} \|_\F \|\theta_\Y\|_\F^2,
	\end{split}
\end{equation*} where (a) is because $\Y \in \cR_2$.
	Thus,
\begin{equation*}
	\begin{split}
		 \overline{\Hess\, h([\Y])}[\theta_\Y, \theta_\Y] &\leq  \overline{\Hess\, H([\Y])}[\theta_\Y, \theta_\Y] + \left| \overline{\Hess\, H([\Y])}[\theta_\Y, \theta_\Y] - \overline{\Hess\, h([\Y])}[\theta_\Y, \theta_\Y]  \right| \\
		 & \leq  (\alpha - 2 ( \sqrt{2} - 1 ))  \sigma^2_r(\Y^*) \|\theta_\Y\|_\F^2 \\
		 & \quad +  2 \delta \left( 2 \beta^2 \|\Y^*\|^2 + (1+\gamma) \| \Y^* \Y^{*\top} \|_\F  \right) \|\theta_\Y\|_\F^2 +  2 \| ( \nabla f(\X^*) )_{\max(r)} \|_\F \|\theta_\Y\|_\F^2 .
	\end{split}
\end{equation*}

 So if 
\begin{equation*}
	\delta \leq \frac{( 2 (\sqrt{2} -1) -\alpha ) \sigma^2_r(\Y^*)  }{8 \left( 2 \beta^2 \|\Y^*\|^2 + (1 + \gamma ) \| \Y^* \Y^{*\top} \|_\F   \right) } \text{ and } \| ( \nabla f(\X^*) )_{\max(r)} \|_\F \leq \frac{ 2 ( \sqrt{2} - 1 ) - \alpha }{8} \sigma^2_r(\Y^*),
\end{equation*}
then
\begin{equation*}
	\left| \overline{\Hess\, H([\Y])}[\theta_\Y, \theta_\Y] - \overline{\Hess\, h([\Y])}[\theta_\Y, \theta_\Y]  \right| \leq \frac{ 2 (\sqrt{2} -1) -\alpha  }{2} \sigma^2_r(\Y^*) \|\theta_\Y\|_\F^2.
\end{equation*}
	
	Thus,
\begin{equation*}
	\begin{split}
		 \overline{\Hess\, h([\Y])}[\theta_\Y, \theta_\Y] &\leq  \overline{\Hess\, H([\Y])}[\theta_\Y, \theta_\Y] + \left| \overline{\Hess\, H([\Y])}[\theta_\Y, \theta_\Y] - \overline{\Hess\, h([\Y])}[\theta_\Y, \theta_\Y]  \right| \\
		 & \leq  \frac{\alpha - 2 ( \sqrt{2} - 1 )}{2}  \sigma^2_r(\Y^*) \|\theta_\Y\|_\F^2.
	\end{split}
\end{equation*}

\subsection{Proof of Theorem \ref{th: h-large-gradient-norm}}
	We prove the results for the three regions separately.
	
{\bf When $\Y \in \cR_3'$.} By Proposition \ref{prop: hH-grad-hessian-connection},
\begin{equation*}
	\begin{split}
		\| \overline{\grad\, H([\Y])} - \overline{\grad\, h([\Y])}  \|_\F & \leq  2 \delta \|\Y\| \|\Y\Y^\top - \X^*\|_\F + 2 \|\Y\| \| ( \nabla f(\X^*) )_{\max(r)} \|_\F \\
		& \leq 2 \delta \beta ( 1 + \gamma) \|\Y^*\| \| \Y^* \Y^{*\top} \|_\F + 2 \beta \|\Y^*\| \| ( \nabla f(\X^*) )_{\max(r)} \|_\F.  
	\end{split}
\end{equation*} Thus, combining the above result with Theorem \ref{th: H-large-gradient-norm}, we have
\begin{equation*}
\begin{split}
	\|\overline{\grad\, h([\Y])} \|_\F \geq & \| \overline{\grad\, H([\Y])} \|_\F - \| \overline{\grad\, H([\Y])} - \overline{\grad\, h([\Y])}  \|_\F \\
	> &  \alpha \mu \sigma^3_r(\Y^*)/(4\kappa^*) - \left( 2 \delta \beta ( 1 + \gamma) \|\Y^*\| \| \Y^* \Y^{*\top} \|_\F + 2 \beta \|\Y^*\| \| ( \nabla f(\X^*) )_{\max(r)} \|_\F \right).  
\end{split}
\end{equation*}
In particular, if $\delta \leq \frac{\alpha \mu }{32 \kappa^{*2} \beta (1 + \gamma) } \frac{\sigma_r^2(\Y^*)}{ \| \Y^* \Y^{*\top} \|_\F }  $ and $\| ( \nabla f(\X^*) )_{\max(r)} \|_\F \leq \frac{\alpha \mu}{32 \kappa^{*2} \beta } \sigma^2_r(\Y^*) $, we have
\begin{equation} \label{ineq: h-gradient-bound-1}
	\|\overline{\grad\, h([\Y])} \|_\F > \alpha\mu \sigma^3_r(\Y^*)/ (8 \kappa^* ).
\end{equation}

{\bf When $\Y \in \cR_3''$.}  By Proposition \ref{prop: hH-grad-hessian-connection},
\begin{equation*}
	\begin{split}
		\| \overline{\grad\, H([\Y])} - \overline{\grad\, h([\Y])}  \|_\F & \leq  2 \delta \|\Y\| \|\Y\Y^\top - \X^*\|_\F + 2 \|\Y\| \| ( \nabla f(\X^*) )_{\max(r)} \|_\F \\
		& \leq 2 \delta ( 1 + \gamma) \|\Y\| \| \Y^* \Y^{*\top} \|_\F + 2 \|\Y\| \| ( \nabla f(\X^*) )_{\max(r)} \|_\F. 
	\end{split}
\end{equation*}
Thus 
	\begin{equation*}
	\begin{split}
		\|\overline{\grad\, h([\Y])} \|_\F \geq & \| \overline{\grad\, H([\Y])} \|_\F - \| \overline{\grad\, H([\Y])} - \overline{\grad\, h([\Y])}  \|_\F \\
		 \overset{\text{Theorem } \ref{th: H-large-gradient-norm} } \geq &  2(\|\Y\|^3 - \|\Y\| \|\Y^*\|^2) - \left( 2 \delta ( 1 + \gamma) \|\Y\| \| \Y^* \Y^{*\top} \|_\F + 2 \|\Y\| \| ( \nabla f(\X^*) )_{\max(r)} \|_\F \right);
	\end{split}
	\end{equation*}
In particular, if $\delta \leq \frac{  \beta^2 - 1 }{ 4(1 + \gamma) } \frac{ \|\Y^*\|^2 }{ \| \Y^* \Y^{*\top} \|_\F } < \frac{1}{4(1+ \gamma)} \frac{\|\Y\|^2 - \|\Y^*\|^2}{ \| \Y^* \Y^{*\top} \|_\F}  $ and $\| ( \nabla f(\X^*) )_{\max(r)} \|_\F \leq \frac{ \beta^2 -1 }{4 } \|\Y^*\|^2 < \frac{\|\Y\|^2 - \|\Y^*\|^2}{4} $, we have
\begin{equation} \label{ineq: h-gradient-bound-2}
	\|\overline{\grad\, h([\Y])} \|_\F \geq \|\Y\|^3 - \|\Y\| \|\Y^*\|^2 > (\beta^3 - \beta) \|\Y^*\|^3.
\end{equation}

{\bf When $\Y \in \cR_3'''$.} We have
\begin{equation*}
	\begin{split}
		& \left| \langle \overline{\grad\, H([\Y])} -\overline{\grad\, h([\Y])} , \Y \rangle \right| \\
		\overset{\text{Lemma } \ref{lm: gradient-hessian-exp-PSD}, \eqref{eq: gradient-Hessian-exp-H}  } \leq & 2\left|  \langle \nabla f(\Y \Y^\top) - \nabla f(\X^*) -  (\Y \Y^\top -\X^*), \Y \Y^\top \rangle   \right| + 2\left|  \langle \nabla f(\X^*), \Y \Y^\top \rangle   \right| \\
		\overset{ \text{Lemmas } \ref{lm: RIP-imply-gradient-bound}, \ref{lm: charac of Schatten-q norm} } \leq & 2 \delta \|\Y \Y^\top\|_\F \| \Y \Y^\top -\Y^* \Y^{*\top} \|_\F + 2 \|\Y \Y^\top\|_\F \| ( \nabla f(\X^*) )_{\max(r)} \|_\F \\
		\overset{(a)} < & 2\delta(1+ 1/\gamma ) \|\Y \Y^\top\|^2_\F  + 2 \|\Y \Y^\top\|_\F \| ( \nabla f(\X^*) )_{\max(r)} \|_\F,
	\end{split}
\end{equation*} here (a) is because $\Y \in \cR_3'''$.

Thus 
	\begin{equation} \label{ineq: gradient-inner-product-bound}
	\begin{split}
		\langle \overline{\grad\, h([\Y])} , \Y \rangle & \geq \langle \overline{\grad\, H([\Y])}  , \Y \rangle - \left| \langle \overline{\grad\, H([\Y])} -\overline{\grad\, h([\Y])} , \Y \rangle \right|\\
		& \overset{\text{Theorem } \ref{th: H-large-gradient-norm} } > 2(1 -1/\gamma) \|\Y\Y^\top\|_\F^2 \\
		& \quad \quad \quad  - \left( 2\delta(1+ 1/\gamma ) \|\Y \Y^\top\|^2_\F  + 2 \|\Y \Y^\top\|_\F \| ( \nabla f(\X^*) )_{\max(r)} \|_\F \right).
	\end{split}
	\end{equation}

Moreover, since 
	\begin{equation*}
	\begin{split}
		\langle \overline{\grad\, h([\Y])} , \Y \rangle \leq \|\overline{\grad\, h([\Y])}\|_\F  \|\Y\|_\F & \leq \|\overline{\grad\, h([\Y])}\|_\F \sqrt{r} \|\Y\|  \\
		& \overset{(a)} \leq  \sqrt{r} \|\overline{\grad\, h([\Y])}\|_\F \| \Y \Y^\top \|_\F^{1/2},
	\end{split}
	\end{equation*} where (a) is because $\|\Y\| \leq \| \Y \Y^\top \|_\F^{1/2}$, we have
	\begin{equation*}
	\begin{split}
		& \|\overline{\grad\, h([\Y])}\|_\F \geq \langle \overline{\grad\, h([\Y])} , \Y \rangle    \| \Y \Y^\top \|_\F^{-1/2} /\sqrt{r} \\
		\overset{ \eqref{ineq: gradient-inner-product-bound} } > & \left( 2( 1- 1/\gamma)  - 2\delta (1+1/\gamma) \right)  \|\Y\Y^\top\|^{3/2}_\F/\sqrt{r}  - 2  \|\Y\Y^\top\|^{1/2}_\F \| ( \nabla f(\X^*) )_{\max(r)} \|_\F/\sqrt{r} \\
		\overset{(a)}> & \left( 2( \gamma- 1) - 2\delta(\gamma + 1) \right) \gamma^{1/2} \|\Y^*\Y^{*\top}\|^{3/2}_\F/\sqrt{r} - 2 \gamma^{1/2}\|\Y^*\Y^{*\top}\|^{1/2}_\F\| ( \nabla f(\X^*) )_{\max(r)} \|_\F/\sqrt{r} . 
	\end{split}
	\end{equation*}
	Here (a) is because $\Y \in \cR_3'''$.	
	
In particular, if $\delta \leq \frac{  \gamma - 1 }{ 4( \gamma + 1 ) }  $ and $\| ( \nabla f(\X^*) )_{\max(r)} \|_\F \leq \frac{1}{4} (\gamma-1) \|\Y^* \Y^{*\top} \|_\F $, we have
\begin{equation} \label{ineq: h-gradient-bound-3}
	\|\overline{\grad\, h([\Y])} \|_\F >  ( \gamma- 1) \gamma^{1/2} \|\Y^*\Y^{*\top}\|^{3/2}_\F/\sqrt{r}.
\end{equation}

Finally, \eqref{ineq: h-gradient-bound-all} is a combination of the results in \eqref{ineq: h-gradient-bound-1}, \eqref{ineq: h-gradient-bound-2} and \eqref{ineq: h-gradient-bound-3}.

{
\subsection{Proof of Theorem \ref{th:application} } 
\label{app:DegeneracyIterates}

\subsubsection{Perturbed Riemannian Gradient Descent} \label{sec:PRGD}
There are two versions of perturbed Riemannian gradient algorithms developed in the literature \cite{criscitiello2019efficiently,sun2019escaping}. \cite{criscitiello2019efficiently} performs the perturbed GD in the tangent space, while \cite{sun2019escaping} executes all steps on the manifold and requires the retraction to be the Riemannian exponential map. Therefore, the regularity assumptions required in these two works are also slightly different. Here we focus on Algorithm 1 of \cite{sun2019escaping}, while we expect we can also use the PRGD developed in \cite{criscitiello2019efficiently}. The Algorithm \ref{alg:PRGD-sun} below is an adaptation of Algorithm 1 of \cite{sun2019escaping} for the optimization problem \eqref{eq: quotient-matrix-fac-denoising} in the lifted horizontal space. This is because the horizontal geodesic in our setting is simply a straight line; see \cite{massart2020quotient} Theorem 2.2. 
\begin{algorithm}
\caption{Perturbed Riemannian gradient algorithm $(\Y_0,L, \rho, K, \mathfrak{I}, \epsilon, \zeta)$}
{\noindent \bf Input} Initial point $\Y_0 \in \widebar{\cM}_{r+}^{q}$, parameters $L, \rho, K, \mathfrak{I}$, accuracy $\epsilon$, probability of success $\zeta$ (parameters defined in Assumptions 1, 2, 3 and assumption of Theorem 1 of \cite{sun2019escaping}).
\begin{algorithmic}[1]
\State Set constants: $\hat{c} \ge 4$, $C := C(K, L, \rho)$ (defined in Lemma 2 and proof of Lemma 8 of \cite{sun2019escaping})
\Statex \hspace{1em} and $\sqrt{c_{\max}} \leq \frac{1}{56\hat{c}^2}$, $\omega = \frac{\sqrt{c_{\max}}}{\chi^2}\epsilon$, $\chi = 3\max\left\{\log\left(\frac{\bar{d}L H([\Y_0])}{\hat{c}\epsilon^2\zeta}\right), 4\right\}$, where $\bar{d} = (pr - (r^2 -r)/2)$ is the dimension of $\cM^q_{r+}$.
\State Set threshold values: $f_{\text{thres}} = \frac{c_{\max}}{\chi^3} \sqrt{\frac{\epsilon^3}{\rho}}$, $g_{\text{thres}} = \frac{\sqrt{c_{\max}}}{\chi^2} \epsilon$, $t_{\text{thres}} = \frac{\chi}{c_{\max}} \frac{L}{\sqrt{\rho\epsilon}}$, $t_{\text{noise}} = -t_{\text{thres}} - 1$.
\State Set $t = 0$ and stepsize: $\eta = \frac{c_{\max}}{L}$.
\While{1}
    \If{$\|\overline{\grad\, H([\Y_t])}\| \leq g_{\text{thres}}$ \textbf{and} $t - t_{\text{noise}} > t_{\text{thres}}$}
        \State $t_{\text{noise}} \leftarrow t$, $\widetilde{\Y}_t \leftarrow \Y_t$, $\Y_t \leftarrow \Y_t + \xi_t$, $\xi_t$ uniformly sampled from $\{ \theta: \theta \in \cH_{\Y_t}\widebar{\cM}_{r+}^{q}, \|\theta\|_{\F} \leq \omega \}$.
    \EndIf
    \If{$t - t_{\text{noise}} = t_{\text{thres}}$ \textbf{and} $H([\Y_t]) - H([\widetilde{\Y}_{t_{\text{noise}}}]) > -f_{\text{thres}}$}
        \State \textbf{output} $\widetilde{\Y}_{t_{\text{noise}}}$
    \EndIf
    \State $\Y_{t+1} \leftarrow \Y_t -\left( \min\left\{\eta, \frac{\mathfrak{I}}{\|\overline{\grad\, H([\Y_t])}\|_{\F}} \right\} \overline{\grad\, H([\Y_t])} \right)$.
    \State $t \leftarrow t + 1$.
\EndWhile
\end{algorithmic}\label{alg:PRGD-sun}
\end{algorithm}
The basic idea of perturbed Riemannnian GD is that when the magnitude of the Riemannian gradient is large, then we run vanilla Riemannnian GD, but when the magnitude of the Riemannian gradient is small, then we perform a random perturbation on the current iterate, and, after that, we run $t_{\text{thres}}$ steps of vanilla Riemannnian GD. The whole algorithm terminates when the function value does not have a sufficiently large decrease. 
\subsubsection{Preliminary Analysis} \label{proof:app-preliminary}
In this section, we present some preparatory results needed in the proof of Theorem \ref{th:application}. First, given two positive real numbers $0 < c_1 \leq c_2$, let us define 
\begin{equation} \label{def:Rc1c2}
	\begin{split}
		\cR_{c_1, c_2} = \{ \Y \in \bbR^{p \times r}_*: c_1 \leq \sigma_r(\Y) \leq \sigma_1(\Y) \leq c_2 \}.
	\end{split}
\end{equation} The next lemma provides sectional curvature and Lipschitz constant bounds for the Riemannian gradient and Riemannian Hessian at points in $\cR_{c_1, c_2}$. These quantities will be used as inputs in the PRGD algorithm. The proof will be provided in Appendix \ref{proof:sec-app-lemma}.
\begin{Lemma} \label{lm:curvature-lipschitz-constants}
	For the objective $H([\Y])$ in \eqref{eq: quotient-matrix-fac-denoising}, we have
	\begin{itemize}
		\item (i) $\left\|  \overline{\grad\, H([\Y])} - \Gamma_{[\Y']}^{[\Y]} \overline{\grad\, H([\Y'])} \right\|_{\F} \leq (6c_2^2 + 2 \sigma_1^2(\Y^*)) d([\Y], [\Y'] )$ for all $Y, Y' \in \cR_{c_1, c_2}$, where $\Gamma_{[\Y']}^{[\Y]}$ denotes a parallel transport in the horizontal space that transports $v \in  \cH_{\Y'}\widebar{\cM}_{r+}^{q} $ to $ \Gamma_{[\Y']}^{[\Y]} v \in  \cH_\Y\widebar{\cM}_{r+}^{q} $; see its detailed definition in Section 5.4 of \cite{absil2009optimization}.
		\item (ii) $\left \| \overline{\Hess \, H([\Y])} -  \Gamma_{[\Y]}^{[\Y']} \overline{\Hess \, H([\Y'])} \Gamma_{[\Y']}^{[\Y]} \right \| \leq 4 \left( 3 c_2 (\kappa^2 +1) + \sigma^2_1(\Y^*) \kappa/c_1  \right) d([\Y], [\Y'] )$ for all $Y, Y' \in \cR_{c_1, c_2}$. Here $\kappa :=c_2/c_1$ and $\|\cdot\|$ denotes the operator norm with respect to the Riemannian metric, i.e.,  $\| \overline{\Hess \, H([\Y])}\| = \sup_{\theta_{\Y} \in   \cH_\Y\widebar{\cM}_{r+}^{q} , \|\theta_{\Y}\|_{\F} \leq 1}  \overline{\Hess \, H([\Y])} [\theta_{\Y}, \theta_{\Y}] $.
		\item (iii) $\left| K([\Y])[\theta_{[\Y]}, \theta'_{[\Y]}] \right| \leq \frac{3}{2 c_1^2}$ for all $Y \in \cR_{c_1, c_2}$, $\theta_{[\Y]}, \theta'_{[\Y]} \in  T_{\Y} {\cM}_{r+}^{q}$. Here $K([\Y])$ denotes the sectional curvature at $[\Y]$; see its detailed definition in Section 7 of \cite{lee2018introduction}.
	\end{itemize}
\end{Lemma}

Moreover, we argue that, without loss of generality, we can assume $\X^*$ is a diagonal matrix with only the top $r$ diagonal values being positive. Recall $\X^*$ has full SVD of the form $[\U^* \, \U^*_{\perp}] \Sigma [\U^* \, \U^*_{\perp}]^{\top}$ for some diagonal matrix $\Sigma$ and 
\begin{equation*}
	\|\Y \Y^\top - \X^*\|_\F^2 = \| ([\U^* \, \U^*_{\perp}]^{\top } \Y) ([\U^* \, \U^*_{\perp}]^{\top } \Y)^\top - \Sigma \|_\F^2.
\end{equation*} 
So, if we replace all $\widetilde{\Y}_i$s and $\Y_i$s in the algorithm by $\widetilde{\W}_i = [\U^* \, \U^*_{\perp}]^{\top } \widetilde{\Y}_i$ and $\W_i = [\U^* \, \U^*_{\perp}]^{\top }{\Y}_i$, then we can see that $\widetilde{\W}_i$ and $\W_i$ are obtained by running the same algorithm with $\X^*$ being replaced by $ \Sigma$. In addition, the spectra of $\Y_i$s and $\W_i$s are exactly the same. So, without loss of generality, we will simply assume $\X^*$ is a diagonal PSD matrix. With this in mind, the initialization condition reduces to 
\begin{equation} \label{ineq:new-initilization-condition}
	 \sigma_1(\widetilde{\Y}_0) \leq \sigma_1(\Y^*),\, \sigma_1(\J_0) \leq \sigma_r(\Y^*)/2,\, 0 < \sigma_r(\K_0) \leq \sigma_r(\Y^*)/2, \, \sigma_1^2(\J_0) \leq \frac{\sigma^2_r(\Y^*)}{8 \sigma_1(\Y^*)} \sigma_r(\K_0),
\end{equation} where $\widetilde{\Y}_0 = [\K_0^\top, \J_0^\top ]^\top $.

For the analysis of the first and third stages of the algorithm, we choose $\eta = \frac{1}{40 \kappa^{*^4} \sigma^2_1(\Y^*) }$, where $\kappa^* = \sigma_1(\Y^*)/\sigma_r(\Y^*)$. The choices for $T_0$ and $T_1$ will be given in the corresponding analysis.

The rest of this section is divided as follows. In Appendix \ref{sec:proof-init}-\ref{sec:proof-local}, we provide the analysis for the three stages of the algorithm separately. Proofs of technical lemmas are provided in the subsequent subsections.

\subsubsection{Step 1 for the proof of Theorem \ref{th:application}: Initialization Safeguard } \label{sec:proof-init}
The proof for this part is similar to the analysis of Theorem B.2 in \cite{chen2023fast}. Here we provide a clean version in our context for the reader's convenience. First, since $\sigma_1(\widetilde{\Y}_0) \leq \sigma_1(\Y^*)$, by Lemma \ref{lm:iterate-control}(i), we have
\begin{equation*}
	\begin{split}
		\sigma_1(\Y_{t+1}) \leq (1 + \eta \sigma^2_1(\Y^*) - \eta \sigma^2_1(\Y_t) ) \sigma_1(\Y_t) \leq \sigma_1(\Y^*),
	\end{split}
\end{equation*} where the second inequality is because $g(s) = (1 + \eta \sigma^2_1(\Y^*) - \eta s^2)s$ is increasing when $s \in [0, 1/\sqrt{3\eta}]$ and $\sigma_1(\Y_t) \leq \sigma_1(\Y^*) \leq  1/\sqrt{3\eta}$.

 Similarly, by Lemma \ref{lm:iterate-control}(ii), it is also straightforward to prove by induction that $\sigma_1(\J_{t+1}) \leq \sigma_1(\J_{t}) \leq \sigma_1(\J_0)$ for $t \geq 0.$ Let $T = \min\{t\geq 0| \sigma_r(\K_t) \geq \sigma_r(\Y^*)/2 \}$. We prove by induction that, for any $t < T$, we have $\sigma_r(\K_{t+1}) \geq  (1 + \eta \sigma^2_r(\Y^*)/2 ) \sigma_r(\K_{t})$. When $t = 0$, we know 
\begin{equation} \label{ineq:induction-start}
	\sigma_1^2(\J_t) \leq \frac{\sigma^2_r(\Y^*)}{8 \sigma_1(\Y^*)} \sigma_r(\K_t)
\end{equation} from the initialization condition. Recall that we have argued that we can assume $\X^*$ is a diagonal PSD matrix, and only the top $r$ diagonal values are positive. Let ${\bf \Lambda}_r = \X^*_{[1:r,1:r]}$. The Riemannnian GD update rule gives 
\begin{equation*}
	\K_{t+1} = \K_t + \eta {\bf \Lambda}_t \K_t - \eta \K_t (\Y_t^\top \Y_t) = {\bf K}_{t} + \eta({\bf \Lambda}_r - {\bf K}_t{\bf K}_t^\top){\bf K}_t - \eta {\bf K}_t {\bf J}_t^\top{\bf J}_t.
\end{equation*} Thus, 
\begin{equation} \label{ineq:induction}
\begin{split}
	\sigma_r({\bf K}_{t+1}) & \geq \sigma_r \left({\bf K}_{t} + \eta({\bf \Lambda}_r - {\bf K}_t{\bf K}_t^\top){\bf K}_t\right) - \eta\sigma_1\left({\bf K}_t{\bf J}_t^\top{\bf J}_t\right) \\
& \overset{\text{Lemma }\ref{lm:K-lower-bound} } \geq \left(1 - 2\eta^2\sigma_1\left({\bf \Lambda}_r{\bf K}_t{\bf K}_t^\top\right)\right)\left(1 + \eta\sigma_r^2(\bf{Y}^*)\right)\sigma_r({\bf K}_t)\left(1 - \eta\sigma_r^2({\bf K}_t)\right) - \eta\sigma_1\left({\bf K}_t{\bf J}_t^\top{\bf J}_t\right) \\
& \overset{(a)} \geq \left(1 - 2\eta^2 (\sigma_1^2(\bf{Y}^*))^2\right)\left(1 + \eta\sigma_r^2(\bf{Y}^*)\right)\left(1 - \eta\sigma_r^2(\bf{Y}^*)/4\right)\sigma_r({\bf K}_t) - \frac{\eta\sigma_r^2(\bf{Y}^*)}{8}\sigma_r({\bf K}_t) \\
& \overset{(b)}  \geq \left(1 + \eta\sigma_r^2(\bf{Y}^*)/2 \right) \cdot \sigma_r({\bf K}_t),
\end{split}	
\end{equation}
 where (a) is because $\sigma_r^2(\K_t) \leq \sigma_r^2(\Y^*)/4$, $\sigma_1(\K_t) \leq \sigma_1(\Y^*)$ and \eqref{ineq:induction-start}, and (b) is because $\eta = \frac{1}{40 \kappa^{*^4} \sigma^2_1(\Y^*) }$. Thus, the induction holds when $t = 0$. Suppose the claim holds for general $t$. Then, at $t+1$, we would have \eqref{ineq:induction-start} holds with $t$ being replaced by $t+1$ since $\sigma_1(\J_{t+1}) \leq \sigma_1(\J_{t}) \leq \sigma_1(\J_0)$ for $t \geq 0$, and $\sigma_r(\K_t)$ only increases. So, if $\sigma_r({\bf K}_{t+1}) < \sigma_r(\Y^*)/2$, we can continue the induction. Following the same analysis as in \eqref{ineq:induction}, we can get the result. Therefore, we know $T \leq \left \lceil \frac{\log\left( \sigma_r(\Y^*)/(2\sigma_r(\K_0) ) \right)}{\log \left( 1 + \eta\sigma_r^2(\bf{Y}^*)/2  \right)} \right\rceil$. Hence, after at most $T$ steps, the iterate enters the following nice region:
 \begin{equation} \label{def:R-nice-0}
	\begin{split}
		&\cR_{\text{nice}} = \Big \{ \Y = [\K^\top \, \J^{\top}]^{\top}: \K \in \bbR^{r \times r}, \J \in \bbR^{(p-r) \times r}, \sigma_1(\Y) \leq \frac{11}{10} \sigma_1(\Y^*),\\
		& \quad \quad  \sigma_1(\J) \leq \frac{1}{2} \sigma_r(\Y^*), \sigma_r(\K) \geq \frac{1}{2} \sigma_r(\Y^*) \Big \}.
	\end{split}
\end{equation}
 The following key lemma (see the proof in Appendix \ref{sec:AdditionalLemmas}) shows that if $\Y_t \in \cR_{\text{nice}}$, then $\Y_{t+1} \in \cR_{\text{nice}}$.
 \begin{Lemma}\label{lm:obsorbing0}
	Given any $\Y_t \in \cR_{\text{nice}}$. Let ${\Y}_{t+1} = \Y_{t} - 2\eta (\Y_{t} \Y_{t}^{\top} - \X^*)\Y_{t}$, where $\X^*$ is a rank $r$ diagonal PSD matrix and $\Y^* \Y^{*\top } = \X^*$. Then ${\Y}_{t+1} \in \cR_{\text{nice}}$ as long as the stepsize satisfies $\eta \leq  \frac{1}{40 \kappa^{*^4} \sigma^2_1(\Y^*) } $, where $\kappa^* = \sigma_1(\Y^*)/\sigma_r(\Y^*)$.
\end{Lemma}
Therefore, by inductively applying Lemma \ref{lm:obsorbing0}, we conclude that after $T_0 = \left \lceil \frac{\log\left( \sigma_r(\Y^*)/(2\sigma_r(\K_0) ) \right)}{\log \left( 1 + \eta\sigma_r^2(\bf{Y}^*)/2  \right)} \right\rceil$ number of iterations, $\widetilde{\Y}_{T_0} \in \cR_{\text{nice}}$.

\subsubsection{Step 2 for the proof of Theorem \ref{th:application}: Enter a local region via PRGD} \label{sec:proof-middle}
Let us recall the regions $\cR_1, \cR_2$ and $\cR_3$ we have defined in \eqref{def: regions}. We take $\alpha = \sqrt{2} - 1$, $\beta = 2$, $\gamma = 2$ and $\mu = 1/5$. From Theorems \ref{th: H-geodesic-strong-convex}-\ref{th: H-large-gradient-norm}, we then know that
\begin{equation} \label{eq:H-landscape-simplified}
	\begin{split}
		&\|\overline{\grad\, H([\Y])} \|_\F \geq \frac{\sqrt{2} - 1}{20 \kappa^*} \sigma_r^3(\Y^*), \forall \Y \in \cR_3, \\
		&\lambda_{\min}(\overline{ \Hess\, H([\Y])}) \leq -(\sqrt{2} -1) \sigma_r^2(\Y^*), \forall \Y \in \cR_2, \\
		& \frac{26}{75}  \sigma_r^2(\Y^*) \leq \lambda_{\min}(\overline{\Hess\, H([\Y])} ) \leq  \lambda_{\max}(\overline{\Hess\, H([\Y])} ) \leq \frac{502}{75} \sigma^2_1(\Y^*), \forall \Y \in \cR_1.
	\end{split}
\end{equation}

Let us also define a slightly enlarged nice region in which the iterates of PRGD will lie:
\begin{equation} \label{def:R-nice}
	\begin{split}
		&\widetilde{\cR}_{\text{nice}} = \Big \{ \Y = [\K^\top \, \J^{\top}]^{\top}: \K \in \bbR^{r \times r}, \J \in \bbR^{(p-r) \times r}, \sigma_1(\Y) \leq \frac{6}{5} \sigma_1(\Y^*),\\
		& \quad \quad  \sigma_1(\J) \leq \frac{2}{3} \sigma_r(\Y^*), \sigma_r(\K) \geq \frac{1}{3} \sigma_r(\Y^*) \Big \}.
	\end{split}
\end{equation} From Algorithm \ref{alg:PRGD-sun}, we know there are two types of gradient steps we take: (Type 1) first perturb the current iterate and then run Riemannian GD; (Type 2) run vanilla Riemannian GD without adding any perturbations. Both types of iterates can be written in a unified way as
\begin{equation*}
	\begin{split}
		\text{first, update }\quad \Y_{t+0.5} = \Y_t + \xi_t, \quad \text{then} \quad \Y_{t+1} = \Y_{t+0.5}- 2\bar{\eta}_t (\Y_{t+0.5}\Y^{\top }_{t+0.5} - \X^*) \Y_{t+0.5},
	\end{split}
\end{equation*} where $\bar{\eta}_t =  \min\left\{\eta, \frac{\mathfrak{I}}{\|\overline{\grad\, H([\Y_t])}\|_{\F}} \right\} $. The difference is that in Type 1, we sample $\xi_t$ uniformly at random from $\{ \theta: \theta \in \cH_{\Y_{t}}\widebar{\cM}_{r+}^{q}, \|\theta\|_{\F} \leq \omega \}$; while in Type 2, we simply take $\xi_t = 0$. The following key lemma is an extension of Lemma \ref{lm:obsorbing0}, which shows, even for the perturbed gradient descent, that if $\Y_t \in \cR_{\text{nice}}$, then $\Y_{t+1} \in \cR_{\text{nice}}$ as long as the scale of perturbation is small relative to the stepsize.

\begin{Lemma}\label{lm:obsorbing}
	Given any $\Y_t \in \cR_{\text{nice}}$. Let $\Y_{t+0.5} = \Y_t + \xi$ for any $\xi$ such that $\|\xi\|_\F \leq a \sigma_r(\Y^*)$ and ${\Y}_{t+1} = \Y_{t+0.5} - 2\bar{\eta}_t (\Y_{t+0.5} \Y_{t+0.5}^{\top} - \X^*)\Y_{t+0.5}$ where $\bar{\eta}_t =  \min\left\{\eta, \frac{ \sigma_r(\Y^*)/3 }{\|\overline{\grad\, H([\Y_t])}\|_{\F}} \right\} $ and $\X^*$ is a rank $r$ diagonal PSD matrix and $\Y^* \Y^{*\top } = \X^*$. Then $\Y_{t+0.5} \in \widetilde{\cR}_{\text{nice}}$ and ${\Y}_{t+1} \in \cR_{\text{nice}}$ as long as $\eta \leq  \frac{1}{40 \kappa^{*^4} \sigma^2_1(\Y^*) } $ and $a \leq \frac{1}{50} \min \{ \eta, \frac{1}{15 \kappa^* \sigma^2_1(\Y^*) \sqrt{r} }\} \sigma_r^2(\Y^*)$ where $\kappa^* = \sigma_1(\Y^*)/\sigma_r(\Y^*)$.
\end{Lemma}

Via the initialization guarantee established in Step 1, we know that after $T_0$ steps, $\widetilde{\Y}_{T_0} \in \cR_{\text{nice}}$. Now, let us specify the rest of the parameters for the PRGD algorithm:
\begin{equation} \label{eq:PRGD-parameter1}
	\begin{split}
		& L = \frac{266}{25} \kappa^{*4} \sigma^2_1(\Y^*), \quad \rho = 250 \kappa^{*2} \sigma_1(\Y^*), \quad K = \frac{27}{2  \sigma_r^2(\Y^*)}, \quad \mathfrak{I} = \frac{1}{3} \sigma_r(\Y^*).
	\end{split}
\end{equation} We also let $C(K, L, \rho)$ be the constant defined in Lemma 2 and proof of Lemma 8 of \cite{sun2019escaping} and let $\hat{\rho} = \max\{ \rho, C(K, L, \rho)\}$. Finally, the accuracy parameter of the PRGD is chosen as $\tilde{\epsilon}$ such that
\begin{equation}\label{eq:PRGD-parameter2}
	\begin{split}
		& \tilde{\epsilon} < \min \left\{A, B, C, D, E  \right\},\\
		&\text{ where } A= \frac{\hat{\rho}}{56 \max\{c_2(K), c_3(K)\}\eta L} \log \left( \frac{\bar{d}L}{\sqrt{\hat{\rho}\tilde{\epsilon}}\zeta} \right), B = \left( \frac{\mathfrak{I}\hat{\rho}}{12\hat{c}\sqrt{\eta L}} \log \left( \frac{\bar{d}L}{\sqrt{\hat{\rho}\tilde{\epsilon}}\zeta} \right) \right)^2,\\
		& C = \frac{\sqrt{2} - 1}{20 \kappa^*} \sigma^3_r(\Y^*) , D = \frac{(\sqrt{2}-1)^2 \sigma_r^4(\Y^*)}{\hat{\rho}} , E = \frac{1}{50} \frac{\chi^2}{\sqrt{c_{\max}}} \left( \frac{c_{\max}}{L} \wedge \frac{1}{15 \kappa^* \sigma^2_1(\Y^*) \sqrt{r} }\right) \sigma_r(\Y^*).
	\end{split}
\end{equation} Here $c_{\max},\chi$ are quantities specified in Algorithm \ref{alg:PRGD-sun}, $\eta$ is the choice of the stepsize in Algorithm \ref{alg:PRGD-sun}, $\bar{d} = pr - (r^2-r)/2$, $c_2(K)$ and $c_3(K)$ are two constants depending only on $K$ defined in Lemma 4 of \cite{sun2019escaping}.
The reason for the choice of $\tilde{\epsilon}$ in \eqref{eq:PRGD-parameter2} is given as follows:
\begin{itemize}
	\item (i) $\tilde{\epsilon} \leq  \min \left\{A, B\right\}$ is required in Theorem 1 of \cite{sun2019escaping}.
	\item (ii) $\tilde{\epsilon} \leq  \min \left\{C, D \right\}$ is assumed to guarantee that the $(\tilde{\epsilon}, - \sqrt{\hat{\rho} \tilde{\epsilon} } )$-approximate second-order stationary point, i.e., $\|\overline{\grad\, H([\Y])} \|_\F \leq \tilde{\epsilon}$ and $\lambda_{\min}(\overline{ \Hess\, H([\Y])}) \leq - \sqrt{\hat{\rho} \tilde{\epsilon}}$, that PRGD converges to does not lie in $\cR_2 \cup \cR_3 $ based on \eqref{eq:H-landscape-simplified}. In other words, this ensures that the output of the algorithm lies in $\cR_1$.
	\item (iii) $\tilde{\epsilon} \leq  E$ ensures that the required perturbation scale is small enough so that the assumption regarding $a$ in Lemma \ref{lm:obsorbing} is satisfied. 
\end{itemize}
We note that the $\tilde{\epsilon}$ in \eqref{eq:PRGD-parameter2} can be chosen to be a universal constant depending only on $\sigma_r(\Y^*)$, $\sigma_1(\Y^*)$, $\zeta$, and $r$. For simplicity, we denote it as $\tilde{\epsilon} = C(\sigma_r(\Y^*), \sigma_1(\Y^*), \zeta, r)$.

Based on the above choices of parameters, we can inductively show that the whole trajectory of PRGD will lie in $\widetilde{\cR}_{\text{nice}}$. First, we note that, given the choice of $\tilde{\epsilon}$, it is easy to check that the perturbation scale $\omega$ satisfies $\omega \leq  \frac{1}{50} \min \{ \eta, \frac{1}{15 \kappa^* \sigma^2_1(\Y^*) \sqrt{r} }\} \sigma_r^3(\Y^*)$, where $\eta = \frac{c_{\max}}{L}$ is the stepsize taken in the PRGD algorithm. Moreover, by the choice of $c_{\max}$ in the algorithm, we know
\begin{equation*}
	\eta = \frac{c_{\max}}{L} \leq \frac{25}{(56*16)^2 * 266* \kappa^{*4} * \sigma^2_1(\Y^*)} \leq  \frac{1}{40 \kappa^{*4} \sigma^2_1(\Y^*) }.
\end{equation*} Thus, the conditions in Lemma \ref{lm:obsorbing} are satisfied and from it we know $\Y_{t+0.5} \in \widetilde{\cR}_{\text{nice}}$ and ${\Y}_{t+1} \in \cR_{\text{nice}}\subseteq \widetilde{\cR}_{\text{nice}}$. Since, by design, after every perturbation step PRGD will run Riemannnian GD for $t_{\text{thres}} \geq 1$ many steps. Thus, by induction, we know the whole trajectory of PRGD will lie in $ \widetilde{\cR}_{\text{nice}}$.

By Lemma \ref{lm:curvature-lipschitz-constants} and the fact that, for any $\Y \in \bbR^{p \times r}_*$, the injectivity radius of $\cM_{r+}^q$ at $[\Y]$ is $\sigma_r(\Y)$ (see Theorem 6.3 of \cite{massart2020quotient}), we know that the Lipschitz constants of the Riemannian gradient, Riemannian Hessian, (denoted as $L', \rho'$) sectional curvature (denoted as $K'$) and injectivity radius (denoted as $\mathfrak{I}'$) satisfy  
\begin{equation*}
	\begin{split}
		L' \leq \frac{266}{25} \sigma_1^2(\Y^*) \leq L, \rho' \leq \rho, K' \leq K \text{ and } \mathfrak{I}' \geq \frac{1}{3} \sigma_r(\Y^*).  
	\end{split}
\end{equation*}
Thus, the assumptions in Theorem 1 of \cite{sun2019escaping} are satisfied and by the choice of $\tilde{\epsilon}$, Theorem 1 of \cite{sun2019escaping} implies that, with probability at least $1 - \zeta$, the output of PRGD will be an approximate second-order stationary point lying in $\cR_1$. Moreover, this point is reached in
\begin{equation*}
	\mathcal{O} \left( \frac{L H([\widetilde{\Y}_{T_0}])}{\tilde{ \epsilon }^2} \log^4 \left( \frac{L pr H([\widetilde{\Y}_{T_0}]) }{\tilde{\epsilon }^2 \zeta} \right) \right)
\end{equation*} many iterations, where $\tilde{\epsilon} = C(\sigma_r(\Y^*), \sigma_1(\Y^*), \zeta, r)$. Note that the number of iterations that PRGD needs is at most $C \log^4(p)$ for some $C > 0$ depending only on $\sigma_r(\Y^*), \sigma_1(\Y^*), \zeta, r$.

\subsubsection{Step 3 for the proof of Theorem \ref{th:application}: Local improvement} \label{sec:proof-local}
Once the iterates enter $\cR_1 \cap \cR_{\text{nice}}$, we learn from Lemma \ref{lm:obsorbing0} that vanilla Riemannnian GD will not leave $\cR_{\text{nice}}$ as long as the step size $\eta \leq \frac{1}{40 \kappa^{*4} \sigma_1^2(\Y^*)}$. Then, by the standard theory of linear convergence of Riemannnian GD in a local geodesically strongly convex region (see the proof of Theorem 11.29 of \cite{boumal2020introduction}), we see that with the choice of stepsize $\eta = \frac{1}{40 \kappa^{*4} \sigma_1^2(\Y^*)}$ the iterates will also not leave $\cR_1$ and the algorithm will converge linearly as $d([\Y_t, \Y^*]) \leq (1-1/\kappa')^{t/2} \sqrt{\kappa'} d([\Y_0, \Y^*])$, where $\kappa' = (40 \kappa^{*4} \sigma_1^2(\Y^*))/(26 \sigma^2_r(\Y^*)/75) = \frac{1500}{13} \kappa^{*6}$. Therefore, $\Y_t$ will be $\epsilon$-close to $\Y^*$ in $T_1 = 2\left\lceil  \frac{\log\left( \frac{\epsilon}{2 \sqrt{ \frac{15}{13} \kappa^{*2} \sigma_r(\Y^*) } } \right)}{\log( 1- \frac{13}{1500 \kappa^{6} } )}  \right\rceil $ number of iterations. Note that $T_1 = \mathcal{O}(\log(1/\epsilon))$, where in $\mathcal{O}(\cdot)$, we hide the dependence on $\sigma_1(\Y^*)$ and $\sigma_r(\Y^*)$.

\subsubsection{Proof of Lemma  \ref{lm:curvature-lipschitz-constants}} \label{proof:sec-app-lemma}
First, by Corollary 10.47 of \cite{boumal2020introduction}, we know the Lipschitz constant of the gradient can be controlled by the operator norm of the Hessian:
\begin{equation*}
	\begin{split}
		\overline{\Hess \, H([\Y])}[\theta_\Y, \theta_\Y] &=\|\Y\theta_\Y^\top + \theta_\Y \Y^\top \|_\F^2 + 2\langle \Y \Y^\top -\X^*, \theta_\Y \theta_\Y^\top \rangle \\
		& \leq 4 \| \Y \theta_{\Y} \|^2_{\F} + 2 \langle \theta_\Y^{\top }( \Y \Y^\top -\X^*), \theta_\Y^\top \rangle \\
		& \leq 6 \sigma^2_1(\Y) \|\theta_{\Y}\|_\F^2 + 2 \sigma^2_1(\Y^*) \|\theta_{\Y}\|_\F^2 \leq  (6 c_2^2 + 2 \sigma^2_1(\Y^*) )\|\theta_{\Y}\|_\F^2.
	\end{split}
\end{equation*} This finishes the first claim.

The third claim follows from Proposition 2 of \cite{massart2019curvature}. For the rest of the proof, we focus on showing the Lipschitz continuity of the Hessian. First, we note that by definition of the operator norm:
\[
\begin{split}
	&\Big\| \overline{\text{Hess}\, H([\mathbf{Y}])} - \Gamma_{[\mathbf{Y'}]}^{[\mathbf{Y}]} \circ \overline{\text{Hess}\, H([\mathbf{Y'}])} \circ \Gamma_{[\mathbf{Y}]}^{[\mathbf{Y'}]} \Big\| \\
	&= \sup_{\theta_{\Y} \in   \cH_\Y\widebar{\cM}_{r+}^{q}, \|\theta_{\Y}\|_{\F} \leq 1} \left| 
 \Big\langle \theta_{\mathbf{Y}}, \overline{\text{Hess}\, H([\mathbf{Y}])}[\theta_{\mathbf{Y}}] - \Gamma_{[\mathbf{Y'}]}^{[\mathbf{Y}]} \big( \overline{\text{Hess}\, H([\mathbf{Y'}])}[\Gamma_{[\mathbf{Y}]}^{[\mathbf{Y'}]} \theta_{\mathbf{Y}}] \big) \Big\rangle \right| \\
& = \sup_{\theta_{\Y} \in   \cH_\Y\widebar{\cM}_{r+}^{q}, \|\theta_{\Y}\|_{\F} \leq 1} \left| 
 \underbrace{\langle \theta_{\mathbf{Y}}, \overline{\text{Hess}\, H([\mathbf{Y}])}[\theta_{\mathbf{Y}}] \rangle}_{\text{Term 1}} - \underbrace{\Big\langle \theta_{\mathbf{Y}}, \Gamma_{[\mathbf{Y'}]}^{[\mathbf{Y}]} \big( \overline{\text{Hess}\, H([\mathbf{Y'}])}[\Gamma_{[\mathbf{Y}]}^{[\mathbf{Y'}]} \theta_{\mathbf{Y}}] \big) \Big\rangle}_{\text{Term 2}} \right|.
\end{split}
\] 

We note that {Term 2} can be simplified. A fundamental property of Riemannian parallel transport is that it is an {isometry} (see Proposition 10.36 of \cite{boumal2020introduction}). Therefore, taking the inner product of $U$ and $\Gamma V$ at $[\mathbf{Y}]$ is the same as taking the inner product of $\Gamma^{-1} U$ and $V$ at $[\mathbf{Y'}]$. Let $\theta_{\mathbf{Y'}} = \Gamma_{[\mathbf{Y}]}^{[\mathbf{Y'}]} \theta_{\mathbf{Y}}$ (which means $\theta_{\mathbf{Y}} = \Gamma_{[\mathbf{Y'}]}^{[\mathbf{Y}]} \theta_{\mathbf{Y'}}$). Term 2 becomes:
    \[
    \Big\langle \Gamma_{[\mathbf{Y}]}^{[\mathbf{Y'}]} \theta_{\mathbf{Y}}, \overline{\text{Hess}\, H([\mathbf{Y'}])}[\theta_{\mathbf{Y'}}] \Big\rangle = \langle \theta_{\mathbf{Y'}}, \overline{\text{Hess}\, H([\mathbf{Y'}])}[\theta_{\mathbf{Y'}}] \rangle.
    \]
Putting all these terms back together, we get
\begin{equation} \label{ineq:Hessian-Lipschiz}
	\begin{split}
	& \Big\| \overline{\text{Hess}\, H([\mathbf{Y}])} - \Gamma_{[\mathbf{Y'}]}^{[\mathbf{Y}]} \circ \overline{\text{Hess}\, H([\mathbf{Y'}])} \circ \Gamma_{[\mathbf{Y}]}^{[\mathbf{Y'}]} \Big\| \\
	&= \sup_{\theta_{\Y} \in   \cH_\Y\widebar{\cM}_{r+}^{q}, \|\theta_{\Y}\|_{\F} \leq 1}\Big| \overline{\text{Hess}\, H([\mathbf{Y}])}[\theta_{\mathbf{Y}}, \theta_{\mathbf{Y}}] - \overline{\text{Hess}\, H([\mathbf{Y'}])}[\theta_{\mathbf{Y'}}, \theta_{\mathbf{Y'}}] \Big|,
\end{split}
\end{equation} where
 $\theta_{\mathbf{Y'}} = \Gamma_{[\mathbf{Y}]}^{[\mathbf{Y'}]} \theta_{\mathbf{Y}}$.

Recall the formula for $\overline{\text{Hess}\, H(\mathbf{Y})}$ in \eqref{eq: gradient-Hessian-exp-H}:
\[
\overline{\text{Hess}\, H(\mathbf{Y})}[\theta_{\mathbf{Y}}, \theta_{\mathbf{Y}}] = \| \mathbf{Y}\theta_{\mathbf{Y}}^T + \theta_{\mathbf{Y}}\mathbf{Y}^T \|_{\F}^2 + 2 \langle \mathbf{Y}\mathbf{Y}^T - \X^*, \theta_{\mathbf{Y}}\theta_{\mathbf{Y}}^T \rangle.
\]
Our goal is to bound the absolute difference $\Delta = \Big|\overline{\text{Hess}\, H(\mathbf{Y})}[\theta_{\mathbf{Y}}, \theta_{\mathbf{Y}}] - \overline{\text{Hess}\, H(\mathbf{Y'})}[\theta_{\mathbf{Y'}}, \theta_{\mathbf{Y'}}] \Big|$.
Using the triangle inequality, we can split $\Delta$ into two parts, $\Delta \le \Delta_A + \Delta_B$:
\begin{itemize}
    \item \textbf{Term A (Degree-4 part):} $\Delta_A = \Big| \| \mathbf{Y}\theta_{\mathbf{Y}}^T + \theta_{\mathbf{Y}} \mathbf{Y}^T \|_{\F}^2 - \| \mathbf{Y'}\theta_{\mathbf{Y'}}^T + \theta_{\mathbf{Y'}} \mathbf{Y'}^T \|_{\F}^2 \Big|$.
    \item \textbf{Term B (Degree-2 part):} $\Delta_B = 2 \Big| \langle \mathbf{Y}\mathbf{Y}^T - \X^*, \theta_{\mathbf{Y}} \theta_{\mathbf{Y}}^T \rangle - \langle \mathbf{Y'}\mathbf{Y'}^T - \X^*, \theta_{\mathbf{Y'}} \theta_{\mathbf{Y'}}^T \rangle \Big|$.
\end{itemize}

We assume without loss of generality that $\mathbf{Y'}$ and $\mathbf{Y}$ are {optimally aligned}, meaning:
\[
\|\mathbf{Y} - \mathbf{Y'}\|_{\F} = d([\mathbf{Y'}], [\mathbf{Y}]).
\]
Let $d = d([\mathbf{Y'}], [\mathbf{Y}])$. By Lemma \ref{lm:parallel-transport-distance}, we know 
    \[
    \|\theta_{\mathbf{Y}} - \theta_{\mathbf{Y'}}\|_{\F} \le L_{\Gamma} \cdot d \cdot \|\theta_{\mathbf{Y'}}\|_{\F},
    \] 
where $L_{\Gamma} = \frac{c_2}{c_1^2}$.

{ \noindent \bf Bound Term B}.
First,
\[
\Delta_B \le 2 \Big| \langle \mathbf{Y}\mathbf{Y}^T, \theta_{\mathbf{Y}} \theta_{\mathbf{Y}}^T \rangle - \langle \mathbf{Y'}\mathbf{Y'}^T, \theta_{\mathbf{Y'}} \theta_{\mathbf{Y'}}^T \rangle \Big| + 2 \Big| \langle \X^*, \theta_{\mathbf{Y'}} \theta_{\mathbf{Y'}}^T - \theta_{\mathbf{Y}} \theta_{\mathbf{Y}}^T \rangle \Big|.
\]
We first bound $2 \big| \langle \X^*, \theta_{\mathbf{Y'}} \theta_{\mathbf{Y'}}^T - \theta_{\mathbf{Y}} \theta_{\mathbf{Y}}^T \rangle \big|$. 
Rewrite the difference of the quadratic terms by adding and subtracting $\theta_{\mathbf{Y'}} \theta_{\mathbf{Y}}^T$:
\[
\theta_{\mathbf{Y'}} \theta_{\mathbf{Y'}}^T - \theta_{\mathbf{Y}} \theta_{\mathbf{Y}}^T = \theta_{\mathbf{Y'}}(\theta_{\mathbf{Y'}} - \theta_{\mathbf{Y}})^T + (\theta_{\mathbf{Y'}} - \theta_{\mathbf{Y}})\theta_{\mathbf{Y}}^T.
\]
Substitute this into the inner product (using the trace property $\langle A, B \rangle = \text{Tr}(A^T B)$):
\begin{equation}
	\langle \X^*, \theta_{\mathbf{Y'}} \theta_{\mathbf{Y'}}^T - \theta_{\mathbf{Y}} \theta_{\mathbf{Y}}^T \rangle = \text{Tr}\Big( \X^* \theta_{\mathbf{Y'}}(\theta_{\mathbf{Y'}} - \theta_{\mathbf{Y}})^T \Big) + \text{Tr}\Big( \X^*(\theta_{\mathbf{Y'}} - \theta_{\mathbf{Y}})\theta_{\mathbf{Y}}^T \Big). \label{ineq:X-first-part}
\end{equation}
Using the trace inequality $|\text{Tr}(A B^T)| \le \|A\|_{\F} \|B\|_{\F}$ and $\|MN\|_{\F} \le \|M\|_2 \|N\|_{\F}$:
\[
\Big| \text{Tr}\Big( (\X^* \theta_{\mathbf{Y'}})(\theta_{\mathbf{Y'}} - \theta_{\mathbf{Y}})^T \Big) \Big| \le \|\X^* \theta_{\mathbf{Y'}}\|_{\F} \|\theta_{\mathbf{Y'}} - \theta_{\mathbf{Y}}\|_{\F} \le \|\X^*\|_2 \|\theta_{\mathbf{Y'}}\|_{\F} \|\theta_{\mathbf{Y'}} - \theta_{\mathbf{Y}}\|_{\F}.
\]
Applying this to both traces and using $\|\theta_{\mathbf{Y}}\|_{\F} = \|\theta_{\mathbf{Y'}}\|_{\F}$, due to the isometry of parallel transport (see Proposition 10.36 of \cite{boumal2020introduction}), yields:
\[
2 \Big| \langle \X^*, \theta_{\mathbf{Y'}} \theta_{\mathbf{Y'}}^T - \theta_{\mathbf{Y}} \theta_{\mathbf{Y}}^T \rangle \Big| \le 4 \|\X^*\|_2 \|\theta_{\mathbf{Y'}}\|_{\F} \|\theta_{\mathbf{Y'}} - \theta_{\mathbf{Y}}\|_{\F} \le 4 L_{\Gamma} \|\X^*\|_2 \cdot d \cdot \|\theta_{\mathbf{Y'}}\|_{\F}^2.
\]

{Next, we bound the $ 2 \Big| \langle \mathbf{Y}\mathbf{Y}^T, \theta_{\mathbf{Y}} \theta_{\mathbf{Y}}^T \rangle - \langle \mathbf{Y'}\mathbf{Y'}^T, \theta_{\mathbf{Y'}} \theta_{\mathbf{Y'}}^T \rangle \Big|$ term}. Add and subtract $\langle \mathbf{Y'}\mathbf{Y'}^T, \theta_{\mathbf{Y}} \theta_{\mathbf{Y}}^T \rangle$:
\[
2 \Big| \langle \mathbf{Y}\mathbf{Y}^T - \mathbf{Y'}\mathbf{Y'}^T, \theta_{\mathbf{Y}} \theta_{\mathbf{Y}}^T \rangle \Big| + 2 \Big| \langle \mathbf{Y'}\mathbf{Y'}^T, \theta_{\mathbf{Y}} \theta_{\mathbf{Y}}^T - \theta_{\mathbf{Y'}} \theta_{\mathbf{Y'}}^T \rangle \Big|
\]
For the first piece, $\|\mathbf{Y}\mathbf{Y}^T - \mathbf{Y'}\mathbf{Y'}^T\|_{\F} = \|\Y (\Y - \Y')^{\top } + (\Y - \Y')\Y^{'\top } \|_{\F} \le 2c_2\|\mathbf{Y}-\mathbf{Y'}\|_{\F} = 2c_2 d$, giving the bound:
\[
2 \Big| \langle \mathbf{Y}\mathbf{Y}^T - \mathbf{Y'}\mathbf{Y'}^T, \theta_{\mathbf{Y}} \theta_{\mathbf{Y}}^T \rangle \Big|  \leq 4 c_2 \cdot d \cdot \|\theta_{\mathbf{Y'}}\|_{\F}^2.
\]
For the second piece, we use the exact same analysis as in \eqref{ineq:X-first-part} replacing $\X^*$ with $\mathbf{Y'}\mathbf{Y'}^T$, implying that $4 \|\mathbf{Y'}\mathbf{Y'}^T\|_2 \|\theta_{\mathbf{Y'}}\|_{\F} \|\theta_{\mathbf{Y'}} - \theta_{\mathbf{Y}}\|_{\F}$ upper bounds this second piece. Since $\|\mathbf{Y'}\mathbf{Y'}^T\|_2 \le c_2^2$, this implies:
\[
2 \Big| \langle \mathbf{Y'}\mathbf{Y'}^T, \theta_{\mathbf{Y}} \theta_{\mathbf{Y}}^T - \theta_{\mathbf{Y'}} \theta_{\mathbf{Y'}}^T \rangle \Big| \leq 4 c_2^2 L_{\Gamma} \cdot d \cdot \|\theta_{\mathbf{Y'}}\|_{\F}^2.
\]
Combining all the bounds together, Term B is seen to be bounded as:
\[
\Delta_B \le \Big( 4 c_2 + 4 c_2^2 L_{\Gamma} + 4 \|\X^*\|_2 L_{\Gamma} \Big) \cdot d \cdot \|\theta_{\mathbf{Y'}}\|_{\F}^2.\]

{ \noindent \bf  Bound Term A}.
Next, we bound $\Delta_A = \Big| \| \mathbf{Y}\theta_{\mathbf{Y}}^T + \theta_{\mathbf{Y}} \mathbf{Y}^T \|_{\F}^2 - \| \mathbf{Y'}\theta_{\mathbf{Y'}}^T + \theta_{\mathbf{Y'}} \mathbf{Y'}^T \|_{\F}^2 \Big|$.
Using $|a^2 - b^2| = |a - b| \cdot (a + b)$:
\begin{itemize}
    \item \textbf{The Sum $(a+b)$:}
    $\| \mathbf{Y}\theta_{\mathbf{Y}}^T + \theta_{\mathbf{Y}} \mathbf{Y}^T \|_{\F} \le 2\|\mathbf{Y}\|_2\|\theta_{\mathbf{Y}}\|_{\F} \le 2 c_2 \|\theta_{\mathbf{Y'}}\|_{\F}$. 
    Similarly for $\mathbf{Y'}$, the sum is bounded by $4 c_2 \|\theta_{\mathbf{Y'}}\|_{\F}$.
    \item \textbf{The Difference $|a-b|$:}
    $|a-b| \le 2 \| \mathbf{Y}\theta_{\mathbf{Y}}^T - \mathbf{Y'}\theta_{\mathbf{Y'}}^T \|_{\F}$.
    Add and subtract $\mathbf{Y'}\theta_{\mathbf{Y}}^T$:
    \[
    \mathbf{Y}\theta_{\mathbf{Y}}^T - \mathbf{Y'}\theta_{\mathbf{Y'}}^T = (\mathbf{Y}-\mathbf{Y'})\theta_{\mathbf{Y}}^T + \mathbf{Y'}(\theta_{\mathbf{Y}} - \theta_{\mathbf{Y'}})^T.
    \] 
Then
    \[
    \begin{split}
    	\| \mathbf{Y}\theta_{\mathbf{Y}}^T - \mathbf{Y'}\theta_{\mathbf{Y'}}^T \|_{\F} \le \|\mathbf{Y}-\mathbf{Y'}\|_{\F} \|\theta_{\mathbf{Y}}\|_2 + \|\mathbf{Y'}\|_2 \|\theta_{\mathbf{Y}} - \theta_{\mathbf{Y'}}\|_{\F}\\
    	\le d \|\theta_{\mathbf{Y'}}\|_{\F} + c_2 L_{\Gamma} d \|\theta_{\mathbf{Y'}}\|_{\F} = (1 + c_2 L_{\Gamma}) d \|\theta_{\mathbf{Y'}}\|_{\F}
    \end{split}
    \]
\end{itemize}
Multiplying the sum and the difference gives:
\[
\Delta_A \le (4 c_2 \|\theta_{\mathbf{Y'}}\|_{\F}) \times 2(1 + c_2 L_{\Gamma}) d \|\theta_{\mathbf{Y'}}\|_{\F}
 \le 8 c_2 (1 + c_2 L_{\Gamma}) \cdot d \cdot \|\theta_{\mathbf{Y'}}\|_{\F}^2 \]

By combining Term A and Term B and plugging the value of $L_{\Gamma} = c_2/c_1^2$, we get 
\[
 \Big|\overline{\text{Hess}\, H(\mathbf{Y})}[\theta_{\mathbf{Y}}, \theta_{\mathbf{Y}}] - \overline{\text{Hess}\, H(\mathbf{Y'})}[\theta_{\mathbf{Y'}}, \theta_{\mathbf{Y'}}] \Big| \le 4 \left( 3 c_2 (\kappa^2 +1) + \sigma^2_1(\Y^*) \kappa/c_1  \right) d([\Y], [\Y'] ) \| \theta_{\mathbf{Y}} \|^2_{\F},
\] where we use the fact that $\| \theta_{\mathbf{Y}} \|^2_{\F} = \| \theta_{\mathbf{Y}'} \|^2_{\F}$ since the parallel transport is an isometry between the corresponding tangent planes. In view of \eqref{ineq:Hessian-Lipschiz}, we have finished the proof of the second claim.

\subsubsection{Proofs of Lemma \ref{lm:obsorbing0} and \ref{lm:obsorbing} }
\label{sec:AdditionalLemmas}
The proof of Lemma \ref{lm:obsorbing0} is very similar to the proof of Lemma \ref{lm:obsorbing}. Here, for simplicity, we present the proof for Lemma \ref{lm:obsorbing} only. First, let us define 
\begin{equation*}
	\begin{split}
		&\widebar{\cR}_{\text{nice}} = \Big \{ \Y = [\K^\top \, \J^{\top}]^{\top}: \K \in \bbR^{r \times r}, \J \in \bbR^{(p-r) \times r}, \\ 
		& \quad \quad \sigma_1(\Y) \leq (\frac{11}{10}+a) \sigma_1(\Y^*), \sigma_1(\J) \leq (\frac{1}{2} + a) \sigma_r(\Y^*), \sigma_r(\K) \geq (\frac{1}{2}-a) \sigma_r(\Y^*) \Big \}.
	\end{split}
\end{equation*} Since $\|\xi\|_\F \leq a \sigma_r(\Y^*)$, we know that given $\Y_t \in \cR_{\text{nice}}$, then $\Y_{t+0.5} \in \widebar{\cR}_{\text{nice}}$. Next, we show ${\Y}_{t+1} \in \cR_{\text{nice}}$ by leveraging the relations between the spectra of ${\Y}_{t+1}$ and $\Y_{t+0.5}$ provided in Lemma \ref{lm:iterate-control}. Write $\Y_{t+0.5} = [\K_{t+0.5}^\top \, \J_{t+0.5}^{\top}]^{\top}$ and $\Y_{t+1} = [\K_{t+1}^\top \, \J_{t+1}^{\top}]^{\top}$, where $\K_{t+0.5}, \K_{t+1} \in \bbR^{r \times r}$ and $ \J_{t+0.5}, \J_{t+1}  \in \bbR^{(p-r) \times r}$. Given that $\Y_{t+0.5} \in \widebar{\cR}_{\text{nice}}$, we deduce
\begin{equation*}
	\begin{split}
		\|\overline{\grad\, H([\Y_{t+0.5}])}\|_{\F} &= \|2(\Y_{t+0.5} \Y_{t+0.5}^\top - \X^*) \Y_{t+0.5}\|_{\F} \leq 2((11/10+a)^2 + 1 )(11/10+a)\sigma_1^3(\Y^*) \sqrt{r} \\
		& \leq 5 \sigma_1^3(\Y^*) \sqrt{r},
	\end{split}
\end{equation*} where the last inequality is because, under our assumptions on $\eta$ and $a$, we have $a \leq 1/100$. Thus, $\bar{\eta}_t \geq \min \{ \eta, \frac{1}{15 \kappa^* \sigma^2_1(\Y^*) \sqrt{r} }\}:= \tilde{\eta}$.

We note that our assumption on $\eta$ and $a$ implies that 
\begin{equation} \label{eq:eta-a-condition}
	\begin{split}
		2\bar{\eta}_t \leq 2\eta \leq \frac{1}{16 \kappa^{*4}  (11/10 +a)^2 \sigma^2_1(\Y^*)} \leq \frac{1}{16 \sigma^2_1(\Y_{t+0.5})} \quad \text{ and }\quad a \leq \frac{1}{50} \tilde{\eta} \sigma_r^2(\Y^*),
	\end{split}
\end{equation} where $\kappa^* = \sigma_1(\Y^*)/\sigma_r(\Y^*)$.

\vskip.2cm
{\noindent \bf Control of $\Y_{t+1}$}. By Lemma \ref{lm:iterate-control} (i), we have 
\begin{equation*}
	\begin{split}
		\sigma_1(\Y_{t+1}) &\leq (1 + 2\bar{\eta}_t \sigma^2_1(\Y^*) - 2\bar{\eta}_t \sigma^2_1(\Y_{t+0.5}) ) \sigma_1(\Y_{t+0.5}) \\
		& \overset{(a)}\leq \left(1 + 2\bar{\eta}_t \sigma^2_1(\Y^*) - 2\bar{\eta}_t (11/10 + a)^2 \sigma^2_1(\Y^*)\right) (11/10 + a) \sigma_1(\Y^*) \\
		& \leq \left( \frac{11}{10} + a \right) \left(1 - \frac{21}{100} 2\bar{\eta}_t  \sigma^2_1(\Y^*)\right)\sigma_1(\Y^*) \\
		& \leq \frac{11}{10} \sigma_1(\Y^*) +a \sigma_1(\Y^*) - \frac{231}{1000}  2\bar{\eta}_t \sigma_1^3(\Y^*)\\
		& \leq \frac{11}{10} \sigma_1(\Y^*) +a \sigma_1(\Y^*) - \frac{231}{1000}  2\tilde{\eta} \sigma_1^3(\Y^*)\\
		& \overset{ \eqref{eq:eta-a-condition} }\leq  \frac{11}{10} \sigma_1(\Y^*), 
	\end{split}
\end{equation*} where (a) is because $g_1(s)= (1 +  2\bar{\eta}_t \sigma^2_1(\Y^*) - 2\bar{\eta}_t s^2)s$ is increasing when $s \in [0, 1/\sqrt{6\bar{\eta}_t} ]$ and $ \sigma_1(\Y_{t+0.5}) \leq (11/10 + a) \sigma_1(\Y^*) \leq 1/\sqrt{32\bar{\eta}_t} $.

\vskip.2cm
{\noindent \bf Control of $\J_{t+1}$}.
\begin{equation*}
	\begin{split}
		\sigma_1(\J_{t+1}) \overset{\text{Lemma } \ref{lm:iterate-control} (ii) }\leq & \left(1 - 2\bar{\eta}_t (\sigma^2_1(\J_{t+0.5}) + \sigma^2_1(\K_{t+0.5})) \right) \sigma_1(\J_{t+0.5}) \\
		\leq  & \left(1 - 2\bar{\eta}_t \sigma^2_1(\J_{t+0.5})  \right) \sigma_1(\J_{t+0.5}) \\
		\overset{(a)}\leq & \left(1 - 2\bar{\eta}_t  (1/2 + a)^2 \sigma^2_r(\Y^*)  \right) (1/2 + a) \sigma_r(\Y^*) \\
		& \leq \left(1 - 2\bar{\eta}_t\sigma^2_r(\Y^*)/4  \right) (1/2 + a) \sigma_r(\Y^*) \\
		& \leq \frac{1}{2} \sigma_r(\Y^*) + a  \sigma_r(\Y^*) - \frac{1}{8} 2\bar{\eta}_t \sigma_r^3(\Y^*) \\
		& \leq \frac{1}{2} \sigma_r(\Y^*) + a  \sigma_r(\Y^*) - \frac{1}{8} 2\tilde{\eta} \sigma_r^3(\Y^*) \\
		& \overset{ \eqref{eq:eta-a-condition} }\leq \frac{1}{2} \sigma_r(\Y^*),
	\end{split}
\end{equation*} where (a) is because $g_2(s) = (1 - 2\bar{\eta}_t s^2 )s$ in increasing when $s \in [0, 1/\sqrt{6\bar{\eta}_t} ]$ and  $ \sigma_1(\J_{t+0.5}) \leq (1/2 + a) \sigma_r(\Y^*) \leq 1/\sqrt{32\bar{\eta}_t} $.

\vskip.2cm
{\noindent \bf Control of $\K_{t+1}$}.
\begin{equation*}
	\begin{split}
		&\sigma^2_{r}(\K_{t+1}) \\
		\geq & \left(1 + 4 \bar{\eta}_t(\sigma^2_r(\Y^*) - \sigma^2_1(\J_{t+0.5}) - \sigma^2_r(\K_{t+0.5}) ) \right) \sigma_r^2(\K_{t+0.5}) - 8 \bar{\eta}_t^2 \sigma^4_1(\Y^*) \sigma^2_1(\Y_{t+0.5}) \\
		\overset{(a)}\geq &\left(1 + 4 \bar{\eta}_t(\sigma^2_r(\Y^*) - (1/2 + a)^2\sigma^2_r(\Y^*) - \sigma^2_r(\K_{t+0.5}) ) \right) \sigma_r^2(\K_{t+0.5}) - 8 \bar{\eta}_t^2 (11/10+a)^2 \sigma^6_1(\Y^*)\\
		\overset{(b)}\geq & \left(1 + 4 \bar{\eta}_t(\sigma^2_r(\Y^*) - (1/2 + a)^2\sigma^2_r(\Y^*) - (1/2 -a)^2 \sigma_r^2(\Y^*)  ) \right) (1/2-a)^2 \sigma_r^2(\Y^*) - 8 \bar{\eta}_t^2 (11/10+a)^2 \sigma^6_1(\Y^*)\\
		\overset{(c)}\geq & \left(1 + 4 \bar{\eta}_t \frac{49}{50} \sigma^2_r(\Y^*) \right) ( \frac{1}{4} -a) \sigma_r^2(\Y^*) - 8 \bar{\eta}_t^2 (11/10+a)^2 \sigma^6_1(\Y^*) \\
		\overset{(d)}\geq & \frac{1}{4} \sigma^2_r(\Y^*) - a \sigma^2_r(\Y^*) + 2\bar{\eta}_t \frac{294}{625} \sigma^4_r(\Y^*) - 8 \bar{\eta}_t^2 (11/10+a)^2 \sigma^6_1(\Y^*) \\
		\geq & \frac{1}{4} \sigma^2_r(\Y^*) - a \sigma^2_r(\Y^*) + \tilde{\eta} \frac{294}{625} \sigma^4_r(\Y^*) + \bar{\eta}_t \frac{294}{625} \sigma^4_r(\Y^*) - 8 \bar{\eta}_t^2 (11/10+a)^2 \sigma^6_1(\Y^*) \\
		\overset{(e)}\geq & \frac{1}{4} \sigma^2_r(\Y^*),
	\end{split}
\end{equation*} where (a) is because $ \sigma_1(\Y_{t+0.5}) \leq (\frac{11}{10}+a) \sigma_1(\Y^*)$ and $\sigma_1(\J_{t+0.5}) \leq (1/2 + a) \sigma_r(\Y^*)$; (b) is because $g_3(s) = \left(1 + 4 \bar{\eta}_t(\sigma^2_r(\Y^*) - (1/2 + a)^2\sigma^2_r(\Y^*) - s ) \right) s$ is increasing on $s \in [0, 1/(8\bar{\eta}_t)]$ and $ \frac{1}{32 \bar{\eta}_t} \geq  \sigma_r^2(\K_{t+0.5}) \geq (1/2 - a)^2 \sigma_r^2(\Y^*)$; (c) is because $a \leq 1/100$ based on \eqref{eq:eta-a-condition}; (d) is again based on $a \leq 1/100$; (e) is because \eqref{eq:eta-a-condition} implies that the following two conditions hold:
\begin{equation*}
	\begin{split}
		 \tilde{\eta} \frac{294}{625} \sigma^4_r(\Y^*) \geq a \sigma^2_r(\Y^*) \quad \text{ and }
		 \quad  \bar{\eta}_t \frac{294}{625} \sigma^4_r(\Y^*) \geq 8 \bar{\eta}_t^2 (11/10+a)^2 \sigma^6_1(\Y^*). 
	\end{split}
\end{equation*} In summary, we have proved that, provided the two conditions on $\eta$ and $a$ in \eqref{eq:eta-a-condition} hold, then we have ${\Y}_{t+1} \in \cR_{\text{nice}}$. In addition, we note that the two conditions in \eqref{eq:eta-a-condition} are satisfied whenever 
\begin{equation*}
	\begin{split}
		2\eta \leq \frac{1}{20 \kappa^{*^4} \sigma^2_1(\Y^*) } \quad \text{ and }\quad a \leq \frac{1}{50} \tilde{\eta} \sigma_r^2(\Y^*).
	\end{split}
\end{equation*} Finally, we note $a \leq 1/2000$ under these two conditions, so $\widebar{\cR}_{\text{nice}} \subseteq \widetilde{\cR}_{\text{nice}}$. This finishes the proof of this lemma.

\subsubsection{Auxiliary Lemmas} \label{proof:sec-app-add-lemma}

\begin{Lemma}\label{lm:iterate-control}
	Suppose ${\Y}_{t+1} = \Y_{t} - \eta (\Y_{t} \Y_{t}^{\top} - \X^*)\Y_{t}$ where $\X^*$ is a rank $r$ diagonal PSD matrix.  Write $\Y_t = [\K_t^\top \, \J_t^{\top}]^{\top}$ and $\Y_{t+1} = [\K_{t+1}^\top \, \J_{t+1}^{\top}]^{\top}$ where $\K_t, \K_{t+1} \in \bbR^{r \times r}$ and $ \J_t, \J_{t+1}  \in \bbR^{(p-r) \times r}$. Suppose $\sigma_1(\Y_t) \leq 1/\sqrt{16\eta}$, then 
	\begin{itemize}
		\item (i) $\sigma_1(\Y_{t+1}) \leq (1 + \eta \sigma_1(\X^*) - \eta \sigma^2_1(\Y_t) ) \sigma_1(\Y_t).$
		\item (ii) $\sigma_1(\J_{t+1}) \leq \left(1 - \eta (\sigma^2_1(\J_t) + \sigma^2_1(\K_t)) \right) \sigma_1(\J_t).$
		\item (iii) $\sigma^2_{r}(\K_{t+1}) \geq \left(1 + 2 \eta(\sigma_r(\X^*) - \sigma^2_1(\J_t) - \sigma^2_r(\K_t) ) \right) \sigma_r^2(\K_t) - 2 \eta^2 \sigma^2_1(\X^*) \sigma^2_1(\Y_t)$.
	\end{itemize}
\end{Lemma}
\begin{proof}
	The proof of these three claims is very similar to the proof of Lemma A.4-A.6 in \cite{chen2023fast} after a slight and straightforward modification, since here we have $\sigma_{r+1}(\X^*) = 0$. For simplicity, we omit the details of the proof. 
\end{proof}

\begin{Lemma}[Lemma C.7 of \cite{jiang2023algorithmic}] \label{lm:K-lower-bound}
	Let $\mathbf{K}$ be an $r \times r$ matrix and ${\bf \Lambda}_r = \text{diag}(\lambda_1^*, \dots, \lambda_r^*) \in \mathbb{R}^{r \times r}$ be a diagonal matrix with $\lambda_1^* \geq \dots \geq \lambda_r^* > 0$. Suppose $\sigma_r(\mathbf{K}) > 0$, $\eta \sigma_1({\bf \Lambda}_r - \mathbf{K} \mathbf{K}^{\top}) \leq 1/2$, $\sigma_1(\mathbf{K}) \leq 1/\sqrt{3\eta}$, $2 \eta^2 \sigma_1({\bf \Lambda}_r \mathbf{K} \mathbf{K}^{\top}) < 1, \text{ and}$

\[
\tilde{\mathbf{K}} = \mathbf{K} + \eta ({\bf \Lambda}_r - \mathbf{K} \mathbf{K}^{\top}) \mathbf{K}.
\]
Then we have $\sigma_r(\tilde{\mathbf{K}}) \geq (1 - 2 \eta^2 \sigma_1({\bf \Lambda}_r \mathbf{K} \mathbf{K}^{\top})) (1 + \eta \lambda_r^*) \sigma_r(\mathbf{K}) (1 - \eta \sigma_r^2(\mathbf{K}))$.
\end{Lemma}

\begin{Lemma}\label{lm:parallel-transport-distance}
	Recall the definition of $\cR_{c_1, c_2}$ in \eqref{def:Rc1c2}. Given any $\Y, \Y' \in \cR_{c_1, c_2}$, and any $\theta_{\Y'} \in  \cH_{\Y'}\widebar{\cM}_{r+}^{q}$, we have 
	\begin{equation*}
		\left\|\theta_{\Y'} - \Gamma_{[\Y']}^{[\Y]} \theta_{\Y'} \right\|_\F \leq \frac{c_2}{c_1^2} d([\Y], [\Y']) \|\theta_{\Y'}\|_{\F},
	\end{equation*} where $\Gamma_{[\Y']}^{[\Y]}$ denotes a parallel transport that transports $v \in  \cH_{\Y'}\widebar{\cM}_{r+}^{q} $ to $ \Gamma_{[\Y']}^{[\Y]} v \in  \cH_\Y\widebar{\cM}_{r+}^{q} $.
\end{Lemma}
\begin{proof} First, we set up the notation.
	\begin{itemize}
    \item We choose $\Y', \Y \in \cR_{c_1, c_2}$ such that they are {optimally aligned}, meaning their Euclidean distance equals the Riemannian distance: $\|\Y - \Y'\|_\F = d([\Y'], [\Y]) := d$.
    \item Let $\Y(t)$ for $t \in [0, 1]$ be the {horizontal geodesic} connecting $\Y'$ to $\Y$ in the total space. Because geodesics have constant speed, we have $\|\dot{\Y}(t)\|_{\F} = d$ for all $t \in [0,1]$. Moreover, by Proposition 4.4 and 4.5 of \cite{massart2020quotient}, we know $\Y(t) = \Y' + t(\Y - \Y')$.
    \item Let $\theta(t)$ be the parallel transport of the initial horizontal vector $\theta(0) = \theta_{\Y'}$ along $\Y(t)$. By definition, $\theta(1) = \theta_{\Y}$.
\end{itemize}

{\noindent \bf Step 1: The ODE for Parallel Transport on a Quotient Manifold}.
In a Riemannian submersion with the standard Euclidean metric, the parallel transport of a horizontal vector $\theta(t)$ along a curve $\Y(t)$ is governed by the condition that the ordinary Euclidean derivative $\dot{\theta}(t)$ must be entirely vertical. 

The vertical space at $\Y(t)$ consists of matrices of the form $\Y(t)\bOmega$, where $\bOmega \in \mathbb{R}^{r \times r}$ is a skew-symmetric matrix ($\bOmega^T = -\bOmega$).
Therefore, the parallel transport curve $\theta(t)$ satisfies the Initial Value Problem (ODE):
\[
\dot{\theta}(t) = \Y(t)\bOmega(t), \quad \theta(0) = \theta_{\Y'}.
\]
To bound the Euclidean difference $\|\theta(1) - \theta(0)\|_{\F}$, we must find a way to bound the magnitude of $\bOmega(t)$.

{\noindent \bf Step 2: Differentiating the Horizontal Condition}.
Since $\theta(t)$ evolves by parallel transport, it must remain a horizontal vector at every point along the curve $\Y(t)$. By Lemma \ref{lm: psd-quotient-manifold1-prop}, the condition for $\theta(t) \in  \cH_{\Y(t)}\widebar{\cM}_{r+}^{q}$ is that $\Y(t)^T \theta(t)$ is symmetric. Equivalently:
\[
\Y(t)^T \theta(t) - \theta(t)^T \Y(t) = 0.
\]
Since this holds for all $t$, we can differentiate it with respect to $t$:
\[
\dot{\Y}^T \theta + \Y^T \dot{\theta} - \dot{\theta}^T Y - \theta^T \dot{\Y} = 0
\]
\textit{(Here we drop the $(t)$ temporarily for cleaner notation).} Substitute our ODE $\dot{\theta} = \Y\bOmega$ and $\dot{\theta}^T = -\bOmega \Y^T$ (since $\bOmega$ is skew-symmetric) into the equation:
\[
\dot{\Y}^T \theta + \Y^T (\Y\bOmega) - (-\bOmega \Y^T) Y - \theta^T \dot{\Y} = 0
\]
Rearranging the terms to isolate $\bOmega$ yields a continuous Sylvester equation:
\[
\Y^{\top} \Y \bOmega + \bOmega \Y^{\top} \Y = \theta^T \dot{\Y} - \dot{\Y}^T \theta.
\]
{\noindent \bf Step 3: Bounding the Skew-Symmetric Matrix $\bOmega(t)$}.
We view the left side of the equation as a linear operator $\mathcal{L}(\bOmega) = \Y^{\top} \Y \bOmega + \bOmega \Y^{\top} \Y$.
Because $\Y^{\top} \Y$ is symmetric and positive definite, the eigenvalues of the operator $\mathcal{L}$ are exactly the sums of pairs of eigenvalues of $\Y^{\top} \Y$. That is, the eigenvalues of $\mathcal{L}$ are $\sigma_i^2(\Y) + \sigma_j^2(\Y)$, see the proof of Theorem VII.2.1 of \cite{bhatia2013matrix}, where $\sigma_i(\Y)$ are the singular values of $\Y$.

Because our curve $\Y(t)$ is entirely contained within the restricted domain $\cR_{c_1,c_2}$, we know that for all $t$:
\[
\sigma_r(\Y(t)) \geq t\sigma_r(\Y) + (1-t) \sigma_r(\Y') \geq c_1 > 0.
\] 
Therefore, the smallest eigenvalue of the operator $\mathcal{L}$ is bounded below by $2c_1^2$. By the properties of linear operators, this bounds the Frobenius norm of the inverse operator:
\[
\|\bOmega\|_{\F} \le \frac{1}{2c_1^2} \big\| \theta^T \dot{\Y} - \dot{\Y}^T \theta \big\|_{\F} \le \|\theta^T \dot{\Y}\|_{\F} + \|\dot{\Y}^T \theta\|_{\F} \le 2 \|\theta(t)\|_{\F} \|\dot{\Y}(t)\|_2.
\]
Since $\dot{\Y}(t)$ is the velocity vector of the geodesic, $\|\dot{\Y}(t)\|_2 \le \|\dot{\Y}(t)\|_{\F} = d$.
Moreover, given that parallel transport is an isometry (see Proposition 10.36 of \cite{boumal2020introduction}), $\|\theta(t)\|_{\F} = \|\theta(0)\|_{\F} = \|\theta_{\Y}\|_{\F}$. Substituting these bounds back into the inequality for $\bOmega$:
\begin{equation} \label{ineq:Omega-t-bound}
	\|\bOmega(t)\|_{\F} \le \frac{1}{2c_1^2} \cdot 2 \|\theta_{\Y}\|_{\F} \cdot d = \frac{d}{c_1^2} \|\theta_{\Y}\|_{\F}
\end{equation}

{ \noindent \bf  Step 4: Integrating to Bound the Ambient Difference}.
We want to bound the total Euclidean change in $\theta$ from $t=0$ to $t=1$. By the Fundamental Theorem of Calculus and the triangle inequality for integrals:
\[
\theta_{\Y} - \theta_{\Y'} = \theta(1) - \theta(0) = \int_0^1 \dot{\theta}(t) dt,
\]
\[
\|\theta_{\Y} - \theta_{\Y'}\|_{\F} \le \int_0^1 \|\dot{\theta}(t)\|_{\F} dt.
\]
Substituting $\dot{\theta}(t) = \Y(t)\bOmega(t)$:
\[
\|\dot{\theta}(t)\|_{\F} = \|\Y(t)\bOmega(t)\|_{\F} \le \|\Y(t)\|_2 \|\bOmega(t)\|_{\F} \leq  c_2 \left( \frac{d}{c_1^2} \|\theta_{\Y}\|_{\F} \right),
\]
where the last inequality is because $\Y(t) \in \cR_{c_1, c_2}$ and \eqref{ineq:Omega-t-bound}.

Because this bound is independent of $t$, we may integrate it from 0 to 1 to deduce:
\[
\|\theta_{\Y} - \theta_{\Y'}\|_{\F} \le \left( \frac{c_2}{c_1^2} \right) \cdot d \cdot \|\theta_{\Y}\|_{\F}
\]
This finishes the proof.
\end{proof}

\subsection{Random Initialization Guarantee} \label{sec:random-initialization}
\begin{Lemma}
\label{lem:RandomIntial}
	Given any $c \in (0,1/(2\sqrt{3}))$. Suppose $\widetilde{\Y}_0 = c \frac{\sigma_r^2(\Y^*)}{\sigma_1(\Y^*) p} \mathbf{N}_0$ with entries of $\mathbf{N}_0 \in \bbR^{p \times r}$ drawn i.i.d. from $N( 0, \frac{1}{p} )$. Let $\K_0 = \U^{*\top } \widetilde{\Y}_0$ and $\J_0 = \U^{*\top }_{\perp} \widetilde{\Y}_0$, where $\U^* \in \st(r,p)$ spans the top $r$ left singular subspace of $\Y^*$ and $\U^*_{\perp}\in \st(p-r,p)$ is its orthogonal complement. Then with probability at least $1- C_1 c - \exp(-c_1 r) - 2\exp(-p/2)$ for some universal constants $C_1, c_1 > 0$, the initialization condition required in Theorem \ref{th:application} is satisfied.
\end{Lemma}
\begin{proof}
	First, by Corollary 5.35 of \cite{vershynin2010introduction}, we know that, with probability at least $1 - 2 \exp(-p/2)$, we have 
	\begin{equation} \label{ineq:concentration1}
		\begin{split}
			\max\{\sigma_1(\J_0), \sigma_r(\K_0) \} \leq \sigma_1(\widetilde{\Y}_0) \leq  \sqrt{3} c \frac{\sigma_r^2(\Y^*)}{\sigma_1(\Y^*) p} \leq \frac{1}{2} \sigma_r(\Y^*) \leq \sigma_1(\Y^*).
		\end{split}
	\end{equation} At the same time, we note that $\U^{*\top } \mathbf{N}_0$ has i.i.d. $N(0,1/p)$ entries. Thus, by Theorem 1.1 of \cite{rudelson2009smallest}, we know that for every $c'' > 0$, with probability at least $1 - C_1c'' - \exp(-c_1r) $, we have 
	\begin{equation} \label{ineq:concentration2}
		\sigma_r(\K_0) \geq  c\frac{\sigma_r^2(\Y^*)}{\sigma_1(\Y^*) \sqrt{pr}} c'',
	\end{equation} where $C_1,c_1 > 0$ are two universal constants. Thus, $\sigma_r(\K_0) $ is at least of order $1/p$. Assuming the events \eqref{ineq:concentration1} and \eqref{ineq:concentration2} hold, we may take $c'' /8 = 3 c$ to deduce
\begin{equation*}
	\begin{split}
		\sigma^2_1(\J_0) \leq \left(  \sqrt{3} c \frac{\sigma_r^2(\Y^*)}{\sigma_1(\Y^*) p}  \right)^2 \leq  c\frac{\sigma_r^2(\Y^*)}{\sigma_1(\Y^*) \sqrt{pr}} c'' \cdot \frac{\sigma^2_r(\Y^*)}{8 \sigma_1(\Y^*)}\leq  \frac{\sigma^2_r(\Y^*)}{8 \sigma_1(\Y^*)} \sigma_r(\K_0).
	\end{split}
\end{equation*} This finishes the proof of this lemma.
\end{proof}

}

\section{Proofs in Section \ref{sec: landscape-convex-f} } \label{proof-sec: h-landscape-f-convex}
\subsection{Proof of Theorem \ref{th: local-landscape-f-convex}}

	First, given any non-zero $\theta_{\widehat{\Y}} \in \cH_{ \widehat{\Y} } \widebar{\cM}_{r+}^q$, we have $\theta_{\widehat{\Y}} \theta_{\widehat{\Y}}^\top + \widehat{\Y} \widehat{\Y}^\top$ is a tangent vector of the set $\{ \bbS^{p \times p} \ni \X \succcurlyeq 0 , \rank(\X) \leq 2r  \}$ \cite[Chapter 6]{rockafellar2009variational}. Since $\widehat{\Y} \widehat{\Y}^\top$ is a rank $r$ local minimizer of $\min_{\bbS^{p \times p} \ni \X \succcurlyeq 0 , \rank(\X) \leq 2r  } f(\X)$, the first-order optimality condition implies that $ \langle  \nabla f(\widehat{\Y} \widehat{\Y}^\top ) , \C - \widehat{\Y} \widehat{\Y}^\top \rangle \geq 0 $ holds for any tangent vector $\C$. Thus, we have $ \langle  \nabla f(\widehat{\Y} \widehat{\Y}^\top ) ,  \theta_{\widehat{\Y}} \theta_{\widehat{\Y}}^\top \rangle \geq 0 $ by setting $\C =\theta_{\widehat{\Y}} \theta_{\widehat{\Y}}^\top + \widehat{\Y} \widehat{\Y}^\top$.
	
	Thus, by Lemma \ref{lm: gradient-hessian-exp-PSD}, we have
	\begin{equation*}
		\begin{split}
			\overline{\Hess \, h([\widehat{\Y}])}[\theta_{\widehat{\Y}}, \theta_{\widehat{\Y}}] & = \nabla^2 f(\widehat{\Y} \widehat{\Y}^\top)[\widehat{\Y}\theta_{\widehat{\Y}}^\top + \theta_{\widehat{\Y}} \widehat{\Y}^\top , \widehat{\Y} \theta_{\widehat{\Y}}^\top + \theta_{\widehat{\Y}} \widehat{\Y}^\top ] + 2\langle \nabla f(\widehat{\Y} \widehat{\Y}^\top ), \theta_{\widehat{\Y}} \theta_{\widehat{\Y}}^\top \rangle \\
			& \geq \nabla^2 f(\widehat{\Y} \widehat{\Y}^\top)[\widehat{\Y}\theta_{\widehat{\Y}}^\top + \theta_{\widehat{\Y}} \widehat{\Y}^\top , \widehat{\Y}\theta_{\widehat{\Y}}^\top + \theta_{\widehat{\Y}}\widehat{\Y}^\top ] > 0,
		\end{split}
	\end{equation*} where the last inequality is because $f$ satisfies the $(r,2r)$-restricted strict convexity property and because $\widehat{\Y}\theta_{\widehat{\Y}}^\top + \theta_{\widehat{\Y}} \widehat{\Y}^\top$ is nonzero by virtue of Lemma \ref{lm: norm-bound-Ytheta-thetaY}\nc. This implies that there is a neighborhood $\cN$ around $[\widehat{\Y}]$ such that $\overline{\Hess \, h([\Y])} \succcurlyeq 0$ for any $[\Y] \in \cN$. The result follows by combining the above with Theorem \ref{th: convexity-radius-Mq}. 

\section{Additional proofs in Section \ref{sec: H-landscape-analysis}} \label{proof-sec: add-H-landscape}

\subsection{ Proof of Lemma \ref{lm: positive-definite-Hessian-radius-embedded-geometry} }
\label{app:Lemma5}

Suppose $\X \in \cM^e_{r+}$ has eigendecomposition $\U \bSigma \U^\top$, $\xi_\X = [\U \quad \U_\perp] \begin{bmatrix}
			\S & \D^\top\\
			\D & \0
		\end{bmatrix} [\U \quad \U_\perp]^\top \in T_{\X}\cM^e_{r+}$. Then by \cite[Proposition 2]{luo2021nonconvex}, we have
	\begin{equation*}
		\Hess\, \widetilde{H}(\X)[\xi_\X, \xi_\X] = \|\xi_\X\|_\F^2 + 2\langle \X - \X^* , \U_\perp  \D \bSigma^{-1} \D^\top \U_\perp^\top \rangle.
	\end{equation*}
	
	Moreover,
	\begin{equation*}
		\begin{split}
			& 2\langle \X - \X^*  ,  \U_\perp  \D \bSigma^{-1} \D^\top \U_\perp^\top \rangle \\
			= & 2\langle \X - \X^*  ,  \U_\perp  \D \U^\top \U \bSigma^{-1}  \U^\top \U \D^\top\U_\perp^\top \rangle \\
			= & 2\langle \X - \X^*  ,  P_{\U_\perp} \xi_\X P_\U  \X^{-1} P_\U \xi_\X  P_{\U_\perp}  \rangle.
		\end{split}
	\end{equation*}
		Thus
		\begin{equation*}
		\begin{split}
			& \left| 2\langle \X - \X^*  ,  \U_\perp  \D \bSigma^{-1} \D^\top \U_\perp^\top \rangle\right| \\
			 \leq & 2 \|\X - \X^*\|_\F \| P_{\U_\perp} \xi_\X P_\U \|_\F^2 \sigma_1(\X^{-1}) = 2 \|\X - \X^*\|_\F \| \xi_\X \|_\F^2/ \sigma_r(\X).
		\end{split}
		\end{equation*}
		
		Finally, $\|\X - \X^*\|_\F \leq \mu' \sigma_r(\X^*)$ implies $\sigma_r(\X) \geq \sigma_r(\X^*) - \| \X - \X^* \| \geq (1 - \mu')\sigma_r(\X^*)$. Thus
	\begin{equation*}
		\begin{split}
			\Hess\, \widetilde{H}(\X)[\xi_\X, \xi_\X] & \geq  \|\xi_\X\|_\F^2 - 2 \|\X - \X^*\|_\F \| \xi_\X \|_\F^2/ \sigma_r(\X)  \\
			& \geq \|\xi_\X\|_\F^2  - 2 \|\X - \X^*\|_\F \| \xi_\X \|_\F^2/((1-\mu') \sigma_r(\X^*)) \geq (1 - \frac{2\mu'}{1-\mu'} )\|\xi_\X\|_\F^2.
		\end{split}
	\end{equation*}	
	
\subsection{Proof of Proposition \ref{prop: hH-grad-hessian-connection} }	
	First,
	\begin{equation*}
		\begin{split}
			&\| \overline{\grad\, H([\Y])} - \overline{\grad\, h([\Y])}  \|_\F \\
		\overset{ \text{Lemma }\ref{lm: gradient-hessian-exp-PSD}, \eqref{eq: gradient-Hessian-exp-H} }	= &  \max_{\bDelta: \|\bDelta\|_\F = 1} 2 \langle (\nabla f(\Y \Y^\top) - (\Y \Y^\top -\X^*)) \Y , \bDelta \rangle \\
			=& \max_{\bDelta: \|\bDelta\|_\F = 1} 2 \langle \nabla f(\Y \Y^\top) - (\Y \Y^\top -\X^*) , \bDelta \Y^\top \rangle \\
			  = &  \max_{\bDelta: \|\bDelta\|_\F = 1} 2 \left( \langle \nabla f(\Y \Y^\top) - \nabla f(\X^*) -  (\Y \Y^\top -\X^*) , \bDelta \Y^\top \rangle + \langle \nabla f(\X^*) , \bDelta \Y^\top \rangle  \right) \\
			  \overset{\text{Lemmas } \ref{lm: RIP-imply-gradient-bound}, \ref{lm: charac of Schatten-q norm}  } \leq &  \max_{\bDelta: \|\bDelta\|_\F = 1} 2 \delta \| \bDelta \Y^\top  \|_\F \|\Y\Y^\top - \X^*\|_\F + 2 \| \bDelta \Y^\top\|_\F \| ( \nabla f(\X^*) )_{\max(r)} \|_\F \\
			  \leq & 2 \delta \|\Y  \| \|\Y\Y^\top - \X^*\|_\F + 2 \|\Y\| \| ( \nabla f(\X^*) )_{\max(r)} \|_\F
		\end{split}
	\end{equation*}
	Second,
	\begin{equation*}
		\begin{split}
			& \left| \overline{\Hess\, H([\Y])}[\theta_\Y, \theta_\Y] - \overline{\Hess\, h([\Y])}[\theta_\Y, \theta_\Y]  \right| \\
			\overset{\text{Lemma }\ref{lm: gradient-hessian-exp-PSD}, \eqref{eq: gradient-Hessian-exp-H} } \leq  & \left| \nabla^2 f(\Y \Y^\top)[\Y\theta_\Y^\top + \theta_\Y \Y^\top , \Y\theta_\Y^\top + \theta_\Y \Y^\top ] - \|\Y\theta_\Y^\top + \theta_\Y \Y^\top \|_\F^2 \right| \\
			& + \left| 2\langle \nabla f(\Y \Y^\top )- (\Y \Y^\top -\X^*), \theta_\Y \theta_\Y^\top \rangle   \right| \\
			\overset{(a)}\leq & \delta  \|\Y\theta_\Y^\top + \theta_\Y \Y^\top \|_\F^2 + \left| 2\langle \nabla f(\Y \Y^\top ) - \nabla f(\X^*) - (\Y \Y^\top -\X^*), \theta_\Y \theta_\Y^\top \rangle   \right| \\
			& +  \left| \langle 2 \nabla f(\X^*), \theta_\Y \theta_\Y^\top \rangle   \right| \\
			\overset{ \text{Lemmas }\ref{lm: RIP-imply-gradient-bound}, \ref{lm: charac of Schatten-q norm} } \leq & \delta  \|\Y\theta_\Y^\top + \theta_\Y \Y^\top \|_\F^2  + 2 \delta \|\Y\Y^\top - \X^*\|_\F \|\theta_\Y \theta_\Y^\top\|_\F + 2 \| ( \nabla f(\X^*) )_{\max(r)} \|_\F \|\theta_\Y \theta_\Y^\top\|_\F,
		\end{split}
	\end{equation*} here (a) is because $f$ satisfies the $(2r,4r)$-restricted strong convexity and smoothness properties with parameter $\delta$ and $\rank(\Y\Y^\top) = r$, $\rank(\Y\theta_\Y^\top + \theta_\Y \Y^\top) \leq 2r$. This finishes the proof of this proposition.

\section{Auxiliary Lemmas for Theorem \ref{th: convexity-radius-Mq}} \label{proof-sec: add-lemma-convexity-radius}
\begin{Lemma}(\cite[Theorem 6.3]{massart2020quotient}) \label{lm: injec-radius}
	For any $\Y \in \bbR^{p \times r}_*$, the injectivity radius of $\cM_{r+}^q$ at $[\Y]$ is $\sigma_r(\Y)$.
\end{Lemma}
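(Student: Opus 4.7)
My plan is to reprove the result using the machinery already developed in the excerpt, rather than defer to \cite{massart2020quotient}. By definition, the injectivity radius $\mathrm{inj}([\Y])$ is the supremum of $r > 0$ such that $\exp_{[\Y]}$ is a diffeomorphism from the open ball of radius $r$ in $T_{[\Y]} \cM_{r+}^q$ onto $B_r([\Y])$. Identifying $T_{[\Y]} \cM_{r+}^q$ with $\cH_\Y \widebar{\cM}_{r+}^q$, Lemma \ref{lm: logarithm-map} tells us that the horizontal lift of $\exp_{[\Y]}$ is just the affine map $\W \mapsto \Y + \W$. I would establish matching upper and lower bounds on $\mathrm{inj}([\Y])$.

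For the upper bound $\mathrm{inj}([\Y]) \leq \sigma_r(\Y)$, I would exhibit a horizontal vector of norm $\sigma_r(\Y)$ along which the exponential exits the manifold. Concretely, let $\Y = \U \bSigma \V^\top$ be the thin SVD and set $\W_0 := -\sigma_r(\Y)\, \u_r \v_r^\top$. A direct calculation yields $\Y^\top \W_0 = -\sigma_r^2(\Y)\, \v_r \v_r^\top \in \bbS^{r \times r}$, so $\W_0 \in \cH_\Y \widebar{\cM}_{r+}^q$ by Lemma \ref{lm: psd-quotient-manifold1-prop}; moreover $\|\W_0\|_\F = \sigma_r(\Y)$ and $\Y + \W_0 = \U(\bSigma - \sigma_r(\Y)\, \e_r \e_r^\top) \V^\top$ has rank $r-1$, so it does not represent an element of $\cM_{r+}^q$. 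Hence $\exp_{[\Y]}$ cannot be extended continuously to the closed ball of radius $\sigma_r(\Y)$ as a map into $\cM_{r+}^q$, which forces the upper bound.

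For the lower bound $\mathrm{inj}([\Y]) \geq \sigma_r(\Y)$, I would take $\W \in \cH_\Y \widebar{\cM}_{r+}^q$ with $\|\W\|_\F < \sigma_r(\Y)$ and proceed in three steps. First, Weyl's inequality gives $\sigma_r(\Y + \W) \geq \sigma_r(\Y) - \|\W\| > 0$, so $\Y + \W \in \bbR^{p \times r}_*$ and $\exp_{[\Y]}(\W_{[\Y]})$ is well-defined. Second, I would show $\Y^\top(\Y + \W) = \Y^\top \Y + \Y^\top \W$ is symmetric positive definite: symmetry is immediate from the horizontal condition, and if some unit vector $v$ satisfied $v^\top(\Y^\top \Y + \Y^\top \W) v \leq 0$, then $\langle \Y v, \W v\rangle \leq -\|\Y v\|^2$, so Cauchy--Schwarz would give $\|\W v\| \geq \|\Y v\| \geq \sigma_r(\Y)$, contradicting $\|\W v\| \leq \|\W\| \leq \|\W\|_\F < \sigma_r(\Y)$. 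Third, positive definiteness makes the SVD of $\Y^\top(\Y + \W)$ coincide with its eigendecomposition, so Lemma \ref{lm: logarithm-map} gives best orthogonal alignment $\Q^* = \I_r$ and horizontal lift $\overline{\log_{[\Y]}[\Y + \W]} = \W$. Hence $\log_{[\Y]} \circ \exp_{[\Y]} = \mathrm{id}$ on the open ball of radius $\sigma_r(\Y)$, yielding both injectivity and, via the inverse function theorem applied to the smooth log map, nondegeneracy of the differential of $\exp_{[\Y]}$.

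The main subtlety lies in the positive definiteness step. A naive operator-norm bound $\|\Y^\top \W\| \leq \|\Y\| \|\W\|_\F$ would only yield the weaker threshold $\sigma_r^2(\Y)/\sigma_1(\Y) = \sigma_r(\Y)/\kappa^*$, missing the sharp bound by a factor of the condition number. The Cauchy--Schwarz argument above instead tests $\Y^\top \W$ against a rank-one quadratic form $v v^\top$, which eliminates this gap and leverages the horizontal structure of $\W$. I view this as the key analytic observation that matches the upper-bound construction exactly.
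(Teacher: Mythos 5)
The paper offers no proof of this lemma --- it is imported verbatim from \cite[Theorem 6.3]{massart2020quotient} --- so your self-contained derivation from Lemmas \ref{lm: psd-quotient-manifold1-prop} and \ref{lm: logarithm-map} is a genuinely different route, and it is correct. Your upper-bound witness $\W_0=-\sigma_r(\Y)\,\u_r\v_r^\top$ works exactly as claimed: $\Y^\top\W_0=-\sigma_r^2(\Y)\v_r\v_r^\top$ is symmetric, hence $\W_0$ is horizontal (horizontality is equivalent to symmetry of $\Y^\top\theta_\Y$, the same characterization the paper uses in proving Lemma \ref{lm: logarithm-map}), and the limiting Gram matrix $(\Y+\W_0)(\Y+\W_0)^\top$ has rank $r-1$, so by Lemma \ref{lm: completeness-quotient-PSD} it represents no point of $\cM_{r+}^q$ and the geodesic is inextendable. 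The crux of the lower bound is your positive-definiteness step, and your Cauchy--Schwarz argument against rank-one quadratic forms is precisely what recovers the sharp threshold $\sigma_r(\Y)$ rather than $\sigma_r(\Y)/\kappa^*$; note that both symmetry \emph{and} positive definiteness of $\Y^\top(\Y+\W)$ are needed to conclude $\Q^*=\I_r$ (a symmetric matrix with a negative eigenvalue has $\Q_V\Q_U^\top\neq\I_r$), and you establish both. Two points worth tightening. First, if the injectivity radius is defined via a diffeomorphism onto the metric ball $B_\rho([\Y])$ rather than onto the image, you also need surjectivity; this follows from Lemma \ref{lm: positive-det} (which gives nonsingularity of $\Y^\top\Y'$ whenever $d([\Y'],[\Y])<\sigma_r(\Y)$, so $\log_{[\Y]}$ is defined there) combined with the distance formula of Lemma \ref{lm: logarithm-map}. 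Second, the appeal to the inverse function theorem is unnecessary: writing $\Q^*=(\Y^\top\Y_2)^\top\bigl((\Y^\top\Y_2)(\Y^\top\Y_2)^\top\bigr)^{-1/2}$ shows the log map is smooth on the nonsingular locus, and two mutually inverse smooth maps are automatically diffeomorphisms.
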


\begin{Lemma} \label{lm: variation-formula-for-singular-values}
	Suppose $\A(t) \in \bbR^{p_1 \times p_2}$ depends smoothly on a time variable $t$, so that the singular value $\sigma_i(t) = \sigma_i(\A(t))$ and left (right) singular vectors $\u_i(t) = \u_i(\A(t))$ ($\v_i(t) = \v_i(\A(t))$) also depend smoothly on $t$. Then for every $i = 1, \ldots, p_1 \wedge p_2$:
	\begin{equation*}
		\begin{split}
			\dot{\sigma}_i(t) = \langle \u_i(t), \dot{\A}(t) \v_i(t) \rangle \quad \text{ and }\quad 
			\ddot{\sigma}_i(t) =\langle \u_i(t), \ddot{\A}(t) \v_i(t) \rangle + \left\langle \dot{\A}(t), \frac{d (\u_i(t) \v_i(t)^\top)}{dt} \right\rangle.
		\end{split}
	\end{equation*} Here for a smooth function $\phi$ of $t$, $\dot{\phi}(t) := \frac{d\phi }{dt}$, $\ddot{\phi}(t) := \frac{d^2 \phi}{dt^2}$.
\end{Lemma}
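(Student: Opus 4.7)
The starting point is the identity $\sigma_i(t) = \u_i(t)^\top \A(t) \v_i(t)$, which follows from $\A(t)\v_i(t) = \sigma_i(t)\u_i(t)$ together with $\|\u_i(t)\|_2 = 1$. I would also record the dual relation $\A(t)^\top \u_i(t) = \sigma_i(t)\v_i(t)$ and the normalization constraints $\|\u_i(t)\|_2^2 = \|\v_i(t)\|_2^2 = 1$, from which smooth differentiation yields the orthogonality conditions $\langle \u_i, \dot{\u}_i\rangle = 0$ and $\langle \v_i, \dot{\v}_i\rangle = 0$. These four facts are the only ingredients I will need.

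For the first derivative, I would differentiate $\sigma_i = \u_i^\top \A \v_i$ by the product rule, producing the three terms $\dot{\u}_i^\top \A \v_i + \u_i^\top \dot{\A}\v_i + \u_i^\top \A \dot{\v}_i$. Using $\A\v_i = \sigma_i \u_i$ and $\A^\top \u_i = \sigma_i \v_i$, the first and third terms become $\sigma_i \langle \dot{\u}_i, \u_i\rangle$ and $\sigma_i \langle \v_i, \dot{\v}_i\rangle$, both of which vanish by the orthogonality conditions above. What remains is exactly $\dot{\sigma}_i(t) = \langle \u_i, \dot{\A}\v_i\rangle$.

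For the second derivative, I would differentiate the identity $\dot{\sigma}_i = \u_i^\top \dot{\A}\v_i$ just obtained, again by the product rule, to get
\begin{equation*}
\ddot{\sigma}_i = \langle \u_i, \ddot{\A}\v_i\rangle + \dot{\u}_i^\top \dot{\A}\v_i + \u_i^\top \dot{\A}\dot{\v}_i.
\end{equation*}
The remaining step is a purely algebraic identification: I would expand $\tfrac{d}{dt}(\u_i\v_i^\top) = \dot{\u}_i\v_i^\top + \u_i\dot{\v}_i^\top$ and compute
\begin{equation*}
\left\langle \dot{\A},\, \dot{\u}_i\v_i^\top + \u_i\dot{\v}_i^\top\right\rangle = \tr(\dot{\A}^\top \dot{\u}_i\v_i^\top) + \tr(\dot{\A}^\top \u_i\dot{\v}_i^\top) = \dot{\u}_i^\top \dot{\A}\v_i + \u_i^\top \dot{\A}\dot{\v}_i,
\end{equation*}
using the cyclic property of the trace. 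Substituting this back gives the claimed formula for $\ddot{\sigma}_i(t)$.

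\textbf{Main obstacle.} There is essentially no analytic obstacle here, only a regularity subtlety: the formulas are valid only at times $t$ where $\sigma_i(t)$, $\u_i(t)$, $\v_i(t)$ can be chosen smoothly, which the hypothesis of the lemma grants us. At points where $\sigma_i$ coincides with a neighbouring singular value the eigenvectors may only be selected as Lipschitz continuous branches and the pointwise formulas above could fail; this is precisely the reason for the smoothness assumption in the statement, and I would briefly remark on this in order to justify unambiguous differentiation of $\u_i(t)$ and $\v_i(t)$. Beyond this, every step reduces to the product rule together with the normalization and singular-vector identities.
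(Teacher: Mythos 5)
Your proof is correct and follows essentially the same route as the paper's: differentiate the identity $\sigma_i = \u_i^\top \A \v_i$, kill the extra terms using $\A\v_i = \sigma_i\u_i$, $\A^\top\u_i = \sigma_i\v_i$ and the normalization-derived orthogonality $\langle\u_i,\dot\u_i\rangle = \langle\v_i,\dot\v_i\rangle = 0$, then differentiate once more and recognize the remaining two terms as $\langle\dot\A, \tfrac{d}{dt}(\u_i\v_i^\top)\rangle$. Your closing remark on why smoothness of the singular vectors is needed is a sensible addition but does not change the argument.
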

\begin{proof}
	First notice $\langle \u_i(t), \A(t) \v_i(t) \rangle = \sigma_i(t)$. Differentiating with respect to $t$ on both sides yields
	\begin{equation}\label{eq: first-order-derive}
	\begin{split}
		 \dot{\sigma}_i(t)&=\langle \dot{\u}_i(t), \A(t) \v_i(t) \rangle + \langle \u_i(t), \dot{\A}(t) \v_i(t) \rangle + \langle \u_i(t), \A(t) \dot{\v}_i(t) \rangle\\
		\Longrightarrow \dot{\sigma}_i(t)&=\sigma_i(t) \langle \dot{\u}_i(t), \u_i(t) \rangle + \langle \u_i(t), \dot{\A}(t) \v_i(t) \rangle + \sigma_i(t)\langle \v_i(t), \dot{\v}_i(t) \rangle\\
		\overset{(a)}\Longrightarrow \dot{\sigma}_i(t) &=  \langle \u_i(t), \dot{\A}(t) \v_i(t) \rangle.
	\end{split}
	\end{equation} Here (a) is because $\langle \u_i(t), \u_i(t) \rangle = 1$, so $\langle \dot{\u}_i(t), \u_i(t) \rangle = 0$. Similarly we have $\langle \v_i(t), \dot{\v}_i(t) \rangle = 0$.
	
	Differentiating with respect to $t$ on both sides of $\dot{\sigma}_i(t) = \langle \u_i(t), \dot{\A}(t) \v_i(t) \rangle$, we get
	\begin{equation*}
		\begin{split}
			\ddot{\sigma}_i(t) &= \langle \u_i(t), \ddot{\A}(t) \v_i(t) \rangle + \langle \dot{\u}_i(t), \dot{\A}(t) \v_i(t) \rangle + \langle \u_i(t), \dot{\A}(t) \dot{\v}_i(t) \rangle \\
			\Longrightarrow  \ddot{\sigma}_i(t) &= \langle \u_i(t), \ddot{\A}(t) \v_i(t) \rangle + \langle \frac{d (\u_i(t) \v_i(t)^\top)}{dt}, \dot{\A}(t) \rangle .
		\end{split}
	\end{equation*}
\end{proof}

\begin{Lemma} \label{lm: positive-det}
	Given any $\Y, \Y' \in \bbR^{p \times r}_*$ such that $d([\Y'], [\Y]) < \sigma_r(\Y)$. Let $\O'  = \argmin_{\O \in \bbO_r} \| \Y' \O - \Y  \|_\F$. Then $\Y^\top (\Y + t(\Y' \O' - \Y) )$ is nonsingular and $\det(\Y^\top (\Y + t(\Y' \O' - \Y) ) ) > 0$ for all $t \in [0,1]$.
\end{Lemma}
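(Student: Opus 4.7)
The plan is to recognize that $M(t) := \Y^\top(\Y + t(\Y'\O' - \Y)) = (1-t)\Y^\top\Y + t\Y^\top\Y'\O'$ is a convex combination of two matrices, and to show both endpoints $M(0)$ and $M(1)$ are symmetric positive definite. Since a convex combination of positive definite matrices is positive definite, $M(t)\succ 0$ for every $t \in [0,1]$, which immediately delivers both nonsingularity and positivity of the determinant.

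For symmetry, I would invoke Lemma \ref{lm: logarithm-map} to note $\Y'\O' - \Y \in \cH_\Y \widebar{\cM}_{r+}^q$, and then the explicit characterization of the horizontal space in Lemma \ref{lm: psd-quotient-manifold1-prop} to conclude that $\Y^\top(\Y'\O' - \Y)$ is a symmetric $r\times r$ matrix. Consequently $\Y^\top\Y'\O' = \Y^\top\Y + \Y^\top(\Y'\O'-\Y)$ is symmetric, and so is every $M(t)$. Positive definiteness of $M(0) = \Y^\top\Y$ is immediate from $\Y \in \bbR^{p\times r}_*$.

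The core step is $M(1) = \Y^\top\Y'\O' \succ 0$. For any unit $w \in \bbR^r$ I would write
\[
w^\top M(1) w = \|\Y w\|^2 + \langle \Y w,\, (\Y'\O'-\Y)w\rangle \;\geq\; \|\Y w\|\bigl(\|\Y w\| - \|(\Y'\O'-\Y)w\|\bigr),
\]
and then apply the two bounds $\|\Y w\| \geq \sigma_r(\Y)$ and $\|(\Y'\O'-\Y)w\| \leq \|\Y'\O'-\Y\| \leq \|\Y'\O'-\Y\|_F = d([\Y'],[\Y]) < \sigma_r(\Y)$. Both factors are then strictly positive (and $\|\Y w\| > 0$ since $\Y$ has full column rank), giving $w^\top M(1)w > 0$.

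The main obstacle I would want to avoid is the tempting shortcut of applying Weyl's inequality with $\|S\| \leq \|\Y\|\cdot\|\Y'\O'-\Y\|$, which only yields $\lambda_{\min}(M(1)) \geq \sigma_r(\Y)^2 - \sigma_1(\Y)\sigma_r(\Y)$, far too weak whenever $\Y$ is poorly conditioned. The vector-by-vector bound sketched above sidesteps this issue because the \emph{same} $w$ appears inside both $\Y w$ and $(\Y'\O'-\Y)w$, which lets the factor $\sigma_r(\Y)$ coming from $\|\Y w\|$ be paired against $\sigma_r(\Y)$ coming from the distance hypothesis, rather than against $\sigma_1(\Y)$.
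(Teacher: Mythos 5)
Your proof is correct, and it takes a genuinely different route from the paper's. The paper argues by contradiction: it writes $\Y'\O'-\Y$ in the explicit horizontal coordinates $\Y(\Y^\top\Y)^{-1}\S + \U_\perp\D$ of Lemma \ref{lm: psd-quotient-manifold1-prop}, uses the Frobenius--orthogonality of the two components together with the Schmidt--Mirsky theorem (distance from $\Y$ to the set of rank-deficient matrices equals $\sigma_r(\Y)$) to rule out singularity of $M(t)$ for any $t\in[0,1]$, and then gets positivity of the determinant from continuity in $t$ plus $\det(\Y^\top\Y)>0$. You instead observe that $M(t)$ is a convex combination of the two symmetric matrices $M(0)=\Y^\top\Y$ and $M(1)=\Y^\top\Y'\O'$ (symmetry of $M(1)$ following from horizontality of $\Y'\O'-\Y$, exactly as computed in the proof of Lemma \ref{lm: logarithm-map}, where $\Y^\top\Y'\O'=\Q_U\bSigma\Q_U^\top$), and prove $M(1)\succ 0$ by the pointwise Cauchy--Schwarz estimate that pairs $\|\Y w\|\ge\sigma_r(\Y)$ against $\|(\Y'\O'-\Y)w\|\le \|\Y'\O'-\Y\|_\F=d([\Y'],[\Y])<\sigma_r(\Y)$. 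Your version buys a slightly stronger and more quantitative conclusion --- $M(t)\succ 0$ uniformly, with $\lambda_{\min}(M(t))\ge \sigma_r(\Y)\bigl(\sigma_r(\Y)-d([\Y'],[\Y])\bigr)$ --- and dispenses with both the contradiction and the continuity-of-determinant step; it also needs less of the horizontal-space structure (only the symmetry of $\Y^\top(\Y'\O'-\Y)$, not the Pythagorean splitting). The paper's argument, on the other hand, is the one that generalizes naturally to the injectivity-radius computations of \cite{massart2020quotient}, since it directly identifies $\sigma_r(\Y)$ as the Frobenius distance to the rank-deficient boundary. Your closing remark about avoiding the naive Weyl bound $\lambda_{\min}(M(1))\ge\sigma_r^2(\Y)-\sigma_1(\Y)\,d([\Y'],[\Y])$ is well taken; keeping the same $w$ in both factors is exactly what makes the estimate condition-number free.
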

\begin{proof} Suppose $\U$ spans the top $r$ eigenspace of $\Y \Y^\top$. By Lemma \ref{lm: logarithm-map}, we know $\Y' \O' - \Y \in \cH_\Y \widebar{\cM}_{r+}^{q}$, so we can represent $\Y' \O' - \Y$ by $\Y (\Y^\top \Y)^{-1} \S + \U_\perp \D$ for some $\S \in \bbS^{r \times r}, \D \in \bbR^{(p-r) \times r}$ by Lemma \ref{lm: psd-quotient-manifold1-prop}.

For the sake of contradiction, suppose there exists a $t^* \in [0,1]$ such that $\Y^\top (\Y + t^*(\Y' \O' - \Y))$ is singular. We show next that this implies $d([\Y'],[\Y]) = \|\Y'\O' - \Y\|_\F \geq \sigma_r(\Y)$, thus contradicting our assumption. 

Since $\Y^\top (\Y + t^*(\Y' \O' - \Y))$ is singular, the matrix $\Y (\Y^\top \Y)^{-1}\Y^\top (\Y + t^*(\Y' \O' - \Y)) = \Y + t^* \Y (\Y^\top \Y)^{-1}\S$ is singular also. It follows that
\begin{equation*}
	\begin{split}
		\|\Y'\O' - \Y\|^2_\F = \|\Y (\Y^\top \Y)^{-1} \S\|^2_\F + \|\U_\perp \D\|_\F^2 \geq \|t^*\Y (\Y^\top \Y)^{-1} \S\|^2_\F \geq \min_{\widetilde{\Y} \in \bbR^{p \times r} \setminus \bbR^{p \times r}_* }\| \widetilde{\Y} - \Y \|_\F \overset{ (a) }= \sigma_r(\Y).
	\end{split}
\end{equation*} Here (a) is by the Schmidt-Mirsky theorem, see \cite[Chapter 1, Theorem 4.32]{stewart1998matrix}. We thus conclude that $\Y^\top (\Y + t(\Y' \O' - \Y) )$ is nonsingular for all $t \in [0,1]$. 

Using now the continuity of $\det(\Y^\top (\Y + t(\Y' \O' - \Y) ) )$ with respect to $t$, the fact that $\Y^\top (\Y + t(\Y' \O' - \Y) )$ is nonsingular for all $t \in [0,1]$, and the fact that at $t = 0$ we have $\det(\Y^\top \Y) > 0$, we deduce that $\det(\Y^\top (\Y + t(\Y' \O' - \Y) ) ) > 0$ for all $t \in [0,1]$. This completes the proof.
\end{proof}

The fact that the matrix $\Y^\top (\Y + t(\Y' \O' - \Y) )$ is nonsingular for all $t \in [0,1]$ has been proved in \cite[Proposition 6.2]{massart2020quotient}. Here we have presented the proof again for completeness, emphasizing the fact that the determinant does not change sign. This fact will be needed in the next lemma.
  \nc

\begin{Lemma}[Condition Number for the Perturbation of Orthogonal Procrustes Problem] \label{lm: procrustes-condition-number}
	Let $\Y, \Y' \in \bbR^{p \times r}_*$ such that $d([\Y'], [\Y]) < \sigma_r(\Y)$, and let $\Delta_\Y$ and $\Delta_{\Y'}$ be two $\mathbb{R}^{p\times r}$ matrices. Let $\O_t = \argmin_{\O \in \bbO_r} \| (\Y' + t \Delta_{\Y'} ) \O - (\Y + t\Delta_{\Y} )  \|_\F$ for $t \geq 0$. Then
	\begin{equation*}
		\left\| \frac{d \O_t}{dt}\Big|_{t=0}  \right\|_\F \leq \sqrt{2} \left( \frac{\| \Delta_{\Y'} \|_\F }{ ( \sigma_r(\Y)^2 + \sigma^2_{r-1}(\Y) )^{1/2} } + \frac{\| \Delta_{\Y} \|_\F}{( \sigma_r(\Y)^2 + \sigma^2_{r-1}(\Y) )^{1/2} - d([\Y'], [\Y]) }   \right). 
	\end{equation*}
\end{Lemma}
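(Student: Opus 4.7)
The approach is to differentiate the first-order optimality characterization of the Procrustes problem and analyze the resulting Sylvester equation. For each $t$, $\O_t$ is the aligner if and only if $\O_t^\top M(t)$ is symmetric (in fact symmetric positive semidefinite), where $M(t) := (\Y' + t\Delta_{\Y'})^\top(\Y + t\Delta_\Y)$. Without loss of generality I can reduce to $\O_0 = I$ by absorbing $\O_0$ into $\Y'$ (replacing $\Y'$ by $\widetilde{\Y} := \Y'\O_0$ and $\Delta_{\Y'}$ by $\Delta_{\Y'}\O_0$; this preserves Frobenius norms and $d([\Y'],[\Y])$), making $S := M(0) = \Y'^\top\Y$ symmetric positive definite (the hypothesis $d([\Y'], [\Y]) < \sigma_r(\Y)$ rules out zero eigenvalues via Lemma~\ref{lm: positive-det}). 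Differentiating the identity $\O_t^\top M(t) = M(t)^\top \O_t$ at $t = 0$ and isolating the skew-symmetric part yields the Sylvester equation $S\bOmega + \bOmega S = K$, with $\bOmega := \dot\O_0$ skew-symmetric and $K = K_1 + K_2$, where $K_1 := \Delta_{\Y'}^\top \Y - \Y^\top \Delta_{\Y'}$ and $K_2 := \Y'^\top \Delta_\Y - \Delta_\Y^\top \Y'$.

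By linearity I split $\bOmega = \bOmega_1 + \bOmega_2$, where each $\bOmega_i$ is the unique skew-symmetric solution of $S\bOmega_i + \bOmega_i S = K_i$ (this Sylvester operator is invertible on the space of skew-symmetric matrices since $S \succ 0$), and apply the triangle inequality $\|\dot\O_0\|_\F \leq \|\bOmega_1\|_\F + \|\bOmega_2\|_\F$. Taking the Frobenius inner product of each Sylvester equation with $\bOmega_i$ and simplifying using skew-symmetry gives the scalar identities $\|S^{1/2}\bOmega_1\|_\F^2 = -\langle \Delta_{\Y'}, \Y\bOmega_1\rangle$ and $\|S^{1/2}\bOmega_2\|_\F^2 = -\langle \Delta_\Y, \Y'\bOmega_2\rangle$, both of which I upper-bound by Cauchy--Schwarz: $\|S^{1/2}\bOmega_1\|_\F^2 \leq \|\Delta_{\Y'}\|_\F \|\Y\bOmega_1\|_\F$ and $\|S^{1/2}\bOmega_2\|_\F^2 \leq \|\Delta_\Y\|_\F \|\Y'\bOmega_2\|_\F$.

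The remaining ingredient is a spectral inequality of the form $\|X\bOmega\|_\F^2 \geq \tfrac{1}{2}(\sigma_r^2(X) + \sigma_{r-1}^2(X))\|\bOmega\|_\F^2$ for any skew-symmetric $\bOmega$ and full-column-rank $X \in \bbR^{p \times r}$; this is proved by diagonalizing $X^\top X$ and pairing up the $(i,j)$ and $(j,i)$ entries of $\bOmega$ in that basis, in the same spirit as the pairing argument already used in the proof of Theorem~\ref{th: H-geodesic-strong-convex}. Combined with a matching lower bound relating $\|S^{1/2}\bOmega_i\|_\F$ back to $\|\bOmega_i\|_\F$ through the appropriate factor ($\Y$ for $\bOmega_1$, $\Y'$ for $\bOmega_2$), this yields $\|\bOmega_1\|_\F \leq \sqrt{2} \|\Delta_{\Y'}\|_\F/\sqrt{\sigma_r^2(\Y) + \sigma_{r-1}^2(\Y)}$; applying Weyl's inequality $|\sigma_j(\Y') - \sigma_j(\Y)| \leq \|\Y' - \Y\| \leq d([\Y'],[\Y])$ to translate the $\Y'$-singular-values appearing in the analogous $\bOmega_2$-bound back into $\Y$-singular-values then gives $\|\bOmega_2\|_\F \leq \sqrt{2} \|\Delta_\Y\|_\F/(\sqrt{\sigma_r^2(\Y) + \sigma_{r-1}^2(\Y)} - d([\Y'],[\Y]))$.

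The main obstacle is bridging the Cauchy--Schwarz upper bound $\|S^{1/2}\bOmega_i\|_\F^2 \leq \|\Delta_i\|_\F \|X_i\bOmega_i\|_\F$ to a clean lower bound of the shape $c\|\bOmega_i\|_\F^2 \leq \|\Delta_i\|_\F\|X_i\bOmega_i\|_\F$ without introducing extraneous error terms; in particular, for $\bOmega_1$ the target has \emph{no} $d$-dependent correction in the denominator, whereas naive expansions such as $\|S^{1/2}\bOmega_1\|_\F^2 = \langle \Y\bOmega_1, \Y'\bOmega_1\rangle \geq \|\Y\bOmega_1\|_\F^2 - \|E_0\|\sigma_1(\Y)\|\bOmega_1\|_\F^2$ (with $E_0 := \Y' - \Y$) introduce an unwanted $\sigma_1(\Y)\cdot d$ factor. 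Resolving this cleanly will likely require a componentwise analysis in the eigenbasis of $S$, exploiting the biorthogonality relation $(\Y' W)^\top (\Y W) = \boldsymbol{\Lambda}$ (where $W$ diagonalizes $S$ with eigenvalues collected in $\boldsymbol{\Lambda}$), which decouples the contributions of $\Y$ and $\Y'$ just enough to avoid the spurious $\sigma_1(\Y)$ factors.
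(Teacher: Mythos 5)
Your route is genuinely different from the paper's: the paper does not prove this bound from scratch, but instead reduces to the rotation-constrained Procrustes problem (using Lemma~\ref{lm: positive-det} to ensure the optimal aligner has determinant one for small $t$) and then invokes \cite[Corollary 2.1]{soderkvist1993perturbation} as a black box. Your setup --- differentiating the optimality characterization $\O_t^\top M(t) = M(t)^\top \O_t$, arriving at the Lyapunov equation $S\bOmega + \bOmega S = K_1 + K_2$ with $S \succ 0$, and splitting by linearity --- is correct as far as it goes, and a self-contained proof along these lines would be more informative than a citation. But the proposal is not a proof: the entire analytic content of the lemma is precisely the passage from the Cauchy--Schwarz inequality $\|S^{1/2}\bOmega_i\|_\F^2 \le \|\Delta_i\|_\F\,\|X_i\bOmega_i\|_\F$ to the stated denominators, and you explicitly leave that step as something that ``will likely require'' a componentwise analysis. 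Everything you do carry out (the Sylvester reduction, the pairing inequality $\|X\bOmega\|_\F^2 \ge \tfrac12(\sigma_r^2(X)+\sigma_{r-1}^2(X))\|\bOmega\|_\F^2$) is the routine part; the step you defer is the lemma.

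Moreover, the target you set for the $\bOmega_1$ piece --- $\|\bOmega_1\|_\F \le \sqrt{2}\,\|\Delta_{\Y'}\|_\F/(\sigma_r^2(\Y)+\sigma_{r-1}^2(\Y))^{1/2}$ with \emph{no} $d$-dependent correction --- cannot be reached by your argument, because the Lyapunov operator is governed by the eigenvalues of $S$ (i.e., by the spectrum of $\Y'^\top\Y$), not by the singular values of $\Y$ alone. Concretely, take $p=r=2$, $\Y=\I$, $\Y'=(1-\epsilon)\I$, $\Delta_\Y=\0$, and $\Delta_{\Y'}= c\bigl(\begin{smallmatrix}0&-1\\ 1&0\end{smallmatrix}\bigr)$, so that $\|\Delta_{\Y'}\|_\F=\sqrt2\,c$ and $d([\Y'],[\Y])=\sqrt2\,\epsilon<\sigma_2(\Y)$ for small $\epsilon$. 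Then $M(t)$ is a scaled rotation, $\O_t$ is its rotation factor, and a direct computation (or your own Sylvester formula, since here $\lambda_1+\lambda_2=2(1-\epsilon)$ and $(K_1)_{12}=2c$) gives $\|\dot\O_0\|_\F=\sqrt2\,c/(1-\epsilon)$, which strictly exceeds $\sqrt2\,\|\Delta_{\Y'}\|_\F/(\sigma_1^2(\Y)+\sigma_2^2(\Y))^{1/2}=\sqrt2\,c$. So the ``biorthogonality'' device cannot remove the dependence on $\Y'$ from the first term: any correct completion will produce a first denominator involving $\sigma_r(\Y'),\sigma_{r-1}(\Y')$, or equivalently a $d([\Y'],[\Y])$-type deflation of $(\sigma_r^2(\Y)+\sigma_{r-1}^2(\Y))^{1/2}$. (The same example suggests that the attribution of the two denominators in the lemma as transcribed from \cite{soderkvist1993perturbation} deserves a second look, but that is a separate matter from completing your argument.)
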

\begin{proof}
	Let us introduce another alignment matrix $\O'_t$: $$\O'_t = \argmin_{\O \in \bbO_r} \| (\Y' + t \Delta_{\Y'} ) \O_0 \O - (\Y + t\Delta_{\Y} )  \|_\F.$$ 
	It is easy to see that $\O_0 \O'_t = \O_t$ and $\O_0' = \I_r$. By Lemma \ref{lm: positive-det}, we know that when $d([\Y'], [\Y]) < \sigma_r(\Y)$, then $ \det(  \Y^\top \Y' \O_0  ) > 0 $, and thus $\det((\Y + t\Delta_{\Y} )^\top (\Y' + t \Delta_{\Y'} ) \O_0 ) >0$ for all small enough $t$. \nc By Lemma \ref{lm: logarithm-map}, we have $\O'_t = \V_t \U_t^\top$, where $(\Y + t\Delta_{\Y} )^\top (\Y' + t \Delta_{\Y'} ) \O_0$ has SVD $\U_t \bSigma_t \V_t^\top$, so for small enough $t$ we have $\sign(\det(\O'_t)) = \sign(\det((\Y + t\Delta_{\Y} )^\top (\Y' + t \Delta_{\Y'} ) \O_0 ))$.
	
	By the continuity of the determinant, we have $\det(\O'_t) > 0$ for small enough $t$ because $\sign(\det(\O'_0)) = \sign( \det(\Y^\top \Y' \O_0) ) > 0 $. This implies that the best alignment matrix between $\Y + t\Delta_{\Y}$ and $(\Y' + t \Delta_{\Y'})\O_0$ is a rotation matrix, i.e., it has determinant $1$, for small enough $t$. Then by \cite[Corollary 2.1]{soderkvist1993perturbation}, we have
	\begin{equation} \label{ineq: Ot2_perturbation}
	\begin{split}
		\|\O'_t - \O'_0\|_\F &\leq \sqrt{2} t \left( \frac{\| \Delta_{\Y'} \O_0 \|_\F }{ ( \sigma_r(\Y)^2 + \sigma^2_{r-1}(\Y) )^{1/2} } + \frac{\| \Delta_{\Y} \|_\F}{( \sigma_r(\Y)^2 + \sigma^2_{r-1}(\Y) )^{1/2} - d([\Y'], [\Y]) }  \right) + O(t^3)\\
		& = \sqrt{2} t \left( \frac{\| \Delta_{\Y'}\|_\F }{ ( \sigma_r(\Y)^2 + \sigma^2_{r-1}(\Y) )^{1/2} } + \frac{\| \Delta_{\Y} \|_\F}{( \sigma_r(\Y)^2 + \sigma^2_{r-1}(\Y) )^{1/2} - d([\Y'], [\Y]) }  \right) + O(t^3).
	\end{split} 
	\end{equation}
	Finally, we have
	\begin{equation*}
		\begin{split}
			& \quad \left\| \frac{d \O_t}{dt}\Big|_{t=0}  \right\|_\F  = \left\| \lim_{t \to 0} \frac{ \O_t - \O_0}{t}  \right\|_\F = \left\| \lim_{t \to 0} \frac{ \O_0\O'_t - \O_0 \O'_0}{t}  \right\|_\F 
			\\& = \left\| \lim_{t \to 0} \frac{ \O'_t - \O'_0}{t}  \right\|_\F  =\lim_{t \to 0} \left\|  \frac{ \O'_t - \O'_0}{t}\right\|_\F 
			\\&  \leq \nc  \sqrt{2} \left( \frac{\| \Delta_{\Y'} \|_\F }{ ( \sigma_r(\Y)^2 + \sigma^2_{r-1}(\Y) )^{1/2} } + \frac{\| \Delta_{\Y} \|_\F}{( \sigma_r(\Y)^2 + \sigma^2_{r-1}(\Y) )^{1/2} - d([\Y'], [\Y]) }   \right),
		\end{split}
	\end{equation*}
	where the last inequality follows from \eqref{ineq: Ot2_perturbation}.
\end{proof}

\begin{Remark}
Lemma \ref{lm: procrustes-condition-number} provides a generalization of Corollary 2.1 in \cite{soderkvist1993perturbation}. There,  $\O_t$ is defined as a minimizer among \textit{rotation} matrices, i.e. elements in $\mathbb{O}_r$ with determinant one, whereas here we are interested in arbitrary orthonormal matrices. In our proof, however, we show that we can reduce our setting to that in \cite{soderkvist1993perturbation}.
\end{Remark}

\nc

\subsection{Proof of Lemma \ref{lm: totally-normal-neigh}} \label{proof-sec: lm-totally-normal-neigh}
	For given $[\Y'] \in B_x([\Y])$,  let $\Q$ be defined as in Lemma \ref{lm: logarithm-map}, i.e. $\Q$ is the best matrix in $\mathbb{O}_r$ aligning $\Y$ and $\Y'$\nc. Then
	\begin{equation} \label{ineq: sigma-r-lowe-bound}
		\begin{split}
			\sigma_r(\Y') = \sigma_r(\Y' \Q) = \sigma_r(\Y' \Q - \Y + \Y) \geq \sigma_r(\Y) - \|\Y' \Q - \Y\| &\geq \sigma_r(\Y) - \|\Y' \Q - \Y\|_\F\\
			& > \sigma_r(\Y) - x,
		\end{split}
	\end{equation}
	 where the first inequality follows from Weyl's theorem \cite[Theorem  4.29]{stewart1998matrix}\nc.
	Combining with Lemma \ref{lm: injec-radius}, \eqref{ineq: sigma-r-lowe-bound} implies that the injectivity radius at $[\Y']$ is no smaller than $\sigma_r(\Y) - x$.
	
	Moreover, for any other $[\Y''] \in B_x([\Y])$, we have
\begin{equation*}
	d([\Y''],[\Y']) \leq d([\Y''],[\Y]) + d([\Y'],[\Y]) < \frac{2}{3} \sigma_r(\Y) \leq \sigma_r(\Y) - x.
\end{equation*} This implies $B_{\sigma_r(\Y) - x}([\Y']) \supset B_x([\Y]) $ and finishes the proof of this lemma.

\section{Additional Lemmas} \label{proof-sec: additional-lemmas}

The next lemma establishes a one-to-one correspondence between PSD matrices with rank $r$ and the space $\cM_{r+}^q$.

\begin{Lemma}(\cite[Proposition A.1]{massart2020quotient}) \label{lm: completeness-quotient-PSD} \nc Let $\Y_1, \Y_2 \in \bbR_*^{p \times r}$. Then $\Y_1 \Y_1^\top = \Y_2 \Y_2^\top$ if and only if $\Y_2 = \Y_1 \O$ for some $\O \in \bbO_r$. Moreover, $\{\X:\bbS^{p \times p} \ni \X \succcurlyeq 0, \rank(\X) = r \} = \{\Y \Y^\top: \Y \in \bbR^{p \times r}_* \}.$
\end{Lemma}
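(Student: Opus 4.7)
The plan is to prove the two statements in the lemma separately, both by elementary linear algebra.

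For the biconditional, the ``if'' direction is immediate: whenever $\Y_2 = \Y_1 \O$ with $\O \in \bbO_r$, we have $\Y_2 \Y_2^\top = \Y_1 \O \O^\top \Y_1^\top = \Y_1 \Y_1^\top$. For the ``only if'' direction, I would begin by noting that $\Y_1 \Y_1^\top = \Y_2 \Y_2^\top$ forces $\Y_1$ and $\Y_2$ to share a column space, because the column space of $\Y_i \Y_i^\top$ equals the column space of $\Y_i$ (both have rank $r$ by the full column rank assumption). Since $\Y_1$ and $\Y_2$ both have full column rank $r$ and span the same $r$-dimensional subspace of $\bbR^p$, there exists a unique invertible $\M \in \bbR^{r \times r}$ with $\Y_2 = \Y_1 \M$.

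Substituting this into $\Y_1 \Y_1^\top = \Y_2 \Y_2^\top$ gives $\Y_1 (\I_r - \M \M^\top) \Y_1^\top = \0$. Because $\Y_1 \in \bbR^{p \times r}_*$, the Gram matrix $\Y_1^\top \Y_1$ is invertible, so I would multiply on the left by $(\Y_1^\top \Y_1)^{-1} \Y_1^\top$ and on the right by its transpose to conclude $\M \M^\top = \I_r$, i.e., $\M \in \bbO_r$. Setting $\O = \M$ finishes this direction.

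For the set equality, the inclusion $\{\Y \Y^\top : \Y \in \bbR^{p \times r}_*\} \subseteq \{\X \in \bbS^{p\times p} \succcurlyeq 0, \rank(\X) = r\}$ is clear since $\Y \Y^\top$ is symmetric, PSD, and has rank equal to $\rank(\Y) = r$. For the reverse inclusion, I would take any rank-$r$ PSD matrix $\X$, use its (reduced) eigendecomposition $\X = \U \bSigma \U^\top$ with $\U \in \bbR^{p \times r}$ column orthonormal and $\bSigma \in \bbR^{r \times r}$ diagonal with strictly positive entries (rank $r$ and PSD), and set $\Y := \U \bSigma^{1/2}$. Then $\Y \in \bbR^{p \times r}_*$ and $\Y \Y^\top = \X$.

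I do not anticipate a real obstacle here; the only point requiring mild care is invoking the invertibility of $\Y_1^\top \Y_1$ to cancel $\Y_1$ on both sides, which is valid precisely because we assume $\Y_1 \in \bbR^{p \times r}_*$. Everything else is routine.
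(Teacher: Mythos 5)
Your proof is correct. The paper does not actually prove this lemma---it simply cites \cite[Proposition A.1]{massart2020quotient}---so there is no in-paper argument to compare against; your elementary, self-contained derivation (matching column spaces to get $\Y_2=\Y_1\M$ with $\M$ invertible, then cancelling via the invertible Gram matrix $\Y_1^\top\Y_1$ to force $\M\M^\top=\I_r$, plus the reduced eigendecomposition for the set equality) is a valid and complete substitute for the citation.
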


\nc

The next lemma states that this correspondence is a locally bi-Lipschitz map and quantifies the corresponding local Lipschitz constants. 
\nc 
\begin{Lemma} \label{lm: distUY-UUYY-transfer}
	For any $\Y_1, \Y_2 \in \bbR^{p \times r}_*$, we have
	\begin{equation} \label{ineq: distUY-UUYY-transfer-1}
		d^2([\Y_1], [\Y_2]) \leq \frac{1}{2(\sqrt{2}-1) \sigma^2_r(\Y_2) } \|\Y_1 \Y_1^\top - \Y_2 \Y_2^\top \|_\F^2,
	\end{equation}
	and 
	\begin{equation}\label{ineq: distUY-UUYY-transfer-2}
		\|(\Y_1 - \Y_2 \Q) (\Y_1 - \Y_2 \Q)^\top\|_\F^2 \leq 2\|\Y_1 \Y_1^\top - \Y_2 \Y_2^\top \|_\F^2,
	\end{equation}
	where $\Q = \argmin_{\O \in \bbO_r}  \|\Y_1 - \Y_2 \O\|_\F$.
	
	In addition, for any $\Y_1, \Y_2 \in \bbR^{p \times r}_*$ \nc obeying $d([\Y_1],[\Y_2]) \leq \frac{1}{3} \sigma_r(\Y_2)$, we have
	\begin{equation}\label{ineq: distUY-UUYY-transfer-3}
		\|\Y_1 \Y_1^\top - \Y_2 \Y_2^\top \|_\F \leq \frac{7}{3} \|\Y_2 \| d([\Y_1], [\Y_2]).
	\end{equation}
\end{Lemma}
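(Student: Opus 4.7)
My plan is to reduce all three bounds to the algebraic identity
\begin{equation*}
\Y_1\Y_1^\top - \Y_2\Y_2^\top = \Delta\Delta^\top + \Delta(\Y_2\Q)^\top + (\Y_2\Q)\Delta^\top,
\end{equation*}
where $\Delta := \Y_1 - \Y_2\Q$, and to exploit the fact that Lemma \ref{lm: logarithm-map} provides both $\|\Delta\|_\F = d([\Y_1],[\Y_2])$ and the symmetry and positive semidefiniteness of $\Y_1^\top\Y_2\Q$ (via the eigendecomposition $\Q_U \bSigma \Q_U^\top$ of that lemma). I will address the three bounds in the order \eqref{ineq: distUY-UUYY-transfer-3}, \eqref{ineq: distUY-UUYY-transfer-2}, \eqref{ineq: distUY-UUYY-transfer-1}, in increasing order of difficulty.

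For \eqref{ineq: distUY-UUYY-transfer-3}, the triangle inequality and sub-multiplicativity applied to the identity will give $\|\Y_1\Y_1^\top - \Y_2\Y_2^\top\|_\F \leq \|\Delta\|_\F^2 + 2\|\Y_2\|\,\|\Delta\|_\F$. The hypothesis $\|\Delta\|_\F \leq \sigma_r(\Y_2)/3 \leq \|\Y_2\|/3$ converts the quadratic term into at most $(1/3)\|\Y_2\|\,\|\Delta\|_\F$, producing the $7/3$ constant.

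For \eqref{ineq: distUY-UUYY-transfer-2}, my plan is to use the alternative symmetric rewriting $\Y_1\Y_1^\top - \Y_2\Y_2^\top = \tfrac{1}{2}(S\Delta^\top + \Delta S^\top)$ with $S := \Y_1 + \Y_2\Q$, and to verify that the optimality of $\Q$ forces $K := S^\top\Delta = \Y_1^\top\Y_1 - \Q^\top\Y_2^\top\Y_2\Q$ to be symmetric (the cross terms $-\Y_1^\top\Y_2\Q + \Q^\top\Y_2^\top\Y_1$ cancel by the symmetry of $\Y_1^\top\Y_2\Q$). Expanding the squared Frobenius norm and using $\tr((S^\top\Delta)^2) = \|K\|_\F^2$ will yield
\begin{equation*}
\|\Y_1\Y_1^\top - \Y_2\Y_2^\top\|_\F^2 = \tfrac{1}{2}\tr(\Delta^\top\Delta\,S^\top S) + \tfrac{1}{2}\|K\|_\F^2.
\end{equation*}
The key step will be the matrix inequality $S^\top S - \Delta^\top\Delta = 4\,\Y_1^\top\Y_2\Q \succcurlyeq 0$, which is immediate from the PSD-ness of $\Y_1^\top\Y_2\Q$; combined with trace-monotonicity against the PSD matrix $\Delta^\top\Delta$, this gives $\tr(\Delta^\top\Delta\,S^\top S) \geq \tr((\Delta^\top\Delta)^2) = \|\Delta\Delta^\top\|_\F^2$, and discarding the nonnegative $\|K\|_\F^2$ term then delivers \eqref{ineq: distUY-UUYY-transfer-2}.

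For \eqref{ineq: distUY-UUYY-transfer-1}, obtaining the sharp constant $2(\sqrt{2}-1)$ is the main technical hurdle, and my plan is a dichotomy based on the ratio $\|\Delta\|_\F/\sigma_r(\Y_2)$. In the near regime $\|\Delta\|_\F \leq (2-\sqrt{2})\sigma_r(\Y_2)$, writing $S = 2\Y_2\Q + \Delta$ and applying Weyl's inequality will give $\sigma_r(S) \geq 2\sigma_r(\Y_2) - \|\Delta\|_\F \geq \sqrt{2}\,\sigma_r(\Y_2)$, so the first summand alone yields $\|\Y_1\Y_1^\top - \Y_2\Y_2^\top\|_\F^2 \geq \tfrac{1}{2}\sigma_r^2(S)\|\Delta\|_\F^2 \geq \sigma_r^2(\Y_2)\|\Delta\|_\F^2 \geq 2(\sqrt{2}-1)\sigma_r^2(\Y_2)\|\Delta\|_\F^2$. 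In the complementary far regime, the bound must be extracted from the $\tfrac{1}{2}\|K\|_\F^2$ term using the representation $K = \Delta^\top\Delta + 2\Delta^\top\Y_2\Q$ to lower bound $\|K\|_\F$ in terms of $\|\Delta\|_\F$ and $\sigma_r(\Y_2)$, leveraging that $\Delta^\top\Y_2\Q = \Y_1^\top\Y_2\Q - \Q^\top\Y_2^\top\Y_2\Q$ is a difference of two symmetric PSD matrices. Calibrating the threshold at $2-\sqrt{2}$ so that the two regimes meet precisely at the sharp constant $2(\sqrt{2}-1)$ is the delicate balancing act and the main obstacle of the proof.
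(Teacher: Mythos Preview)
The paper does not actually prove this lemma: it simply cites \cite[Lemma 5.4]{tu2016low} for \eqref{ineq: distUY-UUYY-transfer-1}, \cite[Lemma 6]{ge2017no} for \eqref{ineq: distUY-UUYY-transfer-2}, and a slight modification of \cite[Lemma 5.3]{tu2016low} for \eqref{ineq: distUY-UUYY-transfer-3}. Your self-contained arguments for \eqref{ineq: distUY-UUYY-transfer-3} and \eqref{ineq: distUY-UUYY-transfer-2} are correct and clean; in particular, the identity $\|\Y_1\Y_1^\top-\Y_2\Y_2^\top\|_\F^2=\tfrac12\tr(\Delta^\top\Delta\,S^\top S)+\tfrac12\|K\|_\F^2$ together with $S^\top S-\Delta^\top\Delta=4\,\Y_1^\top\Y_2\Q\succcurlyeq 0$ is a nice way to obtain \eqref{ineq: distUY-UUYY-transfer-2}, and the near-regime portion of \eqref{ineq: distUY-UUYY-transfer-1} follows correctly from Weyl's inequality applied to $S=2\Y_2\Q+\Delta$.

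The far regime of \eqref{ineq: distUY-UUYY-transfer-1}, however, is not proved: you openly label it ``the main obstacle'' and only gesture at extracting the bound from $\tfrac12\|K\|_\F^2$ via $K=\Delta^\top\Delta+2\Delta^\top\Y_2\Q$. That plan, as stated, does not close. The term $\Delta^\top\Y_2\Q=\Y_1^\top\Y_2\Q-\Q^\top\Y_2^\top\Y_2\Q$ is a difference of two PSD matrices that can nearly cancel, so there is no lower bound of the form $\|\Delta^\top\Y_2\Q\|_\F\gtrsim\sigma_r(\Y_2)\|\Delta\|_\F$; and $\|\Delta^\top\Delta\|_\F$ by itself only controls $\|\Delta\|_\F^4/r$, which introduces an unwanted $r$-dependence. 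A working argument in this regime (as in \cite{tu2016low}) requires combining both summands $\tfrac12\tr(\Delta^\top\Delta\,S^\top S)$ and $\tfrac12\|K\|_\F^2$ simultaneously rather than discarding one, and the threshold $2-\sqrt{2}$ alone does not make the single-term bound go through. So your proposal is complete and correct for \eqref{ineq: distUY-UUYY-transfer-2} and \eqref{ineq: distUY-UUYY-transfer-3}, but \eqref{ineq: distUY-UUYY-transfer-1} has a genuine gap in the far regime.
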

\begin{proof}
	The first statement is from \cite[Lemma 5.4]{tu2016low}, the second statement is from \cite[Lemma 6]{ge2017no} and the third statement is a slight modification of \cite[Lemma 5.3]{tu2016low}.
\end{proof}

\begin{Lemma}(\cite[Proposition 2]{luo2021geometric}) \label{lm: norm-bound-Ytheta-thetaY} Let $\Y \in \bbR^{p \times r}_*$, and let $\X = \Y \Y^\top$. Then $2 \sigma^2_r(\Y) \|\theta_\Y\|_\F^2 \leq \|\Y \theta_\Y^\top + \theta_\Y \Y^\top \|_\F^2 \leq 4 \sigma^2_1(\Y) \|\theta_\Y\|_\F^2$ holds for all $\theta_\Y \in \cH_\Y \widebar{\cM}_{r+}^{q}$.
\end{Lemma}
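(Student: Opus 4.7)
\textbf{Proof plan for Lemma \ref{lm: norm-bound-Ytheta-thetaY}.}

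The plan is to exploit the characterization of the horizontal space provided by Lemma \ref{lm: psd-quotient-manifold1-prop}. From the explicit description $\theta_\Y = \Y(\Y^\top\Y)^{-1}\S + \U_\perp \D$ with $\S \in \bbS^{r\times r}$, I would first observe that $\Y^\top \theta_\Y = \S$ is symmetric, giving the clean intrinsic characterization $\cH_\Y\widebar{\cM}_{r+}^{q} = \{\theta_\Y \in \bbR^{p\times r} : \Y^\top\theta_\Y \in \bbS^{r\times r}\}$. This symmetry will be the key algebraic fact driving the entire computation.

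Next, I would expand the squared Frobenius norm directly:
\begin{equation*}
\|\Y\theta_\Y^\top + \theta_\Y \Y^\top\|_\F^2 = \tr\bigl((\Y\theta_\Y^\top + \theta_\Y\Y^\top)^2\bigr),
\end{equation*}
and collect the four resulting trace terms. Two of them equal $\tr(\Y^\top\Y\,\theta_\Y^\top\theta_\Y)$ by cyclicity, while the other two equal $\tr((\Y^\top\theta_\Y)^2)$ (again by cyclicity plus symmetry of $\Y^\top\theta_\Y$, which turns a term like $\tr(\Y\theta_\Y^\top\Y\theta_\Y^\top) = \tr((\theta_\Y^\top\Y)^2)$ into $\tr((\Y^\top\theta_\Y)^2) = \|\Y^\top\theta_\Y\|_\F^2$). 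The identity I would arrive at is
\begin{equation*}
\|\Y\theta_\Y^\top + \theta_\Y\Y^\top\|_\F^2 = 2\|\Y^\top\theta_\Y\|_\F^2 + 2\,\tr(\Y^\top\Y\,\theta_\Y^\top\theta_\Y).
\end{equation*}

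From here the two bounds follow from standard trace/singular-value inequalities. For the lower bound, I would drop the nonnegative term $2\|\Y^\top\theta_\Y\|_\F^2$ and use $\tr(\Y^\top\Y\,\theta_\Y^\top\theta_\Y) \geq \lambda_{\min}(\Y^\top\Y)\|\theta_\Y\|_\F^2 = \sigma_r^2(\Y)\|\theta_\Y\|_\F^2$, yielding $2\sigma_r^2(\Y)\|\theta_\Y\|_\F^2$. For the upper bound, I would bound $\|\Y^\top\theta_\Y\|_\F^2 \leq \|\Y\|^2\|\theta_\Y\|_\F^2 = \sigma_1^2(\Y)\|\theta_\Y\|_\F^2$ and $\tr(\Y^\top\Y\,\theta_\Y^\top\theta_\Y) \leq \sigma_1^2(\Y)\|\theta_\Y\|_\F^2$, giving $4\sigma_1^2(\Y)\|\theta_\Y\|_\F^2$.

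There is no serious obstacle here; the only point requiring care is exploiting the symmetry of $\Y^\top\theta_\Y$ correctly so that the two cross terms $\tr(\Y\theta_\Y^\top\Y\theta_\Y^\top)$ and $\tr(\theta_\Y\Y^\top\theta_\Y\Y^\top)$ both collapse to $\|\Y^\top\theta_\Y\|_\F^2$ rather than leaving an indefinite expression. Without the horizontal condition these two terms would not admit such a clean estimate, which is precisely why the bounds hold on $\cH_\Y\widebar{\cM}_{r+}^{q}$ rather than on the full tangent space.
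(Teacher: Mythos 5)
Your proof is correct. The paper does not prove this lemma itself (it cites \cite[Proposition 2]{luo2021geometric}), and your direct expansion, using the symmetry of $\Y^\top\theta_\Y$ on the horizontal space to reduce the cross terms to $\|\Y^\top\theta_\Y\|_\F^2$, is exactly the standard argument; the trace inequalities for the lower and upper bounds are applied correctly. One small refinement: horizontality is only needed for the lower bound (on a vertical direction $\theta_\Y = \Y\bOmega$ with $\bOmega$ skew-symmetric one has $\Y\theta_\Y^\top + \theta_\Y\Y^\top = \0$, so the lower bound genuinely fails there), whereas the upper bound holds for arbitrary $\theta_\Y \in \bbR^{p\times r}$ by the triangle inequality, consistent with the paper's remark that it follows from ``straightforward linear algebra considerations.''
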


\begin{Remark}
The previous result can be interpreted as a quantitative bound for the metric distortion of the differential of the correspondence between $\X$ and $[\Y]$. 
Indeed, the tangent plane of $\{\X:\bbS^{p \times p} \ni \X \succcurlyeq 0, \rank(\X) = r \}$ at $\X= \Y \Y^\top $ consists of matrices of the form $(\Y \theta_\Y^\top + \theta_\Y \Y^\top  )$ for $\theta_\Y \in \cH_\Y \widebar{\cM}_{r+}^{q} $, as has been discussed in \cite{luo2021geometric}.  { We observe that from Lemma \ref{lm: distUY-UUYY-transfer} one could directly obtain the bound $ 2(\sqrt{2} -1) \sigma^2_r(\Y) \|\theta_\Y\|_\F^2  \leq \|\Y \theta_\Y^\top + \theta_\Y \Y^\top \|_\F^2$}, while Lemma \ref{lm: norm-bound-Ytheta-thetaY} provides a better constant for this inequality. The upper bound $\|\Y \theta_\Y^\top + \theta_\Y \Y^\top \|_\F^2 \leq 4 \sigma^2_1(\Y) \|\theta_\Y\|_\F^2$ follows from straightforward linear algebra considerations.

\end{Remark}

\nc

\begin{Lemma}(\cite[Lemma 10]{zhu2017global}) \label{lm: RIP-imply-gradient-bound}
	Suppose $f$ satisfies $(2r,4r)$-restricted strong convexity and smoothness properties with parameter $0\leq \delta < 1$ as in Definition \ref{def: RSC-RSM}. Then for any $p$-by-$p$ \nc real-valued matrices $\C,\D,\H$ with $\rank(\C), \rank(\D) \leq r$ and $\rank(\H) \leq 2r$, we have
	\begin{equation*}
		\left| \langle \nabla f(\C) - \nabla f(\D) - (\C - \D)  , \H \rangle  \right| \leq \delta \| \C - \D \|_\F \|\H\|_\F.
	\end{equation*}
\end{Lemma}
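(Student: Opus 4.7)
The plan is to prove the bound via an integral representation of the gradient difference combined with a polarization argument that converts the restricted strong convexity/smoothness of $f$ (a statement about quadratic forms on low-rank matrices) into a Cauchy--Schwarz-type bound for the associated bilinear form.

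First, I would introduce the auxiliary function $\phi(\X) := f(\X) - \tfrac{1}{2}\|\X\|_\F^2$. Its Euclidean gradient and Hessian satisfy $\nabla \phi(\X) = \nabla f(\X) - \X$ and $\nabla^2 \phi(\X)[\G,\G] = \nabla^2 f(\X)[\G,\G] - \|\G\|_\F^2$, so that the bound \eqref{eq: RSC-RSM} immediately rephrases as
\begin{equation*}
|\nabla^2 \phi(\X)[\G,\G]| \leq \delta \|\G\|_\F^2 \quad \text{whenever } \rank(\X) \leq 2r,\ \rank(\G) \leq 4r.
\end{equation*}
The target quantity becomes $\langle \nabla \phi(\C) - \nabla \phi(\D), \H\rangle$.

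Next, I would apply the fundamental theorem of calculus along the straight-line path $\X_t := \D + t(\C - \D)$ for $t \in [0,1]$, which obeys $\rank(\X_t) \leq \rank(\C) + \rank(\D) \leq 2r$. This yields
\begin{equation*}
\langle \nabla \phi(\C) - \nabla \phi(\D), \H\rangle = \int_0^1 \nabla^2 \phi(\X_t)[\C - \D, \H]\, dt,
\end{equation*}
reducing the claim to the pointwise bound $|\nabla^2 \phi(\X_t)[\C - \D, \H]| \leq \delta \|\C - \D\|_\F \|\H\|_\F$ for each $t \in [0,1]$.

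For that bilinear form bound I would use a polarization plus rescaling trick: for any $s > 0$,
\begin{equation*}
\nabla^2 \phi(\X_t)[\C - \D, s\H] = \tfrac{1}{4}\bigl(\nabla^2 \phi(\X_t)[\C - \D + s\H, \C - \D + s\H] - \nabla^2 \phi(\X_t)[\C - \D - s\H, \C - \D - s\H]\bigr).
\end{equation*}
Since $\rank(\C - \D \pm s\H) \leq \rank(\C-\D) + \rank(\H) \leq 4r$, both quadratic forms on the right are controlled by the bound above, producing $|\nabla^2 \phi(\X_t)[\C-\D, \H]| \leq \tfrac{\delta}{2s}(\|\C-\D\|_\F^2 + s^2 \|\H\|_\F^2)$. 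Optimizing over $s > 0$ with the choice $s = \|\C-\D\|_\F/\|\H\|_\F$ (the degenerate cases $\C = \D$ or $\H = \0$ are trivial) gives the desired pointwise bound, and integrating over $t \in [0,1]$ completes the proof. The only nonroutine step is rank bookkeeping: one must ensure $\rank(\X_t) \leq 2r$ so that \eqref{eq: RSC-RSM} applies to $\nabla^2 \phi(\X_t)$, and $\rank(\C - \D \pm s\H) \leq 4r$ so polarization does not push us outside the cone on which the restricted bound is effective. Both conditions follow directly from the hypotheses $\rank(\C), \rank(\D) \leq r$ and $\rank(\H) \leq 2r$, which is precisely why the lemma is stated in terms of the $(2r,4r)$-version of RSC/RSM.
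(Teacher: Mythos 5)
Your proof is correct and complete: the shift to $\phi(\X)=f(\X)-\tfrac12\|\X\|_\F^2$, the line integral of the Hessian along $\X_t=\D+t(\C-\D)$, the polarization-plus-rescaling bound on the bilinear form, and the rank bookkeeping ($\rank(\X_t)\le 2r$, $\rank(\C-\D\pm s\H)\le 4r$) are all valid. The paper itself gives no proof but cites \cite[Lemma 10]{zhu2017global}, and your argument is essentially the standard one used there, so nothing further is needed.
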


\begin{Lemma}(\cite[Lemma 3]{luo2021schatten})\label{lm: charac of Schatten-q norm}
Let $\X$ be a $p_1$-by-$p_2$ real-valued matrix. For any non-negative integer $r \leq p_1 \wedge p_2$, we have
    \begin{equation}\label{eq: truncated schatten q norm}
        \|\X_{\max(r)}\|_\F = \sup_{\|\B\|_\F \leq 1, \rank(\B) \leq r}  \langle \B, \X \rangle
    \end{equation} 
    If $\rank(\X) \leq r$, then 
    \begin{equation}\label{eq: schatten q norm of rank r matrix}
         \|\X\|_\F = \sup_{\|\B\|_\F \leq 1, \rank(\B) \leq r}  \langle \B, \X \rangle.
    \end{equation}
\end{Lemma}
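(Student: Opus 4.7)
The plan is to first reduce the second identity to the first: when $\rank(\X) \leq r$ we have $\X_{\max(r)} = \X$, so $\|\X\|_\F = \|\X_{\max(r)}\|_\F$ and the second assertion is just a specialization of the first. I would therefore concentrate on the variational formula $\|\X_{\max(r)}\|_\F = \sup_{\|\B\|_\F \leq 1,\, \rank(\B) \leq r} \langle \B, \X\rangle$, working throughout with the SVD $\X = \sum_{i=1}^{p_1 \wedge p_2} \sigma_i(\X)\, \u_i \v_i^\top$. By definition $\X_{\max(r)} = \sum_{i=1}^r \sigma_i(\X)\, \u_i \v_i^\top$, and $\|\X_{\max(r)}\|_\F^2 = \sum_{i=1}^r \sigma_i^2(\X)$ by orthonormality.

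For the lower bound, I would exhibit an explicit extremizer. Assuming $\X_{\max(r)} \neq \0$ (the degenerate case makes both sides zero), take $\B^\star := \X_{\max(r)}/\|\X_{\max(r)}\|_\F$. Then $\|\B^\star\|_\F = 1$ and $\rank(\B^\star) \leq r$, and orthonormality of the singular vectors of $\X$ directly yields $\langle \B^\star, \X\rangle = \|\X_{\max(r)}\|_\F$. This shows the supremum is at least $\|\X_{\max(r)}\|_\F$.

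For the matching upper bound, for any $\B$ with $\|\B\|_\F \leq 1$ and $\rank(\B) \leq r$ I would invoke the von Neumann trace inequality $\langle \B, \X\rangle \leq \sum_i \sigma_i(\B)\, \sigma_i(\X)$. Because $\rank(\B) \leq r$ forces $\sigma_i(\B) = 0$ for $i > r$, only the first $r$ terms survive, and Cauchy--Schwarz then yields
\begin{equation*}
\langle \B, \X\rangle \leq \sum_{i=1}^r \sigma_i(\B)\,\sigma_i(\X) \leq \Bigl(\sum_{i=1}^r \sigma_i^2(\B)\Bigr)^{1/2} \Bigl(\sum_{i=1}^r \sigma_i^2(\X)\Bigr)^{1/2} \leq \|\B\|_\F\,\|\X_{\max(r)}\|_\F \leq \|\X_{\max(r)}\|_\F.
\end{equation*}
Taking the supremum over admissible $\B$ closes the two-sided estimate.

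The only substantive ingredient is the von Neumann trace inequality, which is a classical result obtainable via a simultaneous SVD alignment argument; I do not foresee a real obstacle. Conceptually the identity is just the Frobenius self-duality $\|\X\|_\F = \sup_{\|\B\|_\F \leq 1}\langle \B, \X\rangle$ intersected with the rank-$r$ cone, and the Eckart--Young theorem already signals that the best rank-$r$ approximation $\X_{\max(r)}$ ought to saturate this restricted supremum. If one wished to bypass von Neumann, a self-contained route is to write the SVD of $\B$ as $\sum_{j=1}^{r} \tau_j \a_j \b_j^\top$ with $\sum_j \tau_j^2 \leq 1$, expand $\langle \B, \X\rangle = \sum_j \tau_j\, \a_j^\top \X \b_j$, and apply Cauchy--Schwarz together with the observation that the quantities $\a_j^\top \X \b_j$ are diagonal entries of $\A^\top \X \B'$ for matrices $\A,\B'$ with orthonormal columns, whose Frobenius norm is controlled by $\|\X_{\max(r)}\|_\F$.
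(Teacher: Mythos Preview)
Your proof is correct. The paper itself does not supply a proof of this lemma; it simply cites the result from an external reference, so there is nothing to compare against beyond noting that your argument via the von Neumann trace inequality and Cauchy--Schwarz is the standard and expected route.
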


\bibliographystyle{alpha}
\bibliography{reference.bib}

\end{document}